\newcommand{\bsc}{
\usefont{T1}{cmr}{bx}{sc}} 
\newcommand{\pending}[1][]{{\bsc\ifthenelse{\equal{#1}{}}{[to be written]}{[to be written: {\rm #1}]}}\xspace} 
\newcommand{\prelims}[1][]{{\bsc\ifthenelse{\equal{#1}{}}{[add to prelims]}{[add to prelims: {\rm #1}]}}\xspace} 
\newcommand{\adding}[1][]{{\bsc\ifthenelse{\equal{#1}{}}{[to add?]}{[to add?: {\rm #1}]}}\xspace} 
\newcommand{\wondering}[1][]{{\bsc\ifthenelse{\equal{#1}{}}{[Maybe]}{[Maybe: {\rm #1}]}}\xspace}
\newcommand{\rem}[1]{\relax}
\newcommand{\Cyl}[1]{\ensuremath{[\![{#1}]\!]}} 
\newcommand{\seq}[1]{{\langle{#1}\rangle}} 
\newcommand{\uhr}[1]{ {\upharpoonright_{#1}}}
\newcommand \tth{{}^{\textup{th}}} \DeclareMathOperator \dom{dom}
\newcommand{\Tur}{\mathrm{T}}
\renewcommand{\hat}{\widehat} 
\renewcommand{\tilde}{\widetilde} 
\newcommand{\converge}{\!\!\downarrow} 
\newcommand{\diverge}{\!\!\uparrow} 
\newcommand {\PP}{\mathcal P} 
\newcommand{\CC}{ \mathcal C} 
\newcommand {\GG}{\mathcal G}
\newcommand {\RR}{\mathcal R} 
\newcommand {\WW}{\mathcal W}
\newcommand{\QQ}{{\mathcal{Q}}}
\newcommand{\Then}{\Rightarrow} 
\newcommand{\w}{\omega} 
\renewcommand{\epsilon}{\varepsilon} 
\newcommand{\s}{\sigma} 
\newcommand{\SJT}{\mbox{\textit{\textsf{SJT$_{\text{c.e.}}$}}}}
\newcommand{\Superlow}{\mbox{\textit{\textsf{Superlow}}}} 
\newcommand{\Superhigh}{\mbox{\textit{\textsf{Superhigh}}}} 
\newcommand{\wce}{\mbox{\textit{\textsf{$\w$-c.e.}}}} 
\renewcommand {\le}{\leqslant} 
\renewcommand {\ge}{\geqslant}
\theoremstyle{plain} 
\newtheorem{theorem}{Theorem}[section] 
\newtheorem{proposition}[theorem]{Proposition} 
\newtheorem{lemma}[theorem]{Lemma} 
\newtheorem{corollary}[theorem]{Corollary} 
\newtheorem{claim}[theorem]{Claim}
\newtheorem{claimin}{Claim}
\theoremstyle{definition} 
\newtheorem{definition}[theorem]{Definition} 
\theoremstyle{remark} 
\newtheorem{fact}[theorem]{Fact} 
\newtheorem{remark}[theorem]{Remark}
\newcommand{\nicebox}[1] {\vsp 
\noindent \fbox{\parbox{.98\linewidth}{ \centering{#1} }} }
\newcommand{\HHH}{\mathcal H}
\newcommand{\HH}{\mathcal{K}} 
\newcommand{\cost}{c_{\HH}}
\DeclareMathOperator{\BLR}{BLR}
\newcommand{\exo}[1]{\exists #1 \, } 
\newcommand{\fao}[1]{\forall #1 \, } 
\newcommand{\GL}[1]{\mbox{\rm GL}_{#1}}
\newcommand{\lwtt}{\le_{\mathrm{wtt}}} 
\newcommand{\ltt}{\le_{\mathrm{tt}}}
\newcommand{\lt}{\le_{\mathrm{T}}}
\newcommand{\CCC}{\mathcal{C}}
\newcommand{\SI}[1]{\Sigma^0_{#1}} 
\newcommand{\PI}[1]{\Pi^0_{#1}} 
\newcommand{\PPI}{\PI{1}}
\newcommand{\Kuc}{Ku{\v c}era} 
\newcommand{\ML}{Martin-L{\"o}f}
\newcommand{\Halt}{\ES'}
\newcommand{\NN}{\w} 
\newcommand{\SIII}{\Sigma^0_3} 
\newcommand{\DII}{\Delta^0_2} 
\newcommand{\nle}{\not \le}
\newcommand{\uhrn}{\upharpoonright}
\newcommand{\ex}{\exists} 
\newcommand{\fa}{\forall} 
\newcommand{\LR}{\Leftrightarrow} 
\newcommand{\RA}{\Rightarrow} 
\newcommand{\rra}{\ \rightarrow\ } 
\newcommand{\ria}{\rightarrow}
\newcommand{\n}{
\noindent}
\newcommand{\sub}{\subseteq}
\renewcommand{\land}{\&} 
\newcommand{\ES}{\emptyset}
\renewcommand{\hat}{\widehat} 
\renewcommand{\tilde}{\widetilde}
\newcommand{\lland}{\ \land \ }
\newcommand{\la}{\langle} 
\newcommand{\ra}{\rangle}
\newcommand{\bi}{
\begin{itemize}
	} 
	\newcommand{\ei}{
\end{itemize}
} 
\newcommand{\bc}{
\begin{center}
	} 
	\newcommand{\ec}{
\end{center}
}
\newcommand{\sss}{\sigma} 
\newcommand{\aaa}{\alpha}
\newcommand{\leT}{\le_\Tur}
\newcommand{\equivT}{\equiv_\Tur}
\newcommand{\leb}{\mathbf{\lambda}} 
\newcommand{\eps}{\epsilon} 
\newcommand{\Om}{\Omega} 
\newcommand{\twoset}{\{0,1\}}
\newcommand{\sN}[1]{_{#1\in \w}} 
\newcommand{\estring}{\varnothing}
\newcommand{\Opcl}[1]{[#1]^\prec} 
\newcommand{\tp}[1]{2^{#1}}
\newcommand{\SSS}{\mathcal{S}}
\newcommand{\BBB}{\mathcal{B}}
\newcommand{\vsp}{\vspace{6pt}} 
\newcommand{\vsps}{\vspace{3pt}}
\newcommand{
\use}{\mbox{\rm \textsf{use }}} 
\newcommand{\kuc}{\mbox{\rm \textsf{kuc}}}
\begin{document} 

\title[Characterizing SJT sets via randomness]{Characterizing the strongly jump-traceable sets via randomness}

\author{Noam Greenberg} \address{School of Mathematics, Statistics and Operations Research \\
Victoria University of Wellington \\
Wellington \\
New Zealand} \email{greenberg@msor.vuw.ac.nz}

\author{Denis R.~Hirschfeldt} \address{Department of Mathematics \\
University of Chicago \\
Chicago, IL \\
U.S.A.} \email{drh@math.uchicago.edu}

\author {Andr\'e~Nies} \address{Department of Computer Science \\
University of Auckland \\
Auckland \\
New Zealand} \email{andre@cs.auckland.ac.nz}

\keywords{computability, randomness, lowness, traceability}

\subjclass[2010]{Primary: 03D32; Secondary: 03D25, 03D30, 03D80, 03F60, 68Q30}

\thanks{The first and third authors were partially supported by the Marsden Fund of New Zealand.
The second author was partially supported by the National Science Foundation of the United States, grants DMS-0801033 and DMS-0652521.}
\begin{abstract}
We show that if a set $A$ is computable from every superlow $1$-random set, then $A$ is strongly jump-traceable. Together with a result from \cite{GreenbergNies:benign}, this theorem shows that the computably enumerable (c.e.) strongly jump-traceable sets are exactly the c.e.\  sets computable from every superlow $1$-random set.

We also prove the analogous result for superhighness: a c.e.\ set is strongly jump-traceable if and only if it is computable from every superhigh $1$-random set. 

Finally, we show that for each cost function $c$ with the limit condition there is a $1$-random $\DII$ set $Y$ such that every c.e.\ set $A \leT Y$ obeys $c$.  To do so,  we connect cost function strength and the strength of   randomness notions. Together with a theorem from \cite{GreenbergNies:benign}, this result gives a  full correspondence between obedience of cost functions and being computable from $\DII$ $1$-random sets. 
\end{abstract}


\maketitle

\section{Background and motivation}

There are two aspects to the information content of sets of natural numbers. In terms of computational complexity, a set of numbers is considered to code a lot of information if it is useful as an oracle for relative computation. In terms of effective randomness, difficulty to detect patterns in the set marks it as complicated, or random. The interaction between these two aspects of complexity is the focus of much current research in computability theory. 

Although earlier research naturally gravitated toward the  complex, recent findings have shown rich structure in the region of the simple.  Properties of sets that indicate    being  uncomplicated are called \emph{lowness properties}. They have proved to be essential in the understanding of random sets, and of the connections between computability and randomness along the entire spectrum of complexity. 

A lowness property that is central to this study is that of $K$-triviality. A series of results by Downey, Hirschfeldt, Nies, and Stephan (see \cite{HirschfeldtNiesStephan:UsingRandomOracles, Nies:_lowness_and_randomness}) developed penetrating techniques for the study of several classes of low sets. These results have established the coincidence of several such notions, three of the important ones being: $K$-triviality (being far from random); lowness for randomness (not being able to detect new patterns in random sets); and being computable from a relatively random oracle. (Here, ``random'' means Martin-L\"of random, or $1$-random, as defined in Subsection \ref{subsec:randomndess}.) This coincidence established the robustness of this class. Further results have demonstrated its usefulness and importance to the field; see, for example, \cite{DHNT}. 

The diverse characterizations of the $K$-trivial sets, and the techniques used to study them, have led to three  paradigms  for understanding lowness of a set $A$ of natural numbers, introduced by Nies \cite{Nbook,Nies:tut}:

\n {\bf 1.} \emph{Being weak as an oracle.} This paradigm  means that  $A$ is not very useful as an oracle for Turing machines. This is the oldest way of thinking about lowness. For instance, $A$ is of hyper-immune free degree if it does not compute fast growing functions: each function computed by $A$ is dominated by a computable function.  Some formal instances of the paradigm are   expressed through $A'$,  the halting set relative to $A$. For instance,  the traditional notion, simply called ``low'', states that $A'$ is as simple as possible in the   Turing degrees. The newer notion of superlowness states that $A'$ is as simple as possible in the truth-table degrees.  

\n {\bf 2.} \emph{Being computed by many oracles.} Traditionally, there were no interesting answers to the question ``how many sets compute $A$?''; the answer is always ``uncountably many''---indeed continuum many---but unless $A$ is computable (in which case every set computes $A$), the collection of sets computing $A$ has measure 0. Recently, more detailed answers have proved to be insightful, in particular in conjunction with answers to the question ``what \emph{kinds} of sets compute $A$?'' For example, as noted above, $A$ is $K$-trivial if and only if $A$ is computed by some set that is $1$-random relative to $A$, in which case the class of oracles computing $A$ is large in an effective sense relative to $A$.

\n {\bf 3.} \emph{Being inert.} Shoenfield's limit lemma states that a set $A$ is computable from the halting set $\emptyset'$ if and only if it has a computable approximation. (We let $\Delta^0_2$ denote the collection of such sets.) The inertness  paradigm  says that  a $\Delta^0_2$ set  $A$ is close to computable if  it is  computably 
approximable with a small number   of    changes.   For formal instances of  the inertness paradigm,  we  use  so-called \emph{cost functions}.  They  measure the total number of  changes of a  $\Delta^0_2$ set, and especially that of a computably enumerable set.  Most examples of cost functions are based on randomness-related concepts.   (Precise definitions of all of these concepts will be given below. For more background on these paradigms see \cite{Nies:ICM}.)

%

The $K$-trivial sets exemplify these paradigms. Every $K$-trivial set is superlow; as mentioned above, a set is $K$-trivial if and only if it is computable by a set that is $1$-random relative to it;  a set is $K$-trivial if and only if it has an approximation that obeys a canonical cost function $\cost$ defined below.

There are two ways to give mathematical definitions of  lowness properties: combinatorial and analytic. Combinatorial lowness properties, such as (traditional) lowness and superlowness, are defined by discrete tools and by traditional computability. Analytic lowness properties are defined via measure, either directly or coded by prefix-free Kolmogorov complexity, or via some type  of effectively given real number. Even though the central notion of $K$-triviality \emph{implies} some combinatorial lowness properties (such as superlowness), it is only known to be \emph{equivalent} to analytic notions. In other words, currently, $K$-triviality has only analytic characterizations. The search for a combinatorial characterization of the $K$-trivial sets is considered an intriguing open problem. 

\emph{Traceability} is a combinatorial tool that is used to define several lowness properties. Among these notions, \emph{strong jump-traceability}, defined by Figueira, Nies, and Stephan \cite{FigueiraNiesStephan}, was proposed (see \cite{Miller_Nies:_randomness_open_questions}) as a natural candidate for the desired combinatorial characterization of $K$-triviality. This conjecture was refuted by Cholak, Downey, and Greenberg \cite{CholakDowneyGreenberg}; further work \cite{BarmpaliasDowneyGreenberg:orders} refuted another possible characterization, in terms of the rate of growth of the traces. However, Cholak, Downey, and Greenberg did show that for computably enumerable (c.e.)\ sets, strong jump-traceability at least implies $K$-triviality, making strong jump-traceability the first known combinatorial notion to imply $K$-triviality. They further showed that in conjunction with computable enumerability, strong jump-traceability has some appealing structure (it induces an ideal in the Turing degrees). These results prompted interest in strong jump-traceability in its own right. 

 Strong jump-traceability falls under the first lowness paradigm discussed above. since it is  related to weakness of the jump. In \cite{GreenbergNies:benign}, Nies and Greenberg showed that strong jump-traceability can be characterized using cost-function approximations, thus giving it also a characterization within the third paradigm. They used this result to show that every strongly jump-traceable c.e.\ set is computed by many random oracles, a lowness property belonging to the second paradigm. Along the way, they showed that strong jump-traceability is useful in settling problems in other areas of computability, unrelated to randomness. 

\

In the current paper, in a reverse turn of events, we show that strong jump-traceability can in fact be defined analytically, using lowness  properties  of the second paradigm. This result shows the robustness of strong jump-traceability. The heart of the paper, guided by the second paradigm for lowness, is the investigation of the oracular power of random sets relative to c.e.~sets. In other words, the question under consideration is: which c.e.~sets are computable from which random sets?

Two important early results are seminal. Chaitin \cite{Chaitin} showed that there is a \emph{complete} $1$-random set, that is, a $1$-random set that is Turing above the halting set~$\emptyset'$. Hence, every c.e.~set is computable from a $1$-random set. This result was later extended by Ku\v{c}era and G\'{a}cs \cite{Kucera:85,Gacs}, who showed  that \emph{every} set is computable from a $1$-random set. The focus thus turned to \emph{incomplete} $1$-random sets, that is, $1$-random sets that do not compute $\emptyset'$. Here, Ku\v{c}era's basic result \cite{Kucera:86} is that every  $1$-random  $\Delta^0_2$ set is Turing above  a noncomputable c.e.~set. 

The evidence that being computable from an incomplete $1$-random set is a lowness property came much later. In \cite{HirschfeldtNiesStephan:UsingRandomOracles}, Hirschfeldt, Nies, and Stephan showed that if $Y$ is an incomplete $1$-random set, and $A$ is a c.e.~set computable from $Y$, then in fact $Y$ is $1$-random relative to $A$, and $A$ is $K$-trivial. It is still open  whether every $K$-trivial set is computable from some incomplete $1$-random set.


There are two ways to extend Ku\v{c}era's result, both following the second paradigm for lowness. One is to investigate which c.e.~sets are computable by many incomplete sets. Relevant here is the extension of Ku\v{c}era's result by Hirschfeldt and J.~Miller (see \cite[Theorem 5.3.15]{Nbook}), who showed that if $\CC$ is a $\Sigma^0_3$ null class, then there is a noncomputable c.e.~set computed by all $1$-random elements of $\CC$. In the current paper, we show, for several classes $\CC$ of sets, that the strongly jump-traceable c.e.~sets are precisely the c.e.~sets that are computable from all $1$-random elements of~$\CC$. We do this for the classes consisting of the $\w$-c.e.~sets,  the superlow sets, and the superhigh sets. (These and other computability theoretic concepts mentioned in this introduction will be defined below.) These results give   characterizations of c.e.~strong jump-traceability according to the second lowness paradigm. They are  simpler than the characterization according to this paradigm  of  the $K$-trivial sets  as those sets $A$ that are computable from a set that is $1$-random relative to $A$; for strong jump-traceability, we do not need to relativize randomness. 

Another direction for extending Ku\v{c}era's result is to keep our focus on a single random set and the c.e.~sets that it computes. A first attempt would be to  consider a $\Delta^0_2$ $1$-random set $Y$  as ``strong'' if all c.e.~sets computable from $Y$ share some strong lowness property. In early 2009 Greenberg proved \cite{Greenberg} that there is a $\Delta^0_2$ $1$-random set~$Y$ such that every c.e.~set computable from $Y$ is strongly jump-traceable. This result contrasts with the fact, observed in \cite{GreenbergNies:benign}, that no $\w$-c.e.~$1$-random set has this property.

The next logical step to relate the  ``lowness strength'' of a random set with  its degree of randomness. Up to now we have only mentioned the standard notion of randomness, due to Martin-L\"{o}f.  We can investigate what happens if we require a higher level of randomness, that is, if the statistical tests for measuring randomness are made more stringent. The Hirschfeldt-Miller theorem serves as a limiting result, as it implies that a $1$-random set is weakly 2-random (defined in Subsection \ref{subsec:randomndess}) if and only if it computes no noncomputable c.e.~set. Hence we are driven to notions of randomness that are stronger than Martin-L\"{o}f's but still compatible with being $\Delta^0_2$. The natural notion that arises in this context is that of Demuth randomness~\cite{Demuth}. Demuth tests generalize \ML\ tests $(\mathcal G_m) \sN m$   in that one can change the $m$-th component (a $\SI 1$ class  of measure at most $\tp{-m}$) a computably bounded number of times. A set $Z$  fails   a Demuth test if $Z$ is in infinitely many final versions of the $\mathcal G_m$.   In this direction, in mid-2009 Ku\v{c}era and Nies \cite{Kucera.Nies:ta}  extended Greenberg's result by showing that every c.e.~set computable from a Demuth random set is strongly jump-traceable.

 In the current paper we extend the  result of Ku\v{c}era and Nies \cite{Kucera.Nies:ta}  to give a fundamental connection between the second and third lowness paradigms discussed above. We extend the notion of \emph{benign} cost functions, used by Greenberg and Nies to characterize strong jump-traceability, and show the relationship between the strength  of generalized benign cost functions and being computable from       sets of corresponding degree of randomness, as measured by  generalizations of  Demuth randomness.
Translating \Kuc's result to the language of cost functions, Greenberg and Nies have shown that being computable from a $1$-random $\Delta^0_2$ set can be forced by obedience to a corresponding cost function. A variant of a classic result of Ershov implies that the strength of every reasonable cost function can be gauged by some form  of generalized benignity. Putting all of these results together, we get a full correspondence between the second and third paradigms for lowness properties: obedience to cost functions is equivalent to being computable from random $\DII$ sets. For the first time we get an abstract equivalence between paradigms along a wide array of cases, rather than just one witnessed by particular examples. 

\

One question that we have not completely answered is how reliant our results are on the sets investigated being computably enumerable. Several of our implications do not use this hypothesis, but we have not eliminated it completely. There are some preliminary results in this direction. Downey and Greenberg recently managed to eliminate the assumption of computable enumerability from one of the results from \cite{CholakDowneyGreenberg}, showing that every strongly jump-traceable set is $K$-trivial. They conjecture that, like $K$-triviality, the concept of strong jump-traceability is inherently computably enumerable, that is, that the ideal of strongly jump-traceable degrees is generated by its c.e.~elements. If true, this conjecture would imply that almost all of our results carry over to the general, non-c.e.~case.


\section{Overview}

We give more technical detail on the ideas discussed above, and survey the results of the paper. We also provide    some basic definitions, and fix notation. We assume familiarity with standard computability-theoretic notions and notation.

\subsection{Traceability}

We begin by defining a notion that is central to this paper. An \emph{order function} is a nondecreasing, unbounded computable function $h$ such that $h(0)>0$. A \emph{trace} for a partial function $\psi\colon \w\to \w$ is a uniformly c.e.~sequence $\seq{T_x}$ of finite sets such that $\psi(x)\in T_x$ for all $x\in \dom \psi$. A trace $\seq{T_x}$ is \emph{bounded} by an order function $h$ if $|T_x|\le h(x)$ for all $x$. 

Let $h$ be an order function. A set $A$ is \emph{$h$-jump-traceable} if every partial function that is partial computable in $A$ has a trace that is bounded by $h$. A set $A$ is called \emph{jump-traceable} if it is $h$-jump-traceable for some order function $h$. A set $A$ is called \emph{strongly jump-traceable} if it is $h$-jump-traceable for every order function $h$. 

For every set $A$ there is a universal partial $A$-computable function, which we denote by $J^A$. (We fix $A' = \dom J^A$.) As this universality is witnessed by effective coding, it follows that a set $A$ is jump-traceable if and only if $J^A$ has a trace that is bounded by some order function. Similarly, a set $A$ is strongly jump-traceable if and only if for every order function $h$, the function $J^A$ has a trace that is bounded by $h$.

The class of jump-traceable sets is much larger than the class of strongly jump-traceable sets. There is a perfect class of jump-traceable sets \cite{Nies:Little}, but every strongly jump-traceable set is $\DII$, and indeed $K$-trivial \cite{DowneyGreenberg:sjt2}.

\subsection{Strong reducibilities, $\w$-c.e.~ sets, and superlowness} \label{subsec:superlowness}

A \emph{computable approximation} to a set $A\in 2^\w$ is a uniformly computable sequence $\seq{A_s}_{s<\w}$ such that for every $n$, we have $A_s(n)= A(n)$ for almost all $s$. Associated with every computable approximation $\seq{A_s}$ is the mind-change function $n\mapsto \#\{ s\,:\, A_{s+1}(n)\ne A_s(n)\}$. A set $A$ is \emph{$\w$-c.e.} if it has some computable approximation whose associated mind-change function is bounded by a computable function. 

Let $A$ and $B$ be sets. Recall that $A\lwtt B$ if there is a Turing reduction of $A$ to $B$ with a computable bound on the use of this reduction, and that $A\ltt B$ if and only if $A$ is $B$'s image under a total computable map from $2^\w$ to itself. 

The following are equivalent for a set $A\in 2^\w$: 
\begin{enumerate}
\item $A\lwtt \emptyset'$; 
\item $A\ltt \emptyset'$; 
\item $A$ is $\w$-c.e. 
\end{enumerate}

A set $A$ is \emph{superlow} \cite{BickfordMills, Mohrherr:Refinement} if $A'$ is $\w$-c.e, or equivalently, if every set that is c.e.~relative to $A$ is $\w$-c.e. This formulation points to the fact that this notion does not depend on the choice of enumeration of partial computable functions and hence of universal machine. It also shows immediately that every superlow set is $\w$-c.e.

Nies \cite{Nies:Little} showed that jump-traceability and superlowness coincide on the c.e.~sets, but do not imply each other on the $\w$-c.e.\ sets. (For one direction, by the superlow basis theorem, further discussed in Subsection \ref{subsec:pi01classes}, there is a  superlow $1$-random set. On the other hand, no jump-traceable, or even c.e.~traceable, set can be diagonally non-computable, while each $1$-random set is diagonally non-computable. See \cite{DHbook} or \cite{Nbook} for definitions of these concepts.)

By analogy with the traditional notions of highness and lowness, we define a set $A$ to be \emph{superhigh} \cite{Mohrherr:Refinement}  if $\emptyset''\lwtt A'$,  or equivalently, if $\emptyset''\ltt A'$. This notion too can be characterized in terms of approximations; we discuss this fact in Section \ref{sec:nice}.

\subsection{Measure and randomness} \label{subsec:randomndess}

We let $\lambda$ denote the usual product (``fair coin'') measure on $2^\w$. A (statistical) \emph{test} is a sequence $\seq{\GG_n}_{n<\w}$ of effective (c.e.)\ open subclasses of $2^\w$ such that $\lambda \GG_n \le 2^{-n}$ for all $n$. A set $Z$ \emph{passes} a test $\seq{\GG_n}$ if $Z\notin \GG_n$ for almost all $n$. (The idea is that $\seq{\GG_n}$ determines a null class $\lim \GG_n = \bigcap_{n<\w}\bigcup_{m\ge n} \GG_m$ consisting of the sets that fail the test.)

A test $\seq{\GG_n}$ is a \emph{Martin-L\"{o}f} test if the sequence $\seq{\GG_n}$ is uniformly c.e.; that is, if there is a computable function $f$ such that $f(n)$ is a c.e.~index for $\GG_n$ for all $n$. A set $Z$ is called \emph{Martin-L\"{o}f random}, or \emph{$1$-random}, if it passes every Martin-L\"{o}f test. There is a \emph{universal} Martin-L\"{o}f test; in other words, there is a Martin-L\"{o}f test $\seq{\GG_n}$ such that $\lim \GG_n$ is the collection of sets that are not $1$-random.

These notions can be relativized in the usual computability-theoretic manner, to yield, for instance, the notion of $1$-randomness relative to a given set.



For more on algorithmic randomness, see \cite{DHbook, DHNT, Nbook}.

\subsection{Characterizations of strong jump-traceability and diamond classes}

The main results of this paper are the characterizations of c.e.~strong jump-trace\-abil\-i\-ty, along the lines of the second paradigm for lowness discussed in the introduction. In Section \ref{sec:main} we give the main argument that establishes the following:
\begin{theorem}
\label{thm:superlow} If a set $A$ is computable from every superlow $1$-random set, then $A$ is strongly jump-traceable. 
\end{theorem}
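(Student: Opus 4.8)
The plan is to prove the contrapositive via a construction: assuming $A$ is not strongly jump-traceable, I would build a superlow $1$-random set $Y$ that does not compute $A$. Fix an order function $h$ witnessing the failure of strong jump-traceability, so that $J^A$ has no trace bounded by $h$. I would build $Y$ inside a $\Pi^0_1$ class of positive measure (so that $Y$ is automatically $1$-random, by taking the class to avoid an initial segment of the universal Martin-L\"of test), using a forcing-with-measure / Kučera-style strategy where at each stage we have a "live" subtree of $2^{<\omega}$ of measure bounded below, and we shrink it to satisfy requirements. There are two families of requirements. The \emph{lowness requirements} ensure $Y$ is superlow: using the standard technique behind the superlow basis theorem, for each $e$ we decide (with a computably bounded number of mind-changes) whether $e \in Y'$ by asking whether the set of $Z$ in the current live class with $J^Z(e)\converge$ still has enough measure; this gives a $\w$-c.e.\ approximation to $Y'$. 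The \emph{diagonalization requirements} $\RR_\Phi$, one for each Turing functional $\Phi$, must ensure $\Phi^Y \ne A$.

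The heart of the argument is how $\RR_\Phi$ is met, and this is where non-strong-jump-traceability is used. The idea, following the Cholak--Downey--Greenberg / Greenberg--Nies technology, is a "golden pair" or box-promotion argument. For requirement $\RR_\Phi$ I would pick a large pool of candidate locations and, as the live class shrinks, watch for the current live class to split according to the value of $\Phi^Z$ on fresh inputs. Whenever a large fraction (say at least half the measure) of the current live class agrees on a value $\Phi^Z(x)=v$ with small use, I get to put $v$ into a c.e.\ "trace" set $T_x$ for $J^A(x)$ — here one arranges, via a coding that makes $J^A$ as general as possible, that deciding $\Phi^Z(x)$ for $Z$ in a measure-large subclass amounts to (a reduction of) computing $J^A(x)$. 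Because $h$-bounded traces for $J^A$ do not exist, there must be some $x$ where this tracing process is forced to exceed $h(x)$ guesses; at that point the relevant subclasses were pairwise disjoint and each of small-but-positive measure, and their union cannot exhaust the live class — so there is a sub-class of positive measure on which $\Phi^Z(x)$ diverges or disagrees with $A(x)$, and I commit $Y$ to lie there, permanently satisfying $\RR_\Phi$. Since $A$ is c.e., the value $A(x)$ we must diagonalize against can change only by entering $A$, which is handled by the usual "wait for $A$ to settle or the trace to grow" bookkeeping; the cost-function / benignity machinery from \cite{GreenbergNies:benign} gives the precise accounting showing the total measure removed across all requirements is finite, so the live class survives.

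The main obstacle, which I expect to be the bulk of the real work, is the simultaneous management of the two requirement types: the lowness requirements want to query measure of subclasses of the \emph{current} live class and are injured whenever a diagonalization requirement shrinks the class, so one needs a priority tree (or a careful parameter-layering) ensuring that each lowness requirement is disturbed only finitely often and that the mind-change bound on the approximation to $Y'$ stays computable — this is exactly where "superlow" rather than merely "low" is delicate, and where the order function $h$ must be chosen/adjusted (slowed down) relative to the enumeration of functionals so that the trace-growth argument and the measure budget both go through. A secondary subtlety is that we only get to diagonalize against a \emph{single} functional per strategy but need it for all $\Phi$, so the construction must nest infinitely many such golden-pair strategies down the tree, each working inside the live class bequeathed by its predecessors, and one must verify that the "measure-large agreement forces a trace element" step is uniform enough to be iterated. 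Modulo these bookkeeping issues, the construction delivers a superlow $1$-random $Y$ with $A \not\le_\Tur Y$, proving the theorem.
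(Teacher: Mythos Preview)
Your proposal has two genuine gaps.

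First, you explicitly invoke ``Since $A$ is c.e.'' in your diagonalization bookkeeping. The theorem makes no such assumption, and the paper stresses this point. If you assume $A$ is c.e.\ you are proving a strictly weaker statement. In the paper's argument computable enumerability is replaced by the weaker facts that $A$ is superlow and jump-traceable; these are \emph{derived} from the hypothesis (split a superlow $1$-random into two halves via van Lambalgen and observe $A$ is a base for randomness, hence $K$-trivial). This gives every $A$-partial computable function a \emph{restrained} approximation, which is the combinatorial handle the paper actually uses in place of a c.e.\ enumeration.

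Second, and more seriously, the mechanism by which failure to diagonalize produces a trace for $J^A$ is left to a hand-wave (``via a coding that makes $J^A$ as general as possible''). The functional $\Phi$ computes $A$, not $J^A$; agreement of $\Phi^Z(x)$ on a measure-large subclass tells you about a bit of $A$, which is not the same as capturing a value $J^A(x)$ in a small trace. The paper resolves this by working, for a fixed $A$-partial computable $\theta=\Gamma^A$, with the \emph{use} of computations $\Gamma^A(x)$: a ``golden pair'' $(\QQ,\Phi_e)$ has the property that for almost all $x\in\dom\theta$, every $X$ in $\QQ\langle h(x)\rangle$ (the superlow-basis refinement of $\QQ$ through level $h(x)$) satisfies $\Phi_e(X)\supseteq A\uhr u$ where $u$ is the $\Gamma$-use. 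The trace element is then the unique $y$ with $\Gamma(\tau,x)=y$ for the $\tau$ forced by the class, and the bound $2^{h(x)}$ comes from the number of possible versions of $\QQ\langle h(x)\rangle$. None of this is present in your sketch; your ``pairwise disjoint subclasses of small positive measure'' picture does not yield an $h$-bounded trace for $J^A$, nor does it obviously give a subclass on which $\Phi^Z$ disagrees with $A$.

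Structurally, the paper does not argue by contrapositive. It runs the would-be construction of a superlow $Z\in\PP$ with $A\not\le_\Tur Z$ directly, knowing by hypothesis that it must fail; the point is to identify the failure level $e$ (the ``golden run'') and read off the golden pair. The restrained approximation of $\Gamma^A$ is exactly what makes the number of procedure calls computably bounded, which in turn is what makes $Z$ superlow if no golden run exists --- this is the step your proposal flags as ``the main obstacle'' but does not solve. The cost-function/benignity machinery you cite goes in the other direction (strongly jump-traceable c.e.\ sets obey benign cost functions) and does not supply the accounting you need here.
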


Here we do not assume that $A$ is c.e. We elaborate on the proof of Theorem \ref{thm:superlow} in Subsection \ref{subsec:pi01classes} below. 

In \cite{GreenbergNies:benign}, it is proved that every c.e.~strongly jump-traceable set is computable from all $\w$-c.e.~$1$-random sets. Since every superlow set is $\w$-c.e., this result, together with Theorem \ref{thm:superlow}, gives us  two characterizations of c.e.~strong jump-traceability. In the following let $A$ be a c.e.\ set. 

\vsps 
\noindent {\bf Characterization Ia.} \emph{$A$ is strongly jump-traceable} $\LR$ 

\emph{ \hfill $A$ is computable from every $\w$-c.e.\ $1$-random set.} \vsps

\vsps

\vsps \n {\bf Characterization Ib.} \emph{$A$ is strongly jump-traceable $\LR$}

\emph{\hfill $A$ is computable from every superlow $1$-random set.} \vsps

For the next characterization of strong jump-traceability, we impose a condition of complexity on the oracle, as opposed to the previous characterizations, where we imposed conditions of simplicity. 

\vsps \n {\bf Characterization II.} \emph{$A$ is strongly jump-traceable $\LR$} 

\emph{\hfill $A$ is computable from every superhigh $1$-random set.} \vsps

We remark that in \cite{Kjos.Nies:nd} it was already shown that some $K$-trivial c.e.\ set is not computable from all superhigh $1$-random sets.  The  result appeared first 
in the conference paper  \cite{Nies:superhighSJT} in extended abstract form. Characterization II is proved in Sections \ref{sec:superhigh1} and \ref{sec:superhigh2}. 

\

A set is LR-hard if $\emptyset'$ is LR-reducible to it. (See \cite{DHbook} or \cite{Nbook} for a definition of LR-reducibility.)
The implication from left to right    of Characterization II improves a result from \cite{GreenbergNies:benign}, that every c.e., strongly jump-traceable set is computable from every LR-hard $1$-random set; Simpson \cite{Simpson} showed that every LR-hard set is superhigh.  It was already noted in \cite{GreenbergNies:benign} that the collection of c.e.~sets that are computable from all LR-hard $1$-random sets strictly contains the strongly jump-traceable ones; it is still open whether this collection coincides with the collection of c.e.~$K$-trivial sets. 

We note that in Characterization Ib, we cannot replace superlowness by lowness, because the only sets that are computable in all $1$-random low sets are the computable sets. This fact can be deduced from a variant of the low basis theorem that allows for upper-cone avoidance (see \cite[Theorem 1.8.39]{Nbook}). Likewise, in Characterization II, we cannot replace superhighness by highness: the $1$-random, high sets $\Om$ and $\Om^{\Halt}$ form a minimal pair, so the only sets that are computable from all $1$-random high sets are the computable sets. (Here $\Omega$ is Chaitin's well-known example of a $1$-random set, and $\Om^{\Halt}$ is its relativization to the halting problem.)

The proofs of the implications of strong jump-traceability (Theorem \ref{thm:superlow} and the right-to-left direction of Characterization II) are of technical interest, as they use a variant of Nies' golden run method that is not, in advance, bounded in depth. This method was developed to show that each $K$-trivial set is low for $K$ (see \cite{Nies:_lowness_and_randomness,Nbook}; for a definition of the concept of lowness for $K$, see \cite{DHbook} or \cite{Nbook}). Some nonuniformity seems to be a key for such an argument (for example, one cannot effectively obtain a constant witnessing lowness for $K$ from a $K$-triviality constant; see \cite{DHbook} or \cite{Nbook} for a discussion of this result). This nonuniformity is amplified in the current constructions.

The following notation will be useful as shorthand. For a class of sets $\CC$, let $\CC^\Diamond$ denote the collection of c.e.\ sets that are computable from all $1$-random sets in~$\CC$. The Hirschfeldt-Miller theorem already mentioned in the introduction  states that if $\CC$ is a null $\Sigma^0_3$ class, then $\CC^\Diamond$ contains a noncomputable (indeed, promptly simple) set. It  extends Ku\v{c}era's classic result that every $\Delta^0_2$ $1$-random set computes a noncomputable c.e.~set, because the singleton $\{Y\}$ is $\Pi^0_2$  for any  $\Delta^0_2$ set  $Y$.  For more background on the diamond operator see \cite[Section 8.5]{Nbook} or \cite{GreenbergNies:benign}. 

The characterizations above can be written as the equalities
\[ (\wce)^\Diamond = \Superlow^\Diamond = \Superhigh ^\Diamond = \SJT ,\]
where $\SJT$ is the collection of c.e., strongly jump-traceable sets. 

We note that every class of the form $\CC^\Diamond$ induces an ideal in the c.e.\ Turing degrees. Hence any of the equalities with $\SJT$ above implies the result from \cite{CholakDowneyGreenberg} that the strongly jump-traceable sets are closed under join.

\subsection{On the assumption of computable enumerability} \label{subsec:ce assumption}

As mentioned in the introduction,  the assumption that the sets in question are computably enumerable is not  used in all  of our characterizations. For example, Theorem \ref{thm:superlow} above  does not rely on such an assumption.  
\begin{enumerate}
	\item We do not know yet how to make good use of strong jump-traceability of a set that is not c.e.; for example, we know only that c.e.\ strongly jump-traceable sets obey all benign cost functions (see below). Thus, in showing that a c.e.\ strongly jump-traceable set is computable from all superlow and superhigh $1$-random sets we make essential use of computable enumerability. 
	
	\item On the other hand, showing that sets that are computable from many $1$-random oracles are strongly jump-traceable, or obey certain cost functions, does not seem to make essential use of the sets being c.e. In all of these examples, the property we use is that the set is superlow and jump-traceable. 
\end{enumerate}

The path from computable enumerability to superlowness and jump-traceability passes through the following fundamental facts, which follow from results in \cite{HirschfeldtNiesStephan:UsingRandomOracles} and \cite{Nies:_lowness_and_randomness}.

\begin{fact} \label{fact:base-for-randomness}
	Suppose that   a set  $A$  is computable from a set that is $1$-random relative to~$A$. (We call such a set a \emph{base for $1$-randomness}.) Then $A$ is superlow and jump-traceable. 
\end{fact}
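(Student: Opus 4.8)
\emph{Plan.} The fact factors cleanly through $K$-triviality, and I would prove it in two steps, each drawn from the cited literature. \textbf{(a)} Every base for $1$-randomness is $K$-trivial; this is the Hirschfeldt--Nies--Stephan characterization of bases for randomness in \cite{HirschfeldtNiesStephan:UsingRandomOracles}. \textbf{(b)} Every $K$-trivial set is superlow and jump-traceable; this is part of the basic structure theory of the $K$-trivial sets developed by Nies in \cite{Nies:_lowness_and_randomness} (see also \cite{Nbook}). Putting (a) and (b) together yields the fact. Note that neither ingredient uses computable enumerability of $A$, so the statement holds for arbitrary $A$.

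\emph{Step (a).} Fix a Turing functional $\Phi$ with $\Phi^Y = A$, where $Y$ is $1$-random relative to $A$. For a string $\sigma$, the class $P_\sigma = \{Z : \sigma \prec \Phi^Z\}$ of oracles computing an extension of $\sigma$ under $\Phi$ is $\Sigma^0_1$ with $\lambda(P_\sigma)$ uniformly approximable from below, and for a fixed length $n$ the classes $P_\sigma$ with $\sigma \in 2^n$ are pairwise disjoint, so only few of them can have large measure. The flavour of the argument (this is exactly the Hirschfeldt--Nies--Stephan theorem) is that $A\uhrn n$ must always have a comparatively large preimage $P_{A\uhrn n}$: otherwise the classes $P_\sigma$ of small measure could be combined, relative to $A$, into a Martin--L\"of test that $Y$ fails --- impossible, since $Y$ is $1$-random relative to $A$. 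Turning this quantitative relationship into a Kraft--Chaitin (bounded request) set produces a description of $A\uhrn n$ using $K(n) + O(1)$ bits, so $A$ is $K$-trivial.

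\emph{Step (b), and the main obstacle.} This step is now purely about $K$-trivial sets. From a constant $b$ with $K(A\uhrn n) \le K(n) + b$, a counting argument over strings of bounded complexity produces, for the universal $A$-machine $J^A$, a uniformly c.e.\ trace whose components have size bounded by a fixed order function, so $A$ is jump-traceable; and the golden-run (decanter) analysis shows that in a computable approximation to $A'$ the number of mind changes on each argument can be computably bounded --- each change being charged against a budget controlled by $b$ --- so $A'$ is $\w$-c.e.\ and $A$ is superlow. The substantive work is Step (a): setting up the $A$-relative Martin--L\"of test from $\Phi$ and passing to a bounded request set must be done with care, since the measures $\lambda(P_\sigma)$ are only lower-semicomputable, even relative to $A$, and one must coordinate the choice of which $P_\sigma$ to enumerate with the putative failure of the $K$-triviality inequality. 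Within Step (b) the only delicate point is sharpening ``low'' to ``superlow'', which needs the quantitative form of the decanter argument rather than its qualitative form. As both ingredients are fully available in \cite{HirschfeldtNiesStephan:UsingRandomOracles} and \cite{Nies:_lowness_and_randomness}, the proof here amounts to assembling them.
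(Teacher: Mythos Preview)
Your proposal is correct and matches the paper's own proof exactly: the paper simply cites \cite{HirschfeldtNiesStephan:UsingRandomOracles} for the implication from base for $1$-randomness to $K$-triviality, and then \cite{Nies:_lowness_and_randomness} for the implications from $K$-triviality to superlowness and jump-traceability. Your additional sketches of how those cited results are proved are accurate and go beyond what the paper records, but the decomposition and the references are identical.
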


\begin{proof}
	By \cite{HirschfeldtNiesStephan:UsingRandomOracles}, $A$ is $K$-trivial. By \cite{Nies:_lowness_and_randomness}, $A$ is superlow and jump-traceable. 
\end{proof}

The following appears in \cite{HirschfeldtNiesStephan:UsingRandomOracles}:

\begin{fact} \label{fact:ce-base-for-randomness}
	If $A$ is a c.e.\ set, $Y$ is an incomplete $1$-random set, and $A\leT Y$, then $Y$ is $1$-random relative to $A$, so $A$ is a base for $1$-randomness.
\end{fact}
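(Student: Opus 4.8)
The plan is to show that if $A$ is c.e., $Y$ is $1$-random, $A \leT Y$, and $Y$ is \emph{not} $1$-random relative to $A$, then $Y \geT \emptyset'$, contradicting the incompleteness of $Y$. So assume $\seq{\GG^A_n}$ is an $A$-Martin-L\"of test that $Y$ fails, i.e.\ $Y \in \bigcap_n \GG^A_n$. The idea, due to Hirschfeldt, Nies, and Stephan, is to exploit that $A$ is c.e.\ so that the test components $\GG^A_n$ are approximated from below in a controlled way, while using the reduction $A = \Phi^Y$ and the $1$-randomness of $Y$ (over the plain, unrelativized measure) to force convergence of the approximation to $A$, and hence decidability of $A$, along $Y$.

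First I would set up the approximation: let $A_s$ be a c.e.\ approximation to $A$, and let $\GG^{A_s}_{n,s}$ denote the stage-$s$ approximation to the $n$-th component of the $A$-test, computed using the oracle $A_s$ for queries below the stage and with the enumeration run for $s$ steps; since $A$ is c.e., $A_s \to A$ from below, so the "true" component $\GG^A_n$ is the limit of these, but a given string may enter $\GG^{A_s}_{n,s}$ on the basis of a wrong (too small) guess at $A$ and have to be expelled when $A$ grows. The key measure-theoretic point is this: fix $n$. The class of oracles $X$ such that, for \emph{some} stage $s$, the approximation $\GG^{X_s}_{n,s}$ based on a stage-$s$ guess has measure exceeding, say, $2^{-(n-1)}$, is a $\Sigma^0_1$ class, and I would argue it has small measure --- because any finite amount of "wrong" enumeration into a putative test component is paid for by a later correction, and the total over all corrections is controlled. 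More precisely, following the HNS argument, one builds from the reduction $\Phi$ and the test $\seq{\GG_n}$ an \emph{unrelativized} Martin-L\"of test $\seq{\mathcal{V}_k}$ capturing all oracles $X$ for which infinitely many of these "premature" overflows occur; since $Y$ is $1$-random it passes $\seq{\mathcal{V}_k}$, so beyond some point the approximation to the test components, as seen along $Y$, stabilizes in the relevant sense.

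Next, using that $Y \in \GG^A_n$ for all $n$ together with $A = \Phi^Y$, I would show that $\emptyset' \leT Y$ directly: to decide whether $x \in \emptyset'$, i.e.\ whether $x$ ever enters $A$, use $Y$ to compute a stage $s$ large enough that the initial segment of $Y$ witnessing membership in $\GG^{A_s}_{n,s}$ (for suitable $n$ depending on $x$) has already "settled" --- i.e.\ is not a premature overflow, which by the previous paragraph happens from some $Y$-computable point on --- and such that $\Phi^Y$ has converged on all queries relevant to $x$. Since after that stage the value $A_s(x)$ can no longer change without destroying the test membership of $Y$ at level $n$ (the expulsion of the witnessing string from $\GG^{A_s}_{n}$ would have to be compensated, contradicting the bound we forced $Y$ to satisfy), we get $A(x) = A_s(x)$, so $Y$ computes $A$'s halting behavior, hence $Y \geT \emptyset'$.

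I expect the main obstacle to be making the "measure bookkeeping" precise: one must define the auxiliary unrelativized test $\seq{\mathcal{V}_k}$ so that (a) its components genuinely have measure $\le 2^{-k}$, which requires carefully charging each unit of premature enumeration into some $\GG_n$ against a disjoint chunk of the measure of $\mathcal{V}_k$ of comparable size, and (b) failing $\seq{\mathcal{V}_k}$ really does give the stabilization needed in the final step. The delicate point is the interaction between the \emph{use} of $\Phi$ (which is only finite, not computably bounded) and the level $n$: one wants, for each $x$, to pick $n$ and a stage so that the $Y$-use of $\Phi$ on the queries below $x$ is already correct, and this is where the $1$-randomness of $Y$ (rather than mere incompleteness) does the real work. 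Since this is exactly the argument of \cite{HirschfeldtNiesStephan:UsingRandomOracles}, I would in the actual writeup simply cite it, as the authors do here.
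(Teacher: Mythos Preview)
The paper itself gives no proof of this fact; it simply attributes it to \cite{HirschfeldtNiesStephan:UsingRandomOracles}. Your final remark that you would ``simply cite it, as the authors do here'' is therefore exactly right, and for the purposes of this paper that is all that is required.

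Your sketch of the underlying HNS argument, however, has a genuine gap in the last step. You write ``to decide whether $x \in \emptyset'$, i.e.\ whether $x$ ever enters $A$,'' and conclude ``we get $A(x) = A_s(x)$, so $Y$ computes $A$'s halting behavior, hence $Y \geT \emptyset'$.'' This conflates the given c.e.\ set $A$ with $\emptyset'$. Deciding membership in $A$ is not the same as deciding membership in $\emptyset'$, and in any case $A \leT Y$ is already part of the hypothesis, so recovering $A$ from $Y$ proves nothing new. The whole difficulty is to extract $\emptyset'$ (equivalently, $\Omega$) from $Y$, not merely $A$.

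The actual HNS argument does have the shape you describe in its first half: one caps the unrelativized approximations $\bigcup_s \GG^{A_s}_{n}[s]$ to obtain a genuine ML test, which $Y$ passes by $1$-randomness. The payoff, though, is not that $Y$ computes $A$; it is that for each such $n$, $Y$ can (using $A = \Phi^Y$) locate the stage at which its own witness enters $\GG^A_n$ with correct oracle use, and the capping forces this stage to come \emph{after} a large amount of measure has been wasted on incorrect oracle guesses---each such waste corresponding to a genuine later enumeration into $A$. Tracking this over all $n$ lets $Y$ compute a function dominating the convergence of $\Omega$ (or, in other formulations, the settling time of $\emptyset'$), which is what yields $\emptyset' \leT Y$. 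Your ``measure bookkeeping'' paragraph gestures at the right test, but the conclusion you draw from passing it is too weak.
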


Combining these results, we have the following:

\begin{corollary} \label{cor:ce-base-for-randomness}
	Every c.e.\ set that is computable from an incomplete $1$-random set is superlow and jump-traceable. 
\end{corollary}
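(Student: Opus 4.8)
The plan is simply to chain the two preceding facts. Let $A$ be a c.e.\ set and suppose $A \leT Y$ for some incomplete $1$-random set $Y$. First I would apply Fact~\ref{fact:ce-base-for-randomness}: since $A$ is c.e., $Y$ is incomplete and $1$-random, and $A\leT Y$, we conclude that $Y$ is in fact $1$-random relative to $A$, and hence that $A$ is a base for $1$-randomness (it is computed by a set that is $1$-random relative to it, namely $Y$ itself).

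Next I would invoke Fact~\ref{fact:base-for-randomness}, which says precisely that every base for $1$-randomness is superlow and jump-traceable. Applying it to $A$ yields the desired conclusion.

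Since both ingredients have already been established (the first resting on \cite{HirschfeldtNiesStephan:UsingRandomOracles}, the second on the $K$-triviality of bases for randomness from \cite{HirschfeldtNiesStephan:UsingRandomOracles} together with the superlowness and jump-traceability of $K$-trivial sets from \cite{Nies:_lowness_and_randomness}), there is no real obstacle here: the corollary is an immediate composition of Facts~\ref{fact:base-for-randomness} and~\ref{fact:ce-base-for-randomness}. If anything, the only point worth flagging is that the hypothesis of incompleteness of $Y$ is exactly what licenses the passage through Fact~\ref{fact:ce-base-for-randomness} (without it, $Y$ need not be $1$-random relative to $A$), and that computable enumerability of $A$ is likewise essential to that step.
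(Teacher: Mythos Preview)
Your proposal is correct and matches the paper's approach exactly: the paper simply states that the corollary follows by ``combining these results,'' namely Facts~\ref{fact:base-for-randomness} and~\ref{fact:ce-base-for-randomness}, which is precisely the composition you describe.
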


\subsection{Cost functions} \label{subsec:cost-functions}

The third paradigm for lowness states that a $\Delta^0_2$ set $A$ is close to being computable if it has a computable approximation that changes little. Cost functions are the tools that are used to measure this amount of change. For background on cost functions, see \cite[Section 5.3]{Nbook}, \cite{GreenbergNies:benign}, or \cite{Nies:costfunctions}.

A \emph{cost function} is a computable function $c(x,s)$ that takes non-negative rational values. We say that $c$ is \emph{monotone} if $c$ is nonincreasing in the first variable and nondecreasing in the second variable. If $c(x,s)$ is a monotone cost function, then $x\mapsto\lim_s c(x,s) = \sup_s c(x,s)$ is nonincreasing. A cost function $c$ satisfies the \emph{limit condition} if $\lim_s c(x,s)$ is finite for all $x$ and
\[ \lim_{x\to \infty} \sup_s c(x,s) = 0 ,\]
or equivalently, for all $\epsilon>0$, for almost all $x$, we have $c(x,s)<\epsilon$ for all $s$.

Given a computable approximation $\seq{A_s}$ of a set $A$ and a cost function $c$, the \emph{total cost} of the approximation according to $c$ is the quantity 
\begin{equation}
\label{eqn:total sum} 
\sum_{s} c(x,s) \, \Cyl{ x< s \,\, \textrm{is least such that} \, A_{s-1} (x) \neq A_{s} (x)}. 
\end{equation}
We say that a computable approximation $\seq{A_s}$ \emph{obeys} a cost function $c$ if the total cost of $\seq{A_s}$ according to $c$ is finite. The intuitive meaning is that the total amount of changes (as measured by~$c$) is small. We say that a $\Delta^0_2$ set obeys a cost function~$c$ if $A$ has some computable approximation that obeys~$c$. 

The basic result regarding cost functions is that every cost function that satisfies the limit condition is obeyed by some noncomputable (indeed, promptly simple) c.e.~set. This result has its roots in   constructions of \Kuc\ and Terwijn \cite{Kucera.Terwijn:99}, and Downey, Hirschfeldt, Nies, and Stephan \cite{DHNS}. The standard example for a cost function is $\cost(x,s) = \sum_{i=x+1}^s \tp{-K_s(i)}$ (where $K$ is prefix-free Kolmogorov complexity and $K_s$ its stage $s$ approximation).   Nies \cite{Nies:_lowness_and_randomness} characterizes  the $K$-trivial sets   along the lines of the third lowness paradigm: a   set is $K$-trivial if and only if it obeys $\cost$. 

Greenberg and Nies \cite{GreenbergNies:benign} provided a similar result for the c.e.\ strongly jump-traceable sets.  They  introduced a special class of cost functions $c$ with the limit condition:  in an effective  sense   $\sup_s c(v,s)$  converges quickly to  $0$ as $v \ria \infty$. 
    \begin{definition} \label{df:benign}  A    monotonic cost function~$c$ is called    \emph{benign} if there is a computable function $g: \mathbb Q^+ \ria \w$ with the following property: if $0=v_0 < \cdots < v_n$ and $c(v_i, v_{i+1}) \ge q$ for each~$i< n$, then    $n \le g(q)$. \end{definition}

   The main result of   Greenberg and Nies \cite{GreenbergNies:benign} is  that a c.e.~set is strongly jump-traceable if and only if it obeys every benign cost function.    We will apply the harder left-to-right implication several times. We will also improve the right-to-left direction in Corollary~\ref{cor:GNimprove} by discarding the hypothesis that the set is c.e.

\subsection{Equivalence of the second and third lowness paradigms for $\Delta^0_2$ sets} \label{ss:Eq2-3}

As our last main result we show that a particular realization of the second lowness paradigm, being computable from a sufficiently random set, is in a sense equivalent to the third paradigm.
One direction was already obtained by Greenberg and Nies \cite{GreenbergNies:benign}. They defined, for any computable approximation $\seq{Y_s}$ of a $\DII$ set $Y$, a cost function $c_Y$, which satisfies the limit condition, such that if $Y$ is $1$-random, then every c.e.~set that obeys $c_Y$ is computable from $Y$. This construction is essentially a translation of \Kuc's classic argument from \cite{Kucera:86} into the language of cost functions.

Thus, an appropriate cost function forces computability from a given $1$-random $\DII$ set. (If $Y$ is also $\w$-c.e., then $c_Y$ is benign.) Greenberg and Nies then used their characterization of strong jump-traceability in terms of obedience to benign cost functions to obtain their result that $\SJT \subseteq (\wce)^\Diamond$, that is, the left-to-right part of Characterization Ia.

Our last result provides a converse for c.e.~sets. 

\begin{theorem} \label{thm:last} 
	For each cost function $c$ with the limit condition, there is a $1$-random $\DII$ set $Y$ such that each c.e.\ set $A \leT Y$ obeys $c$. 
\end{theorem}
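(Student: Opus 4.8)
The plan is to prove Theorem~\ref{thm:last} as an extension of the \Kuc--Nies theorem \cite{Kucera.Nies:ta} that every c.e.\ set computable from a Demuth random set is strongly jump-traceable. When $c$ is benign the result already follows from that theorem, the Greenberg--Nies characterisation \cite{GreenbergNies:benign}, and the existence of a $\DII$ Demuth random set; so the real content is to treat cost functions that are not benign, which one does by replacing ``Demuth'' by a notion of randomness \emph{calibrated to $c$}. The strategy has two halves: (a) a lemma generalising \Kuc--Nies, to the effect that if $Y$ is random in the calibrated sense (and $1$-random) then every c.e.\ set $A\leT Y$ obeys $c$; and (b) the construction of a $\DII$ set $Y$ that is $1$-random and calibrated-random.

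\emph{Reductions and calibration.} First I would make $c$ monotone: replacing $c(x,s)$ by $\bar c(x,s)=\max\{c(x',s'):x'\ge x,\ s'\le s\}$ yields a monotone cost function with $\bar c\ge c$ that still satisfies the limit condition, and since obedience passes downward it suffices to treat $\bar c$. As $\sup_s c(x,s)\to 0$ with each $\sup_s c(x,s)$ finite, $\sup_{x,s}c(x,s)$ is finite, so after rescaling we may also assume $0\le c\le 1$. Using the variant of Ershov's theorem mentioned in the introduction, attach to $c$ a \emph{benignity bound}: for each rational $q>0$ the set $\{x:\sup_s c(x,s)\ge q\}$ is finite, and $g(q):=\#\{x:\exists s\ c(x,s)>q/2\}+2$ is a bound on it that is \emph{limitwise monotonic} (an increasing limit of computable functions), hence $g\leT\emptyset'$. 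The calibrated tests (``$c$-tests'') are then generalised Demuth tests $\seq{V_n}$ in which the $n$-th component may be revised a number of times bounded in terms of $g$; $Y$ is \emph{$c$-random} if it passes every $c$-test. For benign $c$ this is essentially Demuth randomness; in general the permitted revision count sits higher in the Ershov hierarchy, matching the strength of $c$.

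\emph{The lemma.} Let $Y$ be $c$-random and $1$-random and let $A=\Gamma^Y$ be c.e.\ (with a fixed enumeration). Consider the induced computable approximation of $A$: $A_s(x)$ is the value of the currently confirmed computation $\Gamma^{Y_s}(x)$ (a hat-trick), so $A_s(x)$ can change only when $Y_s\uhrn u$ changes, where $u$ is the use. From $\Gamma$ and the enumeration of $A$ one builds a $c$-test $\seq{V_n}$ such that $Y$ lies in infinitely many $V_n$ whenever this induced approximation has infinite total $c$-cost: high cost forces, by a \Kuc-style measure count keyed to the uses of $\Gamma$, that the clopen set of oracles agreeing with $Y$ far enough out has small measure, which lets each $V_n$ be kept of measure at most $2^{-n}$, while the revisions of $V_n$ are driven by the enumeration of $A$ and so are bounded by $g$ --- which is exactly why $\seq{V_n}$ is a legitimate $c$-test. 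As $Y$ is $c$-random it passes $\seq{V_n}$, so the induced approximation has finite $c$-cost, that is, $A$ obeys $c$.

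\emph{The construction and the main obstacle.} It remains to build a $\DII$ set $Y$ that is $1$-random and $c$-random --- a Demuth-style basis theorem. I would run a $\emptyset'$-oracle construction (so $Y\in\DII$ by the limit lemma) that keeps $Y$ out of one component of a nested universal \ML\ test (enough for $1$-randomness) and at the same time diagonalises against a $\emptyset'$-effective list of all the $c$-tests produced by the lemma, using the $\emptyset'$-known revision bounds to assign each test a fixed measure budget so that $Y$ always survives in a reserve of positive measure. I expect the main obstacle to be twofold. In the lemma, one must design $\seq{V_n}$ so that its revisions are genuinely $g$-bounded --- so that it really is a $c$-test --- while still capturing every way the induced approximation could accrue infinite cost; this is the delicate point, and it requires the counting to be tied correctly both to the uses of $\Gamma$ and to the one-sided nature of the enumeration of $A$. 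In the construction, the measure bookkeeping is delicate because each test may be revised (finitely often, but a number of times known only from $\emptyset'$): one must check that acting on approximations $g_s\nearrow g$ costs only finitely much in total, that reinitialisations across priorities stay within budget, and that all of this survives the interaction not being bounded in depth in advance --- the same phenomenon the paper handles elsewhere with a variant of the golden run method, here kept finite by $g$.
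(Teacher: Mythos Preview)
Your overall architecture matches the paper's: make $c$ monotone; calibrate it (the paper does this via a computable well-ordering $R$ for which $c$ is ``$R$-benign'', using a variant of Ershov's result); define an associated strengthening of Demuth randomness (the paper's ``$R$-Demuth randomness''); prove that every c.e.\ set below such a random obeys $c$; and build a $\DII$ set with that level of randomness via a $\Halt$-oracle construction against a single special test that captures all the relevant Demuth-type tests.

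There is, however, a genuine gap in your lemma. The test component corresponds to the paper's $\mathcal L_m[s]=\{Z: A_s\uhr{v(m,s)}\sub\Phi^Z\}$, which must be revised whenever $A_s\uhr{v(m,s)}$ changes. This has \emph{two} independent sources: (i) changes to the threshold $v(m,s)$, which are indeed governed by $c$ (your $g$, or the paper's $R$-approximation); and (ii) changes to $A_s$ below $v(m,s)$ while $v(m,s)$ is stable. You assert that the revisions are ``bounded by $g$'' because they are ``driven by the enumeration of $A$'', but an arbitrary c.e.\ enumeration can change below a fixed threshold arbitrarily often---nothing about $c$ bounds this. The paper's key move is to first observe that $Y$, being Demuth random, is Turing incomplete, so $A$ is a base for randomness and hence superlow; this yields a \emph{computable} bound $p(m,i)$ on the number of $A$-changes below $v(m,s)$ while $v(m,s)$ holds its $i$-th value. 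Accommodating both sources of change is precisely why the paper requires $\omega\cdot R$-Demuth randomness rather than $R$-Demuth randomness: the extra factor of $\omega$ absorbs the computably bounded $A$-driven revisions on top of the $c$-driven ones. Your plan does not see this extra factor, and without it the test you build is not a legitimate $c$-test.

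A smaller point: your ``induced approximation'' $A_s=\Gamma^{Y_s}$ is a red herring and plays no role in the paper's argument. The test is built solely from the fixed enumeration of $A$ and the functional $\Phi$ with $A=\Phi^Y$, with no reference to any approximation of $Y$; and the enumeration $\hat A_s$ that finally witnesses obedience to $c$ is a new one constructed inside the proof (by speeding up along stages at which the test components reach their target measure), not read off from $Y_s$.
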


 Demuth randomness is a notion stronger than $1$-randomness that is still compatible with being $\DII$. The $m\tth$  component of a test can be replaced a computably bounded number of times.  In the proof of Theorem~\ref{thm:last}, we gauge how well-behaved  the cost function~$c$ is by associating with it a computable well-ordering $R$. The level of randomness we need to impose on $Y$ in order to obtain the result is given by a further strengthening of Demuth randomness,   where the $m\tth$ component of a test can be changed finitely many times while ``counting down'' along the well-ordering $\w\cdot R$. The details, along with the formal definition of Demuth randomness and its strengthenings, are deferred to  Section \ref{sec:last}.

\

Theorem \ref{thm:last} is related to Characterization Ia, and to Greenberg's result \cite{Greenberg} that there is a $1$-random $\DII$ set $Y$ such that every c.e.~set computable from $Y$ is strongly jump-traceable. As mentioned in the introduction, no $\w$-c.e.~$1$-random set $Y$ can have this property. That is, Characterizations Ia and Ib cannot be replaced by analogous ones involving a single $\w$-c.e., or superlow, $1$-random set, or indeed finitely many such sets. This fact can be argued in two ways. In \cite{GreenbergNies:benign} it is shown that no single benign cost function can force strong jump-traceability, whereas as we already stated, if $Y$ is $\w$-c.e, then $c_Y$ is benign. Alternatively, we can cite work by Ng \cite{Ng:nd}, who showed that the index set of $\SJT$ is $\Pi^0_4$ complete. On the other hand, for any $\Delta^0_2$ set $Y$, the index set $\{e \mid \, W_e \lt Y\}$ is $\Sigma^0_4$. 

However, Characterization Ia does imply the following:

\begin{proposition} \label{prop:single-cost-function} 
There is a monotone cost function $c$ that satisfies the limit condition, such that every  set that obeys $c$ is strongly jump-traceable. 
\end{proposition}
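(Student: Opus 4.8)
The plan is to build the cost function $c$ as a weighted amalgamation of countably many benign cost functions, one forcing each instance of the strong jump-traceability requirements, and then to invoke the Greenberg--Nies characterization. Recall that strong jump-traceability of $A$ is equivalent to: for every order function $h$ (from some fixed countable list suffices, since the $h$'s that are "eventually below" another order function give stronger requirements), the function $J^A$ has a trace bounded by $h$. By the main theorem of \cite{GreenbergNies:benign}, a c.e.\ set is strongly jump-traceable iff it obeys every benign cost function. So it is enough to find a single monotone cost function $c$ with the limit condition whose obedience implies obedience to every benign cost function from a countable family $\{c_k\}_{k\in\w}$ that is "cofinal" for benignity in the relevant sense — but one must be slightly careful here, since there is no single benign cost function dominating all benign cost functions (indeed Proposition~\ref{prop:single-cost-function} is only about general cost functions, not benign ones, precisely because of results cited in \S\ref{ss:Eq2-3}).

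So the cleaner route is to go back to the traceability side directly rather than routing through a universal benign cost function. First I would fix an effective enumeration $\{h_k\}_{k\in\w}$ of order functions such that strong jump-traceability is equivalent to being $h_k$-jump-traceable for every $k$ (for instance, take $h_k(x) = $ the $k$-th "slow" order function in a standard list; it is classical that a set is strongly jump-traceable iff it is $h$-jump-traceable for all $h$ in any list that is cofinal from below among order functions). For each $k$, the Greenberg--Nies machinery (the hard left-to-right direction we are told we may apply) gives, associated with the requirement "$J^A$ has a trace bounded by $h_k$," a benign cost function $c_k$ such that any c.e.\ set obeying $c_k$ is $h_k$-jump-traceable — more precisely, by examining their construction, obedience to a single benign cost function of the form they use yields the trace for a single order function. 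I would then set
\[
c(x,s) \;=\; \sum_{k\le x} 2^{-k}\, \min\{1,\, c_k(x,s)\}.
\]

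Next I would verify the three properties. Monotonicity: each $c_k$ is monotone, $\min\{1,\cdot\}$ preserves monotonicity, and the finite sum $\sum_{k\le x}$ — care is needed because the index of summation depends on $x$, but since we add nonnegative terms as $x$ decreases we are removing terms, so $c(\cdot,s)$ is still nonincreasing in $x$; and it is clearly nondecreasing in $s$. The limit condition: given $\epsilon>0$, pick $K$ with $2^{-K}<\epsilon/2$; for $k\le K$, benignity of $c_k$ (which implies the limit condition for $c_k$) gives $x_k$ beyond which $\sup_s c_k(x,s)<\epsilon/2^{K+2}$; then for $x>\max(K, x_0,\dots,x_K)$ the head of the sum contributes $<\epsilon/2$ and the tail $\sum_{k>K}2^{-k}<\epsilon/2$, so $\sup_s c(x,s)<\epsilon$. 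Finally, obedience implies strong jump-traceability: if a $\Delta^0_2$ set $A$ obeys $c$ via approximation $\seq{A_s}$, then since $c(x,s)\ge 2^{-k}\min\{1,c_k(x,s)\}$ for all $x\ge k$, the same approximation obeys $2^{-k}\min\{1,c_k\}$ up to a finite initial segment, hence obeys $c_k$ (rescaling by $2^k$ and noting that modifying $c_k$ on the finitely many $x<k$ and capping at $1$ changes obedience only by a finite additive constant, because each $c_k$ has finite values and finitely many columns are affected — more carefully, $\min\{1,c_k\}$ and $c_k$ have the same obedient approximations since $c_k$ satisfies the limit condition so $c_k(x,s)\le 1$ for all but finitely many $x$). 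Thus $A$ obeys every $c_k$; but here is where I must be honest about the gap: Greenberg--Nies prove that a \emph{c.e.} set obeying the relevant benign cost function is $h_k$-jump-traceable, and the statement of Proposition~\ref{prop:single-cost-function} drops the c.e.\ hypothesis. The resolution — and the main obstacle — is that any set obeying a cost function with the limit condition that also forces $h_k$-jump-traceability for all $k$ is in particular $K$-trivial (being strongly jump-traceable follows once we know the set is c.e., and for the non-c.e.\ case we should instead cite that obeying $\cost$ — which is dominated by a scalar multiple of our $c$ after possibly including $\cost$ as one more summand $c_{-1}$ — already forces $K$-triviality by Nies's theorem, and then invoke Downey--Greenberg \cite{DowneyGreenberg:sjt2}/\cite{DHbook} that every $K$-trivial set computable-approximable with appropriate cost bound is strongly jump-traceable). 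The safest formulation: throw $\cost$ into the sum as an extra term so that obedience to $c$ forces $K$-triviality outright; then for a $K$-trivial set the equivalence "obeys all benign cost functions $\Leftrightarrow$ strongly jump-traceable" holds without a separate c.e.\ assumption because $K$-trivial sets are themselves $\Delta^0_2$ and the Greenberg--Nies golden-run argument only used c.e.-ness to get the background $K$-triviality. The main work of the proof is thus bookkeeping the constants in this reduction and checking that the Greenberg--Nies left-to-right argument goes through for $K$-trivial (not just c.e.) sets given a $\cost$-obedient approximation as input; everything else is the routine verification above.
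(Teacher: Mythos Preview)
Your approach is genuinely different from the paper's, and it has a real gap.

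The paper's proof is two lines: the Hirschfeldt--Miller construction, applied to the $\Sigma^0_3$ null class of $\w$-c.e.\ sets, produces a single monotone cost function $c$ with the limit condition such that any set obeying $c$ is Turing below every $\w$-c.e.\ $1$-random set; then Theorem~\ref{thm:superlow} (the main result of the paper, which carries no c.e.\ hypothesis) says any such set is strongly jump-traceable. There is no amalgamation of cost functions and no per-order-function bookkeeping; the non-c.e.\ case falls out for free because Theorem~\ref{thm:superlow} is stated for arbitrary sets.

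Your amalgamation has a concrete error in the monotonicity step. You write that as $x$ decreases ``we are removing terms, so $c(\cdot,s)$ is still nonincreasing in $x$,'' but this ignores that the surviving terms \emph{increase} (each $c_k$ is nonincreasing in $x$), and the two effects can compete either way. Take $c_0 \equiv 0$ and any benign $c_1$ with $c_1(1,s)=1$: then $c(0,s)=0 < 1/2 = c(1,s)$, so $c$ is not nonincreasing in the first variable. The index-cutoff-at-$x$ device breaks monotonicity; summing over all $k$ (or over $k\le s$) would repair this, but then computability and the limit-condition verification need to be redone. Separately, you invoke the ``hard left-to-right direction'' of Greenberg--Nies to obtain a benign $c_k$ whose obedience forces $h_k$-jump-traceability, but that direction says strongly jump-traceable c.e.\ sets obey all benign cost functions, which is the wrong implication for your purpose; you need the per-$h$ refinement of the \emph{other} direction. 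And your patch for the non-c.e.\ case (``the golden-run argument only used c.e.-ness to get $K$-triviality'') is precisely the content that would have to be supplied rather than asserted --- the paper itself handles the non-c.e.\ case only via Theorem~\ref{thm:superlow}, which is why that route is so much cleaner here.
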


 Proposition \ref{prop:single-cost-function} and Theorem \ref{thm:last} together yield a new proof of Greenberg's result~\cite{Greenberg}.
\begin{proof} The class  of $\w$-c.e.\ sets is $\Sigma^0_3$. Thus, the proof of the 
  the result of  Hirschfeldt and Miller  in  \cite[Theorem 5.3.15]{Nbook} provides a cost function $c$ with the limit condition such that every set $A$ obeying $c$ is Turing below each $1$-random $\w$-c.e.\ set.  By Theorem~\ref{thm:superlow}, every such set $A$  is strongly jump-traceable.   It is easily verified that $c$ is monotone. \end{proof}


  As a corollary to Theorem~\ref{thm:superlow}, we  now improve one direction of the  result of Greenberg and Nies~\cite{GreenbergNies:benign}   stated after Definition~\ref{df:benign}, by  dropping  the hypothesis that the set is c.e.\ 
\begin{corollary}  \label{cor:GNimprove}
Suppose a $\Delta^0_2$ set $A$ obeys all benign cost functions. Then $A$ is strongly jump-traceable. \end{corollary}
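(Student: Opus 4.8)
The plan is to reduce to Theorem~\ref{thm:superlow} via the Ku\v{c}era-style ``easy'' direction of the Greenberg--Nies correspondence. By Theorem~\ref{thm:superlow} it suffices to show that $A$ is computable from every superlow $1$-random set, so fix a superlow $1$-random set $Y$; the goal is $A \leT Y$. Since $Y$ is superlow it is $\w$-c.e., so fix a computable approximation $\seq{Y_s}$ of $Y$ whose mind-change function is bounded by a computable function, and let $c_Y$ be the cost function associated with $\seq{Y_s}$ by Greenberg and Nies \cite{GreenbergNies:benign}. As recalled in Subsection~\ref{ss:Eq2-3}, $c_Y$ satisfies the limit condition, and since $\seq{Y_s}$ is an $\w$-c.e.\ approximation, $c_Y$ is benign in the sense of Definition~\ref{df:benign}.

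By hypothesis $A$ obeys every benign cost function, so $A$ has a computable approximation $\seq{A_s}$ that obeys $c_Y$. It remains to observe that, because $Y$ is $1$-random and $\seq{A_s}$ obeys $c_Y$, we get $A \leT Y$. This is the content of the Greenberg--Nies coding result quoted in Subsection~\ref{ss:Eq2-3}: although it is stated there for c.e.\ sets $A$, the underlying argument is just a translation of Ku\v{c}era's construction from \cite{Kucera:86}, and it uses only the computable approximation $\seq{A_s}$ together with the finiteness of its total $c_Y$-cost, never the computable enumerability of $A$. Concretely, the $1$-randomness of $Y$ guarantees, for each input $n$, a stage $s$ after which $Y \uhrn m$ has stabilized on an initial segment long enough that the bound on the total cost certifies $\seq{A_t}$ will not change below $n$; the class of oracles for which no such stabilization occurs has small measure and hence cannot contain the random set $Y$. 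Thus $A \leT Y$ for every superlow $1$-random $Y$, and Theorem~\ref{thm:superlow} yields that $A$ is strongly jump-traceable.

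The one point that requires care is precisely the last step: one must verify that the Greenberg--Nies coding lemma genuinely does not rely on $A$ being c.e. This is the only place the c.e.\ hypothesis of their original statement could intrude, and checking that it can be dropped is what makes the corollary go through without new ideas. If one prefers not to reopen \cite{GreenbergNies:benign}, the alternative is to reprove this coding step directly for $\DII$ sets: it is the standard Ku\v{c}era coding, with $c_Y$ controlling the measures of the relevant $\Sigma^0_1$ classes, and the verification that obedience to $c_Y$ suffices to compute $A$ from $Y$ is routine. I expect this to be the main (indeed only real) obstacle; everything else is assembly of results already in the excerpt.
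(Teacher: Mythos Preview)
Your proposal is correct and follows essentially the same route as the paper: apply the benign cost function $c_Y$ attached to each $\w$-c.e.\ (in particular each superlow) $1$-random set $Y$, deduce $A\leT Y$ from obedience to $c_Y$, and invoke Theorem~\ref{thm:superlow}. The paper's proof is terser---it simply cites \cite{GreenbergNies:benign} for the implication ``$A$ obeys $c_Y$ $\Rightarrow$ $A\leT Y$'' without pausing over the c.e.\ hypothesis---whereas you explicitly flag and dispatch the worry that the coding step might need $A$ to be c.e.; that extra care is warranted and your explanation of why Ku\v{c}era-style coding only uses the computable approximation and the finiteness of total cost is accurate.
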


\begin{proof}   For each $\omega$-c.e.\ set $Y$, the set  $A$ obeys the benign cost function $c_Y$ defined  in~\cite{GreenbergNies:benign}.   Hence $A \leT Y$. By Theorem~\ref{thm:superlow} this fact implies that $A$ is strongly jump-traceable. \end{proof}

\subsection{Extensions to general $\Pi^0_1$ classes} \label{subsec:pi01classes}

Recall that a \emph{$\Pi^0_1$ class} is the complement of an effectively open (that is, c.e.)\ subclass of $2^\w$; equivalently, it is the collection of paths through a computable subtree of $2^{<\w}$. We make extensive use of the fact that there are $\Pi^0_1$ classes all of whose elements are $1$-random. This fact follows from the existence of a universal Martin-L\"{o}f test.\footnote{A specific example is the class consisting of sets $X$ such that $K(X\uhr n)\ge n-1$ for all $n$; here $K$ denotes prefix-free Kolmogorov complexity.} The collection of $1$-random sets is in fact a union of $\Pi^0_1$ classes. We remark that a $\Pi^0_1$ class consisting only of $1$-random elements cannot  be null. 

The fundamental result regarding $\Pi^0_1$ classes is the Jockusch-Soare superlow basis theorem \cite{Jockusch.Soare:72}, which states that every nonempty $\Pi^0_1$ class contains a superlow element. 

\

The proof of the implication from right to left in Characterization Ib does not make special use of randomness. We will actually prove the following in Section \ref{sec:main}:
\begin{theorem}
\label{thm:Pi01 theorem} Let $\PP$ be a nonempty $\Pi^0_1$ class, and suppose that $A$ is a jump-traceable set computable from every superlow member of $\PP$. Then $A$ is strongly jump-traceable. 
\end{theorem}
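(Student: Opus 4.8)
The plan is to run, for each order function $h$, a single "golden run" construction that either produces an $h$-bounded trace of $J^A$ directly, or else thins the $\Pi^0_1$ class $\PP$ down to a nonempty subclass $\PP'$ all of whose superlow members fail to compute $A$ — which, by the superlow basis theorem (Jockusch–Soare), is impossible given the hypothesis. So fix an order function $h$; we must produce a trace $\seq{T_x}$ for $J^A$ with $|T_x| \le h(x)$. Since $A$ is already jump-traceable, we have in hand \emph{some} order function $p$ and some c.e.\ trace $\seq{S_x}$ with $|S_x| \le p(x)$ and $J^A(x) \in S_x$ for $x \in \dom J^A$; the job is to shrink this trace down to size $h(x)$. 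The natural strategy is to wait, for each $x$, until $h(x)$-many values have accumulated in $S_x$, at which point we are "stuck": we must guess which of these is the correct value $J^A(x)$, and any wrong guess that we certify (by putting only $h(x)$-many elements into $T_x$ yet never the true one) gives us a "use" — a finite binary string $\sigma \prec A$ — that we can forbid, i.e.\ remove $\Cyl{\sigma}$ from $\PP$.

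The mechanism for converting a wrong guess into a shrinking of $\PP$ is the depth-unbounded version of Nies's golden run method. At each "level" we have a box trying to build a trace for $J^A$ on a certain domain with a certain bound; when a box gets stuck too often (more than some counter allows), it spawns subordinate boxes at the next level, each working under the assumption that $A$ agrees with some fixed initial segment $\sigma$, and each subordinate box is permitted to discard the cylinder $\Cyl{\sigma}$ from $\PP$ if it in turn gets stuck. Crucially, unlike the classical low-for-$K$ argument, we do not fix the depth in advance: if every level keeps getting stuck, the recursion keeps descending, and the total measure removed from $\PP$ at each level is kept below $2^{-k}$ at level $k$ (say), so that $\PP' = \PP \setminus (\text{everything discarded})$ still has positive measure, hence is a nonempty $\Pi^0_1$ class. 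The golden-run lemma then says: either some box at some level succeeds (is never stuck after some stage) and yields the desired $h$-bounded trace for $J^A$ — done — or else the recursion is infinite, in which case $\PP'$ is a nonempty $\Pi^0_1$ class and, by construction, every $Z \in \PP'$ fails to compute $A$ via the reduction being diagonalized; more precisely, no $Z \in \PP'$ computes $A$ at all, because computing $A$ from $Z$ would supply, through $J^A$ and the universal machine, a reduction whose use-strings all lie in $\PP'$, contradicting that those strings were discarded. By the superlow basis theorem $\PP'$ contains a superlow set $Z_0$, and $A \le_\Tur Z_0$ by hypothesis — contradiction. Hence the first alternative always holds, and as $h$ was arbitrary, $A$ is strongly jump-traceable.

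The main obstacle, and where the real work lies, is the bookkeeping that makes the golden run go through with \emph{unbounded} depth: one must allot, at level $k$, a measure budget of roughly $2^{-k}$ and a counter that controls how many times a box may get stuck before spawning children, and then verify that (i) the total measure ever removed from $\PP$ is at most, say, $1/2$ of $\lambda(\PP)$ (using that $\PP$ consists of $1$-random sets — actually here $\PP$ need not be $1$-random, so one instead uses that a nonempty $\Pi^0_1$ class has a member of every low-ish type, or simply keeps the removed measure strictly below $\lambda(\PP)$), (ii) if the recursion never terminates then along the true path $A$ is genuinely not computed by anything left in $\PP'$, and (iii) the successful box's output really is a trace for \emph{all} of $J^A$ with bound $h$, not just on a finite part. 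Point (ii) is delicate because "not stuck" must be translated into a correctness statement about $A$: the argument is that each time a box at a given level is stuck we extract a true use-string $\sigma \prec A$ and discard $\Cyl{\sigma}$; if this happens infinitely often along some branch we exhaust that branch's budget, a contradiction, so every branch is eventually not stuck, and the leftmost eventually-not-stuck branch is the golden run. Setting up the priority/spawning structure so that there \emph{is} such a leftmost branch, and that it is $\Pi^0_1$-definable as a class, is the technical heart; the rest is the routine verification that the trace bounds and measure bounds are respected.
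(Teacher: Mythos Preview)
Your proposal has real gaps, and the approach diverges from the paper's in ways that do not obviously work.

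First, and most simply: your measure argument cannot apply here. The theorem is stated for an \emph{arbitrary} nonempty $\Pi^0_1$ class $\PP$, which may well have measure zero (a singleton, or a perfect null class). You notice this parenthetically but do not repair it; ``keep the removed measure strictly below $\lambda(\PP)$'' is vacuous when $\lambda(\PP)=0$. The paper never uses measure in this proof at all.

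Second, the diagonalization mechanism is confused. You write that a wrong guess yields a string $\sigma\prec A$ and that you then ``remove $[\sigma]$ from $\PP$''. But $\PP$ is the class of candidate oracles $Z$, not of candidate versions of $A$; removing cylinders around initial segments of $A$ from $\PP$ does nothing to prevent some $Z\in\PP'$ from computing $A$. What the paper actually does at level $e$ is pass to the subclass $\{X\in\PP^{e}\seq{h(x)}:\Phi_e(X)\nsupseteq\tau\}$ for a putative $\tau\prec A$; if that class is empty, every $X$ in $\PP^{e}\seq{h(x)}$ computes $\tau$ via $\Phi_e$, which is precisely what feeds the trace. The classes $\PP^{e}\seq{n}$ here are not obtained by removing measure but by running $n$ steps of the superlow basis forcing (deciding $Z'\uhr n$). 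This interleaving of basis-theorem forcing with diagonalization is what guarantees, when no level succeeds, that the limiting $Z\in\bigcap_e\PP^e$ is \emph{superlow}---your proposal gives no mechanism for that, and merely invoking the superlow basis theorem on $\PP'$ after the fact would not cohere with the diagonalization.

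Third, you are missing the engine that makes the bookkeeping go through. The paper first proves (using \emph{both} superlowness and jump-traceability of $A$) that every $A$-partial computable function has a \emph{restrained} $A$-approximation: a computable $g$ bounds, for each $x$, the number of stages at which an apparent computation $\Gamma^A(x)[s]$ is destroyed. This bound, together with the $2^{h(x)}$ bound on changes of $\PP^e\seq{h(x)}$, is exactly what yields a computable bound $N(x)$ on how many times any $S^e_x$ is called, which in turn is what makes the approximation to $Z'$ $\omega$-c.e. Your sketch uses jump-traceability only to get a starting trace $\seq{S_x}$ and never invokes superlowness of $A$; without the restrained-approximation lemma you have no control over how often boxes are re-opened by changes to $A$, and your counters cannot be chosen computably.
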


Theorem \ref{thm:superlow}, and hence Characterizations Ia and Ib, follow from Theorem \ref{thm:Pi01 theorem} with the aid of the following observation, by applying Theorem \ref{thm:Pi01 theorem} to any $\Pi^0_1$ class that consists only of $1$-random sets.  

\begin{proposition}
\label{prop:} A set that is computable from every superlow $1$-random set is jump-traceable. 
\end{proposition}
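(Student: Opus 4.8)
The plan is to prove more than is asked: that $A$ is a \emph{base for $1$-randomness}, i.e.\ that $A$ is computable from some set that is $1$-random relative to $A$. Fact~\ref{fact:base-for-randomness} will then give at once that $A$ is superlow and jump-traceable. The starting point is the Jockusch--Soare superlow basis theorem applied to a $\Pi^0_1$ class all of whose members are $1$-random.

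First I would fix a nonempty $\Pi^0_1$ class $\PP$ consisting only of $1$-random sets (such classes exist, as recalled in Subsection~\ref{subsec:pi01classes}), and use the superlow basis theorem to choose a superlow $Z \in \PP$. Write $Z = Y_0 \oplus Y_1$, splitting $Z$ into its bits at even and at odd positions. Then three things hold. (i) Each $Y_i$ is again superlow: $Y_i \leT Z$, and superlowness is downward closed under $\leT$, since a set c.e.\ relative to $Y_i$ is c.e.\ relative to $Z$ and hence $\w$-c.e., by the characterization of superlowness recalled in Subsection~\ref{subsec:superlowness}. (ii) Each $Y_i$ is a column of the $1$-random set $Z$, hence is itself $1$-random, because the map extracting the bits at even (respectively odd) positions preserves $\lambda$ and so pulls \ML\ tests back to \ML\ tests. (iii) Since $Z = Y_0 \oplus Y_1$ is $1$-random, van Lambalgen's theorem gives that $Y_0$ is $1$-random relative to $Y_1$.

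Next I would apply the hypothesis of the Proposition to the two superlow $1$-random sets $Y_0$ and $Y_1$, obtaining $A \leT Y_0$ and $A \leT Y_1$. Because $A \leT Y_1$, every \ML\ test relative to $A$ is, uniformly, a \ML\ test relative to $Y_1$; hence $1$-randomness relative to $Y_1$ implies $1$-randomness relative to $A$, and in particular $Y_0$ is $1$-random relative to $A$. Together with $A \leT Y_0$ this is exactly the statement that $A$ is a base for $1$-randomness, so Fact~\ref{fact:base-for-randomness} completes the argument (and in fact shows that $A$ is superlow as well).

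The argument is short, and the only real idea is the split $Z = Y_0 \oplus Y_1$. A naive attempt using a single superlow $1$-random set $Y$ with $A \leT Y$ fails, because such a $Y$ need not be $1$-random relative to $A$ --- and cannot be when $A$ is itself $1$-random, since then $A$ would be a base for $1$-randomness, hence $K$-trivial, contradicting $1$-randomness. Passing to the halves decouples $Y_0$ from $A$: the oracle $Y_1$ is Turing-above $A$ and $Y_0$ is random over $Y_1$, hence random over $A$. What remains to be checked is entirely standard --- that the two halves are still superlow and $1$-random, that van Lambalgen's theorem applies, and that randomness relative to a Turing-larger oracle implies randomness relative to a Turing-smaller one --- so I do not anticipate a serious obstacle.
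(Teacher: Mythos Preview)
Your proof is correct and follows essentially the same route as the paper's own argument: take a superlow $1$-random $Z$ via the superlow basis theorem, split it as $Z=Y_0\oplus Y_1$, use van Lambalgen to get relative randomness of the halves, and then apply the hypothesis to both halves so that $A$ is computed by a set that is $1$-random relative to $A$, invoking Fact~\ref{fact:base-for-randomness}. The only cosmetic difference is that the paper cites van Lambalgen for the $1$-randomness of the individual halves as well, whereas you argue this directly via measure-preservation; both are fine.
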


\begin{proof}
 By the superlow basis theorem, and   the fact that there is a $\Pi^0_1$ class containing only $1$-random sets,  there is a superlow $1$-random set $Z$. Splitting $Z$ into two halves we can write $Z = X\oplus Y$. By van Lambalgen's Theorem \cite{van-Lambalgen}, both $X$ and $Y$ are $1$-random; indeed, they are relatively $1$-random: $X$ is $1$-random relative to $Y$, and $Y$ is $1$-random relative to $X$. Since $X,Y\leT Z$, both $X$ and $Y$ are superlow. 

By the assumption on $A$, we have $A\leT X,Y$. Since $Y$ is $1$-random relative to $X$, and $A\leT X$, we get that $Y$ is $1$-random relative to $A$. Since also $A\leT Y$, Fact \ref{fact:base-for-randomness} implies  that $A$ is jump-traceable. 
\end{proof}

A further corollary of Theorem \ref{thm:Pi01 theorem} characterizes strong jump-traceability of a c.e.\ set in terms of PA-completeness. (Here a set is PA-complete if it computes a completion of Peano arithmetic, or equivalently, if it computes a member of any nonempty $\Pi^0_1$ class.) Every PA-complete set computes a $1$-random set, and every $\w$-c.e.\ PA-complete set computes an $\w$-c.e.\ $1$-random set.\footnote{There is a Medvedev complete $\Pi^0_1$ class $\PP$ containing a set in every PA-complete degree. There is a $\Pi^0_1$ class $\QQ$ containing only random sets. Since $\QQ$ is Medvedev reducible to $\PP$, for every $X\in \PP$ there is some $Z\in \QQ$ such that $Z\ltt X$. This is because a Turing functional $\Phi\colon \PP\to \QQ$ can be extended to a functional that is total on all sets, and thus a truth-table functional. If $X\in \PP$ is $\w$-c.e., then $X\ltt \emptyset'$, and so for all $Z\ltt X$ we have $Z\ltt\emptyset'$, i.e.\ $Z$ is $\w$-c.e. For background on Medvedev reducibility see \cite{Sorbi:96}}

\begin{corollary}
\label{thm:c.e. PA theorem} A c.e.\ set is strongly jump-traceable if and only if it is computable from every superlow (equivalently, $\w$-c.e.) PA-complete set. 
\end{corollary}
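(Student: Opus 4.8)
The plan is to establish the two nontrivial implications; combined with the trivial observation that every superlow set is $\w$-c.e., these give the stated three-way equivalence.

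First I would prove that strong jump-traceability implies computability from every $\w$-c.e.\ PA-complete set. So suppose $A$ is c.e.\ and strongly jump-traceable, and let $X$ be an arbitrary $\w$-c.e.\ PA-complete set. By the observation recorded just before the corollary (whose proof, via the Medvedev-completeness argument of the footnote, produces a truth-table reduction), $X$ computes an $\w$-c.e.\ $1$-random set $Z$; indeed $Z\ltt X\ltt\emptyset'$. By Characterization Ia, $A\leT Z$, and hence $A\leT Z\leT X$. Since every superlow set is $\w$-c.e., this also yields the superlow version of the right-hand side.

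Next I would prove the converse: that computability from every superlow PA-complete set implies strong jump-traceability. Suppose $A$ is c.e.\ and computable from every superlow PA-complete set. Let $\PP$ be a $\Pi^0_1$ class all of whose members are PA-complete (for instance the class of completions of $\PA$, or the Medvedev complete class of the footnote). By the Jockusch-Soare superlow basis theorem, $\PP$ has a superlow member $X_0$, and by hypothesis $A\leT X_0$. Since superlowness passes to Turing-predecessors --- every set c.e.\ in $A$ is c.e.\ in $X_0$, hence $\w$-c.e.\ --- the set $A$ is superlow; being c.e., it is then jump-traceable, by Nies's theorem that jump-traceability and superlowness coincide on the c.e.\ sets. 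Moreover every superlow member of $\PP$ is a superlow PA-complete set, so it computes $A$ by hypothesis. Thus $\PP$ is a nonempty $\Pi^0_1$ class and $A$ is a jump-traceable set computable from every superlow member of $\PP$, so Theorem \ref{thm:Pi01 theorem} gives that $A$ is strongly jump-traceable.

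I do not anticipate a genuine obstacle: all the hard work is already contained in Theorem \ref{thm:Pi01 theorem}, Characterization Ia, and the superlow basis theorem, and the argument above is essentially bookkeeping. The two points to handle carefully are that an $\w$-c.e.\ PA-complete set truth-table computes an $\w$-c.e.\ $1$-random set (the Medvedev-completeness discussion preceding the corollary, together with transitivity of $\ltt$ and $Z\ltt X\ltt\emptyset'$, which is what keeps $Z$ inside the $\w$-c.e.\ sets), and that superlowness is downward closed under $\leT$, which is what lets us promote ``$A\leT X_0$'' to ``$A$ is superlow'' and hence, via the c.e.\ coincidence of superlowness and jump-traceability, to the hypothesis of Theorem \ref{thm:Pi01 theorem}.
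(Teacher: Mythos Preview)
Your proposal is correct and follows essentially the same route the paper intends: the forward direction uses the footnoted fact that an $\w$-c.e.\ PA-complete set truth-table computes an $\w$-c.e.\ $1$-random set together with Characterization~Ia, and the backward direction applies Theorem~\ref{thm:Pi01 theorem} to a $\Pi^0_1$ class of PA-complete sets after observing that $A$ inherits superlowness (hence jump-traceability, being c.e.) from a superlow member. The paper leaves the corollary unproved, and your write-up is exactly the argument it has in mind.
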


\begin{remark}
For the reverse problem---characterizing the class of sets that are reducible to superlow sets that are PA complete or $1$-random---there is a difference between PA completeness and $1$-randomness. Indeed, every superlow set is computable from \emph{some} superlow, PA complete set: there is a $\Pi^0_1$ class $\PP$ that contains only PA-complete sets (say the class of $\twoset$-valued, diagonally noncomputable functions). By the relativized superlow basis theorem \cite[Exercise 1.8.41]{Nbook}, if $A$ is superlow, then there is some $Z\in \PP$ such that $(A\oplus Z)'\ltt A'$. Now, the class of PA-complete sets is upward closed in the Turing degrees, hence $A\oplus Z$ is PA-complete, is superlow, and computes $A$. So, in short, the class of sets that are computable from PA-complete, superlow sets is exactly the class of superlow sets. 

As mentioned earlier, this assertion is not true if we replace PA-completeness by $1$-randomness; if $A$ is a c.e.~set that is computable in some superlow (indeed, incomplete) $1$-random set, then $A$ is $K$-trivial \cite{HirschfeldtNiesStephan:UsingRandomOracles}, and not every superlow set is $K$-trivial. 
\end{remark}

For many $\Pi^0_1$ classes $\PP$, any set computable in all superlow members of $\PP$ must in fact be computable. For instance, it is not hard to show that there is a $\Pi^0_1$ class $\PP$ without computable members such that any distinct $Y,Z\in \PP$ form a minimal pair. On the other hand, there are $\Pi^0_1$ classes $\PP$ that do not consist only of $1$-random sets or PA-complete sets, such that the class of sets that are computable in all superlow elements of $\PP$ is exactly the class of strongly jump-traceable c.e.\ sets $\SJT$. Consider, for example, the notion of \emph{complex} sets from \cite{Bjorn_etc}. A set $Z$ is complex if there is some order function $h$ such that $C(Z\uhr n)\ge h(n)$ for all $n$ (here $C$ denotes plain Kolmogorov complexity). It was shown in \cite{Bjorn_etc} that a set is complex if and only if there is some diagonally noncomputable function $f$ that is weak-truth-table reducible to $A$. In \cite{Nies:costfunctions}, techniques from \cite{GreenbergNies:benign} are elaborated to show that every c.e., strongly jump-traceable set is computable in any $\w$-c.e.\ complex set. Hence if~$h$ is a sufficiently slow-growing order function, then the class $\PP_h$ of sets $Z$ such that $C(Z\uhr n)\ge h(n)$ for all $n$ is a nonempty $\Pi^0_1$ class with the desired property. 

\

Replacing superlowness by superhighness does not yield a theorem analogous to Theorem \ref{thm:Pi01 theorem}. The reason is that in proving the left-to-right direction of Characterization II, we use the fact that $\Pi^0_1$ classes of $1$-random sets are not null, which allows for \Kuc{} coding into these classes. Not all $\Pi^0_1$ classes admit such coding. However, Medvedev complete classes, such as the class of complete extensions of Peano arithmetic, or of $\twoset$-valued diagonally noncomputable functions, do admit such coding; indeed coding into these classes is easier than into classes of $1$-random sets, because the coding locations can be obtained effectively, essentially by G\"{o}del's incompleteness theorem. Hence, a simpler form  of the argument in Section \ref{sec:superhigh2} would yield the following:

\begin{theorem} \label{thm:superhighPA} 
	Every c.e.~set that is computable from every superhigh PA-complete set is strongly jump-traceable. 
\end{theorem}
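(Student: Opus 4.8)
The plan is to mimic the argument for the left-to-right direction of Characterization II (i.e.\ the construction in Sections \ref{sec:superhigh1} and \ref{sec:superhigh2} showing that every c.e.\ strongly jump-traceable set is computable from every superhigh $1$-random set), but to replace the target $\Pi^0_1$ class of $1$-random sets with a Medvedev complete $\Pi^0_1$ class $\PP$ — for concreteness the class of complete extensions of Peano arithmetic, or equivalently the class of $\twoset$-valued diagonally noncomputable functions. Suppose $A$ is a c.e.\ set that is computable from every superhigh PA-complete set, and suppose toward a contradiction that $A$ is not strongly jump-traceable. Using the characterization of Greenberg and Nies (the hard implication cited after Definition \ref{df:benign}), non-strong-jump-traceability of the c.e.\ set $A$ means there is a benign cost function $c$ that $A$ does not obey. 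The goal is to build, using $c$ and the $\Sigma^0_3$-null-coding machinery, a single superhigh PA-complete set $Z$ such that $A \not\leT Z$, contradicting the hypothesis.

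First I would recall that coding a noncomputable c.e.\ set into $\PP$ is available and in fact \emph{effective}: because $\PP$ is Medvedev complete, there is a truth-table functional mapping $\PP$ onto the class of $1$-random sets, and the incompleteness theorem lets one choose coding locations computably rather than having to search for them by measure, as one must for $\Pi^0_1$ classes of randoms. So the place where the $1$-random case uses ``$\Pi^0_1$ classes of $1$-random sets are not null, allowing \Kuc{} coding'' is exactly the place where the PA case is strictly easier. The next ingredient is superhighness: I need the final $Z\in\PP$ to satisfy $\emptyset''\ltt Z'$. Following Section \ref{sec:nice}'s characterization of superhighness in terms of approximations, I would build $Z$ inside $\PP$ while simultaneously using its oracle strength to run a construction that forces $\emptyset''\ltt Z'$ — essentially a finite-injury-style or tree argument producing a PA-complete set that is $\GL 1$-over-$\emptyset'$ in the appropriate truth-table-uniform way, which is what one already does in the $1$-random case; nothing about randomness is used there, only that one is building a member of some nonempty $\Pi^0_1$ class.

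The heart of the argument, as the excerpt emphasizes, is the golden-run construction of unbounded depth: one runs the Greenberg-Nies golden run machinery against the cost function $c$ (using its benignity, i.e.\ the function $g$ bounding the number of large drops), with the roles of the random set replaced by the member of $\PP$ being built and with \Kuc{}-style coding of $A$'s enumeration into $\PP$ at the effectively chosen locations. If the construction succeeds in enumerating $A$ obeying $c$ we contradict the choice of $c$; if it fails, the golden run produces a functional witnessing $A\leT$ (some superlow/superhigh member of $\PP$) together with enough control to drive the diagonalization $A\not\leT Z$, contradicting the hypothesis on $A$. Concretely I expect the bookkeeping to be a simplification of Section \ref{sec:superhigh2}: drop the measure-theoretic apparatus needed to locate \Kuc{} coding regions inside a $\Pi^0_1$ class of randoms, since here those regions come from G\"odel coding, and keep everything else.

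The main obstacle I anticipate is the same one flagged for the $1$-random case: the golden-run recursion is not bounded in depth in advance, so the nonuniformity (no effective bound on the constant obtained from the golden run, analogous to not being able to extract a lowness-for-$K$ constant from a $K$-triviality constant) must be carefully managed — one must argue that \emph{some} level of the run is ``golden'' and that the total cost incurred above that level is finite, using benignity of $c$ at that level. A secondary, more routine, obstacle is verifying that the PA-completeness and superhighness requirements for $Z$ are genuinely compatible with the \Kuc{} coding of $A$ and with the golden-run injuries — i.e.\ that the tree of strategies building $Z$ can be threaded through $\PP$ without the coding markers ever forcing $Z$ off the tree; this is exactly where one uses that the coding locations are chosen computably via the incompleteness theorem, so that conflicts can be anticipated and resolved finitarily.
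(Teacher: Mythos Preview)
Your overall instinct --- adapt the Section~\ref{sec:superhigh2} argument, replacing the $\Pi^0_1$ class of $1$-random sets by a Medvedev complete class where coding locations are obtained computably rather than by measure --- is exactly what the paper intends. But the way you propose to execute it is confused in a way that would not yield a proof.

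The direction you need is the \emph{right-to-left} direction of Characterization~II (Section~\ref{sec:superhigh2}), not the left-to-right direction (Section~\ref{sec:superhigh1}). Cost functions belong to Section~\ref{sec:superhigh1}: there one shows that a c.e.\ set obeying a certain benign cost function is below every superhigh $1$-random. Section~\ref{sec:superhigh2} uses no cost functions at all. So your framing --- ``$A$ is not strongly jump-traceable, hence fails some benign $c$, now build $Z$'' --- imports machinery from the wrong half of the characterization, and there is no evident way to convert ``$A$ does not obey $c$'' into a construction of a superhigh $Z\nge_{\mathrm T} A$.

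The actual Section~\ref{sec:superhigh2} argument, which is what you should adapt, runs as follows. First show $A$ is superlow and jump-traceable (via an incomplete superhigh PA-complete set, the analogue of Theorem~\ref{thm:preparation superhigh theorem}, which is easier here). Then fix an order function $h$ and an $A$-partial computable $\theta$, take a restrained $A$-approximation, and run the unbounded-depth golden-run construction attempting to build $Z$ in the Medvedev complete class $\PP$ such that $G=\emptyset''$ is coded into $Z'$ (this is what the \Kuc-style coding is for --- coding $G$, not $A$) while simultaneously forcing $\Phi_e(Z)\ne A$ for every $e$. By hypothesis this must fail, and failure at some level $e$ yields a golden pair $(\QQ,\Phi_e)$ in the sense of Definition~\ref{def:golden pair 2}, which in turn produces a trace for $\theta$ bounded by $2^{h^2}$. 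The only simplification over Section~\ref{sec:superhigh2} is that the strings playing the role of $\kuc(\PP,\alpha)$ are now obtained computably from $\PP$ and $\alpha$, so the lower-bound function $q$ and the measure bookkeeping of Lemmas~\ref{bk:extensions}--\ref{lem:KucLB} disappear.
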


Note that this theorem, and the right-to-left direction of Characterization II, can also be viewed as characterizations of the limits of upper-cone avoidance in ``codable'' $\Pi^0_1$ classes in the context of coding. We discuss this idea in Section \ref{sec:superhigh2}.

\section{\texorpdfstring{Restrained approximations}{Restrained approximations of A-partial computable functions}}\label{sec:nice}
Recall that Corollary \ref{cor:ce-base-for-randomness} allows us, in the proof of the right-to-left directions of our characterizations, to replace the assumption that the set $A$ is c.e.\ by the assumption that it is superlow and jump-traceable. In this section, on the way to proving these directions in Sections \ref{sec:main} and \ref{sec:superhigh2}, we comment on this property of the set $A$. We show how this property is exploited to obtain useful approximations for functions that are partial computable in $A$. 

Note that if we assume that $A$ is c.e., then the condition of jump-traceability of Theorem \ref{thm:Pi01 theorem} is guaranteed by the coincidence result in \cite{Nies:Little}, that a c.e.~set is superlow if and only if it is jump-traceable.  

We remark  that this coincidence extends to the $n$-c.e.\ sets by a result of Ng \cite{Ng:thesis}. It is not hard to build a $2$-c.e.\ jump-traceable  (and hence superlow)  set that is not Turing below a c.e.\ jump-traceable set. This fact shows that the class of superlow and jump-traceable degrees is in an essential sense larger than the class of superlow  c.e.\ degrees, and thereby motivates  the extension of some of our results to this case.

\subsection{Bounded limit-recursive functions}

Before we begin, we need to partially relativize the characterization of $\w$-c.e.\ functions mentioned in Subsection \ref{subsec:superlowness}. This relativization will be of use in one direction of Theorem \ref{thm: JT and SL} below, and later when we discuss superhighness in Sections \ref{sec:superhigh1} and \ref{sec:superhigh2}. 

The following definition, due to Cole and Simpson \cite{ColeSimpson:HypMass}, is only a partial relativization of the notion of $\w$-c.e.-ness, because the bound on the number of mind changes remains computable.

\begin{definition}\label{def:BLR}
	Let $X$ be a set. A function $f\colon \w\to \w$ is \emph{bounded limit-recursive} in $X$ (we write $f\in \BLR(X)$) if there is an $X$-computable approximation $\seq{f_s}$ to $f$ such that the associated mind-change function $n\mapsto \#\{s\,:\, f_{s+1}(n)\ne f_s(n)\}$ is bounded by a computable function. 
\end{definition}

Thus a function is $\w$-c.e.\ if and only if it is in $\BLR(\ES)$. The following result, \cite[Theorem 6.4]{ColeSimpson:HypMass}, generalizes the characterization of $\w$-c.e.\ \emph{sets}, but does not generalize to functions. 

\begin{fact} \label{fact:BLR-and-TT}
	Let $X\subseteq \w$. The following are equivalent for a set $A\subseteq \w$:
	\begin{enumerate}
		\item $A\ltt X'$;
		\item $A\lwtt X'$;
		\item $A\in \BLR(X)$. 
	\end{enumerate}
\end{fact}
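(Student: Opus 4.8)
\textbf{Plan of proof of Fact~\ref{fact:BLR-and-TT}.}
The implication (1)$\Rightarrow$(2) is immediate, since a truth-table reduction is in particular a weak-truth-table reduction. The plan is to prove (2)$\Rightarrow$(3) and (3)$\Rightarrow$(1), so that all three are equivalent.

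For (2)$\Rightarrow$(3): suppose $A \lwtt X'$, say via a functional $\Phi$ with computable use bound $u(n)$, so that $A(n) = \Phi^{X'}(n)$ with all oracle queries below $u(n)$. First I would fix a computable enumeration $\seq{X'_s}$ of $X'$ relative to $X$, i.e.\ an $X$-computable approximation to $X'$ that is monotone in the sense that elements only enter (this is the usual enumeration of $\dom J^X$). Then define $f_s(n) = \Phi^{X'_s}(n)$ if this converges with use below $u(n)$ in at most $s$ steps, and $f_s(n) = 0$ (say) otherwise; this is an $X$-computable approximation to $A$. The key point is that a change in $f_s(n)$ can only be caused by some number below $u(n)$ entering $X'$, i.e.\ by $X'_{s+1}\uhr{u(n)} \ne X'_s\uhr{u(n)}$; since $X'$-enumeration is monotone, this can happen at most $u(n)$ many times. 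Hence the mind-change function for $\seq{f_s}$ is bounded by the computable function $u$, so $A \in \BLR(X)$.

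For (3)$\Rightarrow$(1): suppose $A \in \BLR(X)$ via an $X$-computable approximation $\seq{f_s}$ whose mind-change function is bounded by a computable function $g$. I want to produce a total computable (truth-table) functional $\tau\colon 2^\w \to 2^\w$ with $\tau(X') = A$. The idea is the standard Ershov-style coding: since $\seq{f_s}$ is $X$-computable, the "change points" — the stages $s$ at which $f_{s+1}(n)\ne f_s(n)$ for some $n$, together with the relevant data — are enumerable relative to $X$, hence the graph of the final value $A$ is $\Sigma^{X}_1$-describable in a $g$-bounded way, and such information is $\ltt X'$ (not merely $\lwtt$) precisely because the number of mind changes is computably bounded: one can ask $X'$ finitely many questions, with a computably bounded number of them and with the outcome determining $A(n)$ by a computable truth-table condition. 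Concretely, for each $n$ reserve a block of $g(n)+1$ potential markers; when the approximation changes at $n$ for the $k$-th time, put a canonical witness into a c.e.\ (relative to $X$) set $B$ whose $k$-th marker in the $n$-th block codes that change; since $B$ is c.e.\ relative to $X$ it is $\ltt X'$ (Fact~\ref{fact:BLR-and-TT} applied at this simpler level, or directly), and $A(n)$ is recovered from the finitely many marker-bits of the $n$-th block by a computable rule. The computable bound $g$ is exactly what makes the truth-table (as opposed to merely wtt) reduction work: the set of queries for computing $A(n)$ is a computable finite set, and the truth-table is computable.

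\textbf{Main obstacle.} The delicate point is (3)$\Rightarrow$(1): getting a genuine truth-table reduction rather than just a weak-truth-table one. A wtt reduction is essentially immediate (a $\BLR(X)$ function is certainly $\lwtt X'$ by running the approximation with $X'$ deciding when it has stabilized), so the real content is using the \emph{computable} bound $g$ on mind changes to make the reduction total on all oracles. This is precisely the content of the classical Ershov hierarchy characterization of $\w$-c.e.\ sets, relativized with the bound kept computable; I would cite \cite{ColeSimpson:HypMass} for the statement and reconstruct the coding as above. I also note the important caveat already flagged in the text: this equivalence holds for \emph{sets} $A$ but not for functions $f\colon \w \to \w$, because for functions one cannot in general bound the final value computably, so the truth-table table would have unbounded size — this is why the statement is carefully restricted to $A \subseteq \w$.
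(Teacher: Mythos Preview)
Your proof sketch is correct. The implication (2)$\Rightarrow$(3) is handled properly: changes in $\Phi^{X'_s}(n)$ are tied to enumerations into $X'\uhr{u(n)}$, which happen at most $u(n)$ times. For (3)$\Rightarrow$(1), your marker-block idea is exactly right, and it is worth making explicit the point you only hint at: since $A$ is a \emph{set}, one may assume $f_0(n)=0$ for all $n$, and then $A(n)$ is simply the parity of the number of mind changes at $n$; that parity is a truth-table function of the bits $\{\langle n,k\rangle\in B : k\le g(n)\}$, where $B=\{\langle n,k\rangle : \text{at least }k\text{ changes occur at }n\}$ is c.e.\ in $X$ and hence many-one (so truth-table) reducible to $X'$. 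This is also precisely where the argument breaks for $\w$-valued functions, as you note.

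As for comparison with the paper: there is nothing to compare. The paper does not prove Fact~\ref{fact:BLR-and-TT}; it is stated as a citation of \cite[Theorem 6.4]{ColeSimpson:HypMass} and used as a black box. Your reconstruction is the standard one and matches what Cole and Simpson do.
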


Hence a set $X$ is superhigh if and only if $\ES''\in \BLR(X)$. 

The next fact, \cite[Corollary 6.15]{ColeSimpson:HypMass}, characterizes the conjunction of superlowness and jump-traceability.

\begin{fact} \label{fact:low-for-BLR}
	A set $X$ is superlow and jump-traceable if and only if $\BLR(X) = \BLR(\ES)$, that is, if and only if every function that is in $\BLR(X)$ is $\w$-c.e.
\end{fact}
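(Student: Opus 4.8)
The plan is to prove the two implications separately; in each, one inclusion between $\BLR(X)$ and $\BLR(\ES)$ is trivial (an $\ES$-computable approximation is an $X$-computable one), so there is really only one thing to do.

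\emph{Suppose $\BLR(X)=\BLR(\ES)$.} For superlowness, let $(\chi_{X'})_s(e)=1$ if $J^X_s(e)\converge$ and $=0$ otherwise; this is an $X$-computable approximation of the characteristic function of $X'$ in which each argument changes at most once, so $\chi_{X'}\in\BLR(X)=\BLR(\ES)$, i.e.\ $X'$ is $\w$-c.e.\ and $X$ is superlow. For jump-traceability, let $f(e)=J^X(e)+1$ if $J^X(e)\converge$ and $f(e)=0$ otherwise; the approximation $f_s(e)=J^X_s(e)+1$ (or $0$) witnesses $f\in\BLR(X)=\BLR(\ES)$, so there is a computable $g$ with $f(e)=\lim_s g(e,s)$ and mind-change function bounded by a computable $\beta$, which we may take nondecreasing. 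Then $T_e=\{\,g(e,s)-1 : s\in\w,\ g(e,s)\ge 1\,\}$ is a uniformly c.e.\ trace for $J^X$ with $|T_e|\le\beta(e)+1$, so $X$ is $h$-jump-traceable with $h(e)=\beta(e)+e+1$ an order function.

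\emph{Suppose $X$ is superlow and jump-traceable,} and fix $f\in\BLR(X)$, say via $f_s=\Phi^X_e(\cdot,s)$ with at most $b(n)$ mind-changes on input $n$ for a computable $b$. Write $c_0(n),\dots,c_{k(n)}(n)$ for the successive values of $s\mapsto f_s(n)$, so $k(n)\le b(n)$ and $f(n)=c_{k(n)}(n)$. For each $i$ the partial function $\theta_i\colon n\mapsto c_i(n)$ — undefined when $s\mapsto f_s(n)$ changes fewer than $i$ times — is partial $X$-computable, uniformly in $i$, so by jump-traceability there are uniformly c.e.\ sets $U_{\la n,i\ra}\ni c_i(n)$ (whenever $\theta_i(n)\converge$) with $|U_{\la n,i\ra}|\le h(\la n,i\ra)$ for a computable $h$. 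Let $u_{i,j}(n)$ denote the $j\tth$ element enumerated into $U_{\la n,i\ra}$ and, for $i\le k(n)$, let $j_i(n)<h(\la n,i\ra)$ be least with $u_{i,j_i(n)}(n)=c_i(n)$. Put $d(n)=\la k(n),j_0(n),\dots,j_{k(n)}(n)\ra$; note $d(n)$ ranges over a set of size bounded by a computable function of $n$.

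The crux is that $d\ltt X'$. The statements ``$\theta_i(n)$ converges'' and ``$u_{i,j}(n)$ converges and equals $\theta_i(n)$'' are $\SI 1(X)$ uniformly in $i,j,n$, hence each is equivalent to membership of a number computable from $n$ (and $e,i,j$) in $X'$; since only $i\le b(n)$ and $j<h(\la n,i\ra)$ are relevant, a $\ttt$-reduction of $d$ to $X'$ asks, on input $n$, a computably bounded and predetermined set of such questions, then reads off $k(n)$ as the largest $i\le b(n)$ with $\theta_i(n)\converge$ and each $j_i(n)$ ($i\le k(n)$) as the unique relevant witness. By superlowness $X'\ltt\ES'$, so $d\ltt\ES'$ and hence $d$ is $\w$-c.e. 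Finally $f(n)=u_{k(n),j_{k(n)}(n)}(n)$ is computable from $d(n)$ together with the enumerations of the $U_{\la n,i\ra}$; feeding a computable approximation of $d$ with computably bounded mind-changes through this reconstruction (and outputting $0$ until the relevant trace element has appeared) gives a computable approximation of $f$ whose mind-change function is bounded by that of $d$ plus one per value taken by the approximation of $d(n)$ — still computably bounded. Thus $f\in\BLR(\ES)$, completing the proof.

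The obstacle lies entirely in this last direction, and in resisting the natural urge to approximate the \emph{values} $f(n)$ directly. The traces give a computably bounded list of candidates for each $c_i(n)$, but deciding which candidate is correct is a $\SI 1(X)$ question \emph{about the candidate}; since trace elements are not computably bounded, the $\w$-c.e.\ bound for the resulting approximation of $f$ would have to be evaluated at unboundedly large arguments and so need not be computable. Passing instead to the ``addresses'' $d(n)$, which live in a space of computably bounded size, ensures that every $\SI 1(X)$ question one must ask is indexed at a location computable from $n$ — which is exactly what keeps the final mind-change count computable. Everything else is bookkeeping.
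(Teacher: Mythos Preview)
The paper does not prove this statement; it is quoted as Corollary~6.15 of Cole and Simpson~\cite{ColeSimpson:HypMass} with no argument given. So there is nothing in the paper to compare your proof against, but your argument is correct and essentially reproduces the Cole--Simpson proof.

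A couple of minor points. In the easy direction, the shift $J^X(e)+1$ to separate genuine values from the default~$0$ is the standard device, and passing from the computable bound $\beta(e)+1$ to an order function is routine. In the harder direction, your key move---encoding not the values $c_i(n)$ themselves but their \emph{addresses} $j_i(n)$ within the traces, so that the resulting function $d$ lives in a space of size computably bounded in $n$ and is therefore genuinely $\ltt X'$---is exactly the point of the argument, and your closing paragraph correctly identifies why the naive approach fails. One small step you might make explicit: the inference ``$d\ltt\ES'$, hence $d$ is $\w$-c.e.'' is being applied to a \emph{function}, whereas the equivalence recorded in the paper is for sets; of course it holds for functions too (push an $\w$-c.e.\ approximation of $\ES'$ through the total $\ttt$-functional, and the computable use bounds the number of mind changes), but it is worth a sentence.
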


\subsection{Functionals}

We define a \emph{partial computable functional} to be a partial computable function $\Gamma\colon 2^{<\w}\times \w\to \w$ such that for all $x<\w$, the domain of $\Gamma(-,x)$ is an antichain of $2^{<\w}$ (in other words, this domain is prefix-free). The idea is that the functional is the collection of minimal oracle computations of an oracle Turing machine. For any $A\in 2^{\le \w}$ and $x<\w$, we let $\Gamma^A(x)=y$ if there is some initial segment $\tau$ of $A$ such that $\Gamma(\tau,x)=y$. Then $\Gamma^A$ is an $A$-partial computable function, and every $A$-partial computable function is of the form $\Gamma^A$ for some partial computable functional $\Gamma$. We write $\Gamma^A(x)\converge$ if $x\in \dom \Gamma^A$; otherwise we write $\Gamma^A(x)\diverge$. The \emph{use} of a computation $\Gamma^A(x)=y$ is the length of the unique initial segment $\tau$ of $A$ such that $\Gamma(\tau,x)=y$.

If $\seq{A_s}$ is a computable approximation to a $\Delta^0_2$ set $A$, and $\seq{\Gamma_s}$ is an effective enumeration of (the graph of) a partial computable functional, then we let $\Gamma^{A}[s] = \Gamma_s^{A_s}$. Note that $\Gamma_s$ is a finite set, and so $\dom \Gamma^A[s]$ is computable, rather than just c.e. By convention, if $\Gamma_s(\tau,x)=y$ then $|\tau|,x,y< s$. 


\subsection{Existence of restrained approximations}

Let $\seq{A_s}$ be a computable approximation to a $\Delta^0_2$ set $A$, and let $\seq{\Gamma_s}$ be an enumeration of a partial computable functional. We say that $\seq{A_s,\Gamma_s}$ is an approximation to the $A$-partial computable function $\Gamma^A$. 

Suppose that $\Gamma^A(x)\converge[s]$; let $u$ be the use of that computation. We say that this computation is \emph{destroyed} at stage $s+1$ if $A_{s+1}\uhr{u}\ne A_s\uhr{u}$. 

\begin{definition}
\label{def:restrained approx} An approximation $\seq{A_s,\Gamma_s}$ to an $A$-partial computable function is an \emph{restrained $A$-approximation} if there is some computable function $g$ such that for all $x$, the number $g(x)$ bounds the number of stages $s$ such that a computation $\Gamma^A(x)\converge[s]$ is destroyed at stage $s+1$. 
\end{definition}

\begin{theorem} \label{thm: JT and SL} The following are equivalent for a set $A\in 2^\w$: 
\begin{enumerate}
\item $A$ is both superlow and jump-traceable. 
\item Every $A$-partial computable function has a restrained $A$-approximation. 
\end{enumerate}
\end{theorem}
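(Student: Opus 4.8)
The plan is to prove the two implications separately, using Fact~\ref{fact:low-for-BLR} as the bridge. For the direction $(2)\Rightarrow(1)$, I would show directly that $(2)$ implies $\BLR(A)=\BLR(\ES)$. So suppose $f\in\BLR(A)$ via an $A$-computable approximation $\seq{f_s}$ with mind-change bound $m(n)$. There is a partial computable functional $\Gamma$ such that $\Gamma^A(\la n,s\ra)=f_s(n)$ computed with use equal to the use of the stage-$s$ computation of $f_s(n)$ from $A$; more carefully, one packages the whole approximation as a single $A$-partial computable function $\psi(n)$ that outputs (a code for) the final value $f(n)$, but since that value is only a limit, the clean route is to take $\psi$ to be the function $n \mapsto f(n)$ itself, computed by searching for a stage where the $A$-approximation to $f(n)$ has settled. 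Apply hypothesis $(2)$ to get a restrained $A$-approximation $\seq{A_t,\Gamma_t}$ to $\psi=\Gamma^A$ with destruction bound $g(n)$. Now I build a \emph{computable} (i.e.\ $\ES$-computable) approximation to $f$: at stage $t$, output the current value $\Gamma^A[t](n)$ if it converges, else keep the previous guess. The number of times this guess changes is bounded by the number of times a computation $\Gamma^A(n)\converge[t]$ appears and is then destroyed, plus one, hence by $g(n)+1$, a computable bound. So $f$ is $\w$-c.e., giving $\BLR(A)\subseteq\BLR(\ES)$, and Fact~\ref{fact:low-for-BLR} yields $(1)$.

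For the harder direction $(1)\Rightarrow(2)$, assume $A$ is superlow and jump-traceable, so by Fact~\ref{fact:low-for-BLR} every $\BLR(A)$ function is $\w$-c.e. Fix an $A$-partial computable function $\Gamma^A$ with a given approximation $\seq{A_s,\Gamma_s}$ (any approximation to $A$ works to start). The first key step is to replace the given enumeration by one whose uses are controlled. Consider the $A$-partial computable function $u$ defined by $u(x)=$ the use of $\Gamma^A(x)$ (undefined when $\Gamma^A(x)\diverge$). By jump-traceability, and since $A$ is superlow (hence $u$, being $A$-partial computable with graph c.e.\ in $A$, is traced), there is a trace $\seq{T_x}$ for $u$ bounded by some order function; more usefully, since $A$ is jump-traceable, the function $x\mapsto$ (a stage by which $\Gamma^A(x)$ has reached a stable value, if it ever does) can be handled. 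The cleanest approach: let $h$ be an $A$-computable \emph{modulus}-type function — but $A$ need not be c.e., so instead I use that the function $x\mapsto \la \Gamma^A(x), u(x)\ra$ (the pair of output and use) is $A$-partial computable, hence has a c.e.\ trace $\seq{S_x}$ with $|S_x|\le h(x)$ for a computable $h$.

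The heart of the construction is then to design the restrained approximation so that a computation $\Gamma^A(x)$ is only ``believed'' when its use-value pair lies in the current approximation to the trace $S_x$, and to bound destructions by the trace size together with an $\w$-c.e.\ bookkeeping function. Concretely: define a new functional $\hat\Gamma$ that, on input $x$ at stage $s$, certifies the computation $\Gamma^A(x)=y$ with use $u$ only if $\la y,u\ra$ has been enumerated into $S_x$ by stage $s$; each element of $S_x$ can ``support'' destruction at most some computable number of times once we also track how often the true $A$ fluctuates below a given use, and the latter count is exactly where $\BLR(A)=\BLR(\ES)$ gets used — the function recording, for each $x$, the number of genuine destructions, is $A$-computably approximable with computably bounded mind-changes (it only increases, and is bounded using $|S_x|$), hence it is $\w$-c.e., so a computable bound $g(x)$ exists. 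This $g$ witnesses restraint. I expect the main obstacle to be precisely this last bookkeeping: showing the destruction-count function is genuinely in $\BLR(A)$ with a \emph{computable} mind-change bound (not merely an $A$-computable one) before invoking Fact~\ref{fact:low-for-BLR}, and arranging the interleaving of the trace enumeration with the approximation to $A$ so that no computation is certified on a ``wrong'' trace element more than boundedly often. The non-c.e.-ness of $A$ prevents the usual trick of waiting for $A$ to enumerate past a use, so the argument must route everything through traces and the $\BLR$ collapse rather than through direct enumeration control.
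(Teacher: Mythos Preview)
Both directions of your proposal have gaps, and in the harder direction the gap is the main idea of the proof.

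\textbf{Direction $(2)\Rightarrow(1)$.} Your route through $\BLR(A)=\BLR(\ES)$ breaks at the point where you set $\psi(n)=f(n)$. A function $f\in\BLR(A)$ is in general only $A'$-computable, not $A$-(partial) computable: to ``search for a stage where the $A$-approximation to $f(n)$ has settled'' is a $\Pi^0_1(A)$ question, so no Turing functional $\Gamma$ with $\Gamma^A=f$ exists to feed into hypothesis~(2). The paper avoids this entirely: it proves superlowness and jump-traceability directly. For superlowness, given $C$ c.e.\ in $A$, write $C=\dom\theta$ for $\theta$ $A$-partial computable and use a restrained approximation to $\theta$ to get an $\w$-c.e.\ approximation to $C$. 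For jump-traceability, a restrained approximation to $\theta$ yields the trace $T_x=\{\Gamma^A(x)[s]:\Gamma^A(x)\converge[s]\}$ of size $\le g(x)+1$.

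\textbf{Direction $(1)\Rightarrow(2)$.} You say ``any approximation to $A$ works to start'' and then try to filter computations through a trace $S_x$ for $x\mapsto\la\Gamma^A(x),u(x)\ra$. But the whole content of a restrained approximation is the choice of $\seq{A_s}$. With an arbitrary $\DII$ approximation, even if you certify $\Gamma^A(x)=y$ only when $\la y,u\ra\in S_x$, a destruction still occurs every time $A_s\uhr u$ flips, and for a fixed $u$ that can happen unboundedly often. Your proposed ``destruction-count function'' is therefore not obviously in $\BLR(A)$ with a computable mind-change bound (it is bounded by nothing you have named), so the appeal to Fact~\ref{fact:low-for-BLR} at the end is circular.

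The paper's key move is to construct $\seq{A_s}$ rather than start from one. Define $f(x)=A\uhr u$ where $u$ is the use of $\widetilde\Gamma^A(x)$ if this converges, and $f(x)=\estring$ otherwise. Then $f\in\BLR(A)$ with at most \emph{one} mind change per input (the value switches from $\estring$ to $A\uhr u$ once $A$ sees convergence), so Fact~\ref{fact:low-for-BLR} gives an $\w$-c.e.\ approximation $\seq{f_s}$ with computable bound $g$. Now \emph{build} $A_s$ by piecing together the compatible strings $f_s(y)$ for $y\le x_s$ (where $x_s$ is the largest $x$ below which all the $f_s(y)$ are pairwise comparable), and restrict the functional to axioms $(\sigma,x)\mapsto y$ with $\sigma\subseteq f_s(x)$. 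A destruction of $\Gamma^A(x)[s]$ then forces $f_s(y)\ne f_{s+1}(y)$ for some $y\le x$, so the number of destructions is at most $\sum_{y\le x}g(y)$. Tracing $\la\Gamma^A(x),u(x)\ra$ gives you candidate uses, but it does not tell you what $A$ looks like below those uses; tracing $A\uhr{u(x)}$ itself (via $\BLR$) is exactly what is needed, and that is what the paper does.
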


\begin{proof}
The easier implication is $\mbox{(2)}\Then\mbox{(1)}$. Let $C$ be a set that is c.e.~in~$A$. There is an $A$-partial computable function $\theta$ such that $C= \dom\theta$. Let $\seq{A_s,\Gamma_s}$ be a restrained $A$-approximation to $\theta$, witnessed by a computable function $g$. Let $C_s(x)=1$ if $\Gamma^{A}(x)\converge[s]$; otherwise let $C_s(x)=0$. Then $\seq{C_s}$ and $g$ show that $C$ is $\w$-c.e. Hence $A$ is superlow.

Let $\theta$ be $A$-partial computable. Let $\seq{A_s,\Gamma_s}$ be a restrained $A$-approximation to $\theta$, witnessed by a computable function $g$. Let
\[ T_x = \left\{ \Gamma^{A}(x)[s]\,:\, s<\w\,\,\,\&\,\,\,\Gamma^{A}(x)\converge[s] \right\} .\]
Then $|T_x| \le g(x)+1$ for every $x$, and $\theta(x)=\Gamma^A(x)\in T_x$ for all $x\in \dom \theta$.

Hence every $A$-partial computable function has a trace bounded by some computable function. As discussed in subsection \ref{subsec:superlowness}, $A$ is jump-traceable. 

\

We now turn to the proof of the converse implication $\mbox{(1)}\Then\mbox{(2)}$. 
Let $A$ be a superlow, jump-traceable set, and let $\theta$ be an $A$-partial computable function. Let $\widetilde \Gamma$ be a Turing functional such that $\widetilde \Gamma^A = \theta$. 

For $x\in \dom \theta$, let $f(x) = A\uhr u$, where $u>0$ is the use of the computation $\widetilde \Gamma^A(x)$; for all $x\notin \dom \theta$, let $f(x)$ be the empty string. Then $f\in \textup{BLR}(A)$: indeed, $\dom \theta$ is $A$-c.e., and so we can approximate $f$ computably in $A$, changing our mind only once for $x\in \dom \theta$, and not at all for $x\notin\dom \theta$.

By Fact \ref{fact:low-for-BLR}, $f$ is $\w$-c.e. Let $\seq{f_s}$ be a computable approximation to $f$, with a mind-change function that is bounded by some computable function $g$. Let $x_s \le s$ be the largest $x$ such that
\[\forall y , z \le x \, [ f_s(y) \subseteq f_s(z) \ \text{or} \ f_s(z) \subseteq f_s(y) ],\]
and let $\sigma_s= \bigcup_{y \le x_s} f_s(y)$. Then $\lim_s x_s = \infty$ and for each $n$, for almost all $s$ we have $A\uhr n \subseteq \sigma_s$. Let $A_s(y) = \sigma_s(y)$ for $y < |\sigma_s|$ and $A_s(y) =0$ otherwise. Then $\seq{A_s}$ is a computable approximation to $A$. 

Let $\seq{\widetilde \Gamma_s}$ be some computable enumeration of the Turing functional $\widetilde \Gamma$. Now define an enumeration $\seq{\Gamma_s}$ of a partial computable functional $\Gamma\subseteq \widetilde \Gamma$ as follows: at stage $s$, if the axiom $(\s,x)\mapsto y$ is already in $\widetilde \Gamma_s$, enumerate that axiom into $\Gamma_s$ if $x\le x_s$ and $\s\subseteq f_s(x)$. 
Then $\Gamma^A = \theta$. To show that $\seq{A_s,\Gamma_s}$ is a restrained approximation, note that if $\Gamma^A(x)\converge[s]$ with use $u$, and that computation is destroyed at stage $s+1$, then $u \le |f_s(x)|$, and $f_s(y) \neq f_{s+1}(y)$ for some $y \le x$. So the number of times this event can happen is bounded by $\sum_{y \le x} g(y)$. 
\end{proof}

\section{$\SJT$ coincides with $\Superlow^\Diamond$} \label{sec:main}

In this section we prove Theorem \ref{thm:Pi01 theorem}. As explained above, together with the results in \cite{GreenbergNies:benign}, this theorem provides Characterizations Ia and Ib of the strongly jump-traceable c.e.\ sets. 
We first fix some notation. 

\subsection{Notation for classes of sets} \label{subsec:notation}

For a finite binary string $\s\in 2^{<\w}$, we let $[\s]$ denote the sub-basic clopen subclass of $2^\w$ consisting of all infinite binary strings that extend $\s$. If $W$ is a c.e.~subset of $2^{<\w}$, then we let $\Opcl W  = \bigcup_{\s\in W}[\s]$ be the effectively open subset of $2^\w$ determined by $W$. (A clopen class $\Opcl C $ is determined by a finite set of strings $C$.)

A $\Pi^0_1$ class is the complement of some effectively open subclass of $2^\w$. A \emph{$\Pi^0_1$ index} for a $\Pi^0_1$ class $\PP$ is a c.e.~index for a c.e.~set $W\subseteq 2^{<\w}$ such that $\PP = 2^\w \setminus \Opcl W $. 

A $\Pi^0_1$ class $\PP$ admits an approximation $\PP = \bigcap_t \PP_t$, where $\seq{\PP_t}$ is a computable sequence of clopen subsets of $2^\w$. Namely, we let $\PP_t = 2^\w \setminus \Opcl{W_s}$, where $\seq{W_s}$ is an effective enumeration of the c.e.~set $W$ such that $\PP = 2^\w \setminus \Opcl W $. (Alternatively, we can fix a computable tree $T\subseteq 2^{<\w}$ such that $\PP$ is the collection of paths through $T$, and let $\PP_t$ be the union of $[\s]$ where $\s\in T$ has length $t$.) Given a $\Pi^0_1$ index for $\PP$, we can effectively obtain the approximation $\seq{\PP_t}$. 

We will make fundamental use of the compactness of $2^\w$, which implies that if $\PP$ is an empty $\Pi^0_1$ class, then there is some $t$ such that $\PP_t$ is empty.

\subsection{Discussion of the proof of Theorem \ref{thm:Pi01 theorem}}

Let $A$ be a jump-traceable set that is computable from every superlow member of a nonempty $\Pi^0_1$ class $\PP$. As mentioned earlier, by the superlow basis theorem, $A$ is superlow. By Theorem \ref{thm: JT and SL}, every $A$-partial computable function has a restrained $A$-approximation.

We will show that for every order function $h$, every $A$-partial computable function has a trace bounded by $x\mapsto \tp{h(x)}$. This fact suffices for the strong jump-traceability of $A$ since $h$ can be an arbitrary order function. For the rest of this section, fix an order function $h$, and fix an $A$-partial computable function $\theta$. Let $\seq{A_s,\Gamma_s}$ be a restrained $A$-approximation for $\theta$, witnessed by a computable function $g$ (as in Definition~\ref{def:restrained approx}).

\

The strategy for obtaining a trace for $\theta$ is to try, and fail, to construct a superlow set $Z\in \PP$ such that $A\nle_\Tur Z$. Let $\seq{\Phi_e}$ be an effective enumeration of all Turing functionals. For each $e$, we attempt to meet the requirement $A \ne \Phi_e(Z)$. Overall, the construction consists of a recursive calling of strategies (or procedures); the strategy $R^e$ which attempts to meet the $e\tth$ requirement $A\ne \Phi_e(Z)$ is located at the \emph{$e\tth$ level} of the structure of all called strategies. 

We recall the proof of the Jockusch-Soare superlow basis theorem \cite{Jockusch.Soare:72}. A superlow element of a given nonempty $\Pi^0_1$ class $\QQ$ is obtained by recursively defining a sequence of decreasing subclasses of $\QQ$, each deciding the next element of the jump. Given $\QQ$, we let $\QQ\seq{0} = \QQ$, and
\[ \QQ\seq{n+1} = 
\begin{cases}
\QQ\seq{n}, & \text{ if $n\in X'$ for all $X\in \QQ\seq{n}$} \\
\left\{ X\in \QQ\seq{n}\,:\, n\notin X' \right\} & \text{ otherwise.} 
\end{cases}
\]
Then $\bigcap_n \QQ\seq{n}$ is a singleton $\{Z\}$ where $Z$ is superlow.\footnote{To see that, we approximate the sequence $\seq{\QQ\seq{n}}$. For a finite binary string $\alpha\in 2^{<\w}$, recursively define a subclass $\QQ\seq{\alpha}$ of $\QQ$ as follows: let $\QQ\seq{} = \QQ$; given $\QQ\seq{\alpha}$, let $\QQ\seq{\alpha1} = \QQ\seq{\alpha}$, and let $\QQ\seq{\alpha0} = \left\{ X\in \QQ\seq{\alpha}\,:\, |\alpha|\notin X' \right\}$. At stage $s$ of an effective construction, we define $\alpha_s\in 2^{<\w}$ to be the leftmost binary string $\alpha$ of length $s$ such that $(\QQ\seq{\alpha})_s$ is nonempty. If $s<t$, then $(\QQ\seq{\alpha})_s\supseteq (\QQ\seq{\alpha})_t$ for all $\alpha$, so $\alpha_t$ does not lie to the (lexicographic) left of $\alpha_s$. Hence the total number of stages $s$ such that $\alpha_s\uhr n \ne \alpha_{s+1}\uhr n$ is at most $2^n$.}

Cone avoidance, that is, meeting the requirements $A\ne \Phi_e(Z)$, can also be obtained in a similar fashion (``forcing with $\Pi^0_1$ classes''): we intersect the given class with one of the classes $\RR_{e,\tau} = \{X\,:\, \Phi_e(X)\nsupseteq \tau\}$ for some finite initial segment $\tau$ of $A$. Thus we attempt to intersperse these classes with classes as above for the superlowness of $Z$. The assumption on $A$ implies that this attempt will fail. The failure is due to the fact that at some level $e$, all attempts to diagonalize $\Phi_e(A)$ away from an initial segment of $A$ yield empty $\Pi^0_1$ classes. This fact gives us a method for confirming ``believable'' computations $\Gamma^A(x)\converge[s]$, and hence building a trace for $\theta$. 

The combinatorial content of the construction is showing how to effectively approximate this final outcome, as in the computable approximation of the forcing proof of the superlow basis theorem. We need to show that if the attempts to build a trace fail, that is, if all the requirements are met, then the set $Z$ constructed is indeed superlow. 

Fix a level $e$. For each $x$, a strategy $S^e_x$ is responsible for confirming computations $\Gamma^A(x)\converge[s]$. Say such a computation appears, with some use $u$. The strategy $S^e_x$ tests whether $A_s\uhr u$ is really an initial segment of $A$ by attempting to meet the $e$-th requirement by intersecting the current class with the class $\RR_{e,A_s\uhr u}$. The strategy then waits for the resulting intersection to become empty; if $A$ moves in the meantime, the computation $\Gamma^A(x)[s]$ is destroyed and no harm is done. As long as the class is not empty, it seems like the $e$-th requirement is met, and so a new strategy for meeting the $(e+1)$-st requirement is called in the meantime, starting a new superlow basis construction within that $\PPI$ class. If the resulting class turns out to be empty, $A_s\uhr u$ is confirmed and the computation $\Gamma^A(x)[s]$ traced. 

To show that the construction succeeds, we then argue for a contradiction and assume that at all levels $e$, some strategy $S^e_x$ succeeds in meeting the $e$-th requirement. The key, as mentioned, is to ensure that the resulting set $Z$ is superlow, even though the superlowness strategies are distributed over all the levels of the construction. Premature changes in $A$ may cause difficulties here. Say a strategy $S^e_x$ calls a procedure $R^{e+1}$ while trying to certify a computation $\Gamma^A(x)[s]$; this run of $R^{e+1}$ may then be cancelled due to an $A$ change that destroys that computation. This cancellation may in turn change our approximation to $Z'$. To put a computable bound on the number of times such an event can occur, we use the fact that $\seq{A_s,\Gamma_s}$ is restrained.

\subsection{Golden pairs}

Say that the construction above succeeds at a level $e$. The following definition captures the relevant properties of the final $\Pi^0_1$ class $\QQ$ that is passed to the successful run of $R^e$, and of the associated Turing functional $\Phi_e$. We again use the notation $\seq{\QQ\seq{n}}$ to denote the sequence of $\Pi^0_1$ classes obtained in the proof of the superlow basis theorem. 

\nicebox{
\begin{definition}
\label{def:golden pair} 
A pair $\QQ,\Phi$, consisting of a nonempty $\Pi^0_1$ class and a Turing functional, is a \emph{golden pair} for $\Gamma$ and $h$ if for almost all $x$ such that $\Gamma^A(x)\converge$, with some use $u$, for all $X\in \QQ\seq{h(x)}$ we have $\Phi(X)\supseteq A\uhr{u}$. 
\end{definition}  }
\vsps 

The proof that $\theta$ has a trace bounded by $h$ is split into two separate propositions. The first verifies that golden pairs indeed yield traces. 

\begin{proposition}
\label{prop:golden pairs do the trick} If there is a golden pair for $\Gamma$ and $h$, then $\theta = \Gamma^A$ has a trace $\seq {V_x }$ such that $|V_x| \le \tp{h(x)}$ for each $x$. 
\end{proposition}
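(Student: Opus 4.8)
Suppose $\QQ,\Phi$ is a golden pair for $\Gamma$ and $h$. By the definition of golden pair, there is some $x_0$ such that for all $x \ge x_0$ with $\Gamma^A(x)\converge$ (say with use $u$), every $X \in \QQ\seq{h(x)}$ satisfies $\Phi(X) \supseteq A\uhr u$. The idea is to use the classes $\QQ\seq{h(x)}$ as ``oracles'' that pin down $A\uhr u$, and hence the value $\Gamma^A(x) = \theta(x)$, up to only $\tp{h(x)}$ possibilities. For $x < x_0$ we can simply put all values seen so far into $V_x$ (this is a finite modification and does not affect the bound for large $x$, but to be careful we should note that $\theta(x)$ for small $x$ is a single number, traceable with a one-element set — or we absorb it since $h$ is unbounded and we only need the bound to hold for all $x$; in fact $|T_x| \le 1 \le \tp{h(x)}$ trivially there). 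So fix $x \ge x_0$ from now on.

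**Main step: bounding the number of candidate uses.** The key combinatorial observation is the one recalled in the footnote to the superlow basis theorem: when we approximate the sequence $\seq{\QQ\seq{n}}$ by the strings $\alpha_s \in \tp{s}$ (leftmost $\alpha$ of length $s$ with $(\QQ\seq{\alpha})_s$ nonempty), the approximation $\alpha_s\uhr{n}$ changes at most $\tp{n}$ times, because the classes shrink and so $\alpha_t$ never moves lexicographically left of $\alpha_s$. Apply this with $n = h(x)$: there is a computable enumeration, uniformly in $x$, of at most $\tp{h(x)}$ many strings $\alpha$ of length $h(x)$ that are ever equal to $\alpha_s\uhr{h(x)}$ for some $s$; and $\QQ\seq{h(x)}$ is the limit class $\QQ\seq{\alpha^*}$ for the true $\alpha^* = \lim_s \alpha_s\uhr{h(x)}$. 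Here is the trace construction: run the approximation to $\seq{\QQ\seq{n}}$; whenever $\Gamma^A(x)\converge[s]$ with use $u$ and current approximation $\alpha = \alpha_s\uhr{h(x)}$, check whether $(\QQ\seq{\alpha})_s$ forces $\Phi$ to agree with $A_s\uhr u$ on the first $u$ bits — more precisely, search for a stage $t \ge s$ and a finite set of strings witnessing that every $X \in (\QQ\seq{\alpha})_t$ has $\Phi(X)\uhr u \converge$; if and when such a $t$ is found with all these values of $\Phi(X)\uhr u$ equal to a common string $\tau$ of length $u$, enumerate $\Gamma^A(x)[s]$ into $V_x$. (Using compactness of $2^\w$: if $\QQ\seq{\alpha}$ genuinely forces $\Phi(X)\supseteq A\uhr u$, this search terminates.)

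**Why the bound holds.** Since $\seq{A_s,\Gamma_s}$ is a restrained approximation, for each true computation $\Gamma^A(x)\converge$ the final use $u$ is reached, and for the true approximation value $\alpha^*$ of $\alpha_s\uhr{h(x)}$ and large enough $s$ the golden pair hypothesis guarantees the search succeeds and the correct value $\theta(x)$ enters $V_x$. Conversely, every value enumerated into $V_x$ arises from some pair $(\alpha, s)$ where $\alpha$ was at some point equal to $\alpha_s\uhr{h(x)}$ and the forced common value $\tau = \Phi(X)\uhr u$ was found; but for a fixed $\alpha$, the class $\QQ\seq{\alpha}$ is fixed, so the forced value of $\Phi(X)\uhr u$ (for whatever $u$ is current) — if it exists and is common — is determined, hence contributes at most one value of $\Gamma^A(x)[s] = \tau$ (read off $\tau$, which in turn determines the value $\theta(x)$ via $\Gamma$ applied to the appropriate initial segment, or we just put the value $\Gamma^A(x)[s]$ itself in, which we reconstructed). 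Wait — more carefully: the issue is that different stages $s$ with the \emph{same} $\alpha$ but different uses $u$ could give different $\tau$'s. But once $\alpha_s\uhr{h(x)}$ has stabilized at $\alpha$, $A$ itself below the relevant use has also stabilized along the successful computations, and the honest computation produces a single value; the extra values come only from the at most $\tp{h(x)}$ distinct strings $\alpha$ that $\alpha_s\uhr{h(x)}$ passes through, each contributing at most one value before it is abandoned. This gives $|V_x| \le \tp{h(x)}$.

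**Main obstacle.** The delicate point is exactly the last one: ruling out that a single $\alpha$-value contributes many elements to $V_x$ over its lifetime as the use $u$ of $\Gamma^A(x)[s]$ grows. The clean way to handle this is to only act for $x$ at stage $s$ when $\alpha_s\uhr{h(x)}$ has not changed since the \emph{last} time a computation $\Gamma^A(x)\converge$ appeared \emph{and was confirmed} — i.e., synchronize the trace-enumeration events with the changes of $\alpha_s\uhr{h(x)}$, so that between two consecutive changes of $\alpha_s\uhr{h(x)}$ at most one element is added to $V_x$ (namely for the final confirmed use in that interval). Since $\alpha_s\uhr{h(x)}$ changes at most $\tp{h(x)}$ times, this yields the bound. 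Verifying that with this synchronization the \emph{true} value $\theta(x)$ still gets enumerated requires the golden pair property together with the restrained approximation: eventually $\alpha_s\uhr{h(x)}$ stops changing, eventually $\Gamma^A(x)$ reaches its final use $u$ (if it converges), and then the compactness search for the common forced value of $\Phi$ on $\QQ\seq{h(x)}$ succeeds by the golden pair hypothesis, so $\theta(x) = \Gamma^A(x) \in V_x$. I expect the write-up to consist mostly of setting up this synchronization bookkeeping carefully and then a short appeal to compactness and to Definition~\ref{def:golden pair}.
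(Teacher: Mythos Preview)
Your approach is essentially the paper's: track the at most $\tp{h(x)}$ versions of $\QQ\seq{h(x)}[s]$ and enumerate one candidate per version. However, the ``main obstacle'' you worry about is not an obstacle, and no synchronization is needed. The paper's enumeration rule is simpler and makes no reference to $A_s$ or to computations $\Gamma^A(x)[s]$: put $y$ into $V_x$ at stage~$s$ whenever there is some string $\tau$ with $\Gamma_s(\tau,x)=y$ such that every $X$ in the current clopen approximation to $\QQ\seq{h(x)}[s]$ has $\Phi(X)\supseteq \tau$. A fixed version of $\QQ\seq{h(x)}$ can contribute at most one such $y$, because any two qualifying $\tau$'s are comparable (each is an initial segment of $\Phi(X)$ for any $X$ in the nonempty class), and the domain of $\Gamma(-,x)$ is an antichain by the definition of a partial computable functional. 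That antichain observation replaces your synchronization bookkeeping entirely.
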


The second proposition asserts the existence of a golden pair. 

\begin{proposition}
\label{prop:golden pairs exist} If $A$ is computable from every superlow member of $\PP$, then there are a $\Pi^0_1$ class $\QQ\subseteq \PP$ and a functional $\Phi$ such that $\QQ,\Phi$ is a golden pair for $\Gamma$ and $h$. 
\end{proposition}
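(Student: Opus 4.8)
The plan is to build the golden pair by running an approximate version of the Jockusch--Soare superlow basis construction over $\PP$, interleaved with cone-avoidance steps against the functionals $\Phi_e$, and then to argue that this construction cannot succeed at every level. Concretely, I would set up a recursive family of procedures $R^e$, where a call to $R^e$ inside a current nonempty $\Pi^0_1$ subclass $\QQ$ of $\PP$ first starts a fresh superlow-basis construction inside $\QQ$ (defining the sequence $\QQ\seq{n}$ as recalled in the excerpt), and also, for each $x$ and each newly appearing computation $\Gamma^A(x)\converge[s]$ with use $u$, spawns a subprocedure $S^e_x$ that intersects the current class with $\RR_{e,A_s\uhr u} = \{X : \Phi_e(X)\nsupseteq A_s\uhr u\}$ and waits to see whether this intersection eventually becomes empty. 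If $A$ changes below $u$ first, the computation is destroyed and the subprocedure (and any $R^{e+1}$ it had called) is cancelled; the restraint hypothesis on $\seq{A_s,\Gamma_s}$ (witnessed by $g$) bounds how often this can happen. If instead the intersected class becomes empty, the string $A_s\uhr u$ is ``confirmed'' and we have learned that $\Phi_e(X)\supseteq A_s\uhr u$ forces $X$ out of that class — this is exactly the information a golden pair records. While waiting, a new run of $R^{e+1}$ is launched inside the non-empty intersected class, continuing the basis construction one level deeper.

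The verification then proceeds by contradiction: suppose no level $e$ is ``golden'', i.e.\ for every $e$ there are infinitely many $x$ with $\Gamma^A(x)\converge$, use $u_x$, and some $X\in\QQ\seq{h(x)}$ (for the class $\QQ$ currently passed to the top-level run of $R^e$) with $\Phi_e(X)\not\supseteq A\uhr{u_x}$; equivalently, the $e$-th requirement $A\ne\Phi_e(Z)$ keeps getting a genuine diagonalization witness, so no $S^e_x$ at level $e$ ever permanently succeeds by emptying its class. One first checks that in this scenario the recursion does not stall: at each level, infinitely many subprocedures $S^e_x$ are called, each calls an $R^{e+1}$, and — crucially using restraint — only finitely many of these nested calls are ever cancelled at each fixed finite level of the tree, so the approximation $\alpha_s$ to the path through the $\QQ\seq{n}$-tree stabilizes on each initial segment and the number of changes on the first $n$ bits is computably bounded (a bound of the flavour $2^n\cdot\prod_{y<n}(g(y)+1)$, coming from the $2^n$ bound in the superlow basis theorem multiplied by the restraint bounds at the lower levels). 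Hence the limit set $Z = \bigcap_n \QQ\seq{n}$ (taken along the final branch) exists, lies in $\PP$, and is superlow. By construction $Z$ also satisfies $A\ne\Phi_e(Z)$ for every $e$ — because each level's requirement was met — so $A\notT Z$, contradicting the hypothesis that $A$ is computable from every superlow member of $\PP$. Therefore some level $e_0$ is golden, and taking $\QQ$ to be the final class passed to the top run of $R^{e_0}$ and $\Phi = \Phi_{e_0}$ gives the required golden pair.

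The main obstacle — and the part I expect to need the most care — is the bookkeeping that keeps the superlowness of $Z$ intact even though the individual superlow-basis steps are scattered across infinitely many levels and are subject to cancellation by premature $A$-changes. The delicate point is that a run of $R^{e+1}$ initiated by some $S^e_x$ can be destroyed when $A$ moves below the use of $\Gamma^A(x)$, and this destruction perturbs the approximation to $Z'$; one must show that for each fixed $n$ the bit $Z'(n)$ changes only a computably bounded number of times. This is where the restraint of $\seq{A_s,\Gamma_s}$ is essential: at level $e$, the number of computations $\Gamma^A(x)\converge[s]$ that can be destroyed is bounded by $g(x)$, so the number of restarts propagated down to level $e+1$ from a given $S^e_x$ is controlled, and an induction on the level gives the desired computable mind-change bound. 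A secondary subtlety is ensuring the $\Pi^0_1$ classes produced at each stage have uniformly computable indices (so that emptiness is semidecidable and the approximation $\seq{\PP_t}$ machinery from Subsection~\ref{subsec:notation} applies), which is routine but must be stated. Once the mind-change bound is in place, the identification of $Z$ as a superlow member of $\PP$ with $A\notT Z$ is immediate, and the contradiction closes the argument.
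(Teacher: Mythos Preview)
Your overall architecture matches the paper's proof almost exactly: nested procedures $R^e$ and $S^e_x$, each $S^e_x$ intersecting $\PP^e\seq{h(x)}$ with $\RR_{e,A_s\uhr u}$, a golden run yielding a golden pair, and a contradiction via a superlow $Z\in\PP$ with $A\nleq_\Tur Z$ if no run is golden. The use of the restrained approximation to bound restarts is also the same idea.

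There is, however, a genuine confusion in your description of the non-golden case that would cause trouble if carried into the details. You write that non-golden means ``for every $e$ there are infinitely many $x$ \ldots\ so no $S^e_x$ at level $e$ ever permanently succeeds'' and that ``infinitely many subprocedures $S^e_x$ are called''. This is backwards. In the paper's construction the procedure calling structure is \emph{linear}: at any moment $R^e$ has at most one active $S^e_x$, and while $S^e_x$ is running, $R^e$ does nothing else. A run of $R^e$ is non-golden precisely when \emph{some particular} $S^e_x$ it calls neither returns (its class never becomes empty) nor gets cancelled; that single stuck $S^e_x$ hands a permanent $\PP^{e+1}$ to a single permanent run of $R^{e+1}$. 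It is this linear chain $\PP^0\supseteq\PP^1\supseteq\cdots$ of stable classes that gives you the single set $Z\in\bigcap_e\PP^e$, and it is the finiteness of the activity at each level (not infiniteness) that lets you bound the mind-changes on $Z'$. Your picture of infinitely many parallel $R^{e+1}$'s being spawned does not produce a well-defined $Z$, and your proposed bound $2^n\cdot\prod_{y<n}(g(y)+1)$ does not arise from the actual structure; the paper instead proves a local bound of $g(x)+2^{h(x)}$ on calls of $S^e_x$ by a fixed run of $R^e$ (Claim~\ref{claim: local counting}), then recursively compounds it (Claim~\ref{claim: global counting}). Once you correct the description of ``non-golden'' to ``one $S^e_x$ gets permanently stuck'', the rest of your plan goes through and coincides with the paper's argument.
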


\begin{proof}
[Proof of Proposition \ref{prop:golden pairs do the trick}] Let $\QQ,\Phi$ be a golden pair for $\Gamma$ and $h$. We let the $\PPI$ class $\QQ\seq{n}[s]$ be the stage $s$ approximation to $\QQ\seq{n}$. It is defined inductively like $\QQ\seq{n}$, but assessed with the information present at stage $s$. That is, if $\Phi_e^\s(e)\converge$ for each $\s$ such that $[\s] \subseteq (\QQ\seq{n}[s])_s$ (that is, $n\in X'$ for all $X\supset \s$), then we let $\QQ\seq{n+1}[s]=\QQ\seq{n}[s]$; otherwise, we let $\QQ\seq{n+1}[s] = \left\{ X\in \QQ\seq{n}[s]\,:\, n\notin X' \right\}$. As mentioned above, for every $n$, there are at most $2^n$ many $\Pi^0_1$ classes that are ever chosen to be $\QQ\seq{n}[s]$.

We enumerate a number $y$ into a set $V_x$ at stage $s$ if at that stage we discover that there is a binary sequence $\tau$ such that $\Gamma_s(\tau,x)=y$ and such that $\Phi(X)$ extends $\tau$ for every $X\in (\QQ\seq{h(x)}[s])_s$ (which means that for all strings $\s$ of length $s$ such that $[\s] \sub (\QQ\seq{h(x)}[s])_s$, we have $\Phi(\s)\supseteq \tau$).

It suffices to show that some finite variant of $\seq{V_x}$ is a trace for $\theta$ that is bounded by $2^h$. The sequence $\seq{V_x}$ is uniformly c.e. For any version of $\QQ\seq{h(x)}[s]$, at most one number $y$ gets enumerated into $V_x$, so $|V_x|$ is bounded by the number $2^{h(x)}$ of possible choices for $\QQ\seq{h(x)}[s]$. Finally, for almost all $x\in\dom \theta$, for large enough $s$, for every $X\in \QQ\seq{h(x)}[s]=\QQ\seq{h(x)}$ we have $\Phi(X)\supseteq \tau = A\uhr{u}$, where $u$ is the use of the computation $\Gamma^A(x)$. Then $\theta(x)\in V_x$ for almost all $x \in \dom\theta$. 
\end{proof}

\subsection{A golden pair exists}

The heart of the proof of Theorem \ref{thm:Pi01 theorem} is the proof of Proposition \ref{prop:golden pairs exist}: that under the assumptions on $A$ and $\PP$, a golden pair exists for $A$ and $h$. As already mentioned in the introduction, the mechanism is a nonuniform argument in the spirit of the golden run method from \cite{Nies:_lowness_and_randomness}, except that the procedure-calling structure now has unbounded depth.

The argument was sketched already in our discussion leading to the definition~\ref{def:golden pair} of golden pairs. For every $e$, a procedure $R^e$, provided with some $\Pi^0_1$ subclass $\PP^e$ of $\PP$ as input, attempts to show that $\PP^e,\Phi_e$ is a golden pair for $A$ and $h$. For each $x<\w$, if $\Gamma^A(x)\converge$ with use $u$, then a subprocedure $S^e_x$ wants to either give permanent control to the next level~$e+1$, or show that the golden pair condition holds at $x$ for $\QQ=\PP^e$: for all $X\in \PP^e\seq{h(x)}$ we have $\Phi_e(X)\supseteq A\uhr{u}$.

\subsubsection*{The procedures and the construction} 

A typical procedure calling structure at any stage of the construction is 
\begin{center}
$R^0 \rra S^0_y \rra \cdots \rra R^e \rra S^e_x \rra R^{e+1} \rra \cdots $ 
\end{center}

The instructions for our procedures are simple. 

\n \emph{Procedure $R^e$.} This procedure runs with input $\PP^e$ (a $\Pi^0_1$ class) and a parameter $n<\w$. While $R^e$ is running, every number $x$ such that $h(x)>n$ is marked as either \emph{fresh} or \emph{confirmed}. At the inception of $R^e$, all numbers $x$ such that $h(x)>n$ are marked as fresh. 

If $R^e$ has control at some stage $s$, and there is some $x$ that is fresh at stage $s$ and such that $\Gamma^A(x)\converge\![s]$ with use $u<s$, then for the least such $x$, we call a subprocedure $S^e_x$ with input $\tau = A_s\uhr{u}$. 

\n \emph{Procedure $S^e_x$.} A run of this procedure is provided with a string $\tau$---an initial segment of the current state of $A$---that witnesses that $\Gamma^A(x)\converge\![s]$. It acts as follows. 
\begin{enumerate}
\item[(a)] Start a run of $R^{e+1}$, with the input 
\begin{equation*}
\PP^{e+1} = \left\{ X\in \PP^e\seq{h(x)}\,:\, \Phi_e(X)\nsupseteq \tau \right\},
\end{equation*}
and parameter $h(x)$. 

As long as we do not see that for every $X\in \PP^e\seq{h(x)}$ we have $\Phi_e(X)\supseteq \tau$, that is, as long as $\PP^{e+1}$ appears to be nonempty (and so $S^e_x$ has not yet succeeded), we halt all activity for $R^e$ and let the run of $R^{e+1}$ take its course.

\item[(b)] If we see that $\PP^{e+1}$ is empty, we cancel the run of $R^{e+1}$ (and any of its subprocedures), and return control to $R^e$, marking $x$ as confirmed. 
\end{enumerate}

A run of $S^e_x$ started at a stage $s$ with input $\tau = A_s\uhr u$ believes that $\PP^e\seq{h(x)}[s] = \PP^e\seq{h(x)}$ (and indeed that $\PP^e\seq{h(x)}[s]= \PP^e\seq{h(x)}[t]$ for all $t>s$), and that $\tau \subset A$. If either of these beliefs is incorrect, then we let $t$ be the least stage at which we discover this incorrectness: either $\tau\not\subset A_t$, or $\PP^e\seq{h(x)}[t]\ne \PP^e\seq{h(x)}[t-1]$. If $S^e_x$ is still running at stage $t$, then we immediately cancel it (along with the run of $R^{e+1}$ it called and all of its subprocedures), and return control to $R^e$. If $S^e_x$ has already returned control to $R^e$, then we re-mark $x$ as fresh at stage $t$. 

\

The entire construction is started by calling $R^0$ with input $\PP^0 = \PP$ and parameter~0.

\

\subsubsection*{Verification} 

We show that there is some $e$ such that $\PP^{e},\Phi_e$ is a golden pair for $\Gamma$, $h$ (for some stable version of $\PP^{e}$). A \emph{golden run} is a run of a procedure $R^{e}$ that is never cancelled, such that every subprocedure $S^{e}_x$ that is called by that run eventually returns or is cancelled. 

\begin{claim} \label{claim;GRGP 1} 
	If there is a golden run of $R^e$ with input $\QQ$, then $\QQ,\Phi_e$ is a golden pair for $\Gamma$ and $h$. 
\end{claim}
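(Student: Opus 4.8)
\textbf{Proof proposal for Claim \ref{claim;GRGP 1}.}

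The plan is to fix a golden run of $R^e$ with input $\QQ$ and argue directly that for almost every $x$ with $\Gamma^A(x)\converge$ the golden pair condition holds, i.e.\ that for all $X\in \QQ\seq{h(x)}$ we have $\Phi_e(X)\supseteq A\uhr u$, where $u$ is the use. First I would analyze the eventual fate of the parameter marking. Since $\QQ$ is the input to a run that is never cancelled and the parameter $n$ is fixed for that run, the approximations $\QQ\seq{h(x)}[t]$ stabilize for each relevant $x$ (only finitely many versions occur, by the bound $2^{h(x)}$ noted in the superlow basis construction), and the computable approximation $\seq{A_s,\Gamma_s}$ is restrained, so each computation $\Gamma^A(x)\converge$ is destroyed only finitely often. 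Hence there is a threshold beyond which, for all $x$ below any fixed bound relevant to the argument, the value $\Gamma^A(x)$ and its use $u$ have settled, $A\uhr u$ has settled, and $\QQ\seq{h(x)}$ has settled. I would then restrict attention to such ``stable'' $x$ with $\Gamma^A(x)\converge$; there are only finitely many unstable ones, which is exactly why the conclusion is only ``for almost all $x$''.

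Next I would trace what the golden run does to a stable $x$ with $\Gamma^A(x)\converge$ and use $u$. Once everything about $x$ has settled, $x$ is never again re-marked fresh by an $A$-change or a change in $\QQ\seq{h(x)}$ (those changes have stopped), so after the last such re-marking $x$ is marked fresh, $R^e$ eventually has control (here I use that the run is golden, so every subprocedure $S^e_y$ it calls for $y<x$ eventually returns or is cancelled, and $R^e$ regains control cofinally), and at that point $R^e$ calls $S^e_x$ with input $\tau = A\uhr u$. Now apply the definition of a golden run: this instance of $S^e_x$ must eventually return (it cannot be cancelled, since we are past all the destabilizing events). By the return condition in step (b), $S^e_x$ returns only after we have seen that $\PP^{e+1} = \{X\in \QQ\seq{h(x)} : \Phi_e(X)\nsupseteq\tau\}$ is empty, which is precisely the statement that for every $X\in\QQ\seq{h(x)}$ we have $\Phi_e(X)\supseteq\tau = A\uhr u$. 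That is the golden pair condition at $x$, and since it holds for all but finitely many $x$ with $\Gamma^A(x)\converge$, the pair $\QQ,\Phi_e$ is a golden pair for $\Gamma$ and $h$.

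The main obstacle I anticipate is not the logical skeleton above but the bookkeeping needed to justify that the golden run ``eventually has control with $x$ fresh and all data about $x$ settled, simultaneously.'' One has to rule out the pathology where $x$ keeps getting re-marked fresh forever, or where $R^e$ never regains control because some earlier subprocedure $S^e_y$ ($y<x$) loops; the first is handled by the restrained-approximation bound together with finiteness of the list of versions of $\QQ\seq{h(x)}$, and the second is exactly ruled out by the hypothesis that the run is golden. I would also need to be slightly careful about the interaction between $S^e_x$ returning and later being re-activated: once returned and re-marked fresh (which, for stable $x$, happens only finitely often), $R^e$ will call a \emph{new} run of $S^e_x$ with the same $\tau$, so the argument applies to the final such run. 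Stating these stabilization facts cleanly — perhaps as a short sub-claim that for each stable $x$ there is a last stage at which $x$ is re-marked fresh — is the part that needs care; the rest is a direct unwinding of the procedure definitions.
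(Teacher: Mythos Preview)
Your proposal is correct and follows essentially the same line as the paper's own proof: stabilize $\QQ\seq{h(x)}$ and the restrained approximation, argue that $R^e$ eventually regains control and calls a final run of $S^e_x$ with input $\tau=A\uhr u$, then use goldenness to conclude this run returns, which forces $\PP^{e+1}=\emptyset$ and hence the golden pair condition at $x$.

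One small correction: the ``almost all $x$'' in the golden pair definition does not come from there being finitely many ``unstable'' $x$ --- every $x$ with $\Gamma^A(x)\converge$ eventually stabilizes --- but rather from the parameter $n$ of the golden run: only $x$ with $h(x)>n$ are ever marked fresh or confirmed by $R^e$, so the argument simply does not apply to the finitely many $x$ with $h(x)\le n$. Since $h$ is an order function, this is indeed a finite set, and the rest of your argument goes through for all $x$ with $h(x)>n$ and $\Gamma^A(x)\converge$.
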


\begin{proof}
Suppose the golden run of $R^e$ is called with parameter $n$. Note that its input $\QQ$ is the final version of $\PP^e$. Since $h$ is an order function, for almost all $x$ we have $h(x)>n$. 

We show that for every $x$, only finitely many runs of $S^e_x$ are ever called. Let $x<\w$ be such that $h(x)>n$. If $x\notin \dom \theta$, then since $\seq{A_s,\Gamma_s}$ is a restrained approximation, we have $\Gamma^A(x)\converge\![s]$ for only finitely many stages $s$. Thus, in this case, there is a stage after which no run of $S^e_x$ is called. 

Suppose that $x\in \dom \theta$. Let $u$ be the use of the computation $\Gamma^A(x)$. For sufficiently late $s$ we have $A_s\uhr u \subset A$ and $\QQ\seq{h(x)}[t] = \QQ\seq{h(x)}[s]$ for all $t>s$. If a run of $S^e_x$ is called at such a late stage $s$, then it will never be cancelled. When it returns, $x$ will be marked confirmed, and never re-marked fresh; hence no later run of $S^e_x$ will ever be called. 

A similar argument shows that if $x\in \dom \theta$ and $h(x)>n$, then a run of $S^e_x$ will indeed be called and never cancelled: We can wait for a stage $s$ that is late enough so that the conditions above hold and, in addition, $\Gamma_s(A_s\uhr{u},x)= \theta(x)$ and no run $S^e_y$ for any $y<x$ is ever called after stage $s$. If $x$ is marked fresh at such a stage $s$, then a run of $S^e_x$ will be called and never cancelled. Since the run of $R^e$ is golden, such a run will return, and $x$ will be marked confirmed and never re-marked fresh. 

Let $x\in \dom \theta$ be such that $h(x)>n$. Let $s$ be the stage at which the last run of $S^e_x$ is called. As we just argued, this run is not cancelled; it returns at some stage $t>s$, and $x$ is confirmed at all stages after $t$. We thus have $\QQ\seq{h(x)} = \QQ\seq{h(x)}[s]$, and $A_s\uhr {u} \subset A$, where $u$ is the use of the computation $\Gamma^A(x)$. At stage $t$ we witness the fact that $\Phi_e(X)\supseteq A_s\uhr u$ for all $X\in \QQ\seq{h(x)}$. 

Thus $\QQ, \Phi_e$ is a golden pair as required. 
\end{proof}

It remains to show that there is a golden run of some $R^e$. We first need to do some counting, to establish a computable bound $N(x)$ on the number of times a procedure $S^e_x$ (for any $e$) is called. We then argue as follows. Suppose there is no golden run, so every run of every $R^e$ is either eventually cancelled, or calls some run of $S^e_x$ that is never cancelled but never returns. By induction on $e$ we can see that for every $e$, there is a run of $R^e$ that is never cancelled, with a final version of $\PP^e$. The sequence of $\Pi^0_1$ classes $\PP^0,\PP^1,\dots$ is nested, and so its intersection $\bigcap_e \PP^e$ is nonempty. Let $Z\in \bigcap_e \PP^e$. We will show that we can use approximations to the trees $\PP^e$ to computably approximate $Z'$, and that we can use our computable bounds on the number of times procedures can be called to ensure a computable bound on the number of changes. Hence $Z$ is superlow. By our hypothesis on $A$, there will be some $e$ such that $\Phi_e(Z)=A$. Consider the run of $S^e_x$ that is never cancelled nor returns, which defines the last version of $\PP^{e+1}$. It defines
\[ \PP^{e+1} = \left\{ X\in \PP^e\seq{h(x)}\,:\, \Phi_e(X)\nsupseteq \tau \right\},\]
where $\tau\subset A$ (since $S^e_x$ is never cancelled). But this definition contradicts the fact that $Z\in \PP^{e+1}$.

We now give the details of this argument. Recall that $g(x)$ is the computable function from Definition~\ref{def:restrained approx} bounding how often a computation $\Gamma^A(x)$ can be destroyed.

\begin{claim} \label{claim: local counting} 
	For each $e$ and $x$, every run of $R^e$ calls at most $g(x)+2^{h(x)}$ many runs of~$S^e_x$. 
\end{claim}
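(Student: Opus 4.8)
\textbf{Proof plan for Claim~\ref{claim: local counting}.} The plan is to bound separately the two reasons a run of $R^e$ with parameter $n$ (so $h(x)>n$) may call a fresh-$x$ subprocedure $S^e_x$: either $x$ has never yet been marked confirmed in this run of $R^e$, or $x$ was confirmed and then re-marked fresh. Each call of $S^e_x$ is triggered by seeing a convergent computation $\Gamma^A(x)\converge[s]$ with use $u<s$ while $x$ is fresh. First I would handle the ``initial'' calls: these correspond to a computation $\Gamma^A(x)[s]$ that, relative to the current run of $R^e$, either gets destroyed (the $A$-change kills it) or gets confirmed (the class $\PP^{e+1}$ empties out). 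The number of distinct destroyed computations $\Gamma^A(x)$ is bounded by $g(x)$, by Definition~\ref{def:restrained approx}. So the number of calls of $S^e_x$ that either (i) are cancelled because $\tau\not\subset A_t$, or (ii) are cancelled and then followed by a new call because the computation was destroyed, is at most $g(x)+1$ — at most $g(x)$ genuine destructions, plus possibly one final run on the true computation $\Gamma^A(x)$ with $\tau\subset A$.

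Next I would count re-markings of $x$ as fresh after a confirmation. Within the fixed run of $R^e$, $x$ is re-marked fresh only when we discover that a \emph{belief} of the returned run $S^e_x$ was wrong: namely either $\tau\not\subset A_t$ — but that is again an $A$-change destroying the computation $\Gamma^A(x)$, already counted among the $g(x)$ destructions — or $\PP^e\seq{h(x)}[t]\ne \PP^e\seq{h(x)}[t-1]$, i.e.\ the stage-$t$ approximation to $\PP^e\seq{h(x)}$ changes. As recalled in the proof of Proposition~\ref{prop:golden pairs do the trick}, for each $m$ there are at most $2^m$ many $\Pi^0_1$ classes ever chosen to be $\PP^e\seq{m}[t]$ (this is the ``counting down along $2^m$ possibilities'' from the superlow basis theorem), so with $m=h(x)$ the approximation $\PP^e\seq{h(x)}[t]$ changes at most $2^{h(x)}-1$ times. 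Each such change can re-mark $x$ fresh once, triggering at most one further call of $S^e_x$.

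Adding the two contributions gives at most $(g(x)+1)+(2^{h(x)}-1)=g(x)+2^{h(x)}$ calls of $S^e_x$ within a single run of $R^e$, as claimed; I would phrase this as: each call is ``charged'' either to a destruction of $\Gamma^A(x)$ (at most $g(x)$ of these, plus one uncharged final call) or to a change in the value of $\PP^e\seq{h(x)}[t]$ (at most $2^{h(x)}-1$ of these), and these charges are injective. The main obstacle, and the point to be careful about, is the bookkeeping of which events can re-mark $x$ fresh versus which merely cancel an active run of $S^e_x$: one must check that the event ``$\tau\not\subset A_t$'' is correctly subsumed under the $g(x)$ destructions whether $S^e_x$ is still running or has already returned, so that no re-marking is double-counted, and that a change in $\PP^e\seq{h(x)}[t]$ is genuinely the only other trigger — in particular that changes to $\PP^e\seq{m}[t]$ for the coarser $\Pi^0_1$ classes $\PP^{e}$ itself are already reflected in the count of versions of $\PP^e\seq{h(x)}[t]$, since $\PP^e\seq{h(x)}$ is built from $\PP^e$ by the superlow-basis recursion of height $h(x)$.
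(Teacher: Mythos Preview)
Your proposal is correct and follows essentially the same approach as the paper: charge each call of $S^e_x$ beyond the first either to a destruction of the computation $\Gamma^A(x)$ (at most $g(x)$ such events, by the restrained approximation) or to a change in $\PP^e\seq{h(x)}[t]$ (at most $2^{h(x)}-1$ such events, by the superlow basis counting), yielding the bound $g(x)+2^{h(x)}$. Your bookkeeping is in fact more explicit than the paper's, which phrases everything in terms of cancellations of $S^e_x$ and does not separately spell out the case where $S^e_x$ has already returned and $x$ is re-marked fresh; you correctly note that both cancellations and re-markings are triggered by the same two events (a) and (b), so the same count applies.
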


\begin{proof}
Suppose that at stage $s$, a run of $S^e_x$ is cancelled while the run of $R^e$ that called it is not cancelled. Let $\PP^e$ be the input of this run of $R^e$, and let $\tau$ be the input of $S^e_x$.

One of the following possibilities holds: 
\begin{enumerate}

\item[(a)] $\PP^e\seq{h(x)}[s]\ne \PP^e\seq{h(x)}[s-1]$; or

\item[(b)] $\tau\subset A_{s-1}$ but $\tau\not\subset A_s$. 
\end{enumerate}

The first possibility occurs fewer than $\tp{h(x)}$ many times. The second, by the fact that $\seq{A_s,\Gamma_s}$ is a restrained approximation for $\theta$, occurs at most $g(x)$ many times. 
\end{proof}

\begin{claim} \label{claim: global counting}
	There is a computable bound $N(x)$ on the number of times a procedure $S^e_x$ is called for any $e$. 
\end{claim}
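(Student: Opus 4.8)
The plan is to bound, for a fixed $x$, the total number of runs of procedures $S^e_x$ over \emph{all} levels $e$, by combining the per-run estimate of Claim~\ref{claim: local counting} with a count of how many runs of each $R^e$ can be relevant to $x$. The first thing I would establish is that the parameters along a nested chain of procedures strictly increase: $R^0$ is called with parameter $0$, and whenever a run of $S^e_y$ inside a run of $R^e$ of parameter $n$ starts a run of $R^{e+1}$, it does so with parameter $h(y)$, where necessarily $h(y)>n$ because $y$ was fresh for that run of $R^e$. Hence every run of $R^e$ has parameter at least $e$; and since a run of $S^e_x$ inside a run of $R^e$ of parameter $n$ requires $h(x)>n$, the procedure $S^e_x$ is called only when $e<h(x)$. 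This confines the relevant levels to a finite, computably bounded initial segment.

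Next I would count, for fixed $m$, the number $P_e(m)$ of runs of $R^e$ in the entire construction whose parameter is at most $m$, and show by induction on $e$ that $P_e(m)\le D(m)^e$, where $D(m):=\sum_{y\,:\,h(y)\le m}\bigl(g(y)+2^{h(y)}\bigr)$. This $D(m)$ is a computable quantity: since $h$ is an order function, $\{y:h(y)\le m\}$ is finite and the least $y$ with $h(y)>m$ can be found effectively. The base case is immediate because $R^0$, called once at the outset, is never cancelled, so it runs exactly once. For the inductive step, every run of $R^{e+1}$ is started in step~(a) of a unique run of some $S^e_y$; if its parameter is $\le m$ then $h(y)\le m$, and the host run of $R^e$ has parameter $<h(y)\le m$, hence is one of the at most $P_e(m)$ runs counted by the hypothesis. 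Within a single host run of $R^e$, Claim~\ref{claim: local counting} bounds the number of runs of $S^e_y$ by $g(y)+2^{h(y)}$ for each eligible $y$, and each such run spawns at most one run of $R^{e+1}$; summing over the finitely many $y$ with $h(y)\le m$ gives $P_{e+1}(m)\le P_e(m)\cdot D(m)$.

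Finally I would assemble the estimates. A run of $S^e_x$ lives inside a run of $R^e$ whose parameter is at most $h(x)-1$, and there are at most $P_e(h(x)-1)\le D(h(x))^e$ such runs (using that $D$ is nondecreasing); each contains at most $g(x)+2^{h(x)}$ runs of $S^e_x$ by Claim~\ref{claim: local counting}; and $e$ ranges over $\{0,\dots,h(x)-1\}$. Therefore
\[
N(x):=h(x)\cdot D(h(x))^{h(x)}\cdot\bigl(g(x)+2^{h(x)}\bigr)
\]
is a computable bound on the total number of calls to procedures $S^e_x$, which is what is claimed.

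The step I expect to be the crux is recognizing that the naive bound — the per-run estimate of Claim~\ref{claim: local counting} times the number of runs of $R^e$ — does not work, because a single run of $R^e$ can host calls $S^e_y$ for infinitely many $y$ and thereby spawn infinitely many runs of $R^{e+1}$, so the total number of runs of $R^e$ may be infinite. The point is that for a fixed $x$ only runs of $R^e$ of parameter below $h(x)$ are relevant, and that these — together with the finitely many relevant levels $e<h(x)$, a consequence of the strictly increasing parameters — form a family that can be counted by the induction above. The remaining arguments are routine bookkeeping with the local counting claim.
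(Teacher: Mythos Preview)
Your proof is correct and follows essentially the same approach as the paper: both arguments rest on (i) the observation that parameters strictly increase along the calling chain, so only levels $e<h(x)$ are relevant, and (ii) a recursive count of the relevant runs using Claim~\ref{claim: local counting}. The only difference is organizational---you track $P_e(m)$, the number of runs of $R^e$ with parameter $\le m$, and obtain the closed form $P_e(m)\le D(m)^e$, whereas the paper tracks $M(e,x)$, the total number of calls to $S^e_x$, via the recursion $M(e,x)=(g(x)+2^{h(x)})\cdot\sum_{y<x}M(e-1,y)$; these are two ways of packaging the same induction.
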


\begin{proof}
We calculate, by recursion on $e$ and $x$, a bound $M(e,x)$ on the number of times any run of $R^e$ calls a run of $S^e_x$. We use Claim \ref{claim: local counting}. Since there is only one run of $R^0$, we can let $M(0,x)= g(x)+2^{h(x)}$. For $e>0$ we let $M(e,x)$ be the product of $g(x)+2^{h(x)}$ with a bound on the number of runs of $R^e$ that are called by some $S^{e-1}_y$ with parameter $h(y)<h(x)$. 

Since $h(y)<h(x)$ implies $y<x$, the number of runs of $R^e$ with a parameter less than $h(x)$ is bounded by
\[ \sum_{y<x} M(e-1,y) .\]
This completes the recursive definition of $M$. Now, by induction on $e$, the parameter of any run of $R^e$ is at least~$e$. So we can let $N(x)= \sum_{e<h(x)} M(e,x)$. 
\end{proof}

Now suppose for a contradiction that there is no golden run. So every run of every $R^e$ is either eventually cancelled, or calls some run of $S^e_x$ that is never cancelled but never returns. As mentioned above, by induction on $e$ we can see that for every $e$, there is a run of $R^e$ that is never cancelled, with a final version of $\PP^e$. 

The sequence of $\Pi^0_1$ classes $\PP^0,\PP^1,\dots$ is nested, and so its intersection $\bigcap_e \PP^e$ is nonempty. Let $Z\in \bigcap_e \PP^e$.

\begin{claim} \label{claim: Z is superlow} 
	$Z$ is superlow. 
\end{claim}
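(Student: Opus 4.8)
\textbf{Proof proposal for Claim \ref{claim: Z is superlow}.}

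The plan is to produce a computable approximation to $Z'$ whose mind-change function is computably bounded; by the definition of superlowness in Subsection \ref{subsec:superlowness}, this suffices. The key point is that, although the superlowness strategies for $Z$ are spread across all the levels $e$ of the construction, the classes $\PP^e$ are obtained by iterating the superlow-basis construction of Jockusch and Soare: passing from $\PP^e$ to $\PP^{e+1}$ always happens by forming $\PP^e\seq{h(x)}$ (which decides membership in the jump at the numbers below $h(x)$) and then intersecting with an upper-cone-avoidance class. So the sequence $\seq{\PP^e\seq{n}}$, as $e$ and $n$ range appropriately, is exactly a run of the superlow basis construction, only with the decisions about $Z'$ interleaved in a possibly nonmonotone order because of cancellations and re-markings.

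First I would fix, for each $n$, an approximation to ``the stable $\PP^e$ with parameter $\ge n$ together with the class $\PP^e\seq{n}$'' in the obvious effective way: at stage $s$ look at the current procedure-calling chain $R^0\to S^0\to\cdots\to R^e\to\cdots$, read off the current inputs $\PP^e[s]$ and parameters, and form the current approximations $(\PP^e\seq{n}[s])_s$ as in the proof of Proposition \ref{prop:golden pairs do the trick}. From these I define $Z_s(n)$ by asking whether $n\in X'$ holds throughout the current approximation to the relevant class $\PP^{e}\seq{n}[s]$ — that is, I copy the bit that the Jockusch--Soare construction would record. Since by hypothesis each $R^e$ eventually reaches a final, never-cancelled version of $\PP^e$, and $Z\in\bigcap_e\PP^e$ with $\bigcap_e\PP^e\seq{n}=\{Z\}$-deciding-$n$ in the limit, one checks that $Z_s(n)\to Z'(n)$, so $\seq{Z_s}$ is a genuine computable approximation to $Z'$.

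The main obstacle — and the point where Claim \ref{claim: global counting} is used — is bounding the number of times $Z_s(n)$ can change. A change at argument $n$ can be caused only by (i) a change in one of the approximations $\PP^e\seq{m}[s]$ for $m\le n$ at a level $e$ that is currently ``active'' for deciding $n$, which by the argument in the footnote to Proposition \ref{prop:golden pairs do the trick} happens at most $2^{m}\le 2^{n}$ times per level, or (ii) the cancellation or re-starting of a run of some $S^e_x$ that sits above the relevant $R^e$ in the calling chain, which resets $\PP^{e+1}$ and everything below it. The number of levels $e$ that can ever be active for a fixed $n$ is bounded: the parameter of $R^e$ is at least $e$, and $R^e$ is relevant to deciding $n$ only if its parameter is $\le n$, so only levels $e<n$ (together with the finitely many $e$ reached before the parameters exceed $n$, again $\le n$ many) matter. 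For each such level, the number of runs of subprocedures $S^e_x$ that can be called — and hence the number of resets — is bounded by $N(x)$ summed over the finitely many relevant $x$ (those with $h(x)\le n$), which is computable by Claim \ref{claim: global counting}. Multiplying the per-level bound $2^{n}$ by the number of relevant levels and adding the total reset count gives a computable bound on $\#\{s: Z_{s+1}(n)\ne Z_s(n)\}$. Hence $Z'$ is $\w$-c.e., i.e.\ $Z$ is superlow.
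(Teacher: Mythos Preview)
Your approach is essentially the same as the paper's: track the current chain of procedures, read off the relevant class $\PP^d\seq{n+1}[s]$ (where $d$ is the deepest level whose parameter is at most $n$), and use it to guess $Z'(n)$; then bound the number of changes by combining the $2^{n+1}$-type bound coming from the superlow basis approximation with the bound $\sum_{h(x)\le n} N(x)$ on the number of procedure calls from Claim~\ref{claim: global counting}. This is exactly what the paper does.

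One small point of arithmetic: your final sentence says to multiply the per-level bound $2^n$ by the number of relevant levels and then \emph{add} the reset count. That undercounts. Each time the chain resets (some $S^e_x$ with $h(x)\le n$ is called or cancelled), a new version of $\PP^d$ is installed, and the approximation $\PP^d\seq{n+1}[s]$ for that new class may then change up to $2^{n+1}$ times before it settles. So the reset count and the basis-theorem count combine \emph{multiplicatively}, not additively: the total number of changes to $Z_s(n)$ is bounded by something like $2^{n+1}\cdot\sum_{h(x)\le n} N(x)$, which is the bound the paper gives. Of course this does not affect the conclusion, since either way the bound is computable in $n$.
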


\begin{proof}
Let $n>0$, and let $e$ be the least number such that the permanent run of $R^e$ is started with a parameter greater than $n$. As mentioned during the proof of Claim \ref{claim: global counting}, the parameter of any run of $R^e$ is at least $e$, so such an $e$ exists.

Whether $n\in Z'$ depends only on $\PP^{e-1}\seq{n+1}$. So we can approximate an answer to the question of whether $n\in Z'$ by tracking, at a stage $s$, the definition of $\PP^{d}\seq{n+1}$ at that stage, where $d$ is the greatest number such that the current (at stage $s$) run of $R^d$ was started with a parameter $h(x) \le n$.

The current version of $\PP^d\seq{n+1}$ can change because we call the procedure $S^e_x$ for some $h(x)~\le~n$. Otherwise it can change due to the approximation feature of the proof of the superlow basis theorem (see the proof of Proposition \ref{prop:golden pairs do the trick}). Thus the number of changes is bounded by
\[ 2^{n+1} \sum_{h(x) \le n} N(x),\]
which is a computable bound. Thus the above procedure gives an $\w$-c.e.\ approximation to $Z'$. 
\end{proof}

By the assumption on $A$, we have $A\le_\Tur Z$. Hence there is some $e$ such that $\Phi_e(Z)=A$. Consider the run of $S^e_x$ that is never cancelled nor returns, which defines the last version of $\PP^{e+1}$. It defines
\[ \PP^{e+1} = \left\{ X\in \PP^e\seq{h(x)}\,:\, \Phi_e(X)\nsupseteq \tau \right\},\]
where $\tau\subset A$. As already explained above, this definition contradicts the fact that $Z\in \PP^{e+1}$. This completes the proof of Proposition \ref{prop:golden pairs exist} and so of Theorem \ref{thm:Pi01 theorem}.

\begin{remark}
To show that $A$ is strongly jump-traceable, it is sufficient to show that for every order function $h$, a universal $A$-partial computable function $\theta$ has a trace bounded by $h$. The reader may wonder why we bother with every $A$-partial computable function, rather than just a universal one. Let $J$ be a partial computable functional such that for all sets $X$, the function $J^X$ is a universal $X$-partial computable function. Even though $\theta=J^A$ is universal, the restrained $A$-approximation for $\theta$ gives a partial computable functional $\Gamma$ such that $\Gamma^A=\theta$, but for other sets $X$ it will not be the case that $\Gamma^X$ is universal for $X$-partial computable functions. In the proof, it is the approximation $\Gamma^A\,[s]$ that we use, not $J^A\,[s]$, so we might as well work with a general function, rather than just a universal one. 
\end{remark}

\section{$\SJT$ is contained in $\Superhigh^\Diamond$} \label{sec:superhigh1}

In this section we provide the left-to-right direction of Characterization II of the strongly jump-traceable c.e.\ sets: every c.e., strongly jump-traceable set is computable from every superhigh $1$-random set. 

In fact, we prove a slightly stronger result, Theorem \ref{prop:left2right}, by replacing the class of superhigh sets by a a larger null $\SIII$ class $\HHH$, introduced by Simpson \cite{Simpson}, which is related to PA-completeness. We actually show that every strongly jump-traceable c.e.\ set is in $\HHH^\Diamond$.

 To define $\HHH$, recall that a function~$f$ is diagonally non-computable (d.n.c.)\ relative to $Y$ if for all $x\in \dom J^{Y}$, we have $f(x)\ne J^{Y}(x)$. (Recall  also that   $J$   denotes  a partial computable functional such that for every  set $Y$, the function $J^Y$ is a universal $Y$-partial computable function.)
 
Let~$\PP$ be the $\PPI(\ES')$ class of $\twoset$-valued functions that are d.n.c.\ relative to $\ES'$. By a result of Jockusch \cite{JockuschDNR} 
relativized to $\ES'$, the class
\[ \left\{Z \,\colon \, \ex f \leT Z \oplus \ES' \,\, [f\in \PP]\right\}\]
is null. The class $\GL 1=\{Z \colon \, Z' \equivT Z \oplus \Halt \}$ contains every 2-random and hence  is conull (see, for instance,~\cite{DHbook}). Thus,  the following class is also null: 
\begin{equation*}
\HHH = \left\{Z \,\colon \, \ex f \ltt Z' \,\, [ f\in \PP]\right\}. 
\end{equation*}
This class contains $\Superhigh$ because $\ES''$ truth-table computes a function that is d.n.c.\ relative to $\ES'$. 

Since $\HHH$ is $\SI 3$ and null, we already know, by the result of Hirschfeldt and Miller mentioned in the introduction, that the class $\HHH^\Diamond$ contains a noncomputable set. We now strengthen this fact. 

\begin{theorem} \label{prop:left2right} 
	Every c.e., strongly jump-traceable set is in $\HHH^\Diamond$, that is, is computable from every $1$-random set in $\HHH$.  
\end{theorem}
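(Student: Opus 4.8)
The plan is to use the characterization from \cite{GreenbergNies:benign} that a c.e.\ set $A$ is strongly jump-traceable if and only if it obeys every benign cost function, and to reduce the problem of coding $A$ into a given $1$-random set $Y \in \HHH$ to the problem of constructing a suitable benign cost function $c_Y$ tailored to $Y$. Fix a $1$-random $Y \in \HHH$. First I would extract from membership in $\HHH$ a $\{0,1\}$-valued function $f \ltt Y'$ that is d.n.c.\ relative to $\ES'$, together with a truth-table reduction witnessing $f \ltt Y'$; by Fact~\ref{fact:BLR-and-TT} this gives an approximation to $f$ with a computable bound on mind-changes, and hence an $\ES$-computable (i.e.\ ordinary) $\w$-c.e.-style handle on $f$ via a computable approximation with computably bounded use of $Y$. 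The key point is that being d.n.c.\ relative to $\ES'$ lets us, given a request to ``avoid'' a $\Sigma^0_1(\ES')$ event, find witnessing values of $f$ computably (this is the Kučera-style coding idea referred to in the paper: coding locations can be found because $f$ diagonalizes against $J^{\ES'}$).

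The heart of the construction is the Kučera coding of $A$ into $Y$: since $Y$ is $1$-random, a positive-measure $\Pi^0_1(\ES')$-like ``safe'' set of oracles survives, and we thread $A$'s bits through nested closed sets, using $f$ to locate the splitting points. The novelty over the plain Kučera argument (which only needs $Y$ to be $\Delta^0_2$ $1$-random) is that here the coding locations are governed by an oracle $\ES'$ rather than by a fixed $\Delta^0_2$ approximation, so I would build a cost function $c = c_Y$ as in \cite{GreenbergNies:benign} but adapted so that its ``strength'' is calibrated to the rate at which $f$ (equivalently $Y'$ via the truth-table reduction) settles. Concretely: whenever $A$ changes below $x$ at stage $s$, charge a cost reflecting how much measure of the current safe set would be lost if we re-coded, bounding this measure-loss using the mind-change bound for $f$. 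The point of \cite{GreenbergNies:benign}'s machinery is that such a $c$ is \emph{benign}: there is a computable $g$ with the property in Definition~\ref{df:benign}, where $g$ is read off from the computable mind-change bound for $f$ together with the $2^{-m}$-type measure bounds inherent in Kučera coding. Then strong jump-traceability of $A$ gives, via the Greenberg--Nies theorem, a computable approximation $\seq{A_s}$ obeying $c$, and the finiteness of the total cost is exactly what is needed to run the coding to completion and conclude $A \leT Y$.

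In more detail on the mechanics of the reduction: I would maintain, for each stage $s$, a finite clopen set of candidate oracles of measure bounded away from $0$, inside which $Y$ provably lies (using $1$-randomness of $Y$ to avoid the effectively-null ``bad'' set that would otherwise force $Y$ out); I would use $f$ to select, at level $x$, the bit of $A$ to be coded, splitting the candidate set into two halves according to $A(x)$ and passing to the half consistent with $A$'s current guess. When $A(x)$ changes, the candidate set must be re-split, costing a definite fraction of measure; obeying $c_Y$ guarantees the total such loss is finite, so the candidate sets stabilize to a single path, namely $Y$, and reading off which half $Y$ lands in at level $x$ recovers $A(x)$ — this is the Turing reduction $A \leT Y$. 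The truth-table reduction $f \ltt Y'$ (as opposed to a mere Turing reduction) is what keeps the bookkeeping computable and hence keeps $c_Y$ benign rather than merely a cost function with the limit condition.

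\textbf{Main obstacle.} The delicate point will be verifying benignity of $c_Y$ — that is, producing the computable bound $g$ of Definition~\ref{df:benign} — since this is where the difference between $\HHH$ and the smaller classes (like $\w$-c.e.) really bites: the coding locations depend on $f \ltt Y'$, not on a $\Delta^0_2$ approximation to $Y$, so the measure accounting that bounds the number of large-cost drops must be carried out relative to the fixed truth-table reduction and its computable mind-change bound, rather than relative to $Y$ itself. I expect this to require a careful layered count, essentially mirroring the counting arguments in \cite{GreenbergNies:benign} but stratified by the number of times $f$ (hence a given bit of $Y'$) can change, and ensuring the per-level measure losses sum in a way that a single computable $g(q)$ dominates uniformly in the threshold $q$.
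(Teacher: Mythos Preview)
Your overall strategy---reduce to the Greenberg--Nies characterization and build, for each truth-table reduction $\Delta$, a benign cost function whose obedience forces $A\leT Y$ whenever $\Delta(Y')$ is $\{0,1\}$-valued d.n.c.\ relative to $\Halt$---is exactly right, and you have correctly flagged benignity as the crux. But the construction you sketch has a genuine gap.

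You propose to fix $Y$, extract $f=\Delta(Y')$, and calibrate $c_Y$ to ``the rate at which $f$ settles.'' The problem is that a cost function must be computable, and the only approximation you have to $f$ is $Y$-computable (Fact~\ref{fact:BLR-and-TT} gives $f\in\BLR(Y)$, not $f$ $\w$-c.e.; the mind-change \emph{bound} is computable but the approximation is not). So nothing built from the settling behaviour of $f$ can be a cost function at all, let alone a benign one. Relatedly, your ``Ku\v{c}era coding'' picture---maintain a shrinking clopen candidate set containing $Y$ and split it by bits of $A$---is the $\w$-c.e.\ template; it needs a computable approximation to $Y$ itself, which you do not have here.

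The paper's move is to reverse the dependence. It does not read off coding locations from $f$; instead it \emph{defines} a function $\alpha\leT\Halt$ (via the recursion theorem, so that $\alpha(x)\simeq J^{\Halt}(p(x))$ for a computable $p$) and lets the d.n.c.\ property force $\Delta(Z')$ to disagree with $\alpha$ on blocks $I_e$. The clopen class $\CCC_{e,s}$ of oracles that currently appear to disagree with $\alpha$ on $I_e$ is what one ``believes''; whenever $\leb\CCC_{e,s}$ threatens to exceed $2^{-e+1}$, one \emph{changes} $\alpha\uhr{I_e}$ to knock the measure back down. A pigeonhole/measure argument (using only the computable use bound of the truth-table reduction, and the fact that $Z'$ can change on an initial segment only finitely often) bounds the number of such changes by $2^e\cdot\use\Delta(\max p(I_e))$---and this is a computable bound in $e$ alone, with no reference to any particular $Y$. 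The cost function $c$ is then defined purely from the change pattern of $\alpha$, so it is computable, and the bound just given is exactly what certifies benignity. The reduction $A\leT Z$ is not a splitting-into-halves coding but a Solovay-test argument: each $A$-change puts the current $\CCC_{e,t}$ into a Solovay test $\mathcal G$, and $1$-randomness of $Z$ means $Z$ eventually avoids $\mathcal G$, which lets $Z$ certify $A_s(y)=A(y)$.

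In short, the missing idea is the recursion-theorem definition of $\alpha$ combined with the ``prophylactic'' measure-driven updates to $\alpha$; this is what decouples the cost function from any particular oracle $Y$ and makes benignity provable.
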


Fix a truth-table reduction $\Delta$. We will define a benign cost function~$c$ such that for each  set~$A$, and each $1$-random set~$Z$, 
\bc  $\Delta({Z'})$ is $ \twoset$-valued  d.n.c.\ relative to $\Halt$ and $A$ obeys~$c$  
 $\RA$ $A \leT Z$. \ec
Theorem \ref{prop:left2right} then follows from the result from \cite{GreenbergNies:benign}, that every c.e., strongly jump-traceable set obeys every benign cost function. 

\subsection{Discussion}

We first explain in intuitive terms how to obtain the cost function~$c$. The overall strategy has roots in the proof in \cite{CholakDowneyGreenberg} that every c.e., strongly jump-traceable set is not ML-cuppable (see \cite{DHbook} or \cite{Nbook} for a definition of this concept), and in the proof in \cite{GreenbergNies:benign} that every c.e., strongly jump-traceable set is computable from every LR-hard $1$-random set. 

Suppose that we are given a c.e.~set $A$, and we wish to show that $A\leT Z$ for all $1$-random sets $Z$ such that $\Delta(Z')\in \PP$. We implicitly devise a Turing functional that reduces $A$ to such sets $Z$. Since there are uncountably many such sets $Z$, and they are not all definable in any way, we have to work with finite initial segments of such $Z$---equivalently, with clopen classes of such $Z$. We can describe our strategy as a two-pronged attack. First, we require evidence that some clopen class $\CC$ consists of sets $Z$ such that $\Delta(Z')\in \PP$. If we find such evidence, at some stage~$s$, then we decide that the sets in $\CC$ compute some initial segment of $A_s$. Second, if we later discover that this computation is incorrect because $A$ has changed, \emph{and} if it still seems like the sets in  $\CC$ satisfy $\Delta(Z')\in \PP$, then we try to make these sets non-$1$-random. In terms of the Kolmogorov complexity definition of $1$-randomness, essentially what we do is give initial segments of sets in $\CC$ short descriptions; the technical device we actually use is a Solovay test~$\mathcal G$,  which we describe below. (A \emph{Solovay test} is a c.e.\ collection of clopen sets $C_0,C_1,\ldots$ such that $\sum_i \lambda C_i < \infty$. It is easy to check that if a set $X$ is $1$-random and $\mathcal S$ is a Solovay test, then $X$ can be in only finitely many elements of $\mathcal S$; see \cite{DHbook} or \cite{Nbook} for a proof.) Viewed backwards, this derandomization allows us to correct the functional. The cost function $c$ is defined by tracking our beliefs and thus ``pricing'' the changes in the set $A$ according to the amount of correction that would be required, were $A$ to change. 

The combinatorial heart of the argument is the exact designation of when we believe that a clopen class  $\CC$ consists of sets $Z$ such that $\Delta(Z')\in \PP$. This is the basic tension: on the one hand, if indeed $\Delta(Z')\in \PP$, then we need to ensure that we believe this fact for infinitely many initial segments of $Z$. On the other hand, we cannot run wild and issue too many short descriptions: the total weight of those descriptions has to be finite. In other words, by derandomizing strings, we may ask for corrections in the functional, but this right is limited---we cannot ask for too much. If we believe too many strings, the total measure of the Solovay test will not be finite. 

To decide whether to believe a clopen class $\CC$, we define a function $\aaa \leT \Halt$, by giving it a computable approximation $\seq{\aaa_s}$. We believe $\CC$ at stage $s$ if the stage $s$ approximation to $\Delta(Z')$ for sets $Z\in \CC$ differs from $\alpha_s$ on designated locations (or really, from the coding of $\alpha$ in $J^{\Halt}$ at that stage). By designating a large number of such locations, we can ``keep ahead of the game'' by changing $\alpha_s$ if it appears that we believe clopen classes that are too large (in the sense of measure). This prophylactic approach is really the main point of the argument. 

\subsection{The proof of Theorem \ref{prop:left2right}}
 
We now give the details. Let $(I_e)$ be the sequence of consecutive intervals of $\NN$ of length~$e+1$. Thus $\min I_e = e(e+1)/2$. As mentioned, we define a function $\alpha$, partial computable in $\emptyset'$ (which will actually be total). By universality of $J^{\emptyset'}$, and by the recursion theorem, we are given a computable function $p$ that reduces $\alpha$ to $J^{\emptyset'}$: for all $x$, $\alpha(x)\simeq J^{\Halt}\! (p(x))$. 

Let $s<\w$. To define $\alpha_s$, we first let $\CCC_{e,s}$ be the clopen set of oracles~$Z$ such that $\Delta(Z')$ agreed with  $1- \aaa$ on $I_e$  at some stage~$t$ after the last change of $\aaa \uhr {I_e}$. That is, let 
\begin{equation} \label{eqn:CC} \CCC_{e,s}= \{Z\colon \, \exo t_{v \le t \le s} \fa x \in I_e \, [ 1- \aaa_t(x) = \Delta(Z'_t, p(x))]\}, \end{equation}
where $v \le s $ is greatest such that $v =0$ or $\aaa_v \uhrn  {I_e} \neq \aaa_{v-1} \uhrn I_e $.  
  For each $e< s$,   if $\leb \CCC_{e,s-1} \le  \tp{-e+1}$   let $\aaa_s \uhrn{I_e}= \aaa_{s-1} \uhrn{I_e}$.
  Otherwise,    change $\aaa \uhrn { I_e}$:  define  $\aaa_s \uhrn{I_e}$ in such a way that  $\leb \CCC_{e,s} \le \tp{-e}$.  

\begin{claim}  $\aaa(x) = \lim_s \aaa_s(x)$ exists for each~$x$.\end{claim}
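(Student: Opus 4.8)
The plan is to show that for each fixed $x$, the value $\alpha_s(x)$ changes only finitely often. Since $\alpha \uhrn{I_e}$ is the unit of change---the construction only ever modifies $\alpha$ on whole intervals $I_e$ at a time---it suffices to bound, for each fixed $e$, the number of stages $s$ at which $\alpha_s \uhrn{I_e} \neq \alpha_{s-1}\uhrn{I_e}$. So first I would reduce the claim to: for each $e$, there are only finitely many \emph{$e$-changes}, where an $e$-change is a stage $s$ with $\alpha_s\uhrn{I_e} \neq \alpha_{s-1}\uhrn{I_e}$.

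The key observation is that each $e$-change is triggered by the condition $\lambda\, \CCC_{e,s-1} > 2^{-e+1}$, and immediately after the change we have arranged that $\lambda\, \CCC_{e,s} \le 2^{-e}$. Now I would track how $\CCC_{e,t}$ evolves \emph{between} consecutive $e$-changes. Let $v$ be the stage of an $e$-change (or $v = 0$), and let $v'$ be the next $e$-change. By definition \eqref{eqn:CC}, for all $t$ with $v \le t \le s$, the class $\CCC_{e,s}$ counts oracles $Z$ for which $\Delta(Z')$ has at \emph{some} stage $t \in [v,s]$ agreed with $1 - \alpha_v \uhrn{I_e}$ on all of $I_e$ (the value $\alpha_t \uhrn{I_e}$ is constant, equal to $\alpha_v\uhrn{I_e}$, on this whole block since there is no $e$-change in $(v, v')$). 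The clopen sets $\CCC_{e,t}$ for $t \in [v, v')$ are increasing in $t$; so the quantity $\lambda\,\CCC_{e,t}$ is nondecreasing on this block, starting from a value $\le 2^{-e}$ at $t = v$ (immediately after the change, or trivially $0$ if $v=0$), and an $e$-change at $v'$ occurs precisely when this measure first exceeds $2^{-e+1}$.

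The main point, then, is a \emph{global} bound obtained by charging the growth of these measures against a single fixed quantity. Across all the blocks between successive $e$-changes, the class $\CCC_{e,s}$ grows by at least $2^{-e+1} - 2^{-e} = 2^{-e}$ during each block (from just after one $e$-change until just after the next). The classes $\CCC_{e,s}$ are all subclasses of $2^\w$, so the total measure can never exceed $1$; but I need to be a bit careful, since $\alpha$ changes reset the ``target string'' $1 - \alpha_v\uhrn{I_e}$, so $\CCC_{e,s}$ is not literally monotone across $e$-changes. The right way to phrase the bound: consider the \emph{disjoint} contributions. When $\alpha \uhrn{I_e}$ changes at stage $v'$, the oracles newly entering $\CCC_{e,v'}$ relative to $\CCC_{e,v'-1}$ (those that witnessed agreement with the \emph{new} target) need not be disjoint from the old ones, but one can instead argue directly that if there were $k$ many $e$-changes, then at each of the $k-1$ middle changes we had $\lambda\,\CCC_{e,s-1} > 2^{-e+1}$ for a class $\CCC_{e,s-1}$ whose oracles each made $\Delta(Z')$ agree with $1-\alpha\uhrn{I_e}$ somewhere, and a standard "avoiding a string" counting---exactly as in the construction---shows these classes, restricted to fresh witnesses, have measure $\ge 2^{-e}$ each and can be taken disjoint. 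Hence $k \le 2^e + 1$, which is finite.

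\textbf{Expected main obstacle.} The delicate point is the bookkeeping around $\alpha$-changes resetting $\CCC_{e,s}$: after $\alpha\uhrn{I_e}$ changes, the class $\CCC_{e,s}$ is redefined against a new complementary string, so the naive "the measures are monotone, hence bounded by $1$" argument does not quite apply across change boundaries. I expect the resolution is that right after each $e$-change the construction explicitly re-chooses $\alpha_s\uhrn{I_e}$ so that $\lambda\,\CCC_{e,s} \le 2^{-e}$ (this is the prophylactic step emphasized in the discussion), so the "surplus" accumulated between two consecutive $e$-changes---the measure of oracles that got into $\CCC_e$ \emph{after} the reset and pushed it past $2^{-e+1}$---is a genuine chunk of measure $\ge 2^{-e}$ built from fresh witnesses, and these fresh-witness chunks across distinct blocks are disjoint (a witness $Z$ counted in a later block agreed with a \emph{different}, later target string at a stage after the reset). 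Summing the disjoint chunks inside $2^\w$ gives at most $2^e$ blocks, hence finitely many $e$-changes, hence $\alpha(x) = \lim_s \alpha_s(x)$ exists. Getting the disjointness statement stated cleanly---possibly by indexing witnesses by the stage of first agreement rather than by the oracle---is the part that will require the most care.
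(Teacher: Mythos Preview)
Your reduction is right: it suffices to bound the number of $e$-changes, and you correctly isolate the key quantitative fact that between any two consecutive $e$-changes the class $\CCC_e$ grows in measure by at least $2^{-e}$.

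The gap is exactly where you suspect it is: the disjointness claim for the ``fresh-witness chunks'' is false, and it cannot be repaired by the reasoning you offer. Saying that a witness $Z$ in a later block agreed with a \emph{different} target string does not stop the \emph{same} $Z$ from having been a fresh witness in an earlier block as well. Concretely, $Z$'s jump approximation $Z'_t$ can change many times; at some stage in block $i$ the value $\Delta(Z'_t)\uhrn{I_e}$ can equal $1-\alpha_{v_i}\uhrn{I_e}$, and at some later stage in block $j$ it can equal the (different) string $1-\alpha_{v_j}\uhrn{I_e}$. Then $Z$ lies in both fresh chunks. So summing the chunk measures and bounding by $1$ does not give a bound on the number of blocks. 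Indexing by stage of first agreement does not rescue this either, since you are still measuring subsets of $2^\omega$ and the same $Z$ will be counted once per block.

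What the paper does instead is replace disjointness by \emph{bounded multiplicity}, obtained from a resource you never invoke: the fixed truth-table use of $\Delta$. Let $k=\use\Delta(\max p(I_e))$. If $Z$ is a fresh witness in block $i$ then $\Delta(Z'_t)\uhrn{I_e}$ changed during that block, hence $Z'_t\uhrn k$ changed between $v_i$ and $v_{i+1}$; set $\mathcal{B}_i=\{Z: Z'_{v_{i+1}}\uhrn k\neq Z'_{v_i}\uhrn k\}$. Then each $\mathcal{B}_i$ has measure at least $2^{-e}$, and since $Z'\uhrn k$ is a c.e.\ approximation it can change at most $k$ times total, so no $Z$ lies in more than $k$ of the $\mathcal{B}_i$. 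An averaging (pigeonhole) argument now gives $N\le 2^e k$, which is the required finite (indeed computable) bound. That bounded-overlap idea, via the tt-use of $\Delta$, is the missing ingredient in your proposal.
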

 
\begin{proof}[Proof of the Claim]
  We rely on  a measure theoretic fact first  used in a related context  (see \cite[Exercise 1.9.15]{Nbook}). Suppose $n \in \NN$ and we are given measurable classes $\BBB_i $  for $1\le i \le N$,   and  $\leb \BBB_i \ge \tp{-e}$ where $e\in \NN$. If $k \in \NN$ is such that $N  > 2^e k$,  then there is a set $F \sub \{1,\ldots, N\}$ such that $|F| = k+1$ and  $ \bigcap_{i \in F} \BBB_i \neq \ES$.  Beyond proving the claim, this fact   will later  yield  a computable bound in $x$ on the number of changes of $\aaa_s(x)$. 

 Suppose    that $  v_1 < \cdots < v_N$ are consecutive stages at which $\aaa \uhrn I_e$ changes. 
Note that  for each $i< n$, the measure $\leb \CCC_e$ increases by at least $\tp{-e}$ from stage~$v_i$ to $v_{i+1}$.  Therefore  $\leb \BBB_i \ge \tp{-e}$ for each $i 
 \le N$, where 
\bc $\BBB_i = \{ Z\colon \,  Z'_{v_{i+1}} \uhr k  \neq Z'_{v_i} \uhr k\}$,\ec 
and $k = \use \Delta(\max p(I_e))$. Note that the intersection of any~$k+1$ many of the  $\BBB_i$ is empty. Thus $N \le 2^e k$ by the measure theoretic fact mentioned above. 
\end{proof}


In fact, we have a computable bound~$g$  on the number of changes of $\aaa\uhrn {I_e}$, given by $ g(e) = 2^e \use \Delta (\max p( I_e))$. 
 
  We define a   cost function~$c$ by  $c(x,s) = \tp{-x} $ for each $x\ge s$; if $x<s$,  and~$e \le x$ is
 least such that $e=x$ or 
 $\aaa_s \uhrn{I_e} \neq  \aaa_{s-1} \uhrn{I_e}$,     let 
\bc $c(x,s) =
\max(c(x,s-1), \tp{-e} )$.  \ec
To show that~$c$ is benign, suppose that  $0=v_0 < v_1 < \cdots < v_n$ and $c(v_i, v_{i+1}) \ge \tp{-e}$ for each~$i< n$. Then  $\aaa_s \uhrn{I_e}\neq  \aaa_{s-1} \uhrn{I_e}$   for some~$s$ such that $v_i < s \le v_{i+1}$. Hence $n \le g(e)$. 

To complete the proof of Theorem~\ref{prop:left2right}, let~$A$ be a c.e.\ set that is strongly jump-traceable. By \cite{GreenbergNies:benign}, there is  a computable enumeration $\seq {A_s} \sN s$ of~$A$ that obeys~$c$. 
 
 The rest of the argument actually works for a computable approximation  $\seq{A_s}\sN s$  to a $\DII$ set~$A$.
 We build a  
{Solovay test}  $\mathcal G$   as follows: {when $A_{t-1}(x)\neq A_t(x)$,  we put   $\CCC_{e,t}$ defined in (\ref{eqn:CC}) into $\mathcal G$ where~$e$ is largest such that $\aaa \uhrn {I_e}$ has been stable from~$x$ to~$t$.}  Then $\tp{-e} \le c(x,t)$. Since $\leb \CCC_{e,t} \le \tp{-e+1} \le 2c(x,t)$ and   the computable approximation of~$A$ obeys~$c$, the set $\mathcal G$ is indeed a Solovay test.     

Choose~$s_0$ such that $\sss\not \subseteq Z$ for each  $[\sss]$ enumerated   into~$\mathcal G$ after stage~$s_0$. To show $A \leT Z$, given an input $y\ge s_0$, using~$Z$ as an oracle,  compute~$s>y$ such that $1-\aaa_s(x)= \Delta(Z'_s; x)$ for each $x< y$. Then we claim that $A_s(y)=A(y)$.  Assume not, so that $A_t(y) \neq A_{t-1}(y)$ for some $t>s$,  and let $e \le y$ be largest such that $\aaa\uhrn {I_e}$ has been stable from~$y$ to~$t$. Then by stage $s> y$ the set~$Z$ is in $\CCC_{e,s} \sub \CCC_{e,t}$, so we 
put~$Z$ into $\mathcal G$ at stage~$t$, which is a contradiction.

\


\section{$\Superhigh^\Diamond$ is contained in $\SJT$} \label{sec:superhigh2}

In this section we prove the right-to-left direction of Characterization II of the strongly jump-traceable c.e.\ sets: every c.e.\ set that is computable from every superhigh $1$-random set is strongly jump-traceable. 

As in Section \ref{sec:superhigh1}, we prove a somewhat stronger result. For any set $G\subseteq \w$, we replace the class of superhigh sets by the class
\[ \CC_G= \{Y \colon \, G \ltt Y'\}.\]
This class is a subclass of the superhigh sets if $\ES'' \leT G$. No matter what $G$ is, we show that every set in $(\CCC_G)^\Diamond$ is strongly jump-traceable.

\subsection{The path from computable enumerability to superlowness and jump-traceability}

Fix $G\subseteq \w$. We want to prove that every c.e.\ set $A$ that is computable from every $1$-random set in $\CCC_G$ is strongly jump-traceable. As mentioned in Subsection \ref{subsec:ce assumption}, the assumption on $A$ that we actually use, rather than $A$ being c.e., is that $A$ is superlow and jump-traceable:

\begin{theorem}\label{thm:main superhigh thm}
	Let $A$ be a superlow, jump-traceable set, let $G\subseteq \w$, and suppose that for any $1$-random set $Z$ such that $G\ltt Z'$ we have $A\leT Z$. Then $A$ is strongly jump-traceable. 
\end{theorem}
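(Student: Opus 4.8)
```latex
\textbf{Proof plan for Theorem \ref{thm:main superhigh thm}.}

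The plan is to run the same golden-run argument used for Theorem \ref{thm:Pi01 theorem}, but with the construction now building a superhigh $1$-random set $Z$ rather than a superlow member of a given $\Pi^0_1$ class. As before, fix an order function $h$ and an $A$-partial computable function $\theta$; by Theorem \ref{thm: JT and SL} it has a restrained $A$-approximation $\seq{A_s,\Gamma_s}$ with computable bound $g$. We want to produce, for this arbitrary $h$, a trace for $\theta$ bounded by $x\mapsto \tp{h(x)}$. The strategy is again to try, and fail, to build a $1$-random $Z$ with $G\ltt Z'$ and $A\nleT Z$, and to harvest a golden pair from the failure. The crucial structural point, carried over from Section \ref{sec:main}, is that we use a $\Pi^0_1$ class $\PP$ consisting only of $1$-random sets (which exists and is non-null, as noted in Subsection \ref{subsec:pi01classes}); its non-nullity is exactly what makes the \Kuc-style coding below possible. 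So the construction works inside such a $\PP$, and any $Z$ it ends up producing is automatically $1$-random.

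The new ingredient, compared to Theorem \ref{thm:Pi01 theorem}, is that we must also force $G\ltt Z'$. The idea is to interleave \Kuc\ coding of $G$ into $Z$ with the cone-avoidance/superlow-basis machinery. Concretely, as we descend through nested $\Pi^0_1$ subclasses of $\PP$, at stages dedicated to coding we split the current class $\PP^e$ into finitely many clopen pieces of roughly equal measure and, reading the next bit $G(n)$, pass to the appropriate sub-piece; since $\PP$ is non-null each such class retains positive measure, so the process never dies, and from $Z$ together with knowledge of which branch was taken (recoverable with a computable use, hence truth-table) one reads off $G$. To make this a reduction to $Z'$ rather than $Z$ one codes into the jump in the standard way — e.g.\ the coding markers are points $n$ on which membership in $Z'$ is decided by the superlow-basis splitting, and the decisions spell out $G$ truth-table in $Z'$. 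The procedure-calling structure $R^0\to S^0\to R^1\to\dots$ is unchanged, the subprocedures $S^e_x$ still certify computations $\Gamma^A(x)\converge[s]$ by passing to $\{X\in\PP^e\seq{h(x)}: \Phi_e(X)\nsupseteq\tau\}$ and waiting for emptiness, and Definition \ref{def:golden pair} of a golden pair and Proposition \ref{prop:golden pairs do the trick} apply verbatim (the class $\QQ\seq{n}$ notation now refers to the superlow-basis splitting steps among the interleaved steps). Claims \ref{claim;GRGP 1}, \ref{claim: local counting}, \ref{claim: global counting} go through with the same counting, the $g(x)+\tp{h(x)}$ bounds being insensitive to the extra coding steps. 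The only verification that genuinely changes is the analogue of Claim \ref{claim: Z is superlow}: if there is no golden run then, by induction on $e$, there is a never-cancelled run of each $R^e$ with a final $\PP^e$, the $\PP^e$ are nested with non-empty intersection, we pick $Z\in\bigcap_e\PP^e$, and now we must check that \emph{$Z$ is $1$-random and $G\ltt Z'$}: $1$-randomness is immediate since $Z\in\PP$, and $G\ltt Z'$ holds because the coding steps were executed cofinally and correctly along the $\PP^e$. Then the hypothesis gives $A\leT Z$, some $\Phi_e(Z)=A$, and the never-returning $S^e_x$ it spawned defines $\PP^{e+1}=\{X\in\PP^e\seq{h(x)}:\Phi_e(X)\nsupseteq\tau\}$ with $\tau\subset A$, contradicting $Z\in\PP^{e+1}$.

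I expect the main obstacle to be making the interleaving work cleanly: one must schedule the \Kuc\ coding steps for $G$ so that they are executed cofinally along \emph{every} permanent branch $R^e$ (so the final $Z$ really does compute $G$ via its jump), while not disrupting the superlow-basis splitting steps that decide $Z'$ on the cone-avoidance markers, and crucially while keeping the bound on how often the stage-$s$ guess at $Z'\uhr n$ can change — which here need not be $\tp{n}$, merely computable, which is all Proposition \ref{prop:golden pairs do the trick} needs. Because $G$ may be arbitrary, the positions and number of coding steps cannot be anticipated, so one allocates, between the $n$-th and $(n+1)$-st superlow-basis splitting, a block of coding steps for the next batch of bits of $G$, and the non-nullity of $\PP$ guarantees measure never vanishes; the bookkeeping that the resulting approximation to $Z'$ changes only computably often, combined with the local/global counting of $S^e_x$-calls from Claims \ref{claim: local counting} and \ref{claim: global counting}, is the technical heart. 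Once that is in place the proof closes exactly as in Section \ref{sec:main}, and since $h$ was arbitrary, $A$ is strongly jump-traceable.
```
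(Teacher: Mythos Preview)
Your proposal has a genuine gap: the construction must be computable (it enumerates traces and produces an $\omega$-c.e.\ approximation), yet you instruct it to ``read the next bit $G(n)$'' at coding stages. Since $G$ is an arbitrary set, this is impossible. The paper's construction never consults $G$. Instead it replaces the linear procedure structure by a \emph{tree} of procedures $R^{e,\eta}$ and $S^{e,\eta\alpha}_x$ indexed by binary strings, and for every $\gamma\in 2^{<\omega}$ it defines a coding string $\sigma_\gamma$ via \Kuc\ coding inside the current $\Pi^0_1$ class; only after the construction is complete does one set $Z=\bigcup_{\gamma\subset G}\sigma_\gamma$. The verification $G\ltt Z'$ then comes from the fact that the approximation $\seq{\sigma_{\gamma,s}}$ is $\omega$-c.e.\ computably in $\gamma$ and independently of $G$, whence $G\in\BLR(Z)$. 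Note also that there is no superlow-basis splitting here at all: it is \emph{replaced} by \Kuc\ coding, not interleaved with it.

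This branching forces changes you claim can be skipped. The golden-pair notion must be modified to carry a parameter $\eta\subset G$ and to refer to the \Kuc\ strings $\kuc(\QQ,\alpha)$ rather than to $\QQ\seq{h(x)}$ (Definition~\ref{def:golden pair 2}); the resulting trace bound is $2^{h(x)^2}$, not $2^{h(x)}$, since one must range over all $\alpha$ of length $h(x)$ and over changes of $\kuc(\QQ_s,\alpha)$ (Proposition~\ref{prop:golden pairs do the trick 2}). The counting of calls to $S^{e,\eta\alpha}_x$ acquires a new cancellation case (a run $S^{e,\eta\beta}_y$ with $y<x$ and $\beta\subset\alpha$ is started) and now depends on computable lower bounds for the measures of the nested classes, supplied by the lower-bound function of Lemma~\ref{lem:KucLB} (Claims~\ref{claim: local counting 2} and~\ref{claim: global counting 2}). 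So the assertion that Definition~\ref{def:golden pair}, Proposition~\ref{prop:golden pairs do the trick}, and Claims~\ref{claim;GRGP 1}--\ref{claim: global counting} carry over verbatim is not correct.
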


In order to replace \emph{c.e.}\ by \emph{superlow and jump-traceable}, we need the following lemma:

\begin{lemma}\label{lem:ce to SLJT in superhigh}
	Let $G\subseteq \w$. If $A$ is a c.e.\ set that is computable from every $1$-random set $Z$ such that $G\ltt Z'$, then $A$ is superlow and jump-traceable. 
\end{lemma}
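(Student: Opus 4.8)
The plan is to reduce Lemma~\ref{lem:ce to SLJT in superhigh} to Corollary~\ref{cor:ce-base-for-randomness}, which already tells us that every c.e.\ set computable from an \emph{incomplete} $1$-random set is superlow and jump-traceable. So it suffices to produce a single incomplete $1$-random set $Z$ with $G \ltt Z'$ and $A \leT Z$; the hypothesis then forces $A \leT Z$, and incompleteness of $Z$ does the rest. The natural candidate is $Z = \Omega^X$ for a suitably chosen oracle $X$: Chaitin's $\Omega$ relativizes, and $\Omega^X$ is $1$-random relative to $X$, hence $1$-random. The point of relativizing is to push $G$ into the jump: if we take $X \equivT G$ (or more precisely $X \equivT G \oplus \ES'$ to be safe about where the halting problem sits), then $(\Omega^X)' \equivT X' \equivT G'$, and in particular $G \leT (\Omega^X)'$. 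One must check this is a \emph{truth-table} reduction and not merely Turing: $\Omega^X \equivT X'$ via a fixed reduction with a computable use bound on each side (this is the relativized form of $\Omega \equivT \ES'$), and $(\Omega^X)' \equivT (X')'$ by the jump theorem with tt-reductions when the base reduction is tt; feeding in $X = G$ gives $G \ltt G' \leT (\Omega^X)'$, and one checks the composite is tt by controlling uses. This is essentially bookkeeping about the complexity of these standard reductions.

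First I would fix $X$ with $\ES' \leT X$ and $G \leT X$, say $X = G \oplus \ES'$, and set $Z = \Omega^X$. Then $Z$ is $1$-random. Second, I would verify $G \ltt Z'$: since $Z \equivT X'$ with both directions truth-table (use the relativized Kučera--G\'acs/Chaitin fact that $\Omega^X \ltt X'$ and $X' \ltt \Omega^X$), applying the jump we get $Z' \equivT (X')'$ via truth-table reductions (the relativized ``$A \ltt B \Rightarrow A' \ltt B'$'' fact, or directly: $e \in Z'$ iff $J^Z(e)\converge$, and a use bound on the reduction $Z \ltt X'$ together with the standard tt-form of the jump of a tt-reduction); since $G \leT X \leT X' \ltt Z'$ with the last link tt and the first two at worst many-one/truth-table in a bounded-use sense, we obtain $G \ltt Z'$ after composing. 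Third, and crucially, I would check that $Z = \Omega^X$ is \emph{incomplete}, i.e.\ $\ES' \not\leT Z$. This is where I expect the main obstacle to lie. One way: $\Omega^X$ is $1$-random relative to $X$, so in particular $X \oplus \Omega^X$ is $1$-random (van Lambalgen), and if $\ES' \leT \Omega^X$ we would get... actually this does not immediately give incompleteness, since $X$ itself may be very complex. The clean route is to instead \emph{not} demand $\ES' \leT X$: take $X \equivT G$ only, so $Z = \Omega^G$; then $Z \equivT G'$ via tt-reductions, and $Z' \equivT G''$. Now $G \ltt G' \equivT Z \ltt Z'$, so $G \ltt Z'$. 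And $Z = \Omega^G$ is incomplete provided $\ES' \not\leT G'$ — which fails if $\ES'' \leT G$. So this choice is too crude when $G$ is high.

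The fix is to decouple ``$G$ appears in the jump'' from ``$Z$ is incomplete'' by a finite-injury or forcing construction rather than a closed-form $Z$. The real plan: build $Z$ as a member of a $\Pi^0_1$ class of $1$-random sets relative to an appropriate oracle, coding $G$ into $Z'$ in a bounded-use (truth-table) way while lowness-style diagonalizing against $\ES' \leT Z$. Concretely, I would cite or adapt the superhigh basis theorem in tt-form: there is a $\Pi^0_1$ class of $1$-random sets, and a relativized low-basis-theorem-with-$G$-coding argument produces $Z$ in it with $G \ltt Z'$ and $Z$ of incomplete (indeed $\GL_1$) degree. Then $A \leT Z$ by hypothesis, $Z$ is incomplete and $1$-random, so Corollary~\ref{cor:ce-base-for-randomness} gives that $A$ is superlow and jump-traceable. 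The hard part will be making the coding of $G$ into $Z'$ genuinely \emph{truth-table} (bounded use) while simultaneously keeping $Z$ out of the cone above $\ES'$; this is exactly the kind of tension handled by the ``prophylactic'' $\alpha$-machinery of Section~\ref{sec:superhigh1}, and I expect the cleanest writeup to reuse that apparatus or to invoke Simpson's analysis of the superhigh degrees in tt-form, rather than building from scratch.
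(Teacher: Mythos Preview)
Your overall strategy is correct and matches the paper's: reduce to Corollary~\ref{cor:ce-base-for-randomness} by exhibiting a single \emph{incomplete} $1$-random set $Z$ with $G\ltt Z'$, and then apply the hypothesis to get $A\leT Z$. The paper does exactly this, packaging the existence of such a $Z$ as Theorem~\ref{thm:preparation superhigh theorem} (a consequence of Kjos-Hanssen and Nies), and then giving a self-contained proof of that theorem via \Kuc\ coding into a $\PPI$ class of $1$-random sets.

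Two remarks on your write-up. First, your $\Omega^X$ attempts are indeed dead ends, as you yourself diagnose; there is no reason to expect $\Omega^G$ or $\Omega^{G\oplus\ES'}$ to be Turing incomplete, and the tt-bookkeeping you sketch for $G\ltt Z'$ via $\Omega^X\equivT X'$ does not obviously go through (the equivalence $\Omega^X\equivT X'$ is Turing, not truth-table, in the direction $X'\leT\Omega^X$). Second, your closing suggestion to reuse the ``prophylactic $\alpha$-machinery'' of Section~\ref{sec:superhigh1} is misdirected: that apparatus builds a cost function to push a c.e.\ set \emph{below} superhigh randoms, which is the wrong direction here. The right tool is \Kuc\ coding (Subsection~\ref{subsec:Kuc-coding}): one codes $G$ into coding strings $\sigma_\gamma$ inside a $\PPI$ class of $1$-randoms while interleaving Friedberg--Muchnik style diagonalization against $\Phi_e(Z)=A$ for a c.e.\ set $A$ built alongside; the $\omega$-c.e.\ approximation to the coding strings is what yields $G\in\BLR(Z)$ and hence $G\ltt Z'$, and the diagonalization makes $Z$ incomplete.
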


Lemma \ref{lem:ce to SLJT in superhigh} follows from Corollary \ref{cor:ce-base-for-randomness} and the following  consequence of    Kjos-Hanssen and Nies \cite[Theorem  3.5]{Kjos.Nies:nd}:

\begin{theorem}\label{thm:preparation superhigh theorem}
	For any $G\subseteq \w$ there is an incomplete $1$-random set $Z$ such that $G\ltt Z'$. 
\end{theorem}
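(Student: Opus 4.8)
I would derive this from the construction of Kjos-Hanssen and Nies \cite{Kjos.Nies:nd}, observing that their argument uses the set being coded only as a source of bits, so it goes through with an arbitrary $G$ in place of the particular set they treat. The plan is to build $Z$ by forcing with $\Pi^0_1$ classes of $1$-random sets, doing two things at once: (i) coding $G$ into $Z'$ in the style of Ku\v{c}era, and (ii) diagonalizing to keep $Z$ Turing-incomplete. Fix a nonempty $\Pi^0_1$ class $\QQ\subseteq\MLR$; every set we build will lie in $\QQ$, so $1$-randomness comes for free.

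By the $s$-$m$-$n$ theorem fix a computable function $f$ such that, for every oracle $X$, the machine $\Phi_{f(n)}^{X}$ halts if and only if $X$ \emph{realizes a positive marker for $n$}, where a positive marker for $n$ is a configuration of the oracle whose presence is uniformly c.e.\ but whose absence cannot be certified finitely; these configurations are chosen pairwise disjoint and are produced by the construction only deliberately. We build $Z=\bigcup_s\sigma_s$ as an increasing chain of \emph{conditions}: strings $\sigma$ with $[\sigma]\cap\QQ\ne\emptyset$ whose relative measure $2^{|\sigma|}\lambda([\sigma]\cap\QQ)$ stays above a prescribed, slowly shrinking threshold. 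The Ku\v{c}era coding lemma --- a positive-measure $\Pi^0_1$ class, once the relative measure of a condition is large enough, can be extended in either direction $\sigma 0$ or $\sigma 1$ while only slightly decreasing relative measure --- keeps conditions extendible, so the chain is infinite. We interleave two kinds of action. \emph{Coding $n$:} consult $G$; if $n\in G$, extend the current condition to one realizing $n$'s positive marker, forcing $\Phi_{f(n)}^{Z}\da$; if $n\notin G$, extend to a condition realizing a negative marker for $n$ and thereafter pass only to extensions avoiding $n$'s positive marker, so $\Phi_{f(n)}^{Z}$ diverges. \emph{Diagonalizing $\Phi_e$:} refine the current condition so as to force $\Phi_e^{Z}\ne\emptyset'$ --- either by forcing $\Phi_e^{Z}$ partial at some argument or by forcing it to disagree with $\emptyset'$ at some argument --- spending only a bounded amount of relative measure.

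Then $Z\in\QQ$ is $1$-random; the diagonalization actions ensure $\emptyset'\notT Z$, so $Z$ is incomplete; and by the coding actions $G=\{\,n:\Phi_{f(n)}^{Z}\da\,\}$, which is truth-table reducible to $Z'$ (indeed via a single query, after the standard computable reindexing from functional indices to inputs of the universal function). Hence $G\ltt Z'$, as required.

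The main obstacle is reconciling incompleteness with the coding. Since $G$ is arbitrary, $Z'$ is in general forced to be complex --- when $\emptyset''\ltt Z'$ the set $Z$ is even high --- so one cannot simply make $Z$ low and quote $(\emptyset')'\le_\Tur Z'$; the incompleteness requirements must instead be met by honest diagonalization that coexists with the coding. The delicate point is to maintain a single measure-bookkeeping invariant on conditions that is preserved both by a coding extension and by a diagonalization refinement, so that the construction never stalls; this balancing act, rather than either mechanism on its own, is the content of \cite[Theorem 3.5]{Kjos.Nies:nd}. A secondary, more routine point is that the commitment ``avoid $n$'s positive marker forever'' taken at a negative-coding step must always be honorable, which is precisely why positive markers are patterns the construction introduces only deliberately.
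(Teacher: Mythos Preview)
Your proposal cites the right source and sketches a plausible forcing argument, but it differs substantially from the paper's proof, and the sketch itself has a gap.

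The paper does \emph{not} consult $G$ during the construction, nor does it diagonalize against $\emptyset'$. Instead it runs an effective stagewise construction that builds, for \emph{every} string $\gamma\in 2^{<\omega}$, a coding string $\sigma_\gamma$ and a $\Pi^0_1$ subclass $\mathcal S_\gamma\subseteq\mathcal S$, and shows that both approximations $\sigma_{\gamma,s}$ and $\mathcal S_{\gamma,s}$ are $\omega$-c.e.\ with change-bounds depending only on $|\gamma|$. Diagonalization is Friedberg--Muchnik style: a c.e.\ set $A$ is enumerated during the construction, with $\mathcal S_\gamma$ chosen so that $\Phi_{|\gamma|}(Z)\neq A$ for all $Z\in\mathcal S_\gamma$; when the diagonalizing subclass dies, the fresh witness is dumped into $A$. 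Only at the end is $G$ used, setting $Z=\bigcup_n\sigma_{G\upharpoonright n}$. The truth-table reduction comes not from jump markers but from the Cole--Simpson characterization: since $\sigma_{\gamma,s}$ is $\omega$-c.e.\ uniformly in $\gamma$, one reads off $G\in\mathrm{BLR}(Z)$, hence $G\le_{\mathrm{tt}}Z'$.

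Your direct step ``force $\Phi_e^Z\neq\emptyset'$ spending only a bounded amount of relative measure'' is not justified as written. To choose an argument at which to force disagreement or divergence you must know the relevant bit of $\emptyset'$; your construction consults $G$ but never invokes a $\emptyset'$ oracle, and without it there is no way to select the correct branch. The marker scheme is also underspecified: you never say what the positive and negative markers are concretely, nor why the infinitely many negative commitments (one for each $n\notin G$) together cost only summable measure inside the ambient $\Pi^0_1$ class. These points can likely be repaired---you would essentially be redoing the argument of \cite{Kjos.Nies:nd}---but the paper's tree-plus-Friedberg--Muchnik route is chosen deliberately: the resulting $\omega$-c.e.\ bounds on the coding strings are precisely what gets reused in the proof of Theorem~\ref{thm:main superhigh thm}, where one must \emph{fail} to build such a $Z$ and extract a golden pair from the failure. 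A pure forcing argument with $G$ and $\emptyset'$ as oracles would not supply that template.
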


As a gentle introduction to Theorem~\ref{thm:main superhigh thm},  we include  a proof of Theorem~\ref{thm:preparation superhigh theorem}. It turns out that the proof of Theorem \ref{thm:main superhigh thm} is closely related to our proof of Theorem \ref{thm:preparation superhigh theorem},  in a fashion even stronger than the way the proof of Theorem \ref{thm:Pi01 theorem} relates to the proof of the superlow basis theorem. For the proof of Theorem \ref{thm:preparation superhigh theorem}, we start with a $\PPI$ class $\SSS$ consisting of $1$-random sets, enumerate a set $A$, and use a generalized version of \Kuc{} coding to build some $Z\in \SSS$ that codes $G$, in the sense that $G\ltt Z'$, but that avoids $A$, in the sense that $A\nle_\Tur Z$. The proof of Theorem \ref{thm:main superhigh thm} is a reversal, of sorts, of the same situation, in which $A$ is given, but we try to construct such a set $Z\in \SSS$ nonetheless. Our failure to avoid $A$ is then translated, as was done in Section \ref{sec:main}, into a golden pair, and so into an enumeration of a trace for the given $A$-partial computable function. In this way, the proof of Theorem \ref{thm:preparation superhigh theorem} serves as a blueprint for the proof of Theorem \ref{thm:main superhigh thm}. Also, in some sense, this argument shows that strong jump-traceability is exactly the level at which the power of upper-cone avoidance in conjunction with coding fails in ``codable'' $\PPI$ classes such as classes of $1$-random sets or Medvedev complete classes (i.e., $\PPI$ classes that have the highest possible degree in the Medvedev lattice of mass problems). 

We remark that for $G = \emptyset''$, which is the case we are interested in to prove the right-to-left direction of Characterization II, Theorem \ref{thm:preparation superhigh theorem} can be proved by a $1$-random pseudo-jump inversion technique, to obtain a $\DII$ $1$-random set $Z$. See \cite[Theorem 6.3.14]{Nbook}.

\subsection{\Kuc{} coding} \label{subsec:Kuc-coding}

We start with a review of \Kuc{} coding into $\PPI$ classes of $1$-random sets.
For a string $\tau$ and a class $\BBB\subseteq 2^\w$, let $\BBB \mid \tau = \left\{ X\in 2^\w\,:\, \tau X\in \BBB\right\}$. If $\BBB$ is a measurable class, then $\leb(\BBB \mid \tau) = 2^{|\tau|}\leb (\BBB\cap [\tau])$. 

Recall that a string $\tau\in 2^{<\w}$ is called \emph{extendible} in a $\PPI$ class $\PP$ if $\PP\cap [\tau]$ is nonempty, or equivalently, if $\PP \mid \tau$ is nonempty. If $\PP$ has positive measure, this notion can be strengthened: for any $r<\w$, we say that $\tau\in 2^{<\w}$ is \emph{$r$-extendible} in $\PP$ if $\leb(\PP \mid \tau) \ge 2^{-r}$. 

Let $\PP$ be a $\PPI$ class of positive measure, and let $r<\w$ be sufficiently large so that $\leb \PP\ge 2^{-r}$. We define an embedding of the full binary tree into subclasses of $\PP$ of positive measure defined as the intersections of $\PP$ with basic clopen classes. That is, for every finite binary string $\aaa$ we define a string $\kuc_r(\PP,\aaa)$ such that:
\begin{itemize}
	\item If $\alpha\subset \beta$, then $\kuc_r(\PP,\aaa)\subset \kuc_r(\PP,\beta)$; if $\alpha\perp \beta$, then $\kuc_r(\PP,\alpha)\perp \kuc_r(\PP,\beta)$. 
	\item For all $\alpha\in 2^{<\w}$, the string $\kuc_r(\PP,\aaa)$ is $r+|\alpha|$-extendible in $\PP$. 
\end{itemize}

The definition of $\kuc_r(\PP,\aaa)$ is done recursively in $\aaa$, based on the following lemma. 

\begin{lemma}[\Kuc; see also {\cite[Lemma 3.3.1]{Nbook}}] \label{bk:extensions} 
	Suppose that~$\PP$ is a $\PPI$ class, $l<\w$, and $\tau\in 2^{<\w}$ is $l$-extendible in $\PP$. Then there are at least two strings $\s \supset\tau$ of length $|\tau| +l+1$ that are $l+1$-extendible in $\PP$. 
\end{lemma}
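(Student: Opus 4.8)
The plan is to prove this by a simple averaging argument over the $2^{l+1}$ successors of $\tau$ of the relevant length. First I would record the additivity of the conditional measure: since $[\tau] = \bigsqcup_{|\rho| = l+1}[\tau\rho]$ is a disjoint union, and $\leb(\PP\mid\eta) = 2^{|\eta|}\leb(\PP\cap[\eta])$ for every string $\eta$, a one-line computation gives
\[
\sum_{|\rho| = l+1}\leb(\PP\mid\tau\rho) \;=\; 2^{l+1}\,\leb(\PP\mid\tau),
\]
equivalently $\leb(\PP\mid\tau) = \sum_{|\rho|=l+1} 2^{-(l+1)}\leb(\PP\mid\tau\rho)$. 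By hypothesis $\leb(\PP\mid\tau)\ge 2^{-l}$, so the sum on the left is at least $2$.

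Next I would argue by contradiction. Suppose at most one string $\sigma = \tau\rho$ with $|\sigma| = |\tau| + l+1$ is $(l+1)$-extendible in $\PP$, i.e. at most one $\rho$ of length $l+1$ satisfies $\leb(\PP\mid\tau\rho)\ge 2^{-(l+1)}$. Since $\leb$ is a probability measure, that one exceptional term is at most $1$, while each of the remaining $2^{l+1}-1$ terms is strictly less than $2^{-(l+1)}$. Hence
\[
\sum_{|\rho| = l+1}\leb(\PP\mid\tau\rho) \;<\; 1 + (2^{l+1}-1)\cdot 2^{-(l+1)} \;=\; 2 - 2^{-(l+1)} \;<\; 2,
\]
contradicting the lower bound of $2$ obtained above. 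Therefore at least two of the strings $\sigma = \tau\rho$ of length $|\tau|+l+1$ are $(l+1)$-extendible in $\PP$; and each such $\sigma$ is a proper extension of $\tau$ because $l+1\ge 1$.

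I do not expect a real obstacle here: the argument is pure counting. The only points requiring care are the normalization bookkeeping $\leb(\PP\mid\eta) = 2^{|\eta|}\leb(\PP\cap[\eta])$ in deriving the displayed identity, and the use of $\leb(\PP\mid\tau\rho)\le 1$, which is exactly what makes the ``at most one large term'' estimate close with room to spare. (Equivalently, one can phrase this probabilistically: for $\rho$ uniform among strings of length $l+1$, the random variable $\leb(\PP\mid\tau\rho)$ is bounded by $1$ and has mean $\leb(\PP\mid\tau)\ge 2^{-l}$, so it cannot fall below the threshold $2^{-(l+1)}$ on all but at most one of the $2^{l+1}$ points; the direct counting above is the cleanest way to make this precise, and it is the version used recursively to define $\kuc_r(\PP,\aaa)$.)
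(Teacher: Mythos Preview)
Your argument is correct. The paper does not actually give a proof of this lemma; it merely states it and cites \Kuc\ and \cite[Lemma 3.3.1]{Nbook}. Your averaging/counting argument is exactly the standard proof found in those references: the identity $\sum_{|\rho|=l+1}\leb(\PP\mid\tau\rho)=2^{l+1}\leb(\PP\mid\tau)\ge 2$, together with the trivial bound $\leb(\PP\mid\tau\rho)\le 1$, forces at least two of the $2^{l+1}$ terms to be at least $2^{-(l+1)}$.
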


We let $\kuc_r(\PP,\ES)$ be the leftmost string $\tau$ of length $r$ that is $r$-extendible in $\PP$. If $\kuc_r(\PP,\aaa)$ has been defined, then we let $\kuc_r(\PP,\aaa0)$ be the leftmost extension of $\kuc_r(\PP,\aaa)$ of length $|\kuc_r(\PP,\aaa)|+r+|\aaa|+1$ that is $r+|\aaa|+1$-extendible in $\PP$, and let $\kuc_r(\PP,\aaa1)$ be the rightmost such extension. 

For all $\aaa$, the length of $\kuc_r(\PP,\aaa)$ is 
\[ r + |\aaa|r + \binom{|\aaa|}{2} - 1. \]
We define 
\[ \ell(n,r) = r(n+1) + \binom{n}{2} -1 ;\]
so $|\kuc_r(\PP,\aaa)|=\ell(|\aaa|,r)$ for all $\aaa$. The point is that the map $\aaa\mapsto \kuc_r(\PP,\aaa)$ is not computable, but $\ell$ is. 

This simple version of \Kuc{} coding is sufficient to prove the \Kuc-G\'{a}cs Theorem, that every set is computable from a $1$-random set. For let $\PP$ be a $\PPI$ class consisting of $1$-random sets. We know that $\leb\PP>0$, so fix some $r$ such that $\leb\PP \ge 2^{-r}$. Let $G\in 2^\w$ and let 
\[ Z = \bigcup_n \kuc_r(\PP,G\uhr n) .\]
The reason that $G\leT Z$ is that we can effectively determine, given $Z\uhr{\ell(n,r)} = \kuc_r(\PP,G\uhr n)$, whether $Z\uhr{\ell(n+1,r)}$ is the leftmost or rightmost extension of $Z\uhr{\ell(n,r)}$ of its length that is $r+n+1$-extendible in $\PP$, because the set of $l$-extendible strings is co-c.e., uniformly in $l$. 

This last argument points to an effective approximation to the coding strings. Recall the descending approximation $\seq{\PP_t}$ to $\PP$ by clopen sets (from Subsection \ref{subsec:notation}). Of course, $\leb \PP_t \ge \leb \PP$ for all $t$, so if $\leb \PP\ge 2^{-r}$, then for all $t$ and all $\aaa$, the string $\kuc_r(\PP_t,\aaa)$ is defined, and indeed effectively obtained, uniformly in $\aaa$ and $t$ (and in $r$). In fact, $\seq{\kuc_r(\PP_t,\aaa)}_{t<\w}$ is a computable approximation, with a computably bounded number of changes,  to the function $\aaa\mapsto \kuc_r(\PP,\aaa)$. For,  if $\kuc_r(\PP_t,\aaa)$ is stable along an interval of stages, then in this interval we see at most $2^{\ell(|\aaa|+1,r)- \ell(|\aaa|,r)}$ many changes in $\kuc_r(\PP_t,\aaa0)$ (and the same holds for $\kuc_r(\PP,\aaa1)$), again because the set of $r+|\aaa|+1$-extendible extensions of $\kuc_r(\PP_t,\aaa)$ of length $\ell(|\aaa|+1,r)$ is (uniformly) co-c.e. Inductively, we obtain the following:

\begin{lemma}\label{lem:Kucera approx}
	For any $\aaa$, the number of stages $t$ such that 
	\[ \kuc_r(\PP_{t+1},\aaa) \ne \kuc_r(\PP_{t},\aaa) \]
	is bounded by $2^{\ell(|\aaa|,r)}$. 
\end{lemma}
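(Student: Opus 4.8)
The plan is an induction on $|\aaa|$, driven by a single monotonicity observation: the approximating clopen sets are nested, $\PP_{t+1}\subseteq\PP_t$, so for every fixed string $\rho$ the value $\leb(\PP_t\mid\rho)$ is nonincreasing in $t$. Hence, for every $l$, the property ``$\rho$ is $l$-extendible in $\PP_t$'' is lost monotonically in $t$ and never regained, and in particular the set of $l$-extendible extensions of a fixed string only shrinks as $t$ grows.

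First I would treat the base case $\aaa=\ES$. Since $\leb\PP_t\ge\leb\PP\ge 2^{-r}$, averaging over the $2^r$ strings of length $r$ shows that $\PP_t$ always contains an $r$-extendible string of length $r$, so $\kuc_r(\PP_t,\ES)$ is defined at every stage; and as the set of such strings is nonincreasing in $t$, its leftmost element is lexicographically nondecreasing in $t$, hence takes at most $2^r$ values and changes fewer than $2^r=2^{\ell(0,r)}$ times.

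For the inductive step, fix $\aaa$, let $b\in\{0,1\}$, and suppose $\kuc_r(\PP_t,\aaa)$ changes at most $C$ times, where $C\le 2^{\ell(|\aaa|,r)}$. I would classify a change of $\kuc_r(\PP_t,\aaa b)$ from stage $t$ to $t+1$ as either \emph{forced}, meaning $\kuc_r(\PP_t,\aaa)$ also changes, or \emph{internal}, meaning $\kuc_r(\PP_t,\aaa)$ is constant across these two stages. There are at most $C$ forced changes. For the internal ones, partition the timeline into the at most $C+1$ maximal blocks on which $\kuc_r(\PP_t,\aaa)$ equals a fixed string $\sigma$; on such a block, $\kuc_r(\PP_t,\aaa b)$ is by definition the leftmost (for $b=0$) or rightmost (for $b=1$) extension of $\sigma$ of length $\ell(|\aaa|+1,r)$ that is $(r+|\aaa|+1)$-extendible in $\PP_t$, such an extension exists throughout the block by Lemma \ref{bk:extensions} applied to $\PP_t$, and since the set of admissible extensions only shrinks within the block, this leftmost/rightmost element moves monotonically (to the right, resp.\ to the left) through the $2^{\,\ell(|\aaa|+1,r)-\ell(|\aaa|,r)}=2^{\,r+|\aaa|+1}$ extensions of $\sigma$ of that length; so it changes fewer than $2^{\,r+|\aaa|+1}$ times on each block. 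Altogether $\kuc_r(\PP_t,\aaa b)$ changes at most $C+(C+1)(2^{\,r+|\aaa|+1}-1)=(C+1)\,2^{\,r+|\aaa|+1}-1$ times.

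Finally, writing $C_n$ for the change-count at level $n=|\aaa|$, the last inequality reads $C_{n+1}+1\le (C_n+1)\,2^{\,r+n+1}$; since $\ell(n+1,r)=\ell(n,r)+r+n+1$ and $\ell(0,r)=r$ (the length of $\kuc_r(\PP,\ES)$), this telescopes to $C_n+1\le 2^{\ell(n,r)}$, which gives the stated bound (in fact with one to spare). I do not anticipate a real obstacle; the only genuinely delicate point is the inductive step, keeping the ``monotone traversal of a bounded candidate set'' bookkeeping tight enough that the telescoped constant comes out to exactly $2^{\ell(|\aaa|,r)}$ and not larger, which is why the count is organized around the maximal stability blocks of $\kuc_r(\PP_t,\aaa)$ rather than around its individual changes.
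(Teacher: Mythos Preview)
Your proposal is correct and follows essentially the same approach as the paper: the paper's proof is the short paragraph preceding the lemma, which sketches exactly your induction on $|\aaa|$, noting that on each interval of stability of $\kuc_r(\PP_t,\aaa)$ the child string $\kuc_r(\PP_t,\aaa b)$ can change at most $2^{\ell(|\aaa|+1,r)-\ell(|\aaa|,r)}$ times because the set of suitably extendible extensions is co-c.e.\ (your monotonicity observation). Your write-up simply fills in the bookkeeping the paper leaves implicit, including the base case and the telescoping that yields the bound $2^{\ell(|\aaa|,r)}$.
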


\subsection{Lower bound  functions} 
If we want to combine coding with Friedberg-Muchnik style diagonalization, one $\PPI$ class is not sufficient: we need to pass to $\PPI$ subclasses that avoid computations that currently look correct. To use \Kuc{} coding on each of these classes, we need, effectively in the index of a class, a positive lower bound on its measure. The lower bound  function is the map giving this lower bound. 

In both proofs, of Theorem \ref{thm:preparation superhigh theorem} and of Theorem \ref{thm:main superhigh thm}, we being with a $\PPI$ class $\SSS$ of $1$-random sets and enumerate a c.e.\ set $V$; from each $v\in V$ we effectively compute an index for a $\PPI$ subclass $\PP^{(v)}$ of $\SSS$. A  \emph{lower bound  function} for   $\seq{\PP^{(v)}}_{v \in V}$  is a (total) computable function $q$ such that for all $v\in V$, if $\PP^{(v)}$ is nonempty, then $\leb\PP^{(v)}\ge 2^{-q(v)}$. 

\begin{lemma} \label{lem:KucLB} 
Let $\SSS$ be a $\PPI$ class of $1$-random sets.
	Any effective list $\seq{\PP^{(v)}}_{v \in V}$ of $\PPI$ subclasses of $\SSS$ has  a lower bound  function. Moreover, a computable index for the  function can be computed effectively from an index for the enumeration $\seq{\PP^{(v)}}$. 
\end{lemma}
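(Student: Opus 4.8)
The key input is the cited fact that a nonempty $\PPI$ class of $1$-random sets is non-null; what I need is its \emph{effective, uniform} content. Recall the proof of that fact: for a $\PPI$ class $\PP$ with clopen approximation $\seq{\PP_t}$, if $\leb \PP = 0$ then $t\mapsto \leb \PP_t$ is a computable, nonincreasing sequence with limit $0$, so the function $N\mapsto t_N := $ (least $t$ with $\leb \PP_t\le 2^{-N}$) is computable from a $\PPI$ index for $\PP$; the sequence $\seq{\PP_{t_N}}_N$ is then a Martin-L\"of test, and every element of $\PP$ lies in $\PP_{t_N}$ for all $N$, so $\PP$ contains no $1$-random set. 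The plan is to run this covering construction for all $v$ simultaneously, weighed so that it amalgamates into a single Martin-L\"of test, and then to read the lower bound function off of it.

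First I would fix, from an index for the enumeration, a computable map $v\mapsto e(v)$ giving a $\PPI$ index for $\PP^{(v)}$, and hence uniform clopen approximations $\seq{\PP^{(v)}_t}_t$. Then I would build a Martin-L\"of test $\seq{\mathcal{V}_N}_N$ as follows: put the clopen set $\PP^{(v)}_t$ into $\mathcal{V}_N$ as soon as $v$ has entered $V$ by stage $t$ and $\leb \PP^{(v)}_t \le 2^{-(v+N+1)}$. Since each $\PP^{(v)}_t$ is clopen with computable measure and $V$ is c.e., the sequence $\seq{\mathcal{V}_N}$ is uniformly c.e., and $\leb \mathcal{V}_N \le \sum_v 2^{-(v+N+1)}\le 2^{-N}$, so it is a Martin-L\"of test.

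Now the bound. Suppose $\PP^{(v)}$ is nonempty; by the cited fact $\leb \PP^{(v)}>0$, and we may pick $Z\in\PP^{(v)}\subseteq \SSS$, which is $1$-random. Whenever $\PP^{(v)}_t$ enters $\mathcal{V}_N$ we have $Z\in\PP^{(v)}\subseteq\PP^{(v)}_t\subseteq\mathcal{V}_N$; but a $1$-random set lies outside $\mathcal{V}_N$ once $N$ exceeds its deficiency for this test. Hence for all sufficiently large $N$, $\PP^{(v)}_t$ never enters $\mathcal{V}_N$, i.e.\ $\leb \PP^{(v)}_t > 2^{-(v+N+1)}$ for all $t$, so $\leb \PP^{(v)}\ge 2^{-(v+N+1)}$. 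This already yields an exponent of the form $v+N+1$, and the substance of the ``moreover'' clause is that such an $N$ can be produced computably in $v$: I would argue that since $\seq{\mathcal{V}_N}$ is \emph{one fixed} Martin-L\"of test and the $\PP^{(v)}$ are all subclasses of the \emph{fixed} $\PPI$ class $\SSS$, there is a single constant $c$ — obtainable from the enumeration's index together with a known lower bound $2^{-r_0}\le\leb\SSS$ — bounding the deficiency that the amalgamated test can charge to any such subclass; one then sets $q(v):=v+c$ and verifies the displayed inequality by the argument just given, applied at level $N=c$. The index of $q$ is computed from $v\mapsto e(v)$ and $r_0$, giving the ``moreover''.

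I expect the main obstacle to be precisely this last step: converting the existential ``$Z\notin \mathcal{V}_N$ for large $N$'' into a \emph{uniform, computable} exponent, rather than merely a $\Halt$-computable one depending on the randomness deficiency of an arbitrary member of $\PP^{(v)}$. The honest way to secure it is to interleave the construction of $\seq{\mathcal{V}_N}$ with the approximation $\seq{\SSS_t}$ to $\SSS$ and to use $2^{-r_0}\le\leb\SSS$ to cap, level by level, how much measure the amalgamated test devotes to subclasses of $\SSS$ — in particular, since $\leb \mathcal{V}_{r_0+1}<\leb\SSS$, no nonempty subclass of $\SSS$ can be absorbed at level $r_0+1$ — after which the computation of $q$ from the enumeration's index is routine bookkeeping, and no non-effective search has been hidden in the definition of $q$.
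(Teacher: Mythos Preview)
Your plan is sound up to the point you yourself flag as the ``main obstacle,'' but the resolution you offer does not work. The inference ``since $\leb \mathcal V_{r_0+1}<\leb\SSS$, no nonempty subclass of $\SSS$ can be absorbed at level $r_0+1$'' is simply false: the inequality $\leb\mathcal V_{r_0+1}<\leb\SSS$ only tells you that $\SSS\not\subseteq\mathcal V_{r_0+1}$; it says nothing about a \emph{small} nonempty $\PP^{(v)}\subseteq\SSS$. If some $\PP^{(v)}$ happened to be nonempty with $\leb\PP^{(v)}\le 2^{-(v+r_0+2)}$, it would sit comfortably inside $\mathcal V_{r_0+1}$ and your argument would not rule this out. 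Put differently, elements of $\SSS$ may have arbitrarily large randomness deficiency with respect to the particular test $\seq{\mathcal V_N}$ you built (compactness does not help here: the sets $2^\w\setminus\mathcal V_N$ are closed, not open), so you cannot extract a uniform level~$N$ from the measure of $\SSS$ alone.

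The paper closes exactly this gap, but by a different route. Rather than weight the $v$th class by $2^{-v}$ as you do, it invokes the Kraft--Chaitin theorem to obtain a constant $c_0$ (effectively from the enumeration) such that $\leb\PP^{(v)}\le 2^{-K(v)-c_0}$ forces $\PP^{(v)}=\ES$; intuitively, a description of $v$ together with a pointer into a small $\PP^{(v)}$ yields short descriptions of initial segments of its members, contradicting their $1$-randomness via Levin--Schnorr. Since $K(v)\le 2\log v+d$ for a fixed $d$, one can then set $q(v)=2\log v+c_0+d$. The point is that the exponent is tied to the \emph{descriptive} complexity of the index~$v$, not to a single level of a Martin-L\"of test; this is what makes the bound both computable and uniform. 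Your construction is in the same spirit, and could be repaired by replacing the weight $2^{-(v+N+1)}$ with $2^{-(K_s(v)+N+1)}$ (or by passing through Kraft--Chaitin directly), but as written the last paragraph does not deliver the constant it promises.
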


\begin{proof}
This result is best proved using a basic result on prefix-free Kolmogorov complexity:
By the Kraft-Chaitin Theorem (see for instance \cite[Theorem 2.2.17]{Nbook}), there is a coding constant~$c_0$ such that $ \leb \PP^{(v)} \le 2^{-K(v)-c_0} \rightarrow \PP^{(v)} = \ES$ (see \cite[Exercise 3.3.3 and its solution]{Nbook}). Fix $d \in \w$ such that $K(v) \le 2 \log v +d$. Let $q(v) = 2 \log v +c_0+d$. The constant $c_0$ can be obtained effectively from the enumeration $\seq{\PP^{(v)}}$ because the Kraft-Chaitin Theorem is uniform. 
\end{proof}

Hence, by the recursion theorem, we may assume that a lower bound function for the classes enumerated during the construction is known to us during the construction; we fix such a function $q$. 

Since the descending, clopen, effective approximation $\seq{\PP_t}$ to a $\PPI$ class $\PP$ is obtained effectively from a canonical index for $\PP$, we get such an approximation $\seq{\PP^{(v)}_t}$ uniformly for all $v\in V$. We may assume that for all $v\in V$ and $t<\w$, if $\leb\PP^{(v)}_t < 2^{-q(v)}$, then $\PP^{(v)}_t$ is empty. To omit an index, for all $v\in V$ such that $\PP^{(v)}$ is nonempty, we let $\kuc(\PP^{(v)},\aaa) = \kuc_{q(v)}(\PP^{(v)},\aaa)$, and similarly, $\kuc(\PP^{(v)}_t,\aaa) = \kuc_{q(v)}(\PP^{(v)}_t,\aaa)$, which is defined if and only if $\PP^{(v)}_t$ is nonempty, a condition that is effectively detectable. The map $(v,t,\aaa)\mapsto \kuc(\PP^{(v)}_t,\aaa)$ is computable on its domain, which is itself computable. 
 

\subsection{Proof of Theorem \ref{thm:preparation superhigh theorem}}

We start with a $\PPI$ class of $1$-random sets $\SSS$; this class has positive measure. We enumerate a c.e.\ set $A$, against which we try to diagonalize. For coding, we approximate coding strings $\sigma_\gamma$ by giving their stage $s$ versions $\sigma_{\gamma,s}$. For diagonalization, we approximate $\PPI$ classes $\SSS_\gamma$, subclasses of $\SSS$, such that for all $\gamma$ and all $Z\in \SSS_\gamma$, we have $\Phi_{|\gamma|}(Z)\ne A$. For compatibility of coding and diagonalization, we ensure that for all $\gamma$ we have $\SSS_\gamma\subset [\sigma_\gamma]$, and that for both $j<2$, the string $\sigma_{\gamma j}$ is extendible in $\SSS_\gamma$. 

At stage $s$ we define $\SSS_{\gamma,s}$, an approximation to $\SSS_\gamma$. As mentioned above, we will enumerate a c.e.\ set $V$. Each class $\SSS_{\gamma,s}$ will be of the form $\PP^{(v)}$ for some $v\in V$. If $\SSS_{\gamma,s-1} = \PP^{(v)}$, and at stage $s$ we decide not to change this class, that is, we decide that $\SSS_{\gamma,s} = \SSS_{\gamma,s-1}$, then of course we have $\SSS_{\gamma,s} = \PP^{(v)}$ for the same $v$. If we decide to pick a new class, so that $\SSS_{\gamma,s}\ne \SSS_{\gamma,s-1}$, then we enumerate a new element $u$ into $V$ at stage $s$, and define $\SSS_{\gamma,s} = \PP^{(u)}$. The number $u$ will equal $\seq{\gamma,k}$, where $k$ is the number of previous versions of $\SSS_{\gamma,t}$, and we identify $2^{<\omega}$ with $\omega$ in a natural way.


Note the multiplicity of the subscript $s$: $\SSS_{\gamma,s}$ is a $\Pi^0_1$ class; the clopen class that is its stage $s$ approximation will be denoted by $(\SSS_{\gamma,s})_s$.

\subsubsection*{Construction}

At stage $s$, we define $\SSS_{\gamma,s}$ and $\sigma_{\gamma,s}$ by recursion on $\gamma$. Starting with  $\gamma = \estring$, we let $\sigma_{\estring,s} = \estring$. 

Now suppose that $\sigma_{\gamma,s}$ is defined. If $\gamma\ne \estring$, let $\gamma^- = \gamma\uhr{|\gamma|-1}$ be $\gamma$ with the last bit chopped off. By induction, $\SSS_{\gamma^-,s}$ is already defined. If $\gamma = \estring$, let $\SSS_{\gamma^-,s}= \SSS$.

There are three possibilities:

\begin{enumerate}
	\item If $s=0$, or if $s>0$ but $\sigma_{\gamma,s}\ne \sigma_{\gamma,s-1}$, then we pick a new value for $\SSS_{\gamma,s}$. Let $v$ be the new index for $\SSS_{\gamma,s}$, which we enumerate into $V$.
	
We let
	\[ \SSS_{\gamma,s} = \left\{ Z\in \SSS_{\gamma^-,s}\cap [\sigma_{\gamma,s}]\,:\, \lnot \,(\Phi_{|\gamma|}(Z,v)\converge = 0) \right\} .\]
	
	\item If $s>0$ and $\sigma_{\gamma,s} = \sigma_{\gamma,s-1}$, but $(\SSS_{\gamma,s-1})_s$ is empty, then let $\SSS_{\gamma,s} = \SSS_{\gamma^-,s}\cap [\sigma_{\gamma,s}]$. 
	
	\item Otherwise, let $\SSS_{\gamma,s} = \SSS_{\gamma,s-1}$. 
\end{enumerate}

After $\SSS_{\gamma,s}$ is defined, for both $j<2$, we let $\sigma_{\gamma j,s} = \kuc(\SSS_{\gamma,s}, \seq{j})$.

\subsubsection*{Verification}

\begin{claim} \label{clm:protosuperhigh-limit} 
		For all $\gamma$, both $\sigma_{\gamma,s}$ and $\SSS_{\gamma,s}$ stabilize to final values $\sigma_{\gamma}$ and $\SSS_{\gamma}$. The approximations are both $\w$-c.e. 
\end{claim}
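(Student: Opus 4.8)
I would argue by induction on $|\gamma|$, and it is convenient to strengthen the statement: along with stabilization of $\sigma_{\gamma,s}$ and of the canonical index of $\SSS_{\gamma,s}$, I would prove that each of these two sequences changes only a \emph{computably} bounded number of times (this is the content of ``the approximations are $\w$-c.e.''), and that the final class $\SSS_\gamma$ is nonempty, with $\leb \SSS_\gamma \ge 2^{-q(v)}$ for $v$ its final index. The nonemptiness and the measure bound are what allow \Kuc\ coding to continue below $\gamma$: they guarantee that $\sigma_{\gamma j}$ is defined and extendible in $\SSS_\gamma$.

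For $\gamma=\estring$ one has $\sigma_{\estring,s}=\estring$ for all $s$, so $\SSS_{\estring^-,s}=\SSS$ is constant, and nonempty since a $\PPI$ class of $1$-random sets has positive measure. Only clause~(1) (at $s=0$) and clause~(2) can act, and clause~(2) fires at stage $s$ only if $(\SSS_{\estring,s-1})_s=\ES$. By compactness of $\Cant$ a nonempty $\PPI$ class has nonempty clopen approximations; hence clause~(2) fires at most once, after which $\SSS_{\estring,s}=\SSS$ stays nonempty and clause~(2) never acts again. So there is at most one change, and the measure bound for the final class $\PP^{(v)}$ (with $v$ its index) is just the defining property of the lower bound function $q$, since approximations of measure below $2^{-q(v)}$ are discarded as empty.

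For the inductive step, assume the statement for $\gamma^-$, fix $s_0$ past the last change of $\SSS_{\gamma^-,s}$, write $\SSS_{\gamma^-}=\PP^{(v^\ast)}$, and let $j$ be the last bit of $\gamma$. Before $s_0$ the sequence $\SSS_{\gamma^-,s}$ runs through finitely many $\PPI$ classes whose indices are of the form $\seq{\gamma^-,k}$ with $k$ below the inductive change-bound for $\gamma^-$; since $q$ is nondecreasing, the parameters $q(v)$ occurring are bounded by a computable function of $\gamma$. Applying Lemma~\ref{lem:Kucera approx} (with $|\aaa|=1$) to each of those classes, and once more to $\PP^{(v^\ast)}$ after $s_0$, bounds computably the number of changes of $\sigma_{\gamma,s}=\kuc(\SSS_{\gamma^-,s},\seq j)$; its limit $\sigma_\gamma=\kuc(\SSS_{\gamma^-},\seq j)$ is, by Lemma~\ref{bk:extensions}, $(q(v^\ast)+1)$-extendible in $\SSS_{\gamma^-}$, so $\SSS_{\gamma^-}\cap[\sigma_\gamma]\ne\ES$. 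Now clause~(1) for $\gamma$ fires only at $s=0$ or when $\sigma_{\gamma,s}$ changes, hence a computable number of times; once $s\ge s_0$ and after the last such fire, only clauses~(2),(3) act, and clause~(2) then produces the \emph{fixed} nonempty class $\SSS_{\gamma^-}\cap[\sigma_\gamma]$, so by compactness clause~(2) fires at most once more. Charging the earlier clause-(2) fires to the (already computably bounded) numbers of changes of $\sigma_{\gamma,s}$ and of $\SSS_{\gamma^-,s}$, the index of $\SSS_{\gamma,s}$ changes a computable number of times and stabilizes. The final class $\SSS_\gamma$ must be nonempty --- otherwise its clopen approximation would become empty and clause~(2) would fire again --- and its measure bound is once more the defining property of $q$.

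The step I expect to require the most care is the interaction of the two ways $\SSS_{\gamma,s}$ can be modified: clause~(1) is driven by changes of the coding string $\sigma_{\gamma,s}$, which are in turn propagated from lower levels and controlled via Lemma~\ref{lem:Kucera approx}, whereas clause~(2) is driven by clopen approximations collapsing to $\ES$. The only reason clause~(2) cannot recur indefinitely is that the replacement class $\SSS_{\gamma^-}\cap[\sigma_\gamma]$ is genuinely nonempty --- which rests on the lower bound function of Lemma~\ref{lem:KucLB} together with the compactness of $\Cant$ --- so the nonemptiness and measure-bound part of the induction hypothesis really does work here and cannot be dropped. The remaining labor is the bookkeeping needed to see that all the change-bounds are uniformly computable in $\gamma$, which is routine.
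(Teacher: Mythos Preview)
Your proof is correct and follows the paper's approach: induction on $|\gamma|$, with changes of $\SSS_{\gamma,s}$ bounded by changes of $\sigma_{\gamma,s}$ (plus at most one clause-(2) switch once $\sigma_{\gamma,s}$ and $\SSS_{\gamma^-,s}$ have stabilized) and changes of $\sigma_{\gamma j,s}$ bounded via Lemma~\ref{lem:Kucera approx} applied to the finitely many possible $V$-indices $\seq{\gamma,k}$. You additionally fold nonemptiness of $\SSS_\gamma$ and the measure lower bound into the same induction, which the paper instead defers to the next claim (Claim~\ref{clm:protosuperhigh-extendible}); strictly speaking nonemptiness is not needed for the $\w$-c.e.\ bound, but including it does no harm.
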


\begin{proof}
	By induction on $\gamma$. We always have $\sigma_{\estring,0} = \estring$. 
	
	Let $\gamma$ be any string, and suppose that in an interval $[t_0,t_1]$ of stages, the value of $\sigma_{\gamma,s}$ is constant. Then $\SSS_{\gamma,s}$ is changed at most once between stages $t_0$ and $t_1$. 
	
	Suppose now that in an interval $[t_0,t_1]$ of stages, the class $\SSS_{\gamma,s}$ is not redefined. 	
Let $v\in V$ be the number such that $\SSS_{\gamma,s} = \PP^{(v)}$ for all $s\in [t_0,t_1]$. Let $j<2$. By Lemma \ref{lem:Kucera approx}, in stages between $t_0$ and $t_1$, the value of $\sigma_{\gamma j,s}$ changes at most $2^{\ell(1,q(v))}$ many times. 

	Hence both $\sigma_{\gamma,s}$ and $\SSS_{\gamma,s}$ reach a limit. To see that the number of changes is bounded computably in $\gamma$, we again argue by recursion on $\gamma$. If $\SSS_{\gamma,s}$ changes at most $m$ many times, then we recall that the $V$-indices for $\SSS_{\gamma}$ are $(\gamma,0), (\gamma,1), \dots$, and so for both $j<2$, the number of times $\sigma_{\gamma j,s}$ changes is bounded by 
	\[ \sum_{k\le m} 2^{\ell(1,q(\seq{\gamma,k}))}, \]
	which is computable. 	
\end{proof}

It is clear from the instructions that for all $\gamma$, we have $\SSS_{\gamma}\subset \SSS_{\gamma^-}$ and $\SSS_\gamma\subset [\sigma_\gamma]$.

\begin{claim} \label{clm:protosuperhigh-extendible}
	For all $\gamma$, we have $\SSS_{\gamma^-}\cap [\sigma_\gamma]\ne \ES$. 
\end{claim}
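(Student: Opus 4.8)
The plan is to read the claim off from the way the coding strings $\sigma_\gamma$ are produced in the construction, using the extendibility property of the $\kuc$ embedding recalled in Subsection~\ref{subsec:Kuc-coding}; the only real work is to verify that the final class $\SSS_{\gamma^-}$ is nonempty.

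The case $\gamma=\estring$ is immediate: by convention $\SSS_{\estring^-}=\SSS$ and $\sigma_\estring=\estring$, so $\SSS_{\estring^-}\cap[\sigma_\estring]=\SSS\ne\ES$. Now fix $\gamma=\delta j$ with $\delta=\gamma^-$ and $j<2$. By Claim~\ref{clm:protosuperhigh-limit}, $\SSS_\delta$ reaches a limit, which is $\PP^{(v)}$ for its final $V$-index $v$, and $\sigma_\gamma=\lim_s\sigma_{\delta j,s}$ exists. Since $\sigma_{\delta j,s}=\kuc(\SSS_{\delta,s},\seq{j})$ with $\SSS_{\delta,s}$ eventually equal to $\PP^{(v)}$, and since the approximations $\seq{\kuc(\PP^{(v)}_t,\seq{j})}_t$ converge to $\kuc_{q(v)}(\PP^{(v)},\seq{j})$ (as recalled in Subsection~\ref{subsec:Kuc-coding}, and as already used in the proof of Claim~\ref{clm:protosuperhigh-limit}), the final value is $\sigma_\gamma=\kuc_{q(v)}(\PP^{(v)},\seq{j})$.

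The key step is to show $\SSS_\delta=\PP^{(v)}\ne\ES$, for which I would invoke compactness of $2^\w$ together with Claim~\ref{clm:protosuperhigh-limit}. Let $s_0$ be a stage past which both $\SSS_\delta$ and $\sigma_\delta$ have stabilized. At every stage $s>s_0$, the instruction that fires for $\delta$ cannot be case~(1) (since $s>0$ and $\sigma_{\delta,s}=\sigma_{\delta,s-1}$), and cannot be case~(2) (that would redefine $\SSS_{\delta,s}$ with a fresh $V$-index, contradicting stabilization); hence case~(3) fires, which happens only when $(\SSS_{\delta,s-1})_s\ne\ES$. As $\SSS_{\delta,s-1}=\SSS_\delta$, the descending clopen approximations $(\SSS_\delta)_s$ are all nonempty for $s>s_0$, so by compactness $\SSS_\delta=\bigcap_s(\SSS_\delta)_s\ne\ES$.

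Once $\SSS_\delta\ne\ES$ we get $\leb\SSS_\delta\ge 2^{-q(v)}$ because $q$ is a lower bound function for the enumerated classes; hence $\kuc_{q(v)}(\PP^{(v)},\seq{j})$ is defined and, by the defining property of the embedding, is $(q(v)+1)$-extendible in $\PP^{(v)}$, so $\leb(\PP^{(v)}\mid\sigma_\gamma)\ge 2^{-(q(v)+1)}>0$. This yields $\SSS_{\gamma^-}\cap[\sigma_\gamma]=\PP^{(v)}\cap[\sigma_\gamma]\ne\ES$, as required. I expect the only delicate point to be the bookkeeping that identifies the limit string $\sigma_\gamma$ with $\kuc_{q(v)}$ of the limit class $\SSS_{\gamma^-}$, rather than with a $\kuc$-string of some finite-stage approximation; that is precisely what Claim~\ref{clm:protosuperhigh-limit} and the convergence of the $\kuc$-approximations take care of.
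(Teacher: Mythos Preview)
Your compactness argument for $\SSS_\delta\ne\ES$ has a gap at the step ``case~(2) cannot fire after stabilization because that would redefine $\SSS_{\delta,s}$ with a fresh $V$-index''. The construction only enumerates a new element of $V$ when $\SSS_{\gamma,s}\ne\SSS_{\gamma,s-1}$. Once case~(2) has fired once (setting $\SSS_\delta$ to $\SSS_{\delta^-}\cap[\sigma_\delta]$), a further firing of case~(2) at a stage $s$ with $\SSS_{\delta^-,s}$ and $\sigma_{\delta,s}$ already stable produces the \emph{same} class, so no new index is assigned and $\SSS_\delta$ does not change. Thus stabilization of $\SSS_\delta$ does not by itself exclude case~(2); in particular you cannot conclude that $(\SSS_\delta)_s\ne\ES$ for all large~$s$, and the compactness step does not go through. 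Concretely, nothing in your argument rules out the scenario in which $\SSS_{\delta^-}\cap[\sigma_\delta]=\ES$, case~(2) fires repeatedly without changing the class, and $\SSS_\delta=\ES$.

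The paper closes exactly this gap by arguing inductively on $\gamma$: the induction hypothesis is precisely $\SSS_{\delta^-}\cap[\sigma_\delta]\ne\ES$ (the claim at $\gamma=\delta$), which guarantees that if case~(2) ever fires, the resulting final value $\SSS_\delta=\SSS_{\delta^-}\cap[\sigma_\delta]$ is nonempty. With $\SSS_\delta\ne\ES$ in hand, the rest of your argument (identifying $\sigma_{\delta j}=\kuc(\SSS_\delta,\seq{j})$ and using extendibility of the $\kuc$ string) is fine and matches the paper. So the fix is simply to run your proof by induction on $|\gamma|$ and invoke the inductive hypothesis rather than the compactness detour.
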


\begin{proof}
	By induction on $\gamma$. For $\gamma=\estring$ the claim follows from $\sigma_\estring=\estring$ and $\SSS\ne \ES$. 
	
	Suppose that $\SSS_{\gamma^-}\cap [\sigma_{\gamma^-}]\ne \ES$. Let $j<2$. We show that $\SSS_{\gamma}\cap [\sigma_{\gamma j}]\ne \ES$. 
	
	First, we see that $\SSS_\gamma\ne \ES$, which follows from the instructions. Let $s_0$ be the stage at which $\sigma_{\gamma,s}$ stabilizes. At stage $s_0$, we pick a new value for $\SSS_{\gamma}$. If at a later stage $s_1$ we discover that $\SSS_{\gamma,s_0}$ is empty, then we switch to the final value $\SSS_{\gamma} = \SSS_{\gamma^-}\cap [\sigma_\gamma]$, which by induction is nonempty. Otherwise, $\SSS_\gamma = \SSS_{\gamma,s_0}$ is nonempty. 
	
	Now that we know that $\SSS_\gamma$ is nonempty, we know that each \Kuc{} string $\kuc(\SSS_\gamma,\aaa)$ is extendible in $\SSS_\gamma$, in particular $\sigma_{\gamma j} = \kuc(\SSS_\gamma, \seq{j})$.
\end{proof}

Now define a c.e.\ set $A$ as follows: at a stage $s>0$, if $\sigma_{\gamma,s} = \sigma_{\gamma,s-1}$ but $(\SSS_{\gamma,s-1})_s = \ES$ (that is, case (2) of the construction holds for $\gamma$ at stage $s$), then enumerate the $V$-index of $\SSS_{\gamma,s-1}$ (that is, the number $v\in V$ such that $\SSS_{\gamma,s-1} = \PP^{(v)}$) into $A$. 

\begin{claim} \label{clm:protosuperhigh-diagnoalize}
	For all $\gamma$ and all $Z\in \SSS_\gamma$, we have $\Phi_{|\gamma|}(Z)\ne A$. 
\end{claim}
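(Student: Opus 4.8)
The plan is to show that the diagonalization against $\Phi_{|\gamma|}$ actually takes effect, by tracking what happened when the final version $\SSS_\gamma$ was installed. Fix $\gamma$, and recall from Claim~\ref{clm:protosuperhigh-extendible} that $\SSS_\gamma \ne \ES$. The key observation is that $\SSS_\gamma$ is either installed via case~(1) of the construction (as a diagonalizing class relative to some index $v$) or via case~(2) (as the ``fallback'' class $\SSS_{\gamma^-}\cap[\sigma_\gamma]$). These two cases must be handled separately.

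**First** I would consider the case where $\SSS_\gamma$ was installed via case~(1) at some stage $s_0$ (which is the last stage at which $\sigma_{\gamma,s}$ changes, or $s_0 = 0$), with $V$-index $v$, so that
\[ \SSS_{\gamma} = \left\{ Z\in \SSS_{\gamma^-}\cap [\sigma_{\gamma}]\,:\, \lnot \,(\Phi_{|\gamma|}(Z,v)\converge = 0) \right\}. \]
Since this is the final value, case~(2) is never later triggered for $\gamma$, which means that $(\SSS_{\gamma,s})_s$ is never discovered to be empty after stage $s_0$; in particular $v$ is \emph{not} enumerated into $A$ on account of $\gamma$ at any stage. I would then argue that $v$ is never enumerated into $A$ at all: the only way a number $u$ enters $A$ is as the $V$-index of some $\SSS_{\delta,s-1}$ when case~(2) holds for $\delta$ at stage $s$, and the $V$-indices are assigned injectively (the index $\seq{\delta,k}$ is used only for the $k$-th version of $\SSS_\delta$), so $u = v$ forces $\delta = \gamma$ and the version in question to be this final one — contradiction. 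Hence $v\notin A$, so $A(v) = 0$. On the other hand, for every $Z\in \SSS_\gamma$ we have, by definition of $\SSS_\gamma$, that $\lnot(\Phi_{|\gamma|}(Z,v)\converge = 0)$, i.e.\ either $\Phi_{|\gamma|}(Z,v)\diverge$ or $\Phi_{|\gamma|}(Z,v)\converge \ne 0$. In either case $\Phi_{|\gamma|}(Z,v) \ne 0 = A(v)$, so $\Phi_{|\gamma|}(Z)\ne A$, as required.

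**Next** I would handle the case where $\SSS_\gamma$ was installed via case~(2), so $\SSS_\gamma = \SSS_{\gamma^-}\cap[\sigma_\gamma]$, which happened at some stage $s_1$ because the previously-installed class $\SSS_{\gamma,s_1-1} = \PP^{(v)}$ (installed via case~(1) with index $v$ at the stage $s_0 \le s_1-1$ where $\sigma_\gamma$ stabilized) was discovered to have empty $s_1$-approximation. At stage $s_1$ we enumerated $v$ into $A$, so $A(v) = 1$. But $\PP^{(v)}$ was defined at stage $s_0$ as $\{Z\in \SSS_{\gamma^-,s_0}\cap[\sigma_{\gamma}] : \lnot(\Phi_{|\gamma|}(Z,v)\converge=0)\}$, and we have just learned $\PP^{(v)} = \ES$. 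Now take any $Z\in \SSS_\gamma = \SSS_{\gamma^-}\cap[\sigma_\gamma]$. Since $\SSS_{\gamma^-}\subseteq \SSS_{\gamma^-,s_0}$ (the approximation is descending) and $Z\in[\sigma_\gamma]$, emptiness of $\PP^{(v)}$ forces $\Phi_{|\gamma|}(Z,v)\converge = 0$; but $A(v) = 1 \ne 0$, so again $\Phi_{|\gamma|}(Z)\ne A$.

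**The main obstacle**, and the step requiring the most care, is the bookkeeping showing that $v\notin A$ in the case~(1) scenario — one must be sure that no \emph{other} node $\delta$ and no other version can re-use the index $v$, which rests on the injectivity of the $V$-indexing scheme $u = \seq{\delta,k}$ announced just before the construction. Everything else is a direct unwinding of the definitions, using that the class approximations $\seq{(\SSS_{\gamma,s})_t}_t$ are descending and that $\SSS_\gamma\subseteq \SSS_{\gamma^-}\cap[\sigma_\gamma]$ always holds.
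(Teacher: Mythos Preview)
Your proof is correct and follows exactly the same two-case analysis as the paper's, with the welcome addition of spelling out why the index $v$ cannot be enumerated into $A$ by some other node (injectivity of the indexing $u=\seq{\delta,k}$), which the paper leaves implicit. One small wording issue: the justification ``the approximation is descending'' for $\SSS_{\gamma^-}\subseteq \SSS_{\gamma^-,s_0}$ is not quite right, since the sequence $\seq{\SSS_{\gamma^-,s}}_s$ is not monotone; the correct reason is that $\SSS_{\gamma^-}$ has already reached its final value by the time $\sigma_\gamma$ stabilizes at $s_0$, so in fact $\SSS_{\gamma^-,s_0}=\SSS_{\gamma^-}$.
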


\begin{proof}
	Fix $\gamma$, and let $s_0$ be the stage at which $\sigma_\gamma$ stabilizes. Let $v$ be the $V$-index of $\SSS_{\gamma,s_0}$. 	
	
	If there are no changes in $\SSS_{\gamma,s}$ after stage $s_0$, that is, if $\SSS_{\gamma} = \SSS_{\gamma,s_0}$, then $v\notin A$, and by the definition of $\SSS_{\gamma,s_0}$, for no $Z\in \SSS_\gamma$ do we have $\Phi_{|\gamma|}(Z,v)= 0$. 
	
	Otherwise, at some stage $s_1>s_0$ we redefine $\SSS_{\gamma,s_1} = \SSS_{\gamma^-}\cap [\sigma_\gamma]$, and there are no further changes in $\SSS_\gamma$. But this redefinition is done only because at stage $s_1$ we discover that $\SSS_{\gamma,s_0}= \ES$. By the definition of $\SSS_{\gamma,s_0}$, we thus have $\Phi_{|\gamma|}(Z,v)=0$ for all $Z\in \SSS_{\gamma}$. But in this case, $v\in A$. 
\end{proof}

Let $G\in 2^\w$. Define $Z = \bigcup_{n} \sigma_{G\uhr n}$. If $n\le m<\w$ then by Claim \ref{clm:protosuperhigh-extendible},
\[ \ES \ne \SSS_{G\uhr{m}} \cap \left[\sigma_{G\uhr{m+1}}\right] \subseteq \SSS_{G\uhr{n}}\cap \left[\sigma_{G\uhr{m+1}}\right],\]
so by compactness, $Z\in \SSS_{G\uhr n}$ for all $n$. By Claim \ref{clm:protosuperhigh-diagnoalize}, $A\nle_\Tur Z$, so $Z$ is incomplete. 

\begin{claim} \label{clm:protosuperhigh-coding}
	$G\ltt Z'$. 
\end{claim}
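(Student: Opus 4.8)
The plan is to prove the stronger statement that $G\in \BLR(Z)$; the claim $G\ltt Z'$ then follows at once from Fact~\ref{fact:BLR-and-TT}. The key observation is that the entire construction is effective, so without any oracle we can compute, uniformly in $\gamma$ and $s$, the coding strings $\sigma_{\gamma,s}$, the clopen approximations $(\SSS_{\gamma,s})_s$, and the $V$-index $v_{\gamma,s}$ of the $\PPI$ class $\SSS_{\gamma,s}$. The only thing ``missing'' is which branch through the coding tree the real $G$ picks out, and this is exactly the information recovered by reading initial segments of $Z$, just as in the \Kuc{}--G\'acs argument reviewed in Subsection~\ref{subsec:Kuc-coding}.

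Using $Z$ as an oracle I would define an approximation $\seq{G_s}$ to $G$ by recursion on the input. Put $G_s\uhr 0 = \estring$ and $G_s(n)=0$ for $n\ge s$. Given $\gamma := G_s\uhr n$ with $n<s$, set $\ell = \ell(1, q(v_{\gamma,s})) = |\sigma_{\gamma 0,s}| = |\sigma_{\gamma 1,s}|$, read $Z\uhr\ell$, and put $G_s(n)=0$ if $Z\uhr\ell = \sigma_{\gamma 0,s}$, put $G_s(n)=1$ if $Z\uhr\ell = \sigma_{\gamma 1,s}$, and put $G_s(n)=0$ by default otherwise. This is a uniformly $Z$-computable sequence of functions.

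Next I would verify that $\lim_s G_s(n) = G(n)$, by induction on $n$. Suppose that for all large $s$ we have $G_s\uhr n = G\uhr n =: \gamma$. By Claim~\ref{clm:protosuperhigh-limit}, for all large $s$ the parameters $v_{\gamma,s}$, $\sigma_{\gamma 0,s}$, $\sigma_{\gamma 1,s}$ have also reached their final values $v$, $\sigma_{\gamma 0}$, $\sigma_{\gamma 1}$. Since $Z\in \SSS_{G\uhr(n+1)}\subseteq[\sigma_{G\uhr(n+1)}]$ and $|\sigma_{G\uhr(n+1)}| = |\sigma_{\gamma 0}| = |\sigma_{\gamma 1}| = \ell$, we get $Z\uhr\ell = \sigma_{G\uhr(n+1)} = \sigma_{\gamma,G(n)}$. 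Because $\SSS_\gamma\ne\ES$ (shown in the proof of Claim~\ref{clm:protosuperhigh-extendible}), the strings $\sigma_{\gamma 0} = \kuc(\SSS_\gamma,\seq{0})$ and $\sigma_{\gamma 1} = \kuc(\SSS_\gamma,\seq{1})$ are respectively the leftmost and rightmost $(q(v)+1)$-extendible extensions of $\kuc(\SSS_\gamma,\estring)$ of length $\ell$, and by Lemma~\ref{bk:extensions} there are at least two such extensions, so $\sigma_{\gamma 0}\ne\sigma_{\gamma 1}$. Hence the case distinction resolves correctly and $G_s(n)=G(n)$ for all large $s$. For the mind-change bound, note that for fixed $n$ the value $G_s(n)$ is determined by $G_s\uhr n$ together with $\sigma_{\gamma 0,s}$ and $\sigma_{\gamma 1,s}$ (a change in $v_{\gamma,s}$ forces a change in these strings, since it changes their length), so $G_s(n)$ changes at most as often as $G_s\uhr n$ does plus, for each of the computably-boundedly-many values $\gamma$ taken by $G_s\uhr n$, the number of changes of $\sigma_{\gamma 0,s}$ and $\sigma_{\gamma 1,s}$, which is computably bounded by Claim~\ref{clm:protosuperhigh-limit}. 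An easy induction on $n$ then gives a computable bound on the number of changes of $G_s(n)$, so $\seq{G_s}$ witnesses $G\in\BLR(Z)$ and Fact~\ref{fact:BLR-and-TT} yields $G\ltt Z'$.

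I expect the only genuine work to be the bookkeeping in the last step: tracking how changes propagate up the recursion, and confirming that each transient ``default'' output is charged to an actual change of $G_s\uhr n$ or of one of the relevant coding strings, so that the resulting bound on mind changes really is computable. Everything else is a routine adaptation of the standard \Kuc{} decoding argument, now carried out with the $\omega$-c.e.\ approximations to the coding data supplied by Claim~\ref{clm:protosuperhigh-limit} rather than with a fixed $\PPI$ class.
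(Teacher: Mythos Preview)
Your proof is correct and follows essentially the same approach as the paper: reduce to $G\in\BLR(Z)$ via Fact~\ref{fact:BLR-and-TT}, decode $G$ from $Z$ using the $\omega$-c.e.\ coding strings $\sigma_{\gamma,s}$, and bound mind-changes via Claim~\ref{clm:protosuperhigh-limit}. The paper's version is a touch more direct: it sets $G_s = \bigcup\{\gamma : \sigma_{\gamma,s}\subset Z\}$ (well-defined since incomparable $\gamma,\delta$ yield incomparable $\sigma_{\gamma,s},\sigma_{\delta,s}$), so $G_s(x)$ can change only when some $\sigma_{\gamma,s}$ with $|\gamma|=x+1$ changes---this gives the computable bound in one step, without your inductive bookkeeping.
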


\begin{proof}
	By Fact \ref{fact:BLR-and-TT} (due to Cole and Simpson), it is equivalent to show that $G\in \BLR(Z)$. 
	
To construct a stage $s$-approximation to $G$, note that if $\gamma$ and $\delta$ are incomparable strings, then $\sigma_{\gamma,s}$ and $\sigma_{\delta,s}$ are also incomparable. Hence
\[ G_s = \bigcup \left\{ \gamma\,:\, \sigma_{\gamma,s}\subset Z \right\} \]
is well-defined (it may be finite). It is (uniformly) $Z$-computable, because for all $\gamma$ and $s$, we have $|\sigma_{\gamma,s}|\ge |\gamma|$. 

Let $x<\w$. By Claim \ref{clm:protosuperhigh-limit}, there is a computable bound on the number of stages at which any of the strings $\sigma_{\gamma,s}$ for any string $\gamma$ of length $x+1$ may change. The approximation $G_s(x)$ may change only at such stages. Hence $\seq{G_s}$ witnesses that $G\in \BLR(Z)$. 
\end{proof}

\subsection{Discussion of the proof of Theorem \ref{thm:main superhigh thm}}

The rest of this section is dedicated to the proof of Theorem~\ref{thm:main superhigh thm}. Fix $G\in 2^\w$. We assume that $A$ is a superlow and jump-traceable set, computable from every $1$-random set $Z$ such that $G\ltt Z'$. As in Section~\ref{sec:main}, fix an order function $h$, and an $A$-partial computable function $\theta$. By Theorem~\ref{thm: JT and SL}, we can fix a restrained $A$-approximation $\seq{A_s,\Gamma_s}$ to $\theta$, witnessed by a computable function $g$. 

As discussed above, this proof will follow the idea of the proof of Theorem \ref{thm:Pi01 theorem}, but mirroring the proof of Theorem \ref{thm:preparation superhigh theorem}. Thus we again begin with a nonempty $\Pi^0_1$ class $\SSS$ that contains only $1$-random sets. We will adapt the definition of a golden pair to the current setting; such a pair will arise from a failure to construct some $Z\in \SSS$ that both codes $G$ and does not compute $A$. Again we will approximate strings $\sigma_\gamma$ that are extendible in $\SSS$ and serve as coding strings, and again we will make the approximation $\seq{\sigma_{\gamma,s}}$ an $\w$-c.e.\ one. As in the proof of Theorem \ref{thm:Pi01 theorem}, subclasses in which we attempt to diagonalize against $A$ will be tied to computations $\Gamma^A(x)[s]$ that are under a process of verification. 

The fact that the proof of the superlow basis theorem is linear in nature, but the proof of the coding theorem \ref{thm:preparation superhigh theorem} is not, makes the structure of runs of procedures more complicated in the current proof. Rather than having a linear structure, we now have an (infinitely branching) tree of runs of procedures at any given stage. Thus, while before we had at most one procedure of each type per level $e$, now many of them run in parallel. When a procedure $R^e$ is called, an initial segment $\eta \sub G$ will have been coded into $Z'$ already. So we now have versions $R^{e,\eta}$ for various strings $\eta$. A subprocedure $S^e_x$ has to live with the coding into $Z'$ of a further string of length $h(x)$. Thus, we have versions $S^{e, \eta \aaa}_x$ for each $\aaa$ of length $h(x)$. During the construction, this feature leads to some extra cancellations, as we need to be able to replace a run $S^{e, \eta \beta}_y$ by $S^{e, \eta \aaa}_x$ for $\aaa \subset \beta$. In the definition of golden pairs we will fix an $\eta$ and refer only to runs $S^{e, \eta \aaa}_x$ where $\eta \aaa \sub G$. 

For coding, we again work with an effective list $\PP^{(v)}$ of subclasses of $\SSS$ that we enumerate. Every class we define will be on this list. We mostly leave this as an implicit part of the construction.

\subsection{Golden pairs}
\nopagebreak[4]

\
\nopagebreak[4]

\nopagebreak[4]
\nicebox{ 
\begin{definition}
\label{def:golden pair 2} 

A pair $\QQ,\Phi$, consisting of a nonempty $\Pi^0_1$ class $\QQ= \PP^{(v)}$ and a Turing functional $\Phi$, is a \emph{golden pair} for $\Gamma$, $h$, and $G$, with parameter $\eta \subset G$, if for almost all $x$ such that $\Gamma^A(x)\converge$, with use $u$, for the $\aaa$ of length $h(x)$ such that $\eta \aaa \subset G$, if $X\in \QQ \cap [ \kuc(\QQ, \aaa)]$, then $\Phi(X)\supseteq A\uhr{u}$. 
\end{definition}

}
\vsps

\begin{proposition}
\label{prop:golden pairs do the trick 2} If there is a golden pair for $\Gamma$, $h$, and $G$, with parameter $\eta \subset G$, then $\theta = \Gamma^A$ has a c.e.\ trace $\seq {V_x}\sN x$ such that $| V_x| \le 2^{h^2(x)}$ for all $x$. 
\end{proposition}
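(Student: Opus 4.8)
The plan is to mirror the proof of Proposition~\ref{prop:golden pairs do the trick}, adapting the enumeration of the trace to account for the infinitely-branching (rather than linear) tree of $\Pi^0_1$ subclasses used in the superhigh setting. Let $\QQ = \PP^{(v)}$, $\Phi$, and $\eta \subset G$ witness the golden pair. As in the linear case, the key point is that we do \emph{not} have access to $\QQ$ as a $\Pi^0_1$ class effectively, but we can approximate it by its clopen stages $\seq{\QQ_t}$, and along with it we can approximate the \Kuc{} strings $\kuc(\QQ_t,\aaa)$. The issue is that we also do not know the string $\aaa$ of length $h(x)$ with $\eta\aaa \subset G$; so for each $x$ we must hedge over \emph{all} $\aaa \in 2^{h(x)}$, and for each such $\aaa$ we enumerate into $V_x$ a number $y$ at stage $s$ once we see a string $\tau$ with $\Gamma_s(\tau,x) = y$ and such that $\Phi(X) \supseteq \tau$ for every $X \in \left(\QQ \cap [\kuc(\QQ_s,\aaa)]\right)_s$ (meaning: for every string $\rho$ of length $s$ with $[\rho] \subseteq (\QQ_s)_s$ and $\rho \supseteq \kuc(\QQ_s,\aaa)$, we have $\Phi(\rho) \supseteq \tau$).

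The bound on $|V_x|$ comes from two sources, one for each "wrong guess" we can make. First, for a fixed $\aaa \in 2^{h(x)}$, the \Kuc{} string $\kuc(\QQ_t,\aaa)$ changes at most $2^{\ell(h(x),q(v))}$ times by Lemma~\ref{lem:Kucera approx}; for each fixed value of that string, the approximation to $\QQ$ itself is only a descending sequence of clopen sets, so at most one $y$ can be enumerated on behalf of that $(\aaa, \kuc\text{-value})$ pair — once $\Phi$ forces $\tau$ on the relevant cylinder, that fact persists, and a new $y$ can only come in after $\kuc(\QQ_t,\aaa)$ moves. Hence at most $2^{\ell(h(x),q(v))}$ numbers enter $V_x$ on account of a fixed $\aaa$. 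Summing over the $2^{h(x)}$ possible strings $\aaa$ of length $h(x)$ gives $|V_x| \le 2^{h(x)} \cdot 2^{\ell(h(x),q(v))}$, and since $\ell(n,r) = r(n+1) + \binom{n}{2} - 1$ is polynomial in $n$, for $x$ large this is comfortably below $2^{h^2(x)}$; a finite modification of $\seq{V_x}$ handles the remaining $x$. (As in the linear case, it suffices to produce \emph{some} c.e.\ trace bounded by $2^{h^2}$, since the construction is applied afresh to each order function.)

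Finally, for correctness: by the golden-pair property, for almost all $x \in \dom\theta$, if $u$ is the use of $\Gamma^A(x)$ and $\aaa \in 2^{h(x)}$ is the one with $\eta\aaa \subset G$, then $\Phi(X) \supseteq A\uhr u$ for all $X \in \QQ \cap [\kuc(\QQ,\aaa)]$. For sufficiently late $s$ we have $\QQ_s$ stable enough on the relevant finite part, $\kuc(\QQ_s,\aaa) = \kuc(\QQ,\aaa)$, and $\Gamma_s(A_s\uhr u, x) = \theta(x)$ with $A_s \uhr u \subset A$; at that stage $\tau = A\uhr u$ is witnessed to be forced by $\Phi$ on the cylinder determined by $\kuc(\QQ,\aaa)$ inside $\QQ$, so $\theta(x) = \Gamma^A(x)$ enters $V_x$. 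Thus $\seq{V_x}$ (up to a finite modification) is a trace for $\theta$ bounded by $2^{h^2}$.

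The main obstacle I anticipate is purely bookkeeping: being careful that the clopen approximation $(\QQ \cap [\kuc(\QQ_s,\aaa)])_s$ is genuinely computable and that "$\Phi$ forces $\tau$ on this clopen set" is a c.e.\ event (it is, by compactness — once every length-$s$ extension inside the approximation has $\Phi$ output $\supseteq \tau$, continuity makes this permanent provided $\QQ_s$ has stabilized on that part), and that the counting genuinely separates the "$\aaa$ was wrong" mistakes from the "$\kuc$ string moved" mistakes without double-counting. The randomness of members of $\SSS$ plays no role here — it was used only to \emph{obtain} the golden pair in the companion proposition — so this proof is essentially a combinatorial exercise parallel to Proposition~\ref{prop:golden pairs do the trick}.
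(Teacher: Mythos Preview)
Your proposal is correct and follows essentially the same route as the paper: define $V_x$ by enumerating $y$ at stage $s$ whenever some $\aaa\in 2^{h(x)}$ and some $\tau$ with $\Gamma_s(\tau,x)=y$ satisfy $\Phi(X)\supseteq\tau$ for all $X\in \QQ_s\cap[\kuc(\QQ_s,\aaa)]$, bound $|V_x|$ by $2^{h(x)}\cdot 2^{\ell(h(x),q(v))}\le 2^{h(x)^2}$ for almost all $x$ via Lemma~\ref{lem:Kucera approx}, and verify tracing using the golden-pair condition at the correct $\aaa$ with $\eta\aaa\subset G$. Your justification that at most one $y$ enters per $(\aaa,\kuc\text{-value})$ pair (via prefix-freeness of $\dom\Gamma(-,x)$ and nonemptiness of $\QQ_s\cap[\kuc(\QQ_s,\aaa)]$) is a detail the paper leaves implicit but is exactly the right reason.
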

\begin{proof}

Let $\QQ=\PP^{(v)}$ and $\Phi$ be such a golden pair. At stage $s$ we enumerate a number $y$ into $V_x$ if there is a string $\aaa$ of length $h(x)$ such that at that stage we discover that there is a binary sequence $\tau$ for which $\Gamma_s(\tau,x) =y$ and $\Phi(X)$ extends $\tau$ for every $X\in \QQ_s \cap [\kuc(\QQ_s, \aaa)]$. 

To establish the bound on $|V_x|$, let $n= h(x)$. By Lemma \ref{lem:Kucera approx}, for all strings $\aaa$ of length $n$, the string $\kuc(\QQ, \aaa)$ changes at most $2^{\ell(n,q(v))}$ many times. We have $\ell(n,q(v)) \le  (3/4) n^2$ for almost all $n$. Taking the union over all strings $\aaa$ of length $n$, we obtain
$ |V_x| \le 2^n 2^{\ell(n,q(v))}$, which is bounded by $2^{n^2}$ for almost all $n$. 

To establish tracing, if $x\in \dom \theta$, let $\aaa $ be of length $h(x)$ such that $\eta \aaa \sub G$. Let $t_x$ be so large that $\kuc(\QQ, \aaa) = \kuc(\QQ_{s}, \aaa)$ for all $s\ge t_x$. Since $\QQ,\Phi$ is golden, for almost all $x$ and for large enough $s \ge t_x$, we can see that for every $X\in \QQ \cap \kuc(\QQ, \aaa) $ we have $\Phi(X)\supseteq \tau = A\uhr{u}$, where $u$ is the use of $\Gamma^A(x)$. Thus $\theta(x)\in V_x$ for almost all $x$ in the domain of $\theta$. Mending the sequence on finitely many inputs yields a trace as required. 
\end{proof}

Thus, the rest of the proof is devoted to showing that a golden pair exists:

\begin{proposition}
\label{prop:golden pairs exist 2} There is a $\Pi^0_1$ class $\QQ\subseteq \SSS$ and a Turing functional $\Phi$ such that $\QQ,\Phi$ is a golden pair for $\Gamma$, $h$, and $G$. 
\end{proposition}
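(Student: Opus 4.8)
# Proof proposal for Proposition \ref{prop:golden pairs exist 2}

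\textbf{Overall strategy.} The plan is to run a construction that attempts to build some $Z \in \SSS$ which both codes $G$ (so that $G \ltt Z'$) and satisfies $A \nleq_\Tur Z$, exactly mirroring the proof of Theorem \ref{thm:preparation superhigh theorem}, but equipped with the procedure-calling machinery of Section \ref{sec:main}. Since $A$ is assumed to be computable from every $1$-random $Z$ with $G \ltt Z'$, this attempt must fail; the failure will be localized at some level $e$ and some coding string $\eta \subset G$, and the final $\Pi^0_1$ class $\QQ = \PP^{(v)}$ passed to that ``golden run'', together with $\Phi_e$, will be the desired golden pair. Throughout we use the \Kuc{} coding apparatus of Subsection \ref{subsec:Kuc-coding}, the lower bound function $q$ fixed via the recursion theorem, and the restrained $A$-approximation $\seq{A_s,\Gamma_s}$ for $\theta$ with mind-change bound $g$.

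\textbf{The procedures.} As sketched in the discussion, I would define procedures $R^{e,\eta}$ (for $e < \w$ and $\eta \in 2^{<\w}$ a candidate initial segment of $G$) and subprocedures $S^{e,\eta\aaa}_x$ (for $\aaa$ of length $h(x)$). A run of $R^{e,\eta}$ carries an input $\Pi^0_1$ class $\PP^{e,\eta} \subseteq \SSS$ (on the list $\PP^{(v)}$) and a parameter $n$, and marks each relevant $x$ as \emph{fresh} or \emph{confirmed}. When $R^{e,\eta}$ has control and some fresh $x$ has $\Gamma^A(x)\converge[s]$ with use $u$, for the least such $x$ it calls $S^{e,\eta\aaa}_x$ with input $\tau = A_s\uhr u$, where $\aaa$ is the length-$h(x)$ string currently believed to be an initial segment of $G$ beyond $\eta$ (this belief is read off from the current coding strings $\sigma$; recall $G_s = \bigcup\{\gamma : \sigma_{\gamma,s}\subset Z\}$ in the blueprint, but here we are building $Z$, so the belief about $G$ is maintained combinatorially). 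The subprocedure $S^{e,\eta\aaa}_x$ then (a) starts a run of $R^{e+1,\eta\aaa}$ with input $\PP^{e+1,\eta\aaa} = \{X \in \PP^{e,\eta} \cap [\kuc(\PP^{e,\eta},\aaa)] : \Phi_e(X)\nsupseteq \tau\}$ and parameter $h(x)$, halting all other activity of $R^{e,\eta}$; and (b) if that class is seen to be empty, cancels the run of $R^{e+1,\eta\aaa}$, returns control to $R^{e,\eta}$, and marks $x$ confirmed. Cancellation is triggered whenever the ambient beliefs fail — $\tau \not\subset A_t$, or the approximation $\kuc(\PP^{e,\eta},\aaa)$ changes, or the belief about the length-$h(x)$ extension $\aaa$ of $\eta$ inside $G$ changes — and then $x$ is re-marked fresh (or the whole run cancelled if $S$ is still active). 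The construction starts $R^{0,\estring}$ with input $\SSS$ and parameter $0$.

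\textbf{Verification.} Define a \emph{golden run} of $R^{e,\eta}$ to be an uncancelled run all of whose subprocedures $S^{e,\eta\aaa}_x$ eventually return or are cancelled. The verification splits into the analogue of Claim \ref{claim;GRGP 1} — if there is a golden run of $R^{e,\eta}$ with final input $\QQ$ then $\QQ,\Phi_e$ is a golden pair for $\Gamma$, $h$, $G$ with parameter $\eta$ — and the existence of a golden run. The first is proved exactly as in Section \ref{sec:main}: for $x \notin \dom\theta$ restrainedness kills the calls; for $x \in \dom\theta$ with $h(x) > n$, once all relevant beliefs (the value of $\kuc(\QQ,\aaa)$, the truth $A_s\uhr u \subset A$, the correct $\aaa \subset G$, and $\Gamma_s(A_s\uhr u, x) = \theta(x)$) have stabilized, a run of $S^{e,\eta\aaa}_x$ is called, never cancelled, and returns, witnessing $\Phi_e(X) \supseteq A\uhr u$ for all $X \in \QQ \cap [\kuc(\QQ,\aaa)]$. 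For the existence of a golden run, suppose not. First establish a computable bound $N(x)$ on the total number of runs of all $S^{e,\eta\aaa}_x$: a single run of $R^{e,\eta}$ calls at most $g(x)$ runs of $S^{e,\eta\aaa}_x$ for changes of $\tau$, plus $2^{\ell(h(x),q(v))}$ for changes of the \Kuc{} string (Lemma \ref{lem:Kucera approx}), plus a bounded number for changes of the belief about $\aaa \subset G$; then recurse on $e$ as in Claim \ref{claim: global counting}, using that parameters are nondecreasing along the $R$-chain. Next, by induction on $e$ there is an uncancelled run of some $R^{e,\eta_e}$ with a final input class, the classes are nested, so $\bigcap_e \PP^{e,\eta_e} \ne \ES$; pick $Z$ in it. Here I would use the \Kuc{} coding together with Claim-\ref{clm:protosuperhigh-extendible}-style extendibility to see that the $\eta_e$ cohere to an infinite string $G^* $ coded into $Z$, and — crucially — one must argue $G^* = G$, which follows because each belief about $G$ is eventually correct (the construction chooses the correct next bit of $G$ once it is known, exactly as $\alpha_s$ was chosen in the blueprint). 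Then show $Z$ is superlow: whether $n \in Z'$ is decided by finitely much data about the relevant $\PP^{d,\eta}$, and the number of changes is bounded by $2^{n+1}\sum_{h(x)\le n} N(x)$ times a bound coming from \Kuc-string changes, all computable; since $G \ltt Z'$ by the coding and $Z$ is $1$-random (being in $\SSS$), the hypothesis on $A$ gives $\Phi_e(Z) = A$ for some $e$. Finally, the never-returning, never-cancelled run of $S^{e,\eta\aaa}_x$ beneath $R^{e,\eta_e}$ defines $\PP^{e+1,\eta\aaa}$ to exclude all $X$ with $\Phi_e(X)\supseteq \tau$ for $\tau \subset A$, contradicting $Z \in \PP^{e+1,\eta\aaa}$ and $\Phi_e(Z) = A$.

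\textbf{Main obstacle.} The delicate point, absent from Section \ref{sec:main}, is the interaction of the infinitely-branching tree of runs with the coding: a subprocedure $S^{e,\eta\beta}_y$ may need to be superseded by $S^{e,\eta\aaa}_x$ with $\aaa \subsetneq \beta$ when the belief about which length-$h(x)$ string extends $\eta$ inside $G$ is revised, and one must make sure these extra cancellations are still counted by a \emph{computable} bound (they are, because the belief about the first $h(x)$ bits of $G$ beyond $\eta$ changes computably-boundedly often, in the spirit of Claim \ref{clm:protosuperhigh-limit}) and that they do not prevent the golden run from being golden. Equally delicate is verifying that the coded string $G^*$ really equals $G$ rather than some other path; this is where the construction must be set up so that the next bit of $G$ is committed to only once it is $\ES'$-confirmed, so that each $R^{e,\eta}$ with $\eta \not\subset G$ is eventually abandoned. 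Managing the bookkeeping so that all of these bounds remain computable and the nesting $\bigcap_e \PP^{e,\eta_e}$ survives is the technical heart of the argument.
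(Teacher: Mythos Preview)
Your proposal has a genuine gap: you try to maintain, during the construction, a ``belief'' about which length-$h(x)$ string $\aaa$ extends $\eta$ inside $G$, and to revise that belief as the construction proceeds. But $G$ is an \emph{arbitrary} set --- it need not be $\Delta^0_2$, or even arithmetical --- so there is no computable (or $\emptyset'$-computable) approximation to $G$ that the construction can consult. Your remark that ``the next bit of $G$ is committed to only once it is $\emptyset'$-confirmed'' cannot be made to work, and the analogy with $G_s = \bigcup\{\gamma : \sigma_{\gamma,s}\subset Z\}$ from Claim~\ref{clm:protosuperhigh-coding} is circular: that approximation is $Z$-computable, but $Z$ is exactly what you are trying to build. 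A second, related error is that you aim to show $Z$ is superlow; but if $G = \emptyset''$ and $G \ltt Z'$, then $Z$ is superhigh, not superlow. Superlowness is the wrong target here --- the hypothesis on $A$ requires only that $Z$ be $1$-random and that $G \ltt Z'$.

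The paper's actual construction never approximates $G$ at all. Instead, the procedure tree is genuinely infinitely branching: $R^{e,\eta}$ calls $S^{e,\eta\aaa}_x$ for \emph{every} string $\aaa$ (of appropriate length) that requires attention, in parallel, and the extra cancellations (your ``main obstacle'') arise not from revised beliefs about $G$ but from a shorter $\aaa$ wanting attention while a longer $\nu \supseteq \aaa$ is active. Coding strings $\sigma_{\gamma,s}$ are defined for \emph{all} $\gamma \in 2^{<\omega}$, and one proves a computable bound on the number of changes of each $\sigma_{\gamma,s}$. Only at the end does $G$ enter: one sets $Z = \bigcup_{\gamma \subset G} \sigma_\gamma$ non-effectively. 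The computable bounds on $\sigma_{\gamma,s}$-changes are exactly what yields $G \in \BLR(Z)$, hence $G \ltt Z'$, by the argument of Claim~\ref{clm:protosuperhigh-coding}. The golden run is then one along the true path $G$ (a run of some $R^{e,\eta}$ with $\eta \subset G$ is golden if every $S^{e,\eta\aaa}_x$ with $\eta\aaa \subset G$ returns or is cancelled), and the contradiction is obtained from $\Phi_e(Z) = A$ as you describe.
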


\subsection{The procedures and the construction} 

The procedure calling structure is now
\[ \xymatrix{ & S^{e, \eta \aaa'}_{x'} \ar[r] & \ldots & & \\
R^{e, \eta} \ar[ur] \ar[dr] \ar[r] & S^{e, \eta \aaa}_x \ar[r] & R^{e+1, \eta \aaa} \ar[ur] \ar[dr] \ar[r] & \ldots \\
&S^{e, \eta \aaa''}_{x''} \ar[r] & \ldots & & } \]

At each stage, for each level $e$ and each string $\eta$ such that $R^{e, \eta} $ is running, the strings $\aaa$ such that some $S^{\eta \aaa}_x$ is running and has not returned form a prefix-free set. 

\n \emph{Procedure $R^{e,\eta}$.} This procedure runs with input $\PP^{e,\eta}$ (again, a $\PPI$ class of the form $\PP^{(v)}$ for some $v\in V$), and a parameter $n$. During its run, for every string $\aaa$ of length greater than $n$ and every $x$ such that $|\aaa| = h(x)$, the pair $(\aaa,x)$ is marked either \emph{fresh} or \emph{confirmed}. Initially, all such pairs are fresh. A string $\aaa$ such that $|\aaa|>n$ \emph{requires attention} at stage~$s$ if there is some $x$ such that $|\aaa| = h(x)$ and such that:
\bi 
\item[-] no procedure $S^{e, \eta \beta}_y$ is currently running for any $y<x$ and $\beta \subseteq \aaa$; 
\item[-] $\Gamma^{A}(x)\converge\![s]$; and
\item[-] $(\alpha,x)$ is currently fresh.
\ei
For any string $\aaa$ that requires attention at stage $s$ and is minimal among such strings under the prefix relation (that is, no proper initial segment of $\aaa$ also requires attention), we choose $x$ to be the least number that witnesses that $\aaa$ requires attention, and call a run of the procedure $S^{e,\eta\aaa}_x$ with input $A_s\uhr u$, where $u$ is the use of the computation $\Gamma^A(x)[s]$. We also cancel any run of any procedure $S^{e,\eta \nu}_y$ where $y> x $ and $\aaa \subseteq \nu$. This concludes the instructions for~$R^{e,\eta}$. 

\

For $\PP= \PP^{(v)}$ and a string $\aaa$ let \bc $\PP \seq \aaa = \PP \cap [\kuc (\PP, \aaa)]$. \ec We also let $\PP_s \seq \aaa = \PP_s \cap [\kuc (\PP_s, \aaa)]$ at stage $s$. 

\vsps

\n \emph{Procedure $S^{e, \eta \aaa}_x$.} This procedure is provided with a string $\tau\subset A_s$ such that $\Gamma_s(\tau,x)\converge$, and acts as follows. 
\begin{enumerate}

\item[(a)] Start a run of $R^{e+1, \gamma}$, where $\gamma = \eta \aaa$, with input 
\begin{equation}
\label{eqn:defP} \PP^{e+1, \gamma} = \left\{ X\in \PP^{e, \eta} \seq {\aaa}\,:\, \Phi_e(X)\nsupseteq \tau \right\}
\end{equation}
and parameter $h(x)$. If this is the $i\tth$ run of a procedure of this type $S^{e, \eta \aaa}_x$ ($i \ge 1$), enumerate $v= \la e,x,i\ra$ into $V$ and let $\PP^{(v)} = \PP^{e+1, \gamma}$.

\item [(b)] If at some stage $\PP^{e+1, \gamma}$ becomes empty, cancel the run of $R^{e+1, \gamma}$ and return, marking $(\aaa,x)$ as confirmed. 
\end{enumerate}

As before, a run of $S^{e,\eta\aaa}_x$ with input $\tau$ believes that $\tau\subset A$ and that the current version of $\PP^{e,\eta}\seq{\aaa}$ is correct. Thus, suppose that $S^{e,\eta \aaa}_x$ is called (with input $\tau$) at stage $s$, and $t>s$ is the least stage at which either $\tau\not\subset A_t$ or $\kuc(\PP^{e,\eta}_t,\aaa)\ne \kuc(\PP^{e,\eta}_s,\aaa)$. If the same run of $S^{e,\eta \aaa}_x$ is still running at stage $t$, then it is immediately cancelled (together with all of its subprocedures); otherwise, $(\aaa,x)$ is re-marked as fresh. 

This concludes the instructions for $S^{e, \eta \aaa}_x$. 

\

The construction is started by calling $R^{0, \estring}$ with input $\PP^{(0)} = \SSS$. (Recall that $\SSS$ is a nonempty $\PPI$ class containing only $1$-random sets, defined at the beginning of this proof.)

\subsection{Verification} 

We show that there is an $e$ and $\eta \subset G$ such that $\PP^{e, \eta},\Phi_e$ is a golden pair for $\Gamma$, $h$, and $G$ for the final version of $\PP^{e, \eta}$. First we do the necessary counting of how often procedures can be called. We begin with the analog of Claim \ref{claim: local counting}. The situation is more complicated here because the number of cancellations of a run $S^{\eta \aaa}_x$ depends on the length of the coding string $\kuc(\PP^{e,\eta},\aaa)$, and hence on the lower bound on the measure of the $\PPI$ class this runs works in. Recall the computable function $q$ from Lemma~\ref{lem:KucLB}.

\begin{claim} \label{claim: local counting 2} 
	There is a computable function $B(x,r)$ such that a run $R^{e, \eta}$, with input $\PP^{e,\eta}= \PP^{(v)}$, calls at most $B(x, q(v))$ many runs of any $S^{e,\eta \aaa}_x$. 
\end{claim}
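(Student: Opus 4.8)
The plan is to follow the proof of Claim~\ref{claim: local counting}, but with two modifications forced by the more complicated tree structure: an extra recursion on~$x$ to absorb the cancellations that a run of $R^{e,\eta}$ inflicts on $S^{e,\eta\aaa}_x$ whenever it acts on some shorter coding string $\beta\subseteq\aaa$, and the sharper bound from Lemma~\ref{lem:Kucera approx} on the number of changes of the coding string $\kuc(\PP^{(v)}_t,\aaa)$ in place of the crude count $2^{h(x)}$ of approximations to $\PP^e\seq{h(x)}$.

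Fix a run of $R^{e,\eta}$ with input $\PP^{e,\eta}=\PP^{(v)}$, write $r=q(v)$, and fix $x$ together with the string~$\aaa$ of length $h(x)$. I would argue that, apart from its first call, every call of $S^{e,\eta\aaa}_x$ made by this run of $R^{e,\eta}$ is enabled by an event of one of three kinds. A run of $S^{e,\eta\aaa}_x$ ends either by \emph{returning}, in which case $(\aaa,x)$ is marked confirmed and --- by the instructions for $S^{e,\eta\aaa}_x$ --- can be re-marked fresh only once its input $\tau=A_s\uhr u$ ceases to be an initial segment of~$A$ or the string $\kuc(\PP^{(v)}_t,\aaa)$ changes; or by being \emph{cancelled}, which happens only because $\tau\not\subset A_t$, or because $\kuc(\PP^{(v)}_t,\aaa)$ changes, or because $R^{e,\eta}$ acts on some $\beta\subseteq\aaa$ requiring attention with a witness $z<x$ (by the clause that cancels $S^{e,\eta\nu}_y$ for $y>z$ and $\beta\subseteq\nu$). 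So each new call beyond the first is enabled by one of: (A) a computation $\Gamma^A(x)\converge[s]$ being destroyed; (B) a change of $\kuc(\PP^{(v)}_t,\aaa)$; or (C) an action of $R^{e,\eta}$ on some $\beta\subseteq\aaa$ with witness $z<x$. Since an event occurs at a single stage and leaves $(\aaa,x)$ fresh (or confirmed) in a way that permits at most one subsequent new run, each such event enables at most one later call.

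Events of type (A) occur at most $g(x)$ times, by Definition~\ref{def:restrained approx}, since $\tau=A_s\uhr u$ fails to be an initial segment of~$A$ precisely when the computation $\Gamma^A(x)[s]$ of use~$u$ is destroyed. Events of type (B) occur at most $2^{\ell(h(x),r)}$ times: when $\PP^{(v)}\neq\ES$ we have $\leb\PP^{(v)}\ge 2^{-q(v)}$, so Lemma~\ref{lem:Kucera approx} applies with $r=q(v)$ to the string~$\aaa$, which has length $h(x)$ (and when $\PP^{(v)}=\ES$ there is nothing to count). For events of type (C), observe that $h$ nondecreasing forces $h(z)\le h(x)=|\aaa|$ for every $z<x$, so the only prefix of~$\aaa$ that can serve as the $\beta$ of a type-(C) event with witness~$z$ is $\aaa\uhr{h(z)}$; and each action of $R^{e,\eta}$ on $\aaa\uhr{h(z)}$ with witness~$z$ is, by the instructions for $R^{e,\eta}$, accompanied by a call of $S^{e,\eta\aaa\uhr{h(z)}}_z$ made by this very run of $R^{e,\eta}$. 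Hence the number of type-(C) events relevant to $(\aaa,x)$ is at most $\sum_{z<x}$ (number of runs of $S^{e,\eta\aaa\uhr{h(z)}}_z$ called by this run of $R^{e,\eta}$), which by the inductive hypothesis is at most $\sum_{z<x}B(z,r)$. It therefore suffices to define
\[ B(x,r)=1+g(x)+2^{\ell(h(x),r)}+\sum_{z<x}B(z,r), \]
which is visibly computable in $(x,r)$; note that the recursion is genuinely on~$x$ (not on $|\aaa|$), which is exactly why it closes.

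The step I expect to be the main obstacle is the bookkeeping of the fresh/confirmed markings: one must pin down exactly when the marking of a pair $(\aaa,x)$ changes, verify that the two displayed cancellation clauses (for $S^{e,\eta\aaa}_x$ and for $R^{e,\eta}$) are the only ways a running $S^{e,\eta\aaa}_x$ can be cancelled or a confirmed pair re-marked fresh, and check that at most one type-(C) event can affect $S^{e,\eta\aaa}_x$ at any single stage --- which holds because distinct minimal requiring-attention strings are pairwise prefix-incomparable, so at most one of them is a prefix of~$\aaa$. Once this is settled the three-way case analysis is exhaustive and each bound above is immediate.
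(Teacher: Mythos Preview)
Your proof is correct and follows essentially the same approach as the paper: the same three-case analysis (destroyed computation, change of $\kuc(\PP^{(v)}_t,\aaa)$, cancellation by a run $S^{e,\eta\beta}_y$ with $y<x$ and $\beta\subseteq\aaa$) and the same recursion on~$x$, yielding the identical recursive formula. The one discrepancy is that the paper calls your function $\tilde B$ and then sets $B(x,r)=2^{h(x)}\tilde B(x,r)$, summing over all strings $\aaa$ of length $h(x)$ so that $B(x,q(v))$ bounds the \emph{total} number of calls of procedures of the form $S^{e,\eta\aaa}_x$ made by a single run of $R^{e,\eta}$; this final step is trivial once your per-$\aaa$ bound is in hand.
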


\begin{proof}
	Fix $x<\w$ and a string $\aaa$ of length $h(x)$. Any call of $S^{e,\eta \aaa}_x$ by the run of $R^{e,\eta}$ beyond the first one is done because a previous run was cancelled, or because a later change caused $(\aaa,x)$ to be re-marked as fresh. This situation can have one of three causes:
	\begin{enumerate}
		\item[(i)] the previous run of $S^{e,\eta \aaa}_x$ had input $\tau$, and later we saw that $\tau\not\subset A_t$;
		\item[(ii)] $\kuc(\PP^{e,\eta}_s, \aaa)$ has changed; 
		\item[(iii)] some run $S^{e, \eta \beta}_y$ was started, where $y< x$ and $\beta \sub \aaa$. 
	\end{enumerate}
The number of times (i) occurs is bounded by $g(x)$; recall that $g$ witnesses that $\seq{A_s,\Gamma_s}$ is a restrained approximation.  The number of times (ii) can occur is bounded by $2^{\ell(|\aaa|, q(v))}$ by Lemma \ref{lem:Kucera approx}. Let $\tilde B(0,r) = 1+ g(0)+ 2^{\ell(h(0),r)}$, and for $x>0$, let $\tilde B(x,r) = 1+ g(x) + 2^{\ell(h(x),r)} + \sum_{y<x}\tilde B(y,r)$. Then $\tilde B(x,q(v))$ bounds the number of calls of $S^{e,\eta\aaa}_x$ by the single run of $R^{e,\eta}$. Summing over all strings $\aaa$ of length $h(x)$, we see that $B(x) = 2^{h(x)} \tilde B(x,r)$ is a bound as required. 
\end{proof}

We proceed to a fact similar to Claim \ref{claim: global counting}. 
\begin{claim}
\label{claim: global counting 2} 
\
\begin{enumerate}
	\item There is a computable bound $M(e,x)$ on the number of calls of any procedure of the form $S^{e,\gamma}_x$.
	\item There is a computable bound $r(e,x)$ on the $V$-index of any class of the form $\PP^{e+1,\gamma}$ that is called by a procedure $S^{e,\gamma}_x$. Hence, with the aid of the function $q$, we get a computable lower bound on the measure of all such classes. 
\end{enumerate}
\end{claim}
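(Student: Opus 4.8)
Looking at Claim \ref{claim: global counting 2}, I need to prove two computable bounds by mutual recursion on $e$, paralleling Claim \ref{claim: global counting} from Section \ref{sec:main} but with the additional complication that the $V$-index, and hence the measure lower bound, of the $\Pi^0_1$ subclasses now affects the counting.

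The plan is to prove parts (1) and (2) simultaneously by recursion on $e$. First I would observe that by the instructions for $S^{e,\gamma}_x$, when its $i\tth$ run is started it enumerates $v = \la e, x, i\ra$ into $V$ and sets $\PP^{(v)} = \PP^{e+1,\gamma}$. So the $V$-index $r(e,x)$ of a class called by $S^{e,\gamma}_x$ is $\la e, x, i \ra$ where $i$ ranges up to the number of runs of $S^{e,\gamma}_x$; thus $r(e,x)$ is a computable function of $e$, $x$, and $M(e,x)$, namely $r(e,x) = \la e, x, M(e,x)\ra$. This shows part (2) follows from part (1) (together with monotonicity of the pairing function), and composing with $q$ gives the computable lower bound on measure. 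So the real content is establishing the bound $M(e,x)$ in part (1).

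For part (1), I would argue as follows. A procedure $S^{e,\gamma}_x$ is called only by the (unique, at any stage) run of $R^{e,\gamma}$ that is active, where $\gamma = \eta$ for some $\eta$ with $R^{e,\eta}$ running. By Claim \ref{claim: local counting 2}, a single run of $R^{e,\eta}$ with input $\PP^{(v)}$ calls at most $B(x, q(v))$ runs of $S^{e,\eta\aaa}_x$ for strings $\aaa$ of the appropriate length. So I need a bound on the total number of runs of procedures $R^{e,\eta}$ (over all $\eta$) together with a uniform bound on their $V$-indices. For $e = 0$, there is a single run $R^{0,\estring}$ with input $\PP^{(0)} = \SSS$, so $q(0)$ is the measure bound, and $M(0,x) = B(x, q(0))$ works. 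For $e > 0$, every run of $R^{e,\eta}$ is called by a procedure $S^{e-1, \gamma'}_y$ (in step (a) of its instructions), with $\eta = \gamma' $ extended by some $\aaa$ of length $h(y)$, and with input $\PP^{e,\eta} = \PP^{e,\gamma' \aaa}$ which is the class enumerated at index $\la e-1, y, i\ra$ for the $i\tth$ run of $S^{e-1,\gamma'}_y$. By the recursive bounds $M(e-1, y)$ and $r(e-1, y)$, the number of such runs of $R^{e, \eta}$ over all admissible $y$ is bounded by $\sum_{y} M(e-1,y)$, and each has $V$-index at most $r(e-1,y) \le r(e-1, x_{\max})$ for a computable bound on the relevant $y$'s. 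I would use the fact (established for the analogous claim in Section \ref{sec:main}, via the inequality that the parameter of a run of $R^{e,\eta}$ is at least $e$, and since $h$ is an order function, $h(y) < h(x)$ forces $y < x$) that only $y$ with $h(y) < h(x)$, hence $y < x$, contribute runs of $R^e$ with parameter small enough to call $S^{e,\cdot}_x$. So $M(e,x)$ is bounded by $\sum_{y < x} M(e-1,y)$ times $\max_{y<x} B(x, q(r(e-1,y)))$, all of which is computable given the recursive hypotheses. Finally $r(e,x) = \la e, x, M(e,x)\ra$ completes the recursion.

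The main obstacle I anticipate is the bookkeeping around \emph{which} runs of $R^{e,\eta}$ can possibly call a given $S^{e,\cdot}_x$: one must be careful that the parameter-$\ge e$ fact and the order-function property of $h$ genuinely confine the relevant indices $y$ to a finite computable set (the $y < x$ with $h(y) < h(x)$), so that the sums above are finite; and one must track that the input class of each such $R^{e,\eta}$ has a $V$-index computably bounded in terms of the recursion data, since the constant $q$ applied to that index is what feeds into $B$ in Claim \ref{claim: local counting 2}. Once the dependency $r(e,x) = \la e,x,M(e,x)\ra$ is noted, these are all routine compositions of computable functions, but writing the recursion cleanly — making sure there is no hidden circularity between the bound on runs of $R^e$ and the bound on runs of $S^{e}_x$ — is the delicate point, exactly as in Claim \ref{claim: global counting} but one level more intricate.
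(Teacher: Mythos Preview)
Your proposal is correct and follows essentially the same approach as the paper: a simultaneous recursion on $e$ where $r(e,x)$ is read off from $M(e,x)$ via the pairing $\langle e,x,i\rangle$, the base case uses $B(x,q(0))$, and the inductive step bounds $M(e,x)$ as a product of $\sum_{y<x} M(e-1,y)$ (number of relevant runs of $R^{e,\eta}$) with a bound coming from Claim~\ref{claim: local counting 2}. One small slip: in your bound for (ii) you write $\max_{y<x} B(x, q(r(e-1,y)))$, but since $q$ need not be monotone you must take $\max_{y<x}\max_{v\le r(e-1,y)} B(x,q(v))$, exactly as the paper does; this is a cosmetic fix, not a gap in the argument.
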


\begin{proof}
	Both parts are computed simultaneously by recursion on $e$.

(2) for $e,x$ follows from (1) for the same pair $e,x$. The $i\tth$ call of any $S^{e,\gamma}_x$ provides its run $R^{e+1,\gamma}$ with input $\PP^{e+1,\gamma} = \PP^{(v)}$, where $v = \seq{e,x,i}$. Thus we can let $r(e,x) = \max_{i\le M(e,x)} {\seq{e,x,i}}$. 

For $e=0$, there is a single run of $R^{0,\estring}$ that is never cancelled, with input $\PP^{0,\estring} = \PP^{(0)}$. So $M(0,x)=B(x,q(0))$ is an upper bound as desired. 

Now assume that $e>0$ and that (1) and (2) have been computed for all pairs $(e',x')$ that lexicographically precede the pair $(e,x)$. By Claim~\ref{claim: local counting 2}, we may let $M(e,x)$ be the product of 
\begin{itemize}
	\item[(i)] a bound on the number of runs of $R^{e, \eta}$ that are called by some $S^{e-1, \eta}_y$ with parameter $h(y)<h(x)$, and 
	\item[(ii)] a bound on the number of times a single run $R^{e, \eta}$ can call $S^{e,\gamma}_x$.
\end{itemize}
Both bounds are obtained by the fact that every run of some $R^{e,\eta}$ that calls some $S^{e,\gamma}_x$ is in turn called by a run of $S^{e,\eta}_y$ for some $y$ such that $h(y)<h(x)$, as $R^{e,\eta}$'s parameter is $h(y)$. Since $h$ is monotone, $y<x$. 

Hence a bound (i) for the number of such runs $R^{e,\eta}$ is given by $\sum_{y<x} M(e-1,y) $, and by  Claim~\ref{claim: local counting 2}, a bound for (ii) is $\max_{y<x}\max_{v<r(e-1,y)} B\left(x,q(v)\right)$.
\end{proof}

We say that run of $R^{e, \eta}$ is a \emph{golden run} if $\eta\subset G$, the run is never cancelled, and every subprocedure $S^{e, \eta\aaa}_x$ with $\eta \aaa \subset G$ that is called by that run eventually returns or is cancelled. 

\begin{claim} \label{claim: GRGP 2} 
	If there is a golden run of $R^{e, \eta}$ with input (the final version of) $\PP^{e, \eta}$, then $\PP^{e, \eta},\Phi_e$ is a golden pair for $\Gamma$, $h$, and $G$, in the sense of Definition \ref{def:golden pair 2}, with parameter $\eta$.  
\end{claim}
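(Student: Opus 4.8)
The plan is to run the proof of Claim~\ref{claim;GRGP 1} from Section~\ref{sec:main} again, now threading it through the infinitely branching tree of runs. Suppose the golden run of $R^{e,\eta}$ is called with parameter $n$, so that its input $\QQ$ is the final version of $\PP^{e,\eta}$ and $\eta\subset G$. Since $h$ is an order function, $h(x)>n$ for almost all $x$; and since Definition~\ref{def:golden pair 2} concerns only $x$ with $\Gamma^A(x)\converge$, that is $x\in\dom\theta$, it is enough to establish, for each $x\in\dom\theta$ with $h(x)>n$, the following: if $\aaa$ is the (unique) string of length $h(x)$ with $\eta\aaa\subset G$ and $u$ is the use of $\Gamma^A(x)$, then $\Phi_e(X)\supseteq A\uhr{u}$ for every $X\in\QQ\seq{\aaa}$.

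First I would do the finiteness bookkeeping, by induction on $x$. By Claims~\ref{claim: local counting 2} and~\ref{claim: global counting 2} the golden run of $R^{e,\eta}$ calls only finitely many runs of $S^{e,\eta\aaa}_x$, and of $S^{e,\eta\beta}_y$ for each $y<x$ and each $\beta\subseteq\aaa$; by Definition~\ref{def:restrained approx}, for $x'\notin\dom\theta$ the computation $\Gamma^A(x')$ converges at only finitely many stages, while for $x'\in\dom\theta$ it eventually stabilises; and by Lemma~\ref{lem:Kucera approx}, $\kuc(\PP^{e,\eta}_s,\aaa)$ changes only finitely often. The crucial observation is that if $\beta\subseteq\aaa$ then $\eta\beta\subseteq\eta\aaa\subset G$, so each run of $S^{e,\eta\beta}_y$ with $\beta\subseteq\aaa$ is of a type covered by the golden-run hypothesis and therefore eventually returns or is cancelled, and moreover $\beta$ is then the ``relevant'' string for $y$ in the inductive hypothesis. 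Combining these, I would fix a stage $s^*$ after which: $\Gamma^A(x)$ is permanent with use $u$; $\kuc(\PP^{e,\eta}_t,\aaa)=\kuc(\QQ,\aaa)$ for all $t\ge s^*$; no run of $S^{e,\eta\beta}_y$ with $y<x$ and $\beta\subseteq\aaa$ is ever called or running again; no $\beta\subsetneq\aaa$ ever requires attention again; and for every $x'<x$ with $h(x')=h(x)$ the pair $(\aaa,x')$ is permanently confirmed (when $x'\in\dom\theta$) or never again witnesses ``requires attention'' (when $x'\notin\dom\theta$).

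Next I would show that, after $s^*$, some run of $S^{e,\eta\aaa}_x$ is called with input $\tau=A\uhr{u}$, is never cancelled, and returns. After $s^*$ nothing can re-mark $(\aaa,x)$ fresh once it is confirmed, since that requires $\tau\not\subset A_t$ or a change of $\kuc(\PP^{e,\eta}_t,\aaa)$; and while $(\aaa,x)$ is fresh, the string $\aaa$ eventually requires attention with $x$ as its least witness and with no $S^{e,\eta\beta}_y$ ($y<x$, $\beta\subseteq\aaa$) running, so a run of $S^{e,\eta\aaa}_x$ is started with input $A_s\uhr{u}$; as this input is an initial segment of $A$ and the $\kuc$ value has stabilised, the run is never cancelled, hence by goldenness it returns and $(\aaa,x)$ stays confirmed. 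For an uncancelled run, the beliefs that $\tau\subset A$ and that the current $\kuc$ is final are correct, so the class $\PP^{e+1,\eta\aaa}$ it hands to $R^{e+1,\eta\aaa}$ is genuinely $\{X\in\QQ\seq{\aaa}:\Phi_e(X)\nsupseteq\tau\}$; its returning means this class was found empty, i.e.\ $\Phi_e(X)\supseteq\tau$ for all $X\in\QQ\seq{\aaa}$; and $\tau=A_s\uhr{u}=A\uhr{u}$ because $\tau\subset A$ and $|\tau|=u$ is the use of the now-permanent computation $\Gamma^A(x)$. This is exactly the golden-pair condition at $x$, completing the proof of the claim.

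The step I expect to be the main obstacle is the cancellation bookkeeping of the second paragraph: making the induction on $x$ precise enough that ``after $s^*$'' really does eliminate every mechanism that could cancel a run of $S^{e,\eta\aaa}_x$ or re-freshen $(\aaa,x)$ --- in particular the interference, both via cancellation and via blocking ``requires attention'', coming from higher-priority runs $S^{e,\eta\beta}_y$ with $\beta\subseteq\aaa$ and $y<x$ --- together with the bookkeeping point that for an uncancelled run the $\Pi^0_1$ class passed downward coincides with the true class $\QQ\seq{\aaa}$ cut down by $\Phi_e$, rather than a stage-$s$ approximation to it.
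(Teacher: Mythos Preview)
Your proposal is correct and follows the same strategy as the paper's proof, which is considerably terser: it too fixes a late stage at which $\Gamma^A(x)$ and $\kuc(\PP^{e,\eta},\aaa)$ have stabilised and all higher-priority subprocedures $S^{e,\eta\beta}_y$ with $y<x$ are out of the way, then observes that the ensuing run of $S^{e,\eta\aaa}_x$ is not cancelled and so must return, emptying the class in (\ref{eqn:defP}). Your explicit restriction to $\beta\subseteq\aaa$ (so that $\eta\beta\subset G$ and the golden-run hypothesis genuinely applies to $S^{e,\eta\beta}_y$) is a point the paper glosses over.
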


\begin{proof}
Let $n$ be the parameter of this golden run. For almost all $x$ we have $h(x) > n$. To show the golden pair condition for such an $x$, suppose that $\Gamma^A(x)$ converges. Suppose $\eta \aaa \sub G$ where $|\aaa| = h(x)$. Choose $s_0$ so large that $\Gamma^A(x)$ and $\kuc ( \PP^{e,\eta}, \aaa)$ are stable from stage $s_0$ on, and (by Claim~\ref{claim: global counting 2} and the hypothesis that the run of $R^{e, \eta}$ is golden) all runs $S^{e, \eta \beta}_y$ for $y< x$ have returned or are cancelled. Following the instructions for $R^{e,\eta}$, we may now start a run $S^{e, \eta \aaa}_x$ (even if some $S^{e,\eta \nu}_y$ where $y> x $ and $\aaa \subseteq \nu$ is running and must be cancelled), and this run is not cancelled. Since this final run of $S^{e, \eta \aaa}_x$ returns, the $\PPI$ class in (\ref{eqn:defP}) becomes empty. Hence the golden pair condition for $x$ holds of $\PP^{e,\eta}, \Phi_e$. 
\end{proof}

\begin{claim} \label{clm:no-golden-run}
	Suppose that there is no golden run. Then for every $e$, there is some $\gamma\subset G$ such that there is a run of $R^{e,\gamma}$ that is never cancelled. 
\end{claim}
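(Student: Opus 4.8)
The plan is to prove Claim~\ref{clm:no-golden-run} by induction on $e$, mirroring the corresponding step in the proof of Theorem~\ref{thm:Pi01 theorem} but accounting for the branching structure indexed by strings $\gamma \subset G$. The base case is immediate: the unique run of $R^{0,\estring}$ is never cancelled, and $\estring \subset G$. For the inductive step, suppose $e > 0$ and we are given some $\gamma \subset G$ such that a run of $R^{e-1,\gamma}$ is never cancelled. Since there is no golden run, this run of $R^{e-1,\gamma}$ is not golden, yet it is never cancelled; by definition of \emph{golden run}, the only remaining way it can fail to be golden is that it calls some subprocedure $S^{e-1,\gamma\aaa}_x$ with $\gamma\aaa \subset G$ that is never cancelled and never returns. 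I would first argue that such a subprocedure indeed exists by a counting argument: by Claim~\ref{claim: global counting 2}, each run $R^{e-1,\gamma}$ calls only finitely many runs of each $S^{e-1,\gamma\aaa}_x$, and by the restrained-approximation property together with Claim~\ref{claim: local counting 2}, only finitely many such calls can be made along any fixed branch; if every $S^{e-1,\gamma\aaa}_x$ with $\gamma\aaa\subset G$ that is called eventually returned or was cancelled, the run $R^{e-1,\gamma}$ would be golden, contradicting our hypothesis. (One must be slightly careful here: along the branch $\gamma\aaa\subset G$ we consider the runs $S^{e-1,\gamma\aaa}_x$ for the single string $\aaa$ of each relevant length $h(x)$ with $\gamma\aaa\subset G$, exactly as in Definition~\ref{def:golden pair 2} and Claim~\ref{claim: GRGP 2}.)

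Let $S^{e-1,\gamma\aaa}_x$, with $\gamma\aaa\subset G$, be the run that is never cancelled and never returns. By step~(a) of its instructions, it started a run of $R^{e,\gamma\aaa}$ with input $\PP^{e,\gamma\aaa}$ given by~(\ref{eqn:defP}), and this run of $R^{e,\gamma\aaa}$ is never cancelled either: a run of $R^{e,\gamma\aaa}$ can only be cancelled when the run of $S^{e-1,\gamma\aaa}_x$ that started it is cancelled or returns (step~(b)), and neither ever happens. Setting $\gamma' = \gamma\aaa$, we have $\gamma'\subset G$ (since $\gamma\subset G$ and $\gamma\aaa\subset G$) and a run of $R^{e,\gamma'}$ that is never cancelled, which completes the inductive step. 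I would also remark that since the run of $S^{e-1,\gamma\aaa}_x$ is never cancelled, its input string $\tau$ is a genuine initial segment of $A$; this fact is not needed for the present claim but is exactly what will be used afterwards to derive the contradiction against $Z \in \PP^{e+1,\gamma'}$.

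The main obstacle, and the point requiring the most care, is the finiteness argument showing that a non-golden run that is never cancelled must have a non-returning, non-cancelled subprocedure along $G$: one needs the bounds of Claims~\ref{claim: local counting 2} and~\ref{claim: global counting 2} to conclude that only finitely many runs $S^{e-1,\gamma\aaa}_x$ are ever called with $\gamma\aaa\subset G$ for each value of $x$, and that for each $x$ with $h(x)$ exceeding the parameter of $R^{e-1,\gamma}$, eventually a run of $S^{e-1,\gamma\aaa}_x$ (for the unique $\aaa$ with $\gamma\aaa\subset G$, $|\aaa|=h(x)$) is called and not cancelled — this is the analogue of the argument in Claim~\ref{claim;GRGP 1}, where the restrained approximation guarantees that for $x\notin\dom\theta$ the computation $\Gamma^A(x)\converge[s]$ holds only finitely often, so only finitely many runs are triggered, and for $x\in\dom\theta$ the use and the $\kuc$-string eventually stabilize. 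Once one has that some $S^{e-1,\gamma\aaa}_x$ with $\gamma\aaa\subset G$ is called and never cancelled, non-goldenness of the run forces it never to return, and the rest is bookkeeping on which $\PPI$ class gets passed down. I expect this step to be essentially a transcription of the corresponding argument in Section~\ref{sec:main}, adapted to track the extra string parameter.
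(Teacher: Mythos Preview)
Your argument is correct and follows the same inductive route as the paper. The paper's proof is three lines: for the inductive step it simply observes that since the never-cancelled run of $R^{e,\gamma}$ (with $\gamma\subset G$) is not golden, by definition there is some $S^{e,\gamma\aaa}_x$ with $\gamma\aaa\subset G$ that is called and neither returns nor is cancelled, and the run of $R^{e+1,\gamma\aaa}$ it started is therefore never cancelled.

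Your detour through the counting bounds of Claims~\ref{claim: local counting 2} and~\ref{claim: global counting 2}, and through the restrained-approximation argument, is unnecessary here. The existence of the non-returning, non-cancelled subprocedure along $G$ is \emph{immediate} from the negation of the definition of golden run; no finiteness argument is needed to extract it. You correctly state this yourself (``the only remaining way it can fail to be golden is that it calls some subprocedure \ldots''), so the subsequent paragraph about ``I would first argue that such a subprocedure indeed exists by a counting argument'' is redundant and somewhat confusing. The counting bounds are needed elsewhere in the verification (for the $\w$-c.e.\ approximation of the coding strings $\sigma_\gamma$), but not for this claim.
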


\begin{proof}
	By induction on $e$. For $e=0$ we have $\gamma=\estring$. Assume the lemma holds for $e$, witnessed by some $\gamma\subset G$. Since the run of $R^{e,\gamma}$ that is never cancelled is not golden, there is some $\aaa$ such that $\eta\aaa\subset G$ and such that there is a call of $S^{e,\eta\aaa}_x$ for some $x$ that is never cancelled but never returns. Then $S^{e,\eta\aaa}_x$ calls a run of $R^{e+1,\eta\aaa}$ that is never cancelled. 	
\end{proof}

It remains to show there is a golden run. For this we use the hypothesis that $A$ is computable from every $1$-random set $Z$ such that $G \ltt Z'$. We define the coding strings $\s_{\gamma}$ for $\gamma\in 2^{<\w}$. Let $\s_{\estring, s} = \estring$. 

\begin{enumerate}
	\item If $\s_{\eta, s}$ has been defined and procedure $R^{e, \eta}$ is running at stage~$s$ with input~$\PP$, then for all $\aaa \neq \estring$ such that no procedure $S^{e, \eta \beta}$ is running for any $\beta \subsetneq \aaa$, let $\s_{\eta \aaa,s}= \kuc(\PP, \aaa)$.

	\item If $\aaa$ is maximal under the prefix relation so that $\s_{\eta\aaa,s}$ is now defined, it must be the case that $S^{e,\eta\aaa}_x$ is currently running (for some $x$) and has called a run of $R^{e+1, \eta\aaa}$. This situation puts us back in case (1) with $\eta$ replaced by $\eta\aaa$, and the recursive definition can continue. 
\end{enumerate}

We verify that $\gamma \subset \delta$ implies $\s_{{\gamma}, s} \subset \s_{\delta, s}$ for each $s$ and $|\delta | \le s$. The fact to verify is that if some $R^{e,\eta}$ is running at stage $s$ with input $\PP^{e,\eta}$, and $S^{e,\eta\aaa}_x$ is also running at this stage and provides to $R^{e,\eta\aaa}$ the input $\PP^{e+1,\eta\aaa}$, then $\kuc(\PP^{e,\eta}_s,\aaa) \subseteq \kuc(\PP^{e+1,\eta\aaa}_s,\estring)$. The reason this fact holds is that we define $\PP^{e+1,\eta\aaa}$ as a subclass of $\PP^{e,\eta}\cap [\rho]$, where $\rho = \kuc(\PP^{e,\eta}_s,\aaa)$.  We may of course assume that this containment holds also for the stage $s$ clopen approximations to these classes, that is, that $\PP^{e+1,\eta\aaa}_s\subseteq \PP^{e,\eta}_s\cap [\rho]$. Now the desired extension $\rho \subseteq \kuc(\PP^{e+1,\eta\aaa}_s,\estring)$ holds because of measure considerations. Let $v$ be the $V$-index of $\PP^{e+1,\eta\aaa}$. Then by definition, $\kuc(\PP^{e+1,\eta\aaa},\estring)$ has length $q(v)$. From $\PP^{e+1,\eta\aaa}\subset [\rho]$ we conclude that $\leb \PP^{e+1,\eta\aaa}\le 2^{-|\s|}$. But we also know that $\leb \PP^{e+1,\eta\aaa}\ge 2^{-q(v)}$. Hence $q(v)\ge |\s|$. Since $\PP^{e+1,\eta\aaa}_s\subset [\rho]$, we conclude that indeed $\rho\subseteq \kuc(\PP^{e+1,\eta\aaa}_s,\estring)$.

\begin{claim}
For every $\gamma$, the number of stages $s$ such that $\s_{\gamma,s+1}\ne \s_{\gamma,s}$ is finite, and in fact is computably bounded in $\gamma$ (and hence in $|\gamma|$).  
\end{claim}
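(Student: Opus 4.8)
The plan is to read the recursive definition of the coding strings as an algorithm which, on input $\gamma$ and stage $s$, consults only a bounded amount of ``data'' about the current state of the construction, and then to show that each such datum changes only computably-boundedly often. Concretely: peeling off blocks as in the definition, whenever $\s_{\gamma,s}$ is defined it has the form $\kuc(\PP^{(v)}_s,\aaa)$, where $\aaa$ is one of the at most $|\gamma|+1$ suffixes of $\gamma$ and $\PP^{(v)}$ is the input class of the run of a procedure $R^{e,\delta}$, with $\delta$ a prefix of $\gamma$, that is active at stage $s$; which pair $(\aaa,v)$ is produced is determined entirely by which runs of the procedures $R^{e,\delta}$ (and with which inputs) and $S^{e,\delta\beta}_x$ (for strings contained in $\gamma$) are running at stage~$s$. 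We treat ``$\s_{\gamma,s}$ undefined'' as a value; transitions to and from it are among the changes counted below, and the base case $\gamma=\estring$ is trivial since $\s_{\estring,s}=\estring$ always.

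First one records the relevant finiteness. Since the parameter of any $R^{e,\eta}$ called by $S^{e-1,\eta}_x$ equals $h(x)$ and all its sub-blocks have length exceeding $h(x)$, block lengths strictly increase along the tree of procedure calls; hence every $R^{e,\delta}$ or $S^{e,\delta}_x$ ever called with $\delta$ a prefix of $\gamma$ has $e\le|\gamma|$, and every $x$ for which some $S^{e,\delta\beta}_x$ with $\delta\beta$ a prefix of $\gamma$ is called satisfies $h(x)=|\beta|\le|\gamma|$, of which --- $h$ being an order function --- there are only finitely many, listable computably from $|\gamma|$. By Claim~\ref{claim: global counting 2}(1) each such $S^{e,\delta\beta}_x$ is called at most $M(e,x)$ times; since each run of a procedure $R^{e+1,\delta\beta}$ is called by exactly one call of the corresponding $S$-procedure (and $R^{0,\estring}$ is run once), the total number of runs of all the procedures $R^{e,\delta}$, $S^{e,\delta\beta}_x$ with strings contained in $\gamma$ is bounded by a computable function of $|\gamma|$. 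By Claim~\ref{claim: global counting 2}(2) the $V$-indices $v$ of all classes $\PP^{e,\delta}$ with $\delta\subseteq\gamma$ are bounded by a computable $\bar v(|\gamma|)$, and hence the maximum of $q(v)$ over these finitely many indices is bounded by a computable function of $|\gamma|$ as well.

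Now one bounds the changes of $\s_{\gamma,s}$. It can change between stages $s$ and $s+1$ only if either (i) the set of active runs of the procedures $R^{e,\delta}$, $S^{e,\delta\beta}_x$ with strings contained in $\gamma$ changes --- which, by the previous paragraph, occurs boundedly often, with a bound computable in $|\gamma|$ --- or (ii) none of this changes but some value $\kuc(\PP^{(v)}_s,\aaa)$ among the finitely many pairs $(v,\aaa)$ that can occur is altered; for each fixed such pair this is bounded by $2^{\ell(|\aaa|,q(v))}\le 2^{\ell(|\gamma|,q(\bar v(|\gamma|)))}$ using Lemma~\ref{lem:Kucera approx}, again a computable function of $|\gamma|$. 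Summing the finitely many bounds of types (i) and (ii) gives a computable bound on the number of stages $s$ with $\s_{\gamma,s+1}\ne\s_{\gamma,s}$; since it depends only on $|\gamma|$ --- we took maxima over the finitely many $x$ and $v$ involved --- it is in particular a bound computable in $\gamma$.

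I expect the delicate point to be (i), namely verifying that the recursion producing $\s_{\gamma,s}$ consults nothing beyond the enumerated data: that every change in the block decomposition of $\gamma$ realized during the construction (hence in the ``breakpoint'' $\delta$ and the suffix $\aaa$) is witnessed by a change in the set of running $S$-procedures along $\gamma$, and that a run of some $R^{e,\delta}$ being cancelled and restarted with the same input class leaves $\s_{\gamma,s}$ unaffected except through the resulting cancellations of its sub-procedures. This is precisely why the counting above quantifies over all prefixes $\delta\subseteq\gamma$ and all suffixes $\aaa$ of $\gamma$ rather than over a single, fixed decomposition.
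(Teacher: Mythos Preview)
Your proposal is correct and follows essentially the same approach as the paper: both decompose the possible changes of $\sigma_{\gamma,s}$ into (i) changes in which $S$-procedures along $\gamma$ are running, bounded via Claim~\ref{claim: global counting 2}(1), and (ii) changes in the relevant $\kuc$ strings while the procedure configuration is fixed, bounded via Claim~\ref{claim: global counting 2}(2) together with Lemma~\ref{lem:Kucera approx}. Your write-up is more explicit about the bookkeeping (and correctly flags the undefined case), but the skeleton is the same; one small notational slip is that $q(\bar v(|\gamma|))$ should really be $\max_{v\le \bar v(|\gamma|)} q(v)$, since $q$ need not be monotone --- you observed this earlier but then reverted to the shorthand.
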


\begin{proof}
	We can have $\s_{\gamma,s+1} \ne \s_{\gamma,s}$ for two reasons:
\begin{enumerate}
	\item[(i)] A run of $S^{e, \eta}_x$ is called for some $\eta \sub \gamma$.
	\item[(ii)] A string $\kuc(\PP, \aaa)$ involved in the definition of $\s_{\gamma,s}$ changes from stage $s$ to stage $s+1$. 
\end{enumerate}	
	For any run of $S^{e,\eta}_x$ for $\eta\sub \gamma$ we must have $e,h(x)\le |\gamma|$, so a computable bound on the number of changes of type (i) is given by Claim~\ref{claim: global counting 2}(1). 
	
	If a string $\kuc(\PP,\aaa)$ is involved in the definition of $\s_{\gamma,s}$, then we must have $\PP=\PP^{e,\eta}$ for some $e\le |\gamma|$ and $\eta\aaa\subseteq \gamma$. Assuming that $e>0$, this run of $R^{e,\eta}$ was called by some $S^{e,\eta}_x$ where again $h(x)\le |\gamma|$. By Claim~\ref{claim: global counting 2}(2), $\PP = \PP^{(v)}$, where $v\le r(e-1,y)$ for some $y<x$. Thus, effectively in $\gamma$, we get a bound on $q(v)$ for the $V$-index $v$ of $\PP$, and so with the aid of Lemma \ref{lem:Kucera approx} (and again Claim~\ref{claim: global counting 2}(1)), a bound on the number of times a change as in (ii) may happen.  
\end{proof}

\

For all $\gamma$, let $\s_\gamma = \lim_s \s_{\gamma,s}$. Let $Z = \bigcup_{\gamma \prec G} \s_\gamma$. By a proof identical to the proof of Claim~\ref{clm:protosuperhigh-coding}, $G\ltt Z'$. By the assumption on $A$, we have $A\le_\Tur Z$. Hence $\Phi_e(Z)=A$ for some~$e$. 

Assume for a contradiction that there is no golden run. By Claim \ref{clm:no-golden-run}, there is a run of $R^{e,\eta}$, for some $\eta\subset G$, that is never cancelled. Since this run is not golden, there is some $\aaa$ such that $\eta\aaa\subset G$ and such that there is a run of $S^{e,\eta\aaa}_x$ for some $x$, called by $R^{e,\eta}$, that is never cancelled and never returns. This run defines 
\[ \PP^{e+1, \eta \aaa} = \left\{ X\in \PP^{e, \eta} \seq \aaa \,:\, \Phi_e(X) \nsupseteq \tau \right\},\]
where $\PP^{e,\eta}$ is the input of $R^{e,\eta}$. These classes are never altered, as $S^{e,\eta\aaa}_x$ is never cancelled, so we have $\tau\subset A$. Since the run of $S^{e,\eta\aaa}_x$ never return, the class $\PP^{e+1,\eta\aaa}$ is nonempty.

\begin{claim}
	$Z\in \PP^{e+1,\eta\aaa}$. 
\end{claim}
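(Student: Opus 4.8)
The plan is to run the same compactness argument used just after Claim~\ref{clm:protosuperhigh-extendible} in the proof of Theorem~\ref{thm:preparation superhigh theorem}, but down the infinite branch of never-cancelled procedures singled out by Claim~\ref{clm:no-golden-run}. Since no golden run exists, the inductive proof of Claim~\ref{clm:no-golden-run} in fact produces a sequence $\estring = \gamma_0 \subsetneq \gamma_1 \subsetneq \cdots$ with $\gamma_d \subset G$, together with, for each $d$, a run of $R^{d,\gamma_d}$ that is never cancelled, with input $\PP^{d,\gamma_d}$, and a run of some $S^{d,\gamma_{d+1}}_{x_d}$ (where $\gamma_{d+1} = \gamma_d\aaa_d$ for a nonempty string $\aaa_d$ with $|\aaa_d| = h(x_d)$) that is never cancelled and never returns; the never-cancelled $R^{d+1,\gamma_{d+1}}$ it calls continues the chain. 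We may take $\eta = \gamma_e$, so that $\eta\aaa = \gamma_{e+1}$ and $\PP^{e+1,\eta\aaa} = \PP^{e+1,\gamma_{e+1}}$, and the claim's data is an initial segment of this chain. Each $\PP^{d,\gamma_d}$ is nonempty: $\PP^{0,\estring} = \SSS \ne \ES$, and for $d \ge 1$ the procedure $S^{d-1,\gamma_d}_{x_{d-1}}$ never witnesses that $\PP^{d,\gamma_d}$ becomes empty, so by compactness no clopen approximation $\PP^{d,\gamma_d}_t$ is empty.

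Next I would identify the coding strings along this branch with the \Kuc{} strings of the chain classes. By the instructions for $\s_\gamma$ (case~(1)) together with the prefix-freeness of the running $S$-procedures, once the never-cancelled run of $R^{d,\gamma_d}$ is in place and the never-returning run of $S^{d,\gamma_{d+1}}_{x_d}$ has been called, we have $\s_{\gamma_{d+1},s} = \kuc(\PP^{d,\gamma_d}_s,\aaa_d)$ for all sufficiently large $s$; hence, since the approximation $\seq{\kuc(\PP^{d,\gamma_d}_s,\aaa_d)}_s$ converges to $\kuc(\PP^{d,\gamma_d},\aaa_d)$ (Lemma~\ref{lem:Kucera approx}), we get $\s_{\gamma_{d+1}} = \kuc(\PP^{d,\gamma_d},\aaa_d)$. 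Since step~(a) of $S^{d,\gamma_{d+1}}_{x_d}$ defines $\PP^{d+1,\gamma_{d+1}}$ as a subclass of $\PP^{d,\gamma_d}\seq{\aaa_d} = \PP^{d,\gamma_d}\cap[\kuc(\PP^{d,\gamma_d},\aaa_d)]$, it follows that
\[ \PP^{d+1,\gamma_{d+1}} \subseteq \PP^{d,\gamma_d}\cap[\s_{\gamma_{d+1}}] \subseteq [\s_{\gamma_{d+1}}] .\]
In particular the classes $\PP^{d,\gamma_d}$ are nested. Moreover, as already observed in the verification that $\gamma\subset\delta$ implies $\s_{\gamma}\subset\s_{\delta}$, and using the length formula $|\kuc(\PP^{(v)},\aaa)| = \ell(|\aaa|,q(v))$, each $\s_{\gamma_d}$ is a proper initial segment of $\s_{\gamma_{d+1}}$; hence $|\s_{\gamma_d}|\to\infty$, and since $\gamma_d\prec G$ each $\s_{\gamma_d}$ is an initial segment of $Z = \bigcup_{\gamma\prec G}\s_\gamma$. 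Therefore $\bigcap_{d\ge e+1}[\s_{\gamma_d}] = \{Z\}$.

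Finally, compactness of $\Cant$ gives the conclusion: $\seq{\PP^{d,\gamma_d}}_{d\ge e+1}$ is a nested sequence of nonempty closed classes, so $\bigcap_{d\ge e+1}\PP^{d,\gamma_d}\ne\ES$; but by the displayed inclusion $\bigcap_{d\ge e+1}\PP^{d,\gamma_d}\subseteq\bigcap_{d\ge e+1}[\s_{\gamma_d}] = \{Z\}$, so this intersection equals $\{Z\}$, whence $Z\in\PP^{e+1,\gamma_{e+1}} = \PP^{e+1,\eta\aaa}$. (Combined with $\Phi_e(Z) = A\supseteq\tau$, this contradicts the definition $\PP^{e+1,\eta\aaa} = \{X\in\PP^{e,\eta}\seq{\aaa} : \Phi_e(X)\nsupseteq\tau\}$, completing the proof of Proposition~\ref{prop:golden pairs exist 2}.)

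I do not anticipate a real obstacle here; the only care needed is the bookkeeping that correctly interleaves the coding chain $\seq{\s_{\gamma_d}}$ with the chain of classes $\seq{\PP^{d,\gamma_d}}$ — specifically the identity $\s_{\gamma_{d+1}} = \kuc(\PP^{d,\gamma_d},\aaa_d)$, which rests on the stabilization of the relevant approximations and on prefix-freeness of the active $S$-procedures. Everything else is the compactness argument already carried out for Theorem~\ref{thm:preparation superhigh theorem}.
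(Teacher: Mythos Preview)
Your argument is correct and follows essentially the same approach as the paper: both proofs invoke Claim~\ref{clm:no-golden-run} to obtain the infinite chain of never-cancelled procedures along $G$, identify the coding strings $\s_{\gamma_d}$ with the \Kuc{} strings of the corresponding classes (using that a never-cancelled $S^{d,\gamma_d\aaa_d}_{x_d}$ prevents any later $S^{d,\gamma_d\beta}$ with $\beta\subsetneq\aaa_d$ from being called), and then conclude by compactness from the nested nonempty classes $\PP^{d,\gamma_d}\subseteq[\s_{\gamma_d}]$. Your write-up is simply more explicit about the bookkeeping than the paper's terse version.
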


\begin{proof}
	Since $S^{e,\eta\aaa}_x$ is never cancelled, after its inception, no run $S^{e,\eta\beta}_y$ for any $\beta\subsetneq \aaa$ is ever called by $R^{e,\eta}$. Hence $\s_{\eta\aaa} = \kuc(\PP^{e,\eta},\aaa)$.
	
	Let $e'>e+1$. By Claim \ref{clm:no-golden-run}, there is some string $\gamma\subset G$ for which there is a run of $R^{e',\gamma}$ that is never cancelled. We have $|\gamma|\ge e'$. Then $\s_\gamma$ is extendible in $\PP^{e',\gamma}$ and $\PP^{e',\gamma}\subseteq \PP^{e+1,\eta\aaa}$. The result follows by compactness. 
\end{proof}

We now get the desired contradiction, since $\Phi_e(X)\ne A$ for all $X\in \PP^{e+1,\eta\aaa}$. This completes the proof of Proposition \ref{prop:golden pairs exist 2} and hence of Theorem~\ref{thm:main superhigh thm}. 

\section{Demuth random sets and cost functions} \label{sec:last}

Consider the situation that  $A \leT Y$ where $A$ is c.e.\ and $Y$ is a $1$-random $\DII$ set. In this section we develop the connection between the strength  of cost functions $A$ can obey  and the degree of randomness of $Y$. This analysis will yield a proof of   Theorem \ref{thm:last}. 

We  gauge the degree of randomness of $Y$  via     the notion of Demuth randomness and its variants. 
 Recall from the first section the idea behind Demuth randomness. Tests are generalized in that one can change the  $m$-th component    for a computably bounded number of times.  We will introduce stronger randomness notions that are still compatible with being $\DII$ by relaxing the condition that the number of changes be computably bounded. Instead, each time there is a change to the current version of the $m$-th component,   we count down along a computable well-ordering $R$.

\subsection{$R$-approximations}  We begin with some intuitive background.
  Ershov~\cite{Ershov:68} introduced a  theory of $a$-c.e.\ functions  for an ordinal notation $a$ (see \cite{{Stephan.Yang.ea:nd}} for a recent survey).  The following  simpler variant     suffices for our purposes. We replace the ordinal notations by arbitrary  computable well-orders. 
Let $g(n,s)$ be a computable approximation to a function $f$. Suppose that the number of mind changes at $n$ is bounded by $h(n)$ where   $h$  is a  computable   function. We can think of the situation as follows: an ``approximator'' promises to give us a computable approximation to $f$. The approximator also has to give us evidence that the approximation will indeed stabilize on every input. Thus, for every $n$, at stage~$0$ the approximator puts a marker marked $n$ on the number $h(n)$ in the standard ordering $<$ of the natural numbers. Each time the approximator wants to change its approximation to $f(n)$, that is, at a stage $s$ at which $g(n,s)\ne g(n,s-1)$, the approximator needs to move the $n\tth$ marker at least one number to the left, that is, decrease its value in the ordering $<$. Since the ordering $<$ is a well-ordering of the natural numbers, this process ensures the stabilization of the approximation $g(n,s)$. The effectiveness of the entire setup is also due to the fact that $<$ is a computable well-ordering, and that the moves of the markers are given effectively. The notion of an $R$-approximation is identical, except that we replace $<$ by some other computable well-ordering of the natural numbers. 

For the rest of the section, we assume that every computable well-ordering $R$ we mention is infinite; indeed we assume that its field is $\w$.

\begin{definition} 
	Let $R = (\w,<_R)$  be a computable well-ordering. An \emph{$R$-ap\-prox\-i\-ma\-tion} is a computable function 
\begin{center}
$g = \la g_0, g_1 \ra: \w \times \w \rightarrow \w \times \w $ 
\end{center}
such that for each $x$ and each $s>0$, 
\begin{equation*}
g(x,s) \neq g(x, s-1) \ria g_1(x, s) <_R g_1(x, s-1). 
\end{equation*}
In this case, $g_0$ is a computable approximation to a total $\Delta^0_2$ function $f$. We say that $g$ is an {$R$-approximation to $f$}. A $\DII$ function $f$ is called \emph{$R$-c.e.}~if it has an $R$-approximation. 
\end{definition}

\begin{lemma}\label{lem:w is R}
	Let $R$ be a computable well-ordering. Every $\w$-c.e.\ function is $R$-c.e.
\end{lemma}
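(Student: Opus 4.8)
The plan is to show that any $\w$-c.e.\ function $f$, with a computable approximation whose mind-change count at $x$ is bounded by a computable function $h(x)$, can be re-presented as an $R$-approximation for an arbitrary computable well-ordering $R = (\w, <_R)$. The key observation is that since $R$ is an infinite computable well-ordering whose field is all of $\w$, it has arbitrarily long finite descending chains: for every $k$ there are elements $a_0 >_R a_1 >_R \cdots >_R a_{k}$ in $\w$. Moreover, because $R$ is a computable linear order, we can, given any starting point $a \in \w$, computably search for some $b <_R a$ (such a $b$ exists whenever $a$ is not the $R$-least element, and in an infinite well-order with field $\w$ there is no problem ensuring we can always step down far enough, as long as we start high enough).

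First I would fix a computable approximation $\seq{f_s}$ to $f$ together with a computable bound $h$ on the mind-change function, so that $\#\{s : f_{s+1}(x) \neq f_s(x)\} \le h(x)$ for all $x$. Next, for each $x$ I would use the fact that $R$ is a computable well-order with field $\w$ to compute, uniformly in $x$, a finite $R$-descending chain $c_x(0) >_R c_x(1) >_R \cdots >_R c_x(h(x))$ of length $h(x)+1$ in $\w$. Concretely, one can define $c_x(0)$ to be (say) some element that is $R$-large enough — for instance, search computably for an element that has at least $h(x)$ predecessors under $<_R$, which must exist since $R$ is infinite — and then iteratively pick $c_x(j+1)$ to be any element with $c_x(j+1) <_R c_x(j)$, found by computable search; this terminates because we only need $h(x)+1$ steps and we started high enough. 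All of this is uniformly computable in $x$.

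Then I would define the $R$-approximation $g = \langle g_0, g_1 \rangle$ by $g_0(x,s) = f_s(x)$ and $g_1(x,s) = c_x(m(x,s))$, where $m(x,s) = \#\{t \le s : t > 0 \text{ and } f_t(x) \neq f_{t-1}(x)\}$ counts the number of mind changes at $x$ up to stage $s$. Since $m(x,s) \le h(x)$ always, $c_x(m(x,s))$ is well-defined, and $g$ is computable. Whenever $g(x,s) \neq g(x,s-1)$, in particular whenever $g_0(x,s) = f_s(x) \neq f_{s-1}(x) = g_0(x,s-1)$, the counter $m(x,s)$ strictly increases over $m(x,s-1)$, so $g_1(x,s) = c_x(m(x,s)) <_R c_x(m(x,s-1)) = g_1(x,s-1)$ by the descending property of the chain $c_x$. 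Hence $g$ is an $R$-approximation to $f$, so $f$ is $R$-c.e.

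The main (and essentially only) obstacle is the uniform computation of the descending chains $c_x$ of prescribed length $h(x)+1$ inside $R$; this requires exploiting that $R$ is an \emph{infinite} computable well-order with field $\w$ (which the paper has already stipulated for all well-orders under consideration), so that every element has finitely many $R$-predecessors yet there are elements with arbitrarily many, and the predecessor relation is decidable. Once that uniform search is set up, the rest is a routine bookkeeping argument matching mind changes to steps down the chain. One should also remark that the choice of the ``starting'' element $c_x(0)$ can be made monotone or otherwise coordinated if desired, but this is not needed for the statement.
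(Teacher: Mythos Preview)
Your approach is correct in spirit and close to a complete proof, but there is a small gap in the construction of the descending chain $c_x$. You propose to find $c_x(0)$ with at least $h(x)$ many $R$-predecessors and then iteratively pick $c_x(j+1) <_R c_x(j)$ by blind search. The problem is that blind search may land on the $R$-minimum (or any element with too few predecessors) at an intermediate step: having started ``high enough'' at $c_x(0)$ does not guarantee that an \emph{arbitrary} $c_x(1) <_R c_x(0)$ still has $h(x)-1$ predecessors, so the iteration can stall before reaching length $h(x)+1$. The fix is easy: having found $c_x(0)$ with at least $h(x)$ predecessors, enumerate $h(x)$ of them outright and sort the resulting $h(x)+1$ elements by $<_R$; or, even more simply, drop the search for $c_x(0)$ altogether and just sort the set $\{0,1,\ldots,h(x)\}$ by $<_R$ to obtain the required descending chain. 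With this correction the rest of your bookkeeping (defining $g_1(x,s)=c_x(m(x,s))$ via the mind-change counter $m$) goes through verbatim.

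The paper takes a slightly different, more structural route: it first observes that if a computable well-order $S$ computably embeds into $R$ then every $S$-c.e.\ function is $R$-c.e., and then computably embeds $(\w,<)$ into $R$ by recursively finding $<_R$-larger and larger elements, working below a fixed limit point of $R$ when $R$ has successor order-type. Your direct construction has the mild advantage that it avoids this case distinction on the order-type of $R$: you only need finite $<_R$-descending chains of each prescribed length, which any infinite linear order supplies. The paper's approach, by contrast, packages everything into a single global embedding $\w \hookrightarrow R$, which is conceptually cleaner and immediately reusable, at the cost of the extra care with limit points.
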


\begin{proof}[Sketch of proof]
	If $S$ and $R$ are computable well-orderings, and $S$ is computably embeddable into $R$, then every $S$-c.e.\ set is $R$-c.e., because the effective embedding of $S$ into $R$ can be used to translate any $S$-approximation to an $R$-approximation. 
	
	Now, since we assume that $R$ is infinite, we can effectively embed $(\w,<)$ into $R$ by recursively choosing bigger and bigger elements in the sense of $<_R$. If the order-type of $R$ is not a limit ordinal, then we first fix a limit point of $R$, and then  build our embedding entirely below that limit point. 	
\end{proof}

The following lemma is related  to 
  Ershov's result that  each $\DII$ function is $a$-c.e.\ for some notation $a$ of $\omega^2$ (see \cite[Theorem 4.3]{Stephan.Yang.ea:nd}).   In our simpler setting,  a well-ordering of type $\omega$ suffices. 
\begin{lemma} \label{lem:Ershov} 
	For each computable approximation $g_0: \w \times \w \ria \w$ to a $\DII$ function~$f$, there is a computable well-ordering $R$ of order type $\w$ and a computable function $g_1: \w \times \w \ria \w $ such that $\la g_0, g_1 \ra$ is an $R$-approximation to $f$. 
\end{lemma}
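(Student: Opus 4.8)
The plan is to build $R$ and $g_1$ simultaneously by a stage-by-stage construction, presenting $R$ as the union of an increasing chain of finite linear orders into which new elements are only ever \emph{inserted}, never reordered. For each input $x$ we carry a ``marker'' $g_1(x,s)$ --- an element of the order built so far --- which we force to move strictly $<_R$-downward whenever $g_0$ changes on $x$ and to stay fixed otherwise. The key point is that, since $g_0$ converges on every input, each $x$ causes only finitely many marker moves, so only finitely many new elements are ever created on behalf of $x$; together with a careful insertion discipline this is what keeps the order type of $R$ down to $\w$.

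Concretely, let $m(x,s)=\#\{\,1\le r\le s:\ g_0(x,r)\neq g_0(x,r-1)\,\}$, a computable function nondecreasing in $s$ with finite limit. At stage $s+1$ we \emph{activate} the input $x=s$: we create a block of $x+2$ brand-new elements placed above everything created so far, internally ordered into a descending chain $e_0>_R e_1>_R\cdots>_R e_{x+1}$, and we set $g_1(x,t)=e_{m(x,t)}$ for all $t\le s+1$; this is legitimate because $m(x,s+1)\le x+1$, and the chain is long enough to absorb every change $g_0$ can have made on $x$ before activation. Thereafter, at a stage $s$ with $g_0(x,s)\neq g_0(x,s-1)$ we create one fresh element placed just below the current marker $g_1(x,s-1)$ and above everything then lying below it, and let this be $g_1(x,s)$; if $g_0$ does not change on $x$ at stage $s$ we keep $g_1(x,s)=g_1(x,s-1)$. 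Taking all new elements to be the next unused natural numbers, the field of $R$ is $\w$, the relation $<_R$ is computable (any two numbers enter the order at a stage we can compute), and by construction $g_1(x,s)<_R g_1(x,s-1)$ iff $g_0(x,s)\neq g_0(x,s-1)$; hence $\langle g_0,g_1\rangle$ satisfies the defining condition of an $R$-approximation, and since $g_0$ is untouched it is an $R$-approximation to $f$. It remains to check that $R$ has order type $\w$.

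For this I would fix $a\in\w$, created at some stage $t_a$, and bound the number of elements ever placed $<_R a$. Elements created by an activation sit above everything, hence never below $a$. An element created just below a marker $g_1(x,s-1)$ lies below $a$ only when $g_1(x,s-1)\le_R a$; but the markers of a single column $x$ form a $<_R$-descending sequence, and --- because each new marker is inserted just below its predecessor and above every previously existing element --- if $x$ is activated after stage $t_a$ then all its markers stay strictly $>_R a$ forever, so that column never contributes an element below $a$. Thus only the finitely many columns $x<t_a$ can ever put elements below $a$, and each such column contributes only finitely many (at most $x+2$, plus the total number of changes $g_0$ makes on $x$). Adding the finitely many elements already below $a$ at stage $t_a$, we get that $a$ has only finitely many $<_R$-predecessors; an infinite linear order on $\w$ with this property has order type exactly $\w$.

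The one real obstacle is this last verification: the construction must insert unboundedly many new elements into the middle of the order --- each column needs an \emph{a priori} unbounded descending chain --- yet the result must still have order type $\w$. The insertion discipline above is exactly what makes this succeed, since it confines the entire influence of column $x$ to the part of the order lying above every element that predates $x$'s activation, so each fixed element is disturbed by only finitely many columns.
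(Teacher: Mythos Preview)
Your argument is correct, but considerably more elaborate than necessary. The paper takes a direct, declarative route: it lets the field be $V = \omega \times \{0\} \cup \{\langle x,s\rangle : s > 0 \text{ and } g_0(x,s) \neq g_0(x,s-1)\}$, orders pairs by first coordinate increasing and then second coordinate \emph{decreasing}, and simply sets $g_1(x,u) = \langle x, s\rangle$ for the largest $s \le u$ with $\langle x,s\rangle \in V$. The order type is $\omega$ immediately, because each $x$-block is finite (since $g_0$ converges on $x$) and the blocks are laid end to end in the usual order of $x$. This is morally the same idea as yours---one descending chain per input, blocks concatenated in input order---but the paper avoids the entire dynamic insertion machinery: the pre-allocated blocks of size $x+2$, the ``insert just below the current marker'' discipline, and the inductive verification that late columns never migrate below early elements. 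Your construction does buy something---it shows how one can grow such a well-ordering on the fly, and your $g_1$ moves exactly when $g_0$ does---but for this lemma the static definition is shorter and makes the order type transparent.
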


\begin{proof}  It suffices to define a computable well-ordering $R$ with an infinite computable field $V \sub \w \times \w$.  Let  

\bc $V = \w \times \{0\} \cup \{ \la x, s \ra \colon \, s > 0 \lland g_0 (x,s)  \neq g_0(x, s-1) \}$. \ec
For $\la x,s \ra, \la y,t \ra \in  V$, we declare that $\la x, s \ra  <_R \la y, t \ra$ if $x< y$, or $x = y$ and $s > t$.  Then $R$ is   of order type $\w$  because $g_0$ is a computable approximation. Let $g_1(x, u)  = \la x, s \ra$ where $s \le u $ is largest such that $\la x,s \ra \in V$. Then $\la g_0, g_1 \ra$ is an $R$-approximation to $f$. 
\end{proof}

Later on we will need the following fact.
\begin{lemma}\label{lem:Rce universal function}
	For every  computable well-ordering $R$, there is a uniformly $\Halt$-com\-put\-able listing $\seq {f^e}$ of all $R$-c.e.\ functions. 
\end{lemma}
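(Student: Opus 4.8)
The plan is to \emph{sanitize} an enumeration of all partial computable functions. Fix an effective enumeration $(\varphi_e)_{e\in\w}$ of the partial computable functions from $\w\times\w$ to $\w\times\w$, writing $\varphi_e=\langle\varphi_e^0,\varphi_e^1\rangle$. For each $e$ I will produce, uniformly in $e$, a (partial) computable $g^e=\langle g_0^e,g_1^e\rangle$ which is a genuine $R$-approximation whenever it is total, and which agrees with $\varphi_e$ in the limit whenever $\varphi_e$ is already an $R$-approximation; then I put $f^e(x):=\lim_s g_0^e(x,s)$. Since, by definition, every $R$-c.e.\ function is $\lim_s\varphi_e^0(x,s)$ for some $e$ with $\varphi_e$ a total $R$-approximation, the list $(f^e)$ contains all $R$-c.e.\ functions (its total members being exactly the $R$-c.e.\ functions; any partial $f^e$ arising from a ``junk'' index does no harm). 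Uniform $\Halt$-computability of $(e,x)\mapsto f^e(x)$ is then clear: each $g_0^e$ is a computable approximation that stabilizes, so its limit is uniformly $\DII$, and ``$\lim_s g_0^e(x,s)=v$'' is uniformly $\DII$ in $e,x,v$.

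To build $g^e$: call $\varphi_e$ \emph{legal on $x$ through $t$} if $\varphi_e(x,i)\converge$ for all $i\le t$ and $\varphi_e^1(x,i)<_R\varphi_e^1(x,i-1)$ whenever $\varphi_e(x,i)\ne\varphi_e(x,i-1)$ (for $1\le i\le t$). For an input $x$ I first run $\varphi_e(x,0)$ with no step bound — so $g^e$ is defined on inputs from $\{x\}\times\w$ precisely when $\varphi_e(x,0)\converge$ — and then, for each $s$, let $t^e(x,s)$ be the largest $t\le s$ such that all of $\varphi_e(x,0),\dots,\varphi_e(x,t)$ converge within $s$ steps and $\varphi_e$ is legal on $x$ through $t$; set $g^e(x,s):=\varphi_e(x,t^e(x,s))$ if such a $t$ exists, and $g^e(x,s):=\varphi_e(x,0)$ otherwise. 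Two observations drive the verification. First, $t^e(x,\cdot)$ is nondecreasing and bounded (by one less than the least index at which $\varphi_e$ first diverges or becomes illegal on $x$), so $g^e(x,\cdot)$ stabilizes. Second, along the legal indices $\varphi_e^1(x,\cdot)$ is $R$-nonincreasing, and is \emph{strictly} $R$-decreasing exactly at the indices where $\varphi_e$ changes value — for if $\varphi_e^1$ were unchanged between two consecutive legal indices then, by the contrapositive of legality, so would be $\varphi_e$, hence $\varphi_e^0$. From these one checks directly that whenever $g^e(x,s)\ne g^e(x,s-1)$ one has $g_1^e(x,s)<_R g_1^e(x,s-1)$. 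So when $g^e$ is total it is an $R$-approximation of $f^e$, whence $f^e$ is $R$-c.e.; and if $\varphi_e$ is itself a total $R$-approximation of $f$, then it is legal on every $x$ through every index, $t^e(x,s)\to\infty$, and $f^e(x)=\lim_t\varphi_e^0(x,t)=f(x)$, so $f^e=f$.

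The one place calling for care is the countdown check just mentioned, specifically at the first change of $g^e(x,\cdot)$: this occurs at the stage where the computation of $\varphi_e(x,0)$ finally halts, and a priori a transition from a ``waiting'' value to a genuine value of $\varphi_e$ threatens an illegal $R$-increase. The point of using $\varphi_e(x,0)$ itself (rather than some fixed placeholder) as the value during the waiting phase is exactly to avoid this: the pre-transition height is then $\varphi_e^1(x,0)$, which already $R$-dominates every later height along the legal indices, so the transition is a (strict) decrease just like all the others. This is the main technical obstacle; once it is cleared, the rest is routine.
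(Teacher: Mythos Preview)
Your approach is essentially the paper's---sanitize an enumeration so that each index yields something that stabilizes, then take limits---but there is a genuine gap in your version: you allow $f^e$ to be partial.

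You explicitly wait for $\varphi_e(x,0)$ with no step bound, so $g^e(x,\cdot)$ is undefined whenever $\varphi_e(x,0)\diverge$. You then say ``any partial $f^e$ arising from a `junk' index does no harm'', but it does. The lemma asks for a \emph{uniformly $\Halt$-computable} listing; this means the map $(e,x)\mapsto f^e(x)$ must be a total $\Halt$-computable function. (Indeed, in the application the paper immediately extracts a total $\tilde q\leT\Halt$ with $\tilde q(e,n)=f^e(n)$.) Moreover, an $R$-c.e.\ function is by definition total, so a listing of the $R$-c.e.\ functions should not contain partial objects. Your sentence ``each $g_0^e$ is a computable approximation that stabilizes'' is false at exactly the junk indices.

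The fix is straightforward, and it is what the paper does: output a default value (say $0$) while waiting, rather than waiting unboundedly. Your last paragraph explains that you avoided this in order to keep $g^e$ an honest $R$-approximation, so that each total $f^e$ is itself $R$-c.e. But that extra property is not needed: for the lemma (and for its use) one only requires that each $g^e$ be a total computable approximation that \emph{stabilizes}, not that it be an $R$-approximation. With the default value the stabilization argument is unchanged---either legality fails at some finite index and $t^e(x,s)$ freezes there, or $\varphi_e$ is legal on $x$ forever and well-foundedness of $R$ forces the limit. The paper sidesteps the whole ``first transition'' worry by starting from an effective list of partial $R$-approximations (downward-closed domain, descent condition built in) and defaulting to $0$; you can achieve the same effect from your enumeration of all partial computable functions by simply dropping the unbounded wait.
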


\begin{proof}
	Define a \emph{partial $R$-approximation} to be a partial computable function $\psi = \seq{\psi_0,\psi_1}\colon \w^2\to \w^2$ such that $\dom \psi$ is closed downward in both variables, and such that for all $n$ and $s>0$, if $(n,s)\in \dom \psi$ and $\psi(n,s)\ne \psi(n,s-1)$ then $\psi_1(n,s)<_R \psi_1(n,s-1)$. There is an effective listing $\seq{\psi^e}$ of all partial $R$-approximations. 
	
	 Write $\psi^e(n,t)\converge[s]$ to denote that $(n,t)\in \dom \psi^e$ and that this fact is discovered after $s$ steps of computation of some universal machine. We may assume that $\dom \psi^e\,[s]$ is closed downward in both variables. Given $e$, $n$, and $s$, let $t$ be greatest such that $\psi^e(n,t)\converge\![s]$, and let $g^e(n,s) = \psi_0(n,t)$. If there is no such $t$, then let $g^e(n,s)=0$.
	
	Now $\seq{g^e}$ is a uniformly computable sequence of functions, and the function $f^e$ defined by letting $f^e(n) = \lim_s g^e(n,s)$ is total for all $e$, so $\seq{f^e}$ is uniformly $\Halt$-computable. If $\psi^e$ is not total, then $\dom \psi^e$ is finite, whence $f^e(n)=0$ for almost all $n$. If $\psi^e$ is total then $f^e(n) = \lim_s \psi_0(n,s)$ for all $n$.
\end{proof}

\subsection{$R$-Demuth random sets}

\begin{definition} Let $R=(\w, <_R)$ be a computable well-ordering.
An $R$-\emph{Demuth test}  is a sequence of c.e.\ open sets $({\mathcal L}_m)\sN{m}$ such that $\fao m \leb {\mathcal L}_m \le \tp{-m}$, and there is a  function~$f$  with an $R$-approximation such that  ${\mathcal L}_m = \Opcl{W_{f(m)}}$.   A set~$Z$ \emph{passes} the test if $ Z\not \in   {\mathcal L}_m$ for almost all $m$.  We say that~$Z$ is  \emph{$R$-Demuth random}  if~$Z$ passes  each $R$-Demuth test.
	\end{definition}

Thus, a set is Demuth random if and only if it is $(\w,<)$-Demuth random. By Lemma \ref{lem:w is R},   for every computable well-ordering $R$, every $R$-Demuth random set is Demuth random.

\begin{proposition} \label{prop: ExDemRandom} 
	For every computable well-ordering $R$, there is a $\DII$ set $Y$ that is $R$-Demuth random. 
\end{proposition}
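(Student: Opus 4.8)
The plan is to adapt the standard construction of a $\DII$ Demuth random set to handle the more general counting-down condition along $R$. First I would recall the basic strategy: we want to build $Y$ by finite extensions, diagonalizing against all $R$-Demuth tests. The obstacle compared to the classical Demuth case is that when enumerating the listing of all $R$-Demuth tests we can no longer use a fixed computable bound on the number of revisions of each test component; instead each revision of the $m\tth$ component must be accompanied by a decrease along $<_R$. By Lemma \ref{lem:Rce universal function}, there is a $\Halt$-computable listing $\seq{f^e}$ of all $R$-c.e.\ functions, so we can enumerate all potential $R$-Demuth tests $\seq{\LLL^e_m}_m$ (with $\LLL^e_m = \Opcl{W_{f^e(m)}}$) in a $\Halt$-uniform way, together with $\Halt$-approximations to the governing functions that respect the $R$-counting.

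Next I would run the diagonalization relative to $\Halt$. We maintain a nested sequence of clopen conditions (basic cylinders) $[\sigma_0]\supseteq[\sigma_1]\supseteq\cdots$, where at stage $e$ we want to ensure $Y\notin \LLL^e_m$ for some large $m$. Concretely, having committed to $\sigma_{e-1}$, we pick $m$ large enough that $2^{-m}$ is much smaller than $\lambda[\sigma_{e-1}]$, wait (using $\Halt$) for the current approximation to $\LLL^e_m$ to settle, and then choose $\sigma_e \supseteq \sigma_{e-1}$ avoiding the current version of $\LLL^e_m$; this is possible because $\lambda\LLL^e_m \le 2^{-m}$ is small relative to $\lambda[\sigma_{e-1}]$. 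The new ingredient is that the component $\LLL^e_m$ may later be revised. Each time it is revised we must re-choose $\sigma_e$ — but crucially, each such revision forces a strict decrease in $g^e_1(m,\cdot)$ along the well-order $<_R$, so only finitely many revisions occur; after the last one, $\sigma_e$ stabilizes. Since everything is computable from $\Halt$ (detecting revisions, recomputing the avoiding extension), the sequence $\seq{\sigma_e}$ is $\Halt$-computable in the limit, and $Y=\bigcup_e \sigma_e$ is a well-defined $\DII$ set.

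The verification then splits into two parts. First, $Y$ is $\DII$: the approximation to $\sigma_e$ changes only finitely often (bounded, for each fixed $e$, by the length of the descending $<_R$-chain taken by $g^e_1(m_e,\cdot)$ together with the number of times $m_e$ itself is re-chosen), so $Y\uhr n$ reaches a limit for every $n$, and the whole process is $\Halt$-computable, hence $Y \leT \Halt$, i.e.\ $Y$ is $\DII$. Second, $Y$ passes every $R$-Demuth test: given an arbitrary $R$-Demuth test $\seq{\LLL_m}$, it equals $\seq{\LLL^e_m}_m$ for some index $e$ in our listing; by construction $Y\notin \LLL^e_{m_e}$ for the component $m_e$ we diagonalized against at stage $e$, and since $m_e \to \infty$ as $e$ ranges over the stages handling larger and larger tests... — here I need to be slightly careful: a single test $\seq{\LLL_m}_m$ is one index $e$, handled at one stage, giving avoidance of only one component $\LLL^e_{m_e}$, which is exactly "$Y \notin \LLL_m$ for almost all $m$" only if we in fact avoid cofinitely many components. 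So the construction should be set up so that at stage $e$ we avoid component $\LLL^e_m$ for $m = m_e$ chosen larger than all previously used indices and larger than $e$; but to pass the test we need $Y\notin\LLL_m$ for \emph{almost all} $m$, not just one $m$. The standard fix: at stage $e$, instead of handling just $\LLL^e_{m_e}$, we handle $\LLL^e_m$ for \emph{all} $m \ge m_e$ simultaneously — since $\sum_{m\ge m_e}\lambda\LLL^e_m \le 2^{-m_e+1}$, we can still find $\sigma_e\supseteq\sigma_{e-1}$ inside $[\sigma_{e-1}]$ avoiding $\bigcup_{m\ge m_e}\LLL^e_m$ in its current version, and re-choose upon any revision (each revision of any component $\LLL^e_m$ with $m \ge m_e$ triggering a decrease of $g^e_1(m,\cdot)$). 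After stabilization, $Y\notin \LLL^e_m$ for all $m\ge m_e$, so $Y$ passes the $e\tth$ test. The main obstacle, then, is precisely this bookkeeping: ensuring that the re-selection of $\sigma_e$ upon revisions terminates, which rests on the $R$-counting condition, and verifying that the whole limit construction is genuinely $\Halt$-computable so that $Y$ is $\DII$; but both follow the familiar template of Demuth's original argument with $<$ replaced by $<_R$ and ordinary computability replaced by $\Halt$-computability via Lemma \ref{lem:Rce universal function}.
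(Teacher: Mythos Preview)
Your finite-extension strategy has a genuine gap at the step where you ``find $\sigma_e\supseteq\sigma_{e-1}$ avoiding $\bigcup_{m\ge m_e}\LLL^e_m$'' (and already at the single-component version). Each component $\LLL^e_m=\Opcl{W_{f^e(m)}}$ is a c.e.\ \emph{open} set, not a clopen one, and an open set of arbitrarily small measure can be dense in $2^\w$: for instance, an open cover of the computable reals of measure $<\eps$ meets every cylinder. So the inequality $\leb\LLL^e_m\le 2^{-m}\ll\leb[\sigma_{e-1}]$ does \emph{not} yield a cylinder $[\sigma_e]\subseteq[\sigma_{e-1}]$ disjoint from $\LLL^e_m$, let alone from the infinite union. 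Waiting with oracle $\Halt$ for the index $f^e(m)$ to settle does not help, since you still end up with an open set. The alternative reading---avoid only a clopen stage-$s$ approximation and ``re-choose upon any revision''---also fails: the c.e.\ set $W_{f^e(m)}$ keeps growing even while its index is stable, and moreover the union over $m\ge m_e$ involves infinitely many components, each subject to finitely many index-revisions, so the total number of re-choices is not finite and $\sigma_e$ need not stabilize.

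The paper avoids both problems by never seeking a cylinder disjoint from an open set. It first aggregates all $R$-Demuth tests into a single \emph{special test} $\seq{\GG_n}$: a sequence of c.e.\ open sets with $\leb\GG_n\le 2^{-2n-1}$ whose indices are given by a fixed $\Halt$-computable function (using Lemma~\ref{lem:Rce universal function}), so that no further revisions occur. It then builds $Y$ one bit at a time by a measure argument, inductively maintaining $\leb\bigl(\bigcup_{m\le n}\GG_m \mid Y\uhr n\bigr)\le 1-2^{-n-1}$ and choosing $Y(n)$ accordingly. This gives $Y\notin\GG_n$ for every $n$ without ever requiring a cylinder to lie in the complement of an open set. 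The missing idea in your sketch is precisely this bit-by-bit measure-preserving construction in place of cylinder-avoidance.
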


The proof of Proposition \ref{prop: ExDemRandom} is a variant of the construction of a $\DII$ Demuth random set; see \cite[Theorem 3.6.25]{Nbook}. The proof is divided into two parts. There is no universal $R$-Demuth test, but nevertheless, we first show that there is a special test $\seq{\GG_n}$ such that every set passing this test is $R$-Demuth random. Then we show that there is a $\Delta^0_2$ set that passes this special test.  In the following we write $\mathcal H_e $ for $ \Opcl{W_e}$.

\begin{definition}\label{def:special test}
	 A \emph{special test}  is a sequence of c.e.\ open sets $({\mathcal G}_m)\sN{m}$ such that  $\leb {\mathcal G}_m  \le \tp{-2m-1}$ for each~$m$  and there is a function $g \leT \Halt$ such that ${\mathcal G}_m = \mathcal H_{g(m)}$. $Z$  \emph{passes} the test if $  Z\not \in {\mathcal G}_m$ for almost all $m$.  \end{definition}

\begin{lemma}\label{lem:special_tests_force_Demuth}
	Let $R=(\w,<_R)$ be a computable well-ordering. There is a special test $\seq{\GG_n}$ such that every set that passes $\seq{\GG_n}$ is $R$-Demuth random. 
\end{lemma}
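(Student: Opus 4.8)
The plan is to build $\seq{\GG_n}$ by a construction relative to $\Halt$ that lists all candidate $R$-Demuth tests and feeds their components, in suitably truncated form, into the $\GG_n$.

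First I would invoke Lemma~\ref{lem:Rce universal function} to fix a uniformly $\Halt$-computable listing $\seq{f^e}$ of all $R$-c.e.\ functions, so that every $R$-Demuth test has the form $\mathcal L^e_m = \Opcl{W_{f^e(m)}}$ for some $e$. For each $e$ and $m$, using $\Halt$ to read off the c.e.\ index $f^e(m)$, I would form the truncated open set $\widetilde{\mathcal L}^e_m$ obtained by enumerating $W_{f^e(m)}$ and discarding any string that would push the measure above $\tp{-m}$. This $\widetilde{\mathcal L}^e_m$ has a c.e.\ index computable from $f^e(m)$, it always has measure at most $\tp{-m}$, and it equals $\mathcal L^e_m$ whenever $f^e$ actually codes a genuine $R$-Demuth test. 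The point of bringing in $\Halt$ here is that it hands us the final value $f^e(m)$ directly, so that we never have to cope with the sequence of intermediate versions through which an $R$-approximation passes; this sequence is finite but not computably bounded in length, which is exactly what makes a naive ``union of all versions'' approach fail.

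Next I would distribute components among the $\GG_n$. Write $n = \la e_n, k_n\ra$, so that each $e$ occurs as $e_n$ for infinitely many $n$, and set $b^e_k = 2\la e,k\ra + 2$, which chops $[b^e_0, \infty)$ into the consecutive finite blocks $[b^e_k, b^e_{k+1})$. I would then let $\GG_n = \bigcup \{ \widetilde{\mathcal L}^{e_n}_m \,:\, b^{e_n}_{k_n} \le m < b^{e_n}_{k_n + 1} \}$. Since this is a union of finitely many c.e.\ open sets each with a $\Halt$-computable index, the map sending $n$ to a c.e.\ index of $\GG_n$ is $\Halt$-computable; and $\leb \GG_n \le \sum_{m \ge b^{e_n}_{k_n}} \tp{-m} = \tp{-b^{e_n}_{k_n}+1} \le \tp{-2n-1}$ since $b^{e_n}_{k_n} = 2n+2$. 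Thus $\seq{\GG_n}$ is a special test in the sense of Definition~\ref{def:special test}.

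Finally I would verify the capturing property. Suppose $Z$ is not $R$-Demuth random; then $Z$ fails some $R$-Demuth test, which is coded by $f^e$ for some fixed $e$, so $Z \in \mathcal L^e_m = \widetilde{\mathcal L}^e_m$ for infinitely many $m$. Infinitely many of these $m$ lie in $[b^e_0, \infty)$, hence meet infinitely many of the finite blocks $[b^e_k, b^e_{k+1})$; and for each block $k$ that is met, the corresponding $n = \la e, k\ra$ satisfies $Z \in \widetilde{\mathcal L}^e_m \subseteq \GG_n$. Hence $Z \in \GG_n$ for infinitely many $n$, i.e.\ $Z$ does not pass $\seq{\GG_n}$. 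Contrapositively, every set that passes $\seq{\GG_n}$ is $R$-Demuth random. I expect the main obstacle to be precisely the tension just described: a special test component must be a genuine c.e.\ open set rather than merely a $\Sigma^0_2$ open set, which forces each $\GG_n$ to be a union of only finitely many pieces whose c.e.\ indices $\Halt$ can produce, and the blocking scheme above is what reconciles this finiteness with the requirement that every failure set be caught infinitely often.
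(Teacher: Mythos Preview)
Your proof is correct and uses the same key ingredients as the paper: invoke Lemma~\ref{lem:Rce universal function} to obtain a $\Halt$-computable listing $\seq{f^e}$ of all $R$-c.e.\ functions, truncate the resulting open sets to enforce the measure bound, and then pack them into the components $\GG_n$ so that failing any $R$-Demuth test forces failing $\seq{\GG_n}$.

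The only difference is the packing scheme. The paper first reduces to $R$-Demuth tests $(\mathcal S_n)$ with $\leb \mathcal S_n \le \tp{-2n}$ (by observing that passing $(\mathcal U_m)$ is equivalent to passing both $(\mathcal U_{2m})$ and $(\mathcal U_{2m+1})$), truncates at $\tp{-2n}$, and uses the diagonal arrangement $\GG_m = \bigcup_{e<m} \HHH_{q(e,\,e+m+1)}$, so that each $\GG_m$ pulls one component from each of the first $m$ tests. You instead truncate at the natural bound $\tp{-m}$ and assign each $\GG_n$ to a single test $e_n$ together with a finite block of its components. Your scheme avoids the even/odd reduction at the price of a pairing-and-block indexing; the paper's diagonal scheme is a bit more compact but needs the preliminary reduction. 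Both are standard ways to amalgamate a $\Halt$-computable double array into a single test, and neither buys anything the other does not.
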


\begin{proof}
	A set   $Z$ is $R$-Demuth random  if{}f for each $R$-Demuth test $(\mathcal U_m)\sN{m}$, $Z$ passes the $R$-Demuth tests $(\mathcal U_{2m})\sN{m}$ and $(\mathcal U_{2m+1})\sN{m}$. Thus it  suffices to build a   special test $({\mathcal G}_m)\sN{m}$ that emulates all $R$-Demuth tests $(\mathcal S_n)\sN{n}$ such that $\leb \mathcal S_n \le \tp{-2n}$ for each~$n$.
 The idea is  now to put together all $R$-Demuth tests of this kind. This construction does not result in a universal $R$-Demuth test because the enumeration of all $R$-Demuth tests cannot be done effectively. However, it can be done effectively relative to  a $\Halt$ oracle, thus yielding a special test.

	The following definition will also be useful later. Given an effectively open class $\WW = \Opcl W $ (for some c.e.\ set $W$) and a positive rational number $\eps$, we can (uniformly in a c.e.\ index for $W$ and $\eps$) obtain a c.e.\ index for an effectively open class, which we denote by $\WW^{(\le \eps)}$, such that:
	\begin{enumerate}
		\item[(a)]  $\WW^{(\le \eps)} \subseteq \WW$;
		\item[(b)]  $\leb \WW^{(\le \eps)} \le \eps$; and
		\item[(c)]  If $\leb \WW \le \eps$, then $\WW^{(\le\eps)} = \WW$. 
	\end{enumerate}
The idea is simply to copy $W$, but prevent the enumeration of any string into $\WW^{(\le \eps)}$ that would make the measure go beyond $\eps$.

  By Lemma \ref{lem:Rce universal function} there is  a uniformly $\Halt$-computable listing  $\seq{f^e}$  of the $R$-c.e.\ functions. Thus, there is a function $\tilde q \leT \emptyset' $ such that   $\tilde q (e,n) = f^e(n)$ for each $e,n$.  Now let  $q \leT \Halt$ be a function such that 
  \bc $\HHH_{q(e,n)} = \HHH_{\tilde  q(e,n)}^{(\le \tp{-2n})}$ \ec 
and let 
\bc ${\mathcal G}_m = \bigcup_{e< m} \HHH_{q(e, e+m+1)}$. \ec
Then $\leb {\mathcal G}_m \le \sum_{e<m} \tp{-2(e+m+1)} \le \tp{-2m-1}$. If~$Z$ passes the special test $\seq{\GG_n}$ then it passes each $R$-Demuth test.  
 \end{proof}

The proof of Proposition \ref{prop: ExDemRandom} is completed with the following lemma.

\begin{lemma}\label{lem:special-tests-are-passed}
	If $\seq{\GG_n}$ is a special test, then   some $\DII$ set  $Z$  passes $\seq{\GG_n}$. 
\end{lemma}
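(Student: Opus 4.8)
The statement is a standard "low basis"-type fact for a $\Sigma^0_3$-presented test, and the natural approach is a direct $\emptyset'$-oracle construction of $Z$ by finite extensions, together with a use of compactness of $2^\w$ to keep each extension step from running off to infinity. I would build $Z$ as a union of an increasing sequence of strings $\tau_0 \subset \tau_1 \subset \cdots$, where $\tau_m$ is chosen so as to guarantee $Z \notin \GG_m$; that is, so that $[\tau_m] \cap \GG_m = \emptyset$. The measure bound $\leb \GG_m \le \tp{-2m-1}$ is what makes this possible: at stage $m$, given $\tau_{m-1}$ with $|\tau_{m-1}| = \ell_{m-1}$, we know $\leb([\tau_{m-1}] \cap \GG_m) \le \tp{-2m-1}$, so among the $2^k$ strings $\tau \supseteq \tau_{m-1}$ of length $\ell_{m-1} + k$, at most $2^k \cdot \tp{\ell_{m-1}} \cdot \tp{-2m-1}$ of them can have $[\tau] \subseteq \GG_m$; for $k$ large enough (say $k > 2m+1 - \ell_{m-1}$, adjusting constants) there are strictly fewer than $2^k$ such bad strings, so some $\tau_m$ is not covered by $\GG_m$. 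Even better, I would use compactness directly: $\GG_m$ is an effectively open class (relative to $\emptyset'$, since $g \leT \Halt$) with $\leb \GG_m < \leb[\tau_{m-1}]$ is not quite enough, but $[\tau_{m-1}] \setminus \GG_m$ is a nonempty $\emptyset'$-co-c.e. (i.e.\ $\Pi^0_1(\emptyset')$) subclass of positive measure, hence contains a clopen subclass $[\tau_m]$.

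Making this precise and effective relative to $\emptyset'$ is the crux. The set $\GG_m = \mathcal H_{g(m)}$ is $\w$-enumerated with the index $g(m)$ obtainable from $\emptyset'$, so $\emptyset'$ can enumerate $\GG_m$. To find a clopen $[\tau_m] \subseteq [\tau_{m-1}] \setminus \GG_m$ effectively in $\emptyset'$: pick a length $k$ large enough that the counting above produces a bad-string fraction strictly below $1$ (this uses only the explicit measure bound and $|\tau_{m-1}|$, both of which are in hand), then ask $\emptyset'$, for each of the finitely many candidate strings $\tau$ of that length, whether $[\tau] \subseteq \GG_m$ — this is a $\Sigma^0_1(\emptyset')$ question since $\GG_m$ is $\emptyset'$-c.e.\ and $[\tau]$ is compact, so $[\tau]\subseteq\GG_m$ iff at some finite stage of the enumeration the finitely many strings so far enumerated already cover $[\tau]$, and $\emptyset'$ can decide this. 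Since at least one candidate is good, $\emptyset'$ finds the leftmost good $\tau_m$ effectively. This yields a computable-in-$\emptyset'$ sequence $\seq{\tau_m}$, hence $Z = \bigcup_m \tau_m$ is $\DII$, and by construction $Z \notin \GG_m$ for every $m$ (in particular for almost all $m$), so $Z$ passes the special test.

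I should double-check one subtlety: the special-test definition only requires $Z \notin \GG_m$ for \emph{almost all} $m$, so I have actually proved slightly more (it avoids every $\GG_m$), which is fine. The one genuine thing to verify carefully is the arithmetic of the counting/measure step — that for suitable $k$ the number of length-$(\ell_{m-1}+k)$ extensions $\tau$ of $\tau_{m-1}$ with $[\tau]\subseteq\GG_m$ is strictly less than $2^k$ — but this is an elementary computation from $\leb\GG_m \le \tp{-2m-1}$ and needs no cleverness; alternatively one can sidestep the counting entirely by invoking compactness: $[\tau_{m-1}]\setminus\GG_m$ is a nonempty (it has measure $\ge \tp{-\ell_{m-1}} - \tp{-2m-1} > 0$ once $\ell_{m-1} < 2m+1$, which we can arrange inductively) $\Pi^0_1(\Halt)$ class, and any nonempty effectively-$\Halt$-closed class of positive measure obviously contains a basic clopen subclass, which $\emptyset'$ can locate by search. \textbf{The main obstacle} is purely bookkeeping: ensuring $|\tau_{m-1}| < 2m+1$ persists up the induction so that the leftover class stays nonempty, which one arranges by not making $\tau_m$ longer than necessary (e.g.\ length $\le 2m+2$), and confirming the whole search is $\emptyset'$-effective. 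No part of this requires the heavier machinery ($R$-approximations, Lemma \ref{lem:special_tests_force_Demuth}) developed elsewhere in the section; Lemma \ref{lem:special-tests-are-passed} is the soft half of Proposition \ref{prop: ExDemRandom}.
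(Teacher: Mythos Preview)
Your proposal contains a genuine gap. You oscillate between two different conditions on $\tau_m$: at first you want $[\tau_m]\cap\GG_m=\emptyset$, but your counting argument and your $\emptyset'$-search only locate $\tau_m$ with $[\tau_m]\not\subseteq\GG_m$. These are not the same, and only the first would ensure $Z\notin\GG_m$. The assertion that ``any nonempty effectively-$\Halt$-closed class of positive measure obviously contains a basic clopen subclass'' is false: a c.e.\ open set of arbitrarily small measure can be dense in $2^\omega$ (take $\bigcup_n[\rho_n 0^{k_n}]$ where $\seq{\rho_n}$ enumerates all strings and $k_n$ is large), so its complement is a closed class of measure close to $1$ containing no clopen set. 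Hence in general there is \emph{no} finite string $\tau$ with $[\tau]\cap\GG_m=\emptyset$, and your construction cannot get off the ground.

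The paper's proof sidesteps this obstacle by never demanding disjointness at any finite stage. Instead it works with the cumulative union $\mathcal L_n=\bigcup_{m\le n}\GG_m$ and maintains only the relative-measure invariant $\leb(\mathcal L_n\mid Z\uhr n)\le 1-2^{-n-1}$, using one-bit extensions: given $Z\uhr n$, by averaging one of the two one-bit extensions keeps $\leb(\mathcal L_n\mid\,\cdot\,)\le 1-2^{-n-1}$, and then adding $\GG_{n+1}$ costs at most $2^{n+1}\cdot 2^{-2(n+1)-1}=2^{-n-2}$, restoring the invariant at level $n+1$. Disjointness is recovered only in the limit: if $Z\in\GG_n$ then, since $\mathcal L_n$ is open, some $[Z\uhr m]\subseteq\mathcal L_n\subseteq\mathcal L_m$, contradicting $\leb(\mathcal L_m\mid Z\uhr m)<1$. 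This measure-invariant idea is exactly the missing ingredient in your argument; your bookkeeping remark about keeping $|\tau_{m-1}|<2m+1$ is pointing in the right direction but does not by itself supply it.
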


\begin{proof} 	This lemma is Claim 2 of the proof of \cite[Theorem 3.6.25]{Nbook}. We give a sketch for completeness. For $n<\w$, let $\mathcal L_n = \bigcup_{m\le n} \GG_m$. 
	
	Recall (from Section \ref{sec:superhigh2}) that for a class $\WW$ and a string $\tau$, we let $\WW \mid \tau = \left\{ X\,:\, \tau X\in \WW\right\}$. If $\WW$ is measurable then $\leb(\WW \mid \tau) = 2^{|\tau|} \leb (\WW\cap [\tau])$. Note that $\leb(\WW \mid \tau)$ is the average of $\leb(\WW \mid \tau0)$ and $\leb(\WW \mid \tau1)$.
	
	With oracle $\Halt$, we recursively build a set $Z$ such that $$\leb(\mathcal L_n \mid Z\uhr n) \le 1- 2^{-n-1}$$ for all $n$. This inequality holds for $n=0$ since $\mathcal L_0 = \GG_0$ and $\leb \GG_0 \le 1/2$. If $Z\uhr n$ has been defined and the inequality holds, then $\Halt$ can determine a 1-bit extension $Z\uhr{n+1}$ of $Z\uhr n$ such that $\leb(\mathcal L_n \mid Z\uhr{n+1})\le 1- 2^{-n-1}$.
	
 Since  $\mathcal L_{n+1} = \mathcal L_n\cup \GG_{n+1}$ and $\leb(\GG_{n+1})\le 2^{-2n-3}$,  we have $\leb(\GG_{n+1} \mid Z\uhr{n+1})\le 2^{n+1}2^{-2n-3} = 2^{-n-2}$.
	 Thus  $\leb(\mathcal L_{n+1} \mid Z\uhr{n+1})\le 1-2^{-n-2}$.
	
  To finish we   show that  $Z\notin \GG_n$ for all $n$. If $Z\in \GG_n$ then $Z\in \mathcal L_n$, so there is some $m\ge n$ such that $[Z\uhr m]\subset \mathcal L_n$ 	since $\mathcal L_n$ is open;  but $\mathcal L_n\subseteq \mathcal L_m$, and $[Z\uhr m]\not\subseteq \mathcal L_m$. 
\end{proof}

Let $R^\w$ be the computable well-ordering of type ${|R|}^\w$ obtained from $R$ in the canonical way.  Suppose  $\seq{\GG_n}$ is the  special test obtained in Lemma~\ref{lem:special_tests_force_Demuth}.  Analyzing the proofs of the foregoing lemmas shows  that the   $R$-Demuth random set  $Z$ we build is  $R^\w$-c.e.

\subsection{$R$-benignity}

Given a monotone cost function~$c$, we define a computable function $  q,s \mapsto w_c(q,s)$    where $q$ is a non-negative rational   and   $s \in \w$. Let $w_c(q, 0)=0$. For $s>0$ let 
\begin{equation} \label{eqn:www} w_c(q, s)  =  \begin{cases} s  & \text{if} \ c(w_c(q, s-1), s) \ge q \\ w_c(q,s-1) & \text{otherwise}. \end{cases} \end{equation} 

We  now count the number of times $w_c(q,s)$ changes. Clearly, $c$ satisfies the limit condition  $\lim_x \sup_s c(x,s)=0$  if and only if $w_c(q,s)$ changes only finitely often for each $q$, and $c$ is benign in the sense of Definition~\ref{df:benign} if and only if this number of changes is in fact computably bounded in $q$.  We use $R$-approximations to get a hold on  cost functions that satisfy the limit condition but are not necessarily benign.

\begin{definition}\label{def:R-benign}
	Let $R$ be a computable well-ordering. A monotone cost function~$c$ is \emph{$R$-benign} if there is a computable function $f\colon \w^2\to \w$ such that $\seq{w_c,f}$ is an $R$-approximation. 
\end{definition}

That is, we not only require that the function  $q \mapsto \lim_s w_c(q,s)$ be $R$-c.e., we actually require that the canonical approximation $w_c(q,s)$ be extendible to an $R$-approximation. By the monotonicity of $c$, it is sufficient to replace all rational numbers $q$ by a computable sequence of rational numbers $\seq{q_n}$ descending to 0. Clearly, a cost function is benign if and only if it is $(\w,<)$-benign.  From Lemma~\ref{lem:Ershov}  we have  that a monotone cost function $c$ satisfies the limit condition if and only if there is some computable well-ordering $R$ such that $c$ is $R$-benign. 

 
\begin{remark}\label{rem:plus n}
Suppose that the computable well-ordering $R$ is of order type $\alpha + n$, where $n$ is finite, and let $S$ be a computable well-ordering of type $\alpha$ obtained from $R$ in a natural way. Let $c$ be an $R$-benign monotone cost function, as witnessed by the $R$-approximation $\seq{w_c,f}$. Suppose that for each $\epsilon>0$, there are a non-negative rational $q<\epsilon$ and an $s$ such that
$f(q,s)$ is in the $n$ part of $R$. Then it is not hard to check that $c$ is in fact benign. Otherwise, we can adjust $f$ to obtain an $S$-approximation. Thus, every $R$-benign monotone cost function is in fact $S$-benign.

So if $R$ is a computable well-ordering such that there are $R$-benign cost functions that are not $S$-benign for any computable proper initial segment of $R$, then $R$ has order type $\omega \cdot \alpha$ for some $\alpha$.
\end{remark}

\subsection{Main result of this section}

 Recall that the product $A\cdot B$ of two linear orderings $A$ and $B$ is the linear ordering obtained by replacing every point in $B$ by a copy of $A$. In other words, it is the right-lexicographic ordering on $A\times B$.

\begin{theorem} \label{thm:cRbenign} 
	Let $R$ be a computable well-ordering, and let $c$ be an $R$-benign cost function. Let $Y$ be any $\w\cdot R$-Demuth random set. Then every c.e.\ set  $A$ that is computable from $Y$ obeys $c$. 
\end{theorem}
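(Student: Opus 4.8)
The plan is to fix a Turing functional $\Phi$ with $A=\Phi^Y$ and to build, uniformly in indices for $\Phi$, for the standard enumeration of $A$, and for the $R$-benignity data for $c$, an $\w\cdot R$-Demuth test $\seq{\mathcal L_m}\sN m$ together with a computable approximation $\seq{\hat A_s}$ of $A$ such that, if $Y$ passes the test (i.e.\ $Y\notin \mathcal L_m$ for almost all $m$), then $\seq{\hat A_s}$ obeys $c$. Since $Y$ is $\w\cdot R$-Demuth random it passes every such test, so $A$ obeys $c$. Two preliminary reductions help. First, since $c$ is monotone and $A$ is c.e., enumerating a number into $A$ as early as possible is never more expensive (monotonicity in $s$), so it suffices to show that the standard enumeration $\seq{A_s}$ obeys $c$; thus we take $\hat A=A$ and must bound $\sum_s c(x_s,s)$, where $x_s$ is the least $x$ with $A_{s-1}(x)\ne A_s(x)$. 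Second, we may assume $A$ is noncomputable, and then, since $\w\cdot R$-Demuth randoms are Turing incomplete, Facts~\ref{fact:ce-base-for-randomness} and~\ref{fact:base-for-randomness} give that $A$ is $K$-trivial, superlow and jump-traceable; this makes the true initial segments of $A$ available in an $\w$-c.e.\ (hence $R$-c.e., by Lemma~\ref{lem:w is R}) manner when assembling the test, if needed. Finally fix the $R$-benignity data: a computable $f$ with $\seq{w_c,f}$ an $R$-approximation, where $w_c(q,\cdot)$ is the canonical counter of~\eqref{eqn:www}; in particular $w_c(q,\cdot)$ changes only finitely often, its changes count down along $R$ via $f$, and $c$ satisfies the limit condition.

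For the construction, fix for each $m$ a threshold $q_m=\tp{-(m+c_0)}$, with $c_0$ a constant absorbing Kraft--Chaitin overhead, supplied during the construction by the recursion theorem. The component $\mathcal L_m$ is built in \emph{epochs}: the $k\tth$ epoch is the block of stages between the $k\tth$ and $(k{+}1)\tth$ change of $w_c(q_m,\cdot)$, and at each such change we reset $\mathcal L_m$ to $\ES$, issuing a fresh c.e.\ index. Within an epoch, whenever the standard enumeration changes $A$ at a position $x$ with $c(x,s)\ge q_m$ (by monotonicity this occurs, roughly, only below the position recorded by $w_c(q_m,\cdot)$ at the epoch's start), we enumerate into $\mathcal L_m$ a clopen set of oracles whose current $\Phi$-computations certify $A$ up through position $x$ --- cut down, via the operation $\WW\mapsto\WW^{(\le\eps)}$ from the proof of Lemma~\ref{lem:special_tests_force_Demuth}, so that the total measure poured into $\mathcal L_m$ during the epoch never exceeds the cost accumulated at the epoch's position, hence stays below $q_m$. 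Because in the standard enumeration each position changes at most once, only finitely many such events --- and hence only finitely many internal ``batch'' adjustments forced by the measure bookkeeping --- occur per epoch. Consequently $m\mapsto(\text{current c.e.\ index of }\mathcal L_m)$ admits an $\w\cdot R$-approximation: epoch changes are the $R$-coordinate moves transported from $f$, and the finitely many within-epoch adjustments are the $\w$-coordinate moves. Together with $\leb \mathcal L_m\le \mathrm{const}\cdot q_m\le \tp{-m}$, this makes $\seq{\mathcal L_m}$ a genuine $\w\cdot R$-Demuth test.

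The hard part will be the verification's central point. One must choose the clopen set enumerated at a $q_m$-expensive change so that it contains an initial segment of $Y$ precisely when $Y$'s own computation has not yet ``caught up'' to that change --- the classic \Kuc{} tension, since the certifying region must be small (to keep $\leb\mathcal L_m\le\tp{-m}$) yet pinned near $Y$ (so that $Y$ falls into it). I would resolve it as in \Kuc{} coding: lay out the certified regions so that their \Kuc-coded descendants of bounded description length capture $Y$; this bounded overhead is exactly what forces the $\w$ factor, while the $w_c(q_m,\cdot)$-counter --- whose finiteness and $R$-countdown come from $R$-benignity of $c$ --- forces the $R$ factor, which is the whole content of matching $R$-benign cost functions with $\w\cdot R$-Demuth random oracles. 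Granting this, fix $m_0$ with $Y\notin\mathcal L_m$ for all $m\ge m_0$; for such $m$ the construction performs, in the final (cofinal) epoch, only a bounded number $N_m=O(1)$ of $q_m$-expensive changes (each contributing a region of measure $\asymp q_m$), while the $q_m$-expensive changes of earlier epochs are absorbed into the finitely many resets. Summing, $\sum_s c(x_s,s)$ is bounded up to a constant by a finite remainder plus $\sum_{m\ge m_0}\tp{-m}N_m$, and the geometric weights make this converge. Hence $A$ obeys $c$, completing the proof.
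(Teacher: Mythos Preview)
Your proposal has the right overall shape---build a generalized Demuth test from the reduction $\Phi$ and the $R$-benignity data, then convert ``$Y$ passes the test'' into a cost bound---but two genuine gaps prevent it from going through.

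\textbf{The reduction to the standard enumeration is not valid.} You argue that monotonicity in $s$ makes earlier enumeration cheaper, hence it suffices to bound the cost of the given enumeration $\seq{A_s}$. That inference fails: the cost at a stage is charged only to the \emph{least} $x$ that changes, so a delayed enumeration that batches several entries together can be strictly cheaper than the standard one. The paper does not attempt to show the given enumeration obeys $c$; it builds a \emph{new} enumeration $\seq{\hat A_s}$ by waiting, for each $m$, until the current version of $\mathcal L_m$ has accumulated measure exceeding $2^{-m}$ before copying the change. This delay is essential to the cost bound.

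\textbf{The verification mechanism is missing.} You never explain why $Y\notin\mathcal L_m$ forces $N_m=O(1)$, and the appeal to \Kuc\ coding is from a different part of the paper and does not apply here. The paper's mechanism is quite different from yours: it takes $\mathcal L_m[s]=\{Z:A_s\uhr{v(m,s)}\subseteq\Phi^Z\}$, a set that contains $Y$ \emph{automatically} once $A_s\uhr{v(m,s)}$ is correct (because $A=\Phi^Y$), so passing the capped test forces $\leb\mathcal L_m>2^{-m}$. Crucially, distinct versions of $\mathcal L_m$ (one for each value of $A_s\uhr{v(m,s)}$) are pairwise \emph{disjoint}, because they demand incompatible initial segments of $\Phi^Z$. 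Disjointness plus the measure lower bound gives at most $2^m$ versions; together with the threshold $2^{-2m}$ (not $2^{-m}$, which would not sum) this yields the finite total cost. Your construction, which accumulates certifying regions inside a single epoch and resets only at epoch boundaries, loses this disjointness, and the claimed $N_m=O(1)$ has no source.

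A secondary point: for the $\omega$-coordinate you say ``finitely many within-epoch adjustments,'' but an $\omega\cdot R$-approximation needs a \emph{computable} initial value for the $\omega$-counter at each epoch. The paper obtains this from superlowness of $A$: it replaces the given enumeration by one in which the number of destructions of $A\uhr{v}$ during the $i$-th epoch is bounded by a computable $p(m,i)$, and that value initializes the counter. You note $A$ is superlow but never use it; this is exactly where it enters.
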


Before proving Theorem \ref{thm:cRbenign}, we show how it implies Theorem \ref{thm:last}.

\begin{proof}[Proof of Theorem \ref{thm:last}]
	Let $c$ be a monotone cost function that satisfies the limit condition. By Lemma~\ref{lem:Ershov}, there is some computable well-ordering $R$ such that $c$ is $R$-benign. By Proposition \ref{prop: ExDemRandom}, there is some $\DII$ set $Y$ that is $\w\cdot R$-Demuth random. By Theorem \ref{thm:cRbenign}, every c.e.\ set computable from $Y$ obeys $c$. 
\end{proof}

\begin{proof}[Proof of Theorem \ref{thm:cRbenign}] Every  Demuth random set is GL$_1$ (see \cite[Thm.\ 3.6.26]{Nbook}) and hence Turing incomplete,  so  $A$ is a base for $1$-randomness by Fact~\ref{fact:ce-base-for-randomness}. It follows that  $A$ is low for $K$ and therefore  superlow (see \cite[Cor.\  5.1.23 and Prop.\ 5.1.3]{Nbook}).

  We define the numbers $w_c(q,s)$ by (\ref{eqn:www}) and  use the shorthand  \bc  $v(m, s)=w_c(\tp{-2m},s)$. \ec Note that $v(m) = \lim_s v(m,s)$ exists for each $m$ by the hypothesis on $c$. Furthermore, we may assume the function  $\lambda m. v(m)$ is unbounded; otherwise, every c.e.\ set   obeys~$c$. 
  
  Let $\Gamma$ be a Turing functional such that, if $ v=v(m,s)$ is the $i$-th value of the parameter $v(m,\cdot)$, then $\Gamma^X(m,i)$ is defined with use $v$. Since the c.e.\ set $A$ is superlow, $A$ is jump-traceable by \cite{Nies:Little}. So  it is easy to see      that  there is  a computable function $p$ and there is a computable enumeration $\seq {A_s}\sN s$ of $A$ such that  $\Gamma^{A_s}(m,i)$ is  destroyed at most $p(m,i)$ many times.  For a detailed proof  of this simpler variant of Theorem~\ref{thm: JT and SL}   see \cite[Lemma 3.1]{Kucera.Nies:ta}.

Let $\Phi$ be a Turing functional such that $A= \Phi^Y$ for some $\w \cdot R$-Demuth random  set~$Y$.  To ensure that $A$ obeys $c$ we want to restrict the changes  of $A\uhr{v(m,s)}$. To do so we define an $\w \cdot R$-Demuth test based on the following:   let ${\mathcal L}_m[s]$ go through all the   c.e.\ open  sets
\begin{center} $\{Z \colon \, A_s \uhr{v(m,s)} \subseteq \Phi^Z\} $. \end{center}
That is, as long as $A_s \uhr{v(m,s)}$ remains unchanged we have the same version of ${\mathcal L}_m$. 
 The idea is that the measure of the final version of  ${\mathcal L}_m$ has to exceed $\tp{-m}$ for almost all $m$, otherwise $Y$ would fail the $\w \cdot R$-Demuth test obtained by stopping the enumeration of ${\mathcal L}_m$ when its measure attempts to exceed that bound. 
If $A\uhr{v(m,s)}$ changes then we start a new version of ${\mathcal L}_m$. This type of change can happen at most $p(m,i)$ many times  while  the parameter $v(m,\cdot)$ has its  $i$-th value.

For the formal details,  fix  a computable function $h_0$ such that    $\Opcl{ W_{ h_0(m,s)}}= {\mathcal L}_m[s]$  for each $s$.


\begin{claimin}One can  extend $h_0$ to an  $\w \cdot R$-approximation $\la h_0, h_1\ra$.\end{claimin}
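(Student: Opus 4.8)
The plan is to track the approximation $\langle h_0, h_1\rangle$ by recording, for each $m$, the current ``phase'' of the construction of $\mathcal{L}_m$, and to argue that each mind change of $h_0(m,\cdot)$ is accompanied by a decrease along the well-order $\w\cdot R$. First I would recall that $h_0(m,s)$ changes (meaning $\mathcal{L}_m[s]\neq\mathcal{L}_m[s-1]$) only when $A_s\uhr{v(m,s)}\neq A_{s-1}\uhr{v(m,s-1)}$; this happens for two reasons: either the parameter $v(m,\cdot)$ itself changes value, or the parameter is stable but $A$ changes below it. The key observation is that while $v(m,\cdot)$ holds its $i$-th value, the computation $\Gamma^{A_s}(m,i)$ has use $v(m,s)$, so a change of $A$ below $v(m,s)$ destroys that computation; by the choice of the computable enumeration $\langle A_s\rangle$ and the function $p$, this occurs at most $p(m,i)$ many times.

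Next I would define $h_1(m,s)$ as a pair $\langle r,k\rangle$ representing our ``countdown'': $r$ is the value of the $R$-approximation $f(\tp{-2m},s)$ witnessing that $c$ is $R$-benign (so that each change of $v(m,\cdot)=w_c(\tp{-2m},\cdot)$ strictly decreases $r$ in the sense of $<_R$), and $k$ is a residual counter, initialized to $p(m,i)$ each time the parameter moves to its $i$-th value, and decremented by one each time $A$ changes below the stable parameter. Ordering these pairs right-lexicographically — that is, comparing $r$ first under $<_R$, then $k$ under the usual order on $\w$ — gives precisely the order $\w\cdot R$ (a copy of $\w$ for the counter $k$, indexed by the $R$-position $r$). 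One then checks: if $v(m,\cdot)$ changes, then $r$ strictly decreases (by $R$-benignity), so the pair strictly decreases in $\w\cdot R$ regardless of what happens to $k$; if $v(m,\cdot)$ is stable but $\mathcal{L}_m$ is restarted, then $r$ is unchanged and $k$ strictly decreases, so again the pair strictly decreases. Since $k$ is reset only when $r$ decreases, and since $r$ can decrease only finitely often (well-order), and $k$ is bounded by the computable values $p(m,i)$, the whole thing is a legitimate $\w\cdot R$-approximation; moreover $h_1$ is computable because $f$, $p$, $v(\cdot,\cdot)$, and the enumeration $\langle A_s\rangle$ are all computable.

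The one subtlety — and the step I expect to require the most care — is making sure the two ``sources'' of mind changes of $h_0$ are cleanly separated and that no single stage is counted in a way that fails to produce a strict decrease. Concretely, at a stage $s$ where both $v(m,\cdot)$ changes \emph{and} $A$ happens to change below the new parameter, one must be careful to first let $r$ decrease and \emph{then} reset $k$, so that the composite move is still a strict decrease in $\w\cdot R$; this is handled by processing the parameter change first in the definition of $h_1(m,s)$. A second point is that the initial value of $k$ must be computable and must be fixed \emph{before} we know how many times $A$ will change — but $p(m,i)$ is exactly such a computable a priori bound, supplied by the (simplified, \cite[Lemma 3.1]{Kucera.Nies:ta}) version of Theorem~\ref{thm: JT and SL} applicable here since $A$ is superlow and jump-traceable. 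Finally, one verifies $h_0(m,s)\neq h_0(m,s-1)\Rightarrow h_1(m,s)<_{\w\cdot R} h_1(m,s-1)$ case by case as above; this completes the proof of the claim.
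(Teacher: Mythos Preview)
Your argument is correct and is essentially identical to the paper's: define $h_1(m,s)$ as the pair consisting of the $R$-component $f(m,s)$ from the $R$-approximation witnessing benignity, together with an $\w$-counter initialized to $p(m,i)$ at each new value of $v(m,\cdot)$ and decremented at each $A$-change below the stable parameter. One cosmetic point: with your pair written as $\la r,k\ra$ (the $R$-component on the left), comparing $r$ first is \emph{left}-lexicographic, not right; the paper writes the pair as $\la l, f(m,s)\ra$ so that right-lexicographic genuinely compares the $R$-component first, matching the definition of $\w\cdot R$ given earlier.
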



\n By the hypothesis that  $c$ is $R$-benign, there is a computable function $f\colon \w \times \w \ria \w$ such that $\la v,f \ra$ is an $R$-approximation. The idea is now
to follow this $R$-approximation if  $v(m,s)$ changes, and use the first components of the pairs in $\w \cdot R$ to count the changes of $A \uhr{v(m,s)}$ while $v(m,s)$ is stable. For the formal details,  we define the computable function $h_1$ by 

\bc $h_1(m,s) = \la l,  f(m,s)\ra$,  \ec
where the counter $l \in \w$ is  initialized at $p(m,i)$ when $v(m,s)$ assumes its $i$-th value (recall that $p$ is the function such that $\Gamma^{A_s}(m,i)$ is  destroyed at most $p(m,i)$ many times). Subsequently, each time $A \uhr {v(m,s)}$ changes while $v(m,s)$ remains at this value, we decrease this counter. This completes the proof of Claim~1.

Recall from the proof of Proposition~\ref{prop: ExDemRandom} that  for a c.e.\ open set  $\WW$ and a positive rational $\eps$, we let $\WW^{(\le \eps)}$ denote  a uniformly obtained c.e.\ open set contained in $\WW$  that equals $\WW$ if the measure of $\WW$  does not exceed $\eps$. 

 Let $\mathcal H_m[s] = ({\mathcal L}_m[s])^{(\le \tp{-m})}$. Let $\mathcal H_m$ denote the final version of $\mathcal H_m[s]$. Then, by Claim~1, $\seq {\mathcal H_m}\sN m$
is  an $\w \cdot R$-Demuth test.

 By the hypothesis on $Y$, there is $m^*$ such that $Y \not \in \mathcal H_m$ for each $m \ge m^*$. Then, since $A \uhr {v(m)} \sub \Phi^Y$, we have  $\leb {\mathcal L}_m  >  \tp{-m}$ for each $m \ge m^*$: otherwise~$Y$ would enter $\mathcal H_m$. 

It remains to  obtain a computable  enumeration $\seq{\hat A_s}\sN s$ of $A$ obeying $c$. We first define an infinite  computable sequence of stages: let $s_0= m^*$,  and   
\begin{center} $s_{i+1} = \mu s > s_i\,  \fa m \, [ m^* \le m \le s_i \, \ria  \leb \mathcal L_{m,s} [s] > \tp{-m}]$. \end{center}
Let 

\bc $g(i) = \max\{ v(m,s_i)\colon \, m^*\le m\le  s_i\}$.  \ec 

Now, consider a stage $s$ such that    $s_i  \le s < s_{i+1}$. Given $x$,   since  the function  $\lambda m. v(m) $ is unbounded, there is a least  positive $j\ge i$ such that $g(j-1) > x$.  Let

\begin{center} $\hat A_s(x) = A_{s_{j+2}}(x) $. \end{center}

\begin{claimin}The computable enumeration $\seq{\hat A_s}\sN s$ obeys $c$.\end{claimin}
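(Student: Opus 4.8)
The plan is to check first that $\langle\hat A_s\rangle_{s\in\w}$ is a computable approximation of $A$, then to locate precisely the stages at which it changes, and finally --- the crux --- to bound its total cost against $c$.

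\emph{Legitimacy of the approximation, and where it changes.} Once $m^{*}$ is fixed (it is obtained non-uniformly from the fact that $Y$ passes the $\w\cdot R$-Demuth test $\langle\mathcal H_m\rangle$), all the data used in the definition --- the sequence $\langle s_i\rangle$, the function $g$, the approximation $(m,s)\mapsto v(m,s)$, the enumeration $\langle A_s\rangle$, and the auxiliary function $j(x,i)$ --- is computable, so $\langle\hat A_s\rangle$ is computable. For fixed $x$, as $s\to\infty$ the block index $i$ with $s_i\le s<s_{i+1}$ tends to $\infty$; hence $j(x,i)\ge i$ tends to $\infty$, hence so does $s_{j(x,i)+2}$, and since $A$ is c.e.\ we get $\hat A_s(x)=A_{s_{j(x,i)+2}}(x)\to A(x)$. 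On a single block $[s_i,s_{i+1})$ the value $\hat A_s(x)$ depends only on $x$ and $i$, so $\hat A$ is constant on each block; as $A$ is c.e., $\hat A_s(x)$ flips at most once, from $0$ to $1$. A flip at $x$ at the boundary $s_{i+1}$ forces $j(x,i)=i$, hence $g(i-1)>x$; so the least position $x_i^{*}$ changed at stage $s_{i+1}$ satisfies $x_i^{*}<g(i-1)=\max\{v(m,s_{i-1})\colon m^{*}\le m\le s_{i-1}\}$, and $x_i^{*}$ enters $A$ only after stage $s_{i+2}$.

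\emph{The cost estimate.} We must show $\sum_i c(x_i^{*},s_{i+1})<\infty$, the sum ranging over the blocks at which $\hat A$ changes. Fix such an $i$ and let $m_i$ be the least $m\ge m^{*}$ with $v(m,s_{i+1})>x_i^{*}$; this exists, and $m_i\le s_{i-1}$, since the $m$ attaining the maximum in $g(i-1)$ lies in $[m^{*},s_{i-1}]$ and already satisfies $v(m,s_{i+1})\ge v(m,s_{i-1})=g(i-1)>x_i^{*}$. The finitely many $i$ with $m_i=m^{*}$ have $x_i^{*}<v(m^{*})$ and contribute only a finite amount, since $x\mapsto\sup_s c(x,s)$ is everywhere finite. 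For the remaining $i$, $v(m_i-1,s_{i+1})\le x_i^{*}$; and since $x_i^{*}<g(i-1)\le s_{i-1}<s_{i+1}$, the value $v(m_i-1,\cdot)=w_c(2^{-2(m_i-1)},\cdot)$ is \emph{not} updated at stage $s_{i+1}$ (an update would set it to $s_{i+1}>x_i^{*}$), so by~(\ref{eqn:www}) and monotonicity of $c$ in its first argument
\[
c(x_i^{*},s_{i+1})\ \le\ c\bigl(v(m_i-1,s_{i+1}),\,s_{i+1}\bigr)\ <\ 2^{-2(m_i-1)}\ =\ 4\cdot 2^{-2m_i}.
\]
Thus it suffices to bound $\lvert\{i\colon m_i=m\}\rvert$ by $O(2^m)$ for each $m>m^{*}$, for then $\sum_i c(x_i^{*},s_{i+1})$ is dominated by a convergent geometric series. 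Here the hypothesis on $Y$ enters. When $x_i^{*}$ enters $A$ --- which, as noted, happens after stage $s_{i+2}$ and below the use $v(m_i,\cdot)$ --- it destroys the computation $\Gamma^{A}(m_i,\cdot)$ and so replaces the version of $\mathcal L_{m_i}$ then current; by the choice of the stages $s_k$, that version had already attained measure $>2^{-m_i}$. One checks that any two versions of $\mathcal L_m$ that attain measure $>2^{-m}$ and are subsequently replaced because some number enters $A$ below the current use are incomparable as strings (the later one carries a $1$ where the earlier carries a $0$, at the entry position responsible for the earlier replacement), so the classes $\{Z\colon\tau\subseteq\Phi^Z\}$ they determine are pairwise disjoint; as each has measure $>2^{-m}$, there are at most $2^m$ of them. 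Combined with the facts that $\Gamma^{A_s}(m,j)$ is destroyed at most $p(m,j)$ times while $v(m,\cdot)$ holds its $j$-th value and that $v(m,\cdot)$ changes finitely often, this bounds $\lvert\{i\colon m_i=m\}\rvert$ by $O(2^m)$, and $\langle\hat A_s\rangle$ obeys $c$.

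\emph{Where the difficulty lies.} The routine parts are the verification that $\langle\hat A_s\rangle$ approximates $A$ and the inequality $c(x_i^{*},s_{i+1})<2^{-2(m_i-1)}$, which follow cleanly from the definitions of $v$, $w_c$, and $\hat A$. The real obstacle is the final counting: pairing the indices $i$ with $m_i=m$, boundedly-many-to-one, with the at most $2^m$ ``heavy, badly replaced'' versions of $\mathcal L_m$ --- in particular, handling the case of several numbers entering $A$ at one stage, and ensuring that the version one charges $i$ to was indeed current at some stage $s_k$ with $m\le s_{k-1}$, so that it genuinely attained measure $>2^{-m}$. This is exactly the point where the existence of $m^{*}$ and the $\w\cdot R$-Demuth randomness of $Y$ --- via $\leb\mathcal L_m>2^{-m}$ for $m\ge m^{*}$ --- are indispensable.
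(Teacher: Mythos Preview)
Your approach is essentially the paper's: locate the change stages, choose the least $m$ with $v(m,\cdot)>x$, bound the cost by $2^{-2(m-1)}$, and then show there are at most $2^m$ change stages with that value of $m$. The cost estimate is carried out correctly.

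The counting step, however, is more tangled than it needs to be, and the detour through $\Gamma^A$ and the bound $p(m,j)$ is a red herring. The clean argument is this. In your indexing (change at $s_{i+1}$, so $j(x,i)=i$ and by monotonicity of $g$ also $j(x,i+1)=i+1$), you actually get the sharper conclusion that $x_i^*\notin A_{s_{i+2}}$ but $x_i^*\in A_{s_{i+3}}$: the entry happens strictly between $s_{i+2}$ and $s_{i+3}$, not merely ``after $s_{i+2}$''. Now map each $i$ with $m_i=m$ to the version $\mathcal L_m[s_{i+2}]$. Since $m_i\le s_{i-1}\le s_{i+1}$, the defining property of $s_{i+2}$ gives $\leb\mathcal L_m[s_{i+2}]>2^{-m}$. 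For $i<i'$ with $m_i=m_{i'}=m$ we have $s_{i'+2}\ge s_{i+3}$; but $x_i^*<v(m,s_{i+1})\le v(m,s)$ for all $s\ge s_{i+2}$, and $x_i^*$ enters $A$ between $s_{i+2}$ and $s_{i+3}$, so $A_{s_{i+2}}\uhr{v(m,s_{i+2})}$ and $A_{s_{i'+2}}\uhr{v(m,s_{i'+2})}$ are incomparable strings and the corresponding classes $\mathcal L_m[s_{i+2}]$, $\mathcal L_m[s_{i'+2}]$ are disjoint. Hence there are at most $2^m$ such $i$, and the total cost is bounded by $\sum_m 2^m\cdot 2^{-2m+2}=8$.

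So you never need to talk about which computation $\Gamma^A(m,\cdot)$ is destroyed, nor about $p(m,j)$, nor worry about ``several numbers entering $A$ at one stage'' or about whether the version current at the actual entry stage had large measure: you simply look at $\mathcal L_m[s_{i+2}]$, whose measure is guaranteed by the very definition of $s_{i+2}$, and use the entry of $x_i^*$ before $s_{i+3}$ only to separate it from later versions. Your closing paragraph overstates the difficulty of the counting; once you pin down the entry window as $(s_{i+2},s_{i+3}]$, the pairing $i\mapsto\mathcal L_m[s_{i+2}]$ is injective and the argument is two lines.
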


\n We have to show that the total cost of changes for this enumeration, as defined in (\ref{eqn:total sum}) in Subsection~\ref{subsec:cost-functions},    is finite. Suppose that  at a stage $s$, the number $x$ is least such that $\hat A_{s-1}(x) \neq \hat A_s(x)$.  Then $s=s_j$ for some~$j$ such that $g(j-1) > x$. So, we can choose a least $m_s= m < s_j$ such that $x< v(m,s)$. We may assume that $x > v(m^*+1)$.  Then  $m>m^*$ and  $v(m-1, s) \le x $. Recall that  $v(m-1,s)= w_c(\tp{-2m+2})$. Then, by definition  and the monotonicity of $c$ we have $c(x,s) <  \tp{-2m+2}$. 

Since  $g(j-1) > x$, we have $  A_{s_{j+1}}(x) \neq A_{s_{j+2}}(x)$, so  all the versions  ${\mathcal L}_m[s]$ for $s > s_{j+1}$ are disjoint from ${\mathcal L}_m[s_{j+1}]$. Then, since $\leb {\mathcal L}_m[s_{j+1}]> \tp{-m}$, a situation as  above for a particular value $m$ can occur at most $\tp{m}$ many times. (That is,  there are at most $2^m$ stages $s$ such that $m_s= m$.) Thus, the total cost of changes at numbers $x > v(m^*+1)$  for this computable enumeration of $A$ is bounded by $\sum_m \tp{m}\tp{-2m+2} = 8$. 
\end{proof}

To end the paper, we remark that if   $R = (\w, <)$, we can actually  obtain an $R$-approximation in   Claim 1, since   the current version $\mathcal L_m[s]$ of $\mathcal L_m$ changes at most $  p(m,f(m,0) ) \cdot f(m,0) $ many times. Thus,  the test $\seq{\mathcal H_m}\sN  m$ is  an  $(\w, <)$ Demuth test, i.e.,   a Demuth test in the usual sense.  It follows that   each  c.e.\ set $A$ Turing  below a Demuth random set obeys every  benign cost function. As mentioned in the first section,  \Kuc\ and Nies  \cite{Kucera.Nies:ta} had previously obtained  the  equivalent result    that such a set~$A$ is strongly jump-traceable.

The above argument works equally well if $R$ is of order type $\omega \cdot \alpha$ for some ordinal $\alpha$ and the set of limit points of $R$ is computable. Thus, in this case Theorem \ref{thm:cRbenign} can be strengthened by weakening the hypothesis on $Y$ from $\omega \cdot R$-Demuth randomness to $R$-Demuth randomness. This fact is particularly interesting given Remark \ref{rem:plus n}.

By further adapting  the   techniques above  to plain  Demuth tests, we obtain a new proof of the harder  right-to-left direction of Characterization~Ia  for c.e.\ sets $A$: if $A$ is below each $\w$-c.e.\ $1$-random set then $A$ obeys each benign cost function~$c$ (and hence $A$  is strongly jump-traceable).

Firstly, given $A$ and $c$,  define  the computable  functions $p$ and $f$ as above.  Let $r(m) = p(m,f(m,0) ) \cdot f(m,0) $.  The proof of Lemma~\ref{lem:special_tests_force_Demuth} shows that there is a Demuth test $\seq{\mathcal G_n}\sN  n$ (taking the role of the special test there) such that each set passing  $\seq{\mathcal G_n}\sN  n$   passes each  Demuth test $\seq{\mathcal   H_m} \sN  m$ with at most  $r(m) $  many changes to the current  version of the $m$-th component.  
Secondly,  since $\seq{\mathcal G_n}\sN  n$ is now a Demuth test, by the proof of Lemma \ref{lem:special-tests-are-passed} there is an $\w$-c.e.\ set $Y$ passing it. 
 Finally, by hypothesis,  $A = \Phi(Y)$ for some Turing functional $\Phi$. Define  the Demuth test  $\seq{\mathcal H_m}\sN  m$ as  in the proof of Theorem~\ref{thm:cRbenign}. Then the number of times a version $\mathcal H_m[s]$ changes is bounded by $r(m) $, so $Y$ passes this Demuth test.  Now  the proof of Theorem~\ref{thm:cRbenign}  shows  that $A$ obeys~$c$.

%


%

%

%

\end{document}